\DeclareRobustCommand{\greektext}{%
  \fontencoding{LGR}\selectfont\def\encodingdefault{LGR}}
\DeclareRobustCommand{\textgreek}[1]{\leavevmode{\greektext #1}}
\numberwithin{equation}{section}
\numberwithin{figure}{section}
\theoremstyle{plain}
\newtheorem{thm}{\protect\theoremname}[section]
  \theoremstyle{definition}
  \newtheorem{defn}[thm]{\protect\definitionname}
  \theoremstyle{plain}
  \newtheorem{conjecture}[thm]{\protect\conjecturename}
  \theoremstyle{plain}
  \newtheorem{prop}[thm]{\protect\propositionname}
  \theoremstyle{remark}
  \newtheorem{rem}[thm]{\protect\remarkname}
  \theoremstyle{plain}
  \newtheorem{cor}[thm]{\protect\corollaryname}
  \theoremstyle{plain}
  \newtheorem{lem}[thm]{\protect\lemmaname}
  \theoremstyle{definition}
  \newtheorem{example}[thm]{\protect\examplename}
      \theoremstyle{plain}
      \newtheorem{assumption}{Assumption}
\newcommand*\leftdash{\rotatebox[origin=c]{-45}{$\dabar@\dabar@\dabar@$}}
\newcommand*\rightdash{\rotatebox[origin=c]{45}{$\dabar@\dabar@\dabar@$}}
  \providecommand{\conjecturename}{Conjecture}
  \providecommand{\corollaryname}{Corollary}
  \providecommand{\definitionname}{Definition}
  \providecommand{\examplename}{Example}
  \providecommand{\lemmaname}{Lemma}
  \providecommand{\propositionname}{Proposition}
  \providecommand{\remarkname}{Remark}
\providecommand{\theoremname}{Theorem}
\begin{document}

\title{The $p$-Cycle of Holonomic $\mathcal{D}$-modules and Quantization
of Exact Algebraic Lagrangians}

\author{Christopher Dodd}
\begin{abstract}
Let $X=\mathbb{A}^{n}$ be complex affine space, and let $T^{*}X$
be its cotangent bundle. For any exact Lagrangian $L\subset T^{*}X$,
we define a new invariant, $A$, living in $\text{Div}_{\mathbb{Q}/\mathbb{Z}}(L)$.
We call this invariant the monodromy divisor of $L$. We conjecture
that the existence of a finite order character of $\pi_{1}(L)$ whose
monodromy is exactly $A$ defines an obstruction to attaching a holonomic
$\mathcal{D}_{X}$-module $M$ associated to $L$ - here, the association
goes via positive characteristic and $p$-supports. In the case where
$\mathbb{H}_{dR}^{1}(L)=0$, we prove this conjecture, and then go
on to show that the set of such holonomic $\mathcal{D}_{X}$-modules
forms a torsor over the group of finite order characters of $\pi_{1}$.
This proves a version of a conjecture of Kontsevich. As a consequence,
we deduce that the group of Morita autoequivalences of the $n$-th
Weyl algebra is isomorphic to the group of symplectomorphisms of $T^{*}\mathbb{A}^{n}$.
This generalizes an old theorem of Dixmier (in the case $n=1$) and
settles a conjecture of Belov-Kanel and Kontsevich in general. 
\end{abstract}

\maketitle
\tableofcontents{}

\section{Introduction}

It is an overarching theme of symplectic geometry that one should
study a symplectic manifold via the geometry its Lagrangian submanifolds.
Often, this takes the form of associating to a symplectic manifold
$(X,\omega)$ a category $\mathcal{C}$, linear over some field $\mathbb{K}$,
whose objects are Lagrangian submanifolds (possibly equipped with
extra structure) and whose morphisms are determined by the intersection
theory between Lagrangians (suitably interpreted). 

Of course, this is easier said than done, and the details of the construction
of $\mathcal{C}$ will vary substantially, depending on the precise
set-up in which one works. If $X$ is a $\mathcal{C}^{\infty}$-manifold,
then the Fukaya category $\mathcal{F}uk(X)$ is the traditional answer
for the corresponding category; c.f. e.g., \cite{key-51}, \cite{key-50}.
In addition, one can also consider categories build out of micro-local
sheaves (as in \cite{key-52},\cite{key-53}, \cite{key-54}), or
out of modules over a suitable ring of pseudo-differential operators
(as in \cite{key-55}). Generally speaking, an object in the relevant
category attached to a Lagrangian is called a ``quantization'' of
it. Arbitrary elements of the category $\mathcal{C}$ define not just
smooth Lagrangian submanifolds, but rather Lagrangian cycles (e.g.,
formal sums over $\mathbb{K}$ of Lagrangian submanifolds). 

Despite the differences between these approaches, several common themes
do emerge. In order to construct $\mathcal{C}$ in general, one needs
to assume that $\mathbb{K}$ is a very large ring- often, it is the
Novikov field of formal power series with real exponents. However,
the situation simplifies substantially if $X$ is an exact symplectic
manifold; i.e., there is a one form $\theta$ such that $d\theta=\omega$.
In fact, for the purposes of this paper, one may suppose that $X$
is a cotangent bundle. In this case, one can consider the class of
exact Lagrangians; i.e., Lagrangians $L$ for which $\theta|_{L}$
is an exact one-form. In this setting, one can often construct $\mathcal{C}$
over a much smaller ring of coefficients, such as $\mathbb{R}$ (or
even $\mathbb{Q}$ or $\mathbb{Z}$), in the case of the Fukaya category
this is discussed in the introduction to \cite{key-51}. The situation
simplified further if we place certain cohomological conditions on
$L$. In the $\mathcal{C}^{\infty}$ case, a typical demand is that
$L$ is orientable, admits a spin structure, and that the Mazlov class
$m_{L}$ (which is a map $\pi_{1}(L)\to\mathbb{Z}$ which depends
on the embedding $L\subset X$) vanishes. In case this happens, then
we should expect that each local system on $L$ gives rise to a distinct
object of $\mathcal{C}$. 

Now suppose that $X$ is a complex algebraic or analytic variety,
equipped with a complex algebraic (or holomorphic) symplectic form.
The above constructions are not directly available. However, there
is a robust theory of deformation quantization of algebraic varieties,
which is the analogue of \cite{key-55} in this situation. In particular,
one may find an (often canonical; c.f. \cite{key-72}) sheaf of flat
$\mathbb{C}[[h]]$-algebras $\mathcal{A}_{h}$ which satisfies $\mathcal{A}_{h}/h\tilde{=}\mathcal{O}_{X}$.
It is crucial here that $\mathcal{A}_{h}$ be non-commutative, for
we demand that the bracket on $\mathcal{A}_{h}$ recover the Poisson
structure on $\mathcal{O}_{X}$ via the formula 
\[
\{\overline{f},\overline{g}\}\equiv\frac{1}{h}[f,g]\phantom{i}\text{mod}\phantom{}(h)
\]
for any local sections $f,g\in\mathcal{A}_{h}$ (here $\bar{f}$ denotes
the image of $f$ in $\mathcal{O}_{X}$, and similarly for $g$).
Then one may expect that a suitable derived category of modules over
$\mathcal{A}_{h}$ is an algebraic analogue of the kind of category
we considered above; these matters have been taken up extensively
in the monograph \cite{key-59}, where, essentially, they confirm
this expectation for complex analytic symplectic varieties (c.f. also
\cite{key-60} for the basic result on quantization of Lagrangians
in this context). 

In the algebraic context, the analogous result appears in the paper
\cite{key-58}. Suppose, for simplicity that our symplectic variety
is of the form $T^{*}X_{\mathbb{C}}$ for a smooth affine\footnote{In fact, there is a version of their result when $X_{\mathbb{C}}$
is not affine, but the cohomological obstruction is slightly more
complicated to state, c.f., \cite{key-58}, theorem 1.1.4} complex variety $X_{\mathbb{C}}$. Then a natural quantization (and
the one of interest in this paper) is the sheaf of $h$-differential
operators $\mathcal{D}_{h}$. If $L\subset T^{*}X$ is a smooth Lagrangian,
then we are looking for a $\mathbb{C}[[h]]$-flat $\mathcal{D}_{h}$-module
$\mathcal{L}_{h}$ such that $\mathcal{L}_{h}/h$ is a line bundle
$\mathcal{L}$ on $L$. The main theorem of \cite{key-58} in this
context is that such an $\mathcal{L}_{h}$ exists iff $c_{1}(\mathcal{L})=\frac{1}{2}(c_{1}(\omega_{L/X}))$
inside $\mathbb{H}_{dR}^{2}(L)$. In this case, the set of such modules
is a torsor over the group of isomorphism classes of $(\mathcal{O}_{L}[[h]])^{*}$-torsors
on $L$ equipped with a flat connection (this the analogue of the
set of rank $1$ local systems in this context). 

In the remarkable paper \cite{key-4}, Kontsevich has introduced a
totally different perspective on the algebraic version of this question.
We let $X_{\mathbb{C}}$ be a smooth affine complex algebraic variety,
and $T^{*}X_{\mathbb{C}}$ its cotangent bundle. The first insight
here is that there is already a well defined category which can be
thought of as ``quantizing'' the geometry of $T^{*}X_{\mathbb{C}}$:
namely, the category of holonomic $\mathcal{D}_{X_{\mathbb{C}}}$-modules.
The question is how to construct a correspondence between holonomic
$\mathcal{D}_{X_{\mathbb{C}}}$-modules and suitable Lagrangians.
The main idea of \cite{key-4} is to do this via reduction to positive
characteristic. 

Let $\mathcal{M}_{\mathbb{C}}$ be a holonomic $\mathcal{D}_{X_{\mathbb{C}}}$-module.
We may select a commutative ring $R$, which is a finitely generated
$\mathbb{Z}$-algebra, and a scheme $X$ which is smooth over $S=\mbox{Spec}(R)$,
such that $X\times_{S}\mbox{Spec}(\mathbb{C})\tilde{=}X_{\mathbb{C}}$.
Localizing $R$ if necessary, one may also find (c.f. \prettyref{subsec:Notations-and-Conventions}
below) a sheaf $\mathcal{M}$ of $\mathcal{D}_{X}$-modules, flat
over $R$, such that $\mathcal{M}\otimes_{R}\mathbb{C}=\mathcal{M}_{\mathbb{C}}$.
We call such $\mathcal{M}$ an $R$-model for $\mathcal{M}_{\mathbb{C}}$. 

Then, for any algebraically closed $k$ of positive characteristic,
and any point $\mbox{Spec}(k)\to S$, one may base change to obtain
a $\mathcal{D}_{X_{k}}$ module $\mathcal{M}_{k}$. Now, $\mathcal{D}_{X_{k}}$
is is a sheaf of algebras with a very large center; more precisely,
we have an isomorphism $\mathcal{Z}(\mathcal{D}_{X_{k}})\tilde{=}\mathcal{O}_{T^{*}X_{k}^{(1)}}$,
and $\mathcal{D}_{X_{k}}$ is Azumaya over $\mathcal{Z}(\mathcal{D}_{X_{k}})$
(by {[}BMR{]}, chapter 2)\footnote{Here $T^{*}X_{k}^{(1)}$ is the Frobenius twist of $T^{*}X_{k}$}.
By taking the support of $\mathcal{M}_{k}$, considered as a sheaf
on $T^{*}X_{k}^{(1)}$, counted with appropriate multiplicities, one
associates to $\mathcal{M}_{k}$ a cycle in $T^{*}X_{k}^{(1)}$, called
the $p$-cycle. It was conjectured in \cite{key-4}, conjecture $2$,
and proved by T. Bitoun (\cite{key-7}) that the variety underlying
this cycle, called the $p$-support, is Lagrangian, for all $p>>0$.
In {[}Ko{]}, Kontsevich goes on to outline a program in which these
Lagrangian subvarieties can be fit together to form some kind of cycle
for $\mathcal{M}_{\mathbb{C}}$; which he calls the ``arithmetic
support.'' Furthermore, he conjectures that this cycle provides,
in various senses, a very refined invariant of $\mathcal{M}_{\mathbb{C}}$.
This arithmetic support is no longer a Lagrangian cycle inside $T^{*}X_{\mathbb{C}}$,
but rather lives over a certain very large ring $\mathbb{C}_{\infty}$
defined by putting together all of the reductions mod $p$ of all
of the rings $R$ over which $\mathcal{M}_{\mathbb{C}}$ admits a
model (c.f. \cite{key-4}, section 2.2 for details). 

Natural questions arising out this set-up, then, are: which Lagrangian
cycles are the arithmetic supports of holonomic $\mathcal{D}$-modules? 

When does the arithmetic support live over a smaller field (than $\mathbb{C}_{\infty}$)? 

Can one characterize the set of $\mathcal{D}$-modules with the same
arithmetic support?

To make the second question more rigorous, we give, following \cite{key-4},
the
\begin{defn}
\label{def:Const-arith-supp}1) Let $L_{\mathbb{C}}\subset T^{*}X_{\mathbb{C}}$
be a Lagrangian subvariety. After choosing an appropriate finitely
generated ring $R$ we may suppose $L_{\mathbb{C}}\tilde{=}L\times_{S}\mbox{Spec}(\mathbb{C})$
for some $L\subset T^{*}(X/S)$ which is smooth over $S=\text{Spec}(R)$.
Then, after base changing to $\mbox{Spec}(k)$, for an algebraically
closed field $k$ of positive characteristic for which $R\to k$,
we can look at $L_{k}^{(1)}\subset T^{*}X_{k}^{(1)}$. 

Let $\mathcal{M}$ be an $R$-model for $\mathcal{M}_{\mathbb{C}}$.
We say that $\mathcal{M}_{\mathbb{C}}$ has \emph{constant arithmetic
support equal to $L_{\mathbb{C}}$ }if, for any such $\mathcal{M}$,
$R$ as above, the $p$-support of $\mathcal{M}_{k}$ is equal to
$L_{k}^{(1)}$ for all $k$ of characteristic $p>>0$. 

2) In this situation, let $\{L_{\mathbb{C},i}\}_{i=1}^{s}$ be the
components of $L_{\mathbb{C}}$. We say the multiplicity of $\mathcal{M}_{\mathbb{C}}$
(along $L_{\mathbb{C},i}$) is $r$ if $\mathcal{M}_{k}$ is generically
a vector bundle of rank $p^{r}$ on $L_{k,i}$ for $p>>0$. 
\end{defn}

It is not hard to see that if these conditions hold for one such $R$,
and one particular $R$-model, they hold for all of them. So we can
refine the last two questions by asking: for a given Lagrangian subvariety
$L_{\mathbb{C}}$, is there a holonomic $\mathcal{D}$-module which
has constant arithmetic support equal to $L_{\mathbb{C}}$ (say, with
fixed multiplicities)? Is there a parametrization of the set of such? 

Before discussing this question further, we should note that, in a
special case, there is a precise conjecture of Kontsevich (this is
\cite{key-4}, conjecture $5$) about this question: 
\begin{conjecture}
Suppose that $L_{\mathbb{C}}$ is a smooth Lagrangian and the singular
homology $H_{1}^{sing}(L_{\mathbb{C}},\mathbb{Z})=0$. Then there
is a unique holonomic $\mathcal{D}_{X_{\mathbb{C}}}$-module $\mathcal{M}_{\mathbb{C}}$
with constant arithmetic support equal to $L_{\mathbb{C}}$, with
multiplicity $1$. We further have $\text{Ext}^{1}(\mathcal{M}_{\mathbb{C}},\mathcal{M}_{\mathbb{C}})=0$. 
\end{conjecture}

In fact, in a private communication, Kontsevich expressed the idea
that this conjecture is probably too ambitious. He expects that, by
analogy with the case of algebraic quantization discussed above, $L_{\mathbb{C}}$
should have to satisfy the condition that there is a line bundle $\mathcal{L}$
with $c_{1}(\mathcal{L})=\frac{1}{2}(c_{1}(\omega_{L/X}))$. 

Although we cannot prove this conjecture, in this paper, we will define
and study a different obstruction for $L_{\mathbb{C}}$. I do not
know the exact relationship between the two obstructions, but the
one defined here appears to be stronger. Our obstruction is defined
for smooth, exact algebraic Lagrangians, which, by analogy with the
case of symplectic manifolds, seem to yield the simplest version of
the theory. To explain where this obstruction comes from, we begin
by recalling that the answer to both of the above questions is known
in a special case. Before stating it, let us note that if $L_{\mathbb{C}}=X_{\mathbb{C}}$,
then the equations for $L_{\mathbb{C}}$ are given (locally) by $\{\partial_{i}=0\}_{i=1}^{n}$,
where $\{\partial_{i}\}_{i=1}^{n}$ is a set of coordinate derivations.
Therefore, for $\mathcal{M}_{\mathbb{C}}$ to have constant arithmetic
support equal to $L_{\mathbb{C}}$ means that $\mathcal{M}_{k}$ is
locally annihilated by $\{\partial_{i}^{p}\}_{i=1}^{n}$. This condition
is classically known as having $p$-curvature $0$ (and is well studied).
One has:
\begin{thm}
\label{thm:CC}Let $L_{\mathbb{C}}=X_{\mathbb{C}}\subset T^{*}X_{\mathbb{C}}$.
Then a holonomic $\mathcal{D}_{X_{\mathbb{C}}}$-module $\mathcal{M}_{\mathbb{C}}$
has constant arithmetic support equal to $L_{\mathbb{C}}$, with multiplicity
$1$, iff $\mathcal{M}_{\mathbb{C}}$ is a line bundle with flat connection,
which has regular singularities, and whose associated monodromy representation
is a finite-order character of $\pi_{1}(L_{\mathbb{C}})$.
\end{thm}

This was proved by Chudonovsky-Chudonovsky, \cite{key-65}, theorem
8.1, when $X$ is a curve, following the foundational work of Katz
(\cite{key-24}). Further, the deduction of the result in the general
case from the curve case, was given in \cite{key-74} (c.f. the proof
of theorem 10.5). Another elegant proof of the theorem was given by
Bost, in \cite{key-16}, corollary 2.8 (in fact, he proves an analogous
result for a $G$-bundle with connection, where $G$ is any solvable
group. The above theorem is the case $G=GL_{1}$). If one removes
the condition that the multiplicity be $1$, then the analogue of
this theorem is the famous Katz-Grothendieck conjecture (c.f. \cite{key-74}),
which is open in general. This suggest that the analogue of a coefficient
local system in this context is a local system with finite monodromy
group. This leads us to the 
\begin{conjecture}
\label{conj:Torsor}Suppose $L_{\mathbb{C}}$ is a smooth Lagrangian.
Then the set of all holonomic $\mathcal{D}_{X_{\mathbb{C}}}$-modules
with constant arithmetic support of multiplicity $1$ equal to $L_{\mathbb{C}}$
is a pseudo-torsor over $\pi^{*}$, the group of finite order characters
of $\pi_{1}(L_{\mathbb{C}})$. 
\end{conjecture}

Now let us return to a smooth, exact Lagrangian $L_{\mathbb{C}}$.
Let $f$ on $L_{\mathbb{C}}$ be such that $df=\theta|_{L_{\mathbb{C}}}$.
Below we will prove 
\begin{prop}
(c.f. \prettyref{lem:p-support-of-E-l} below) There is an open subset
$U_{\mathbb{C}}\subset T^{*}X_{\mathbb{C}}$ whose inverse image $L_{U_{\mathbb{C}}}$
in $L_{\mathbb{C}}$ is open and dense; and a $\mathcal{D}_{U_{\mathbb{C}}}$-module
$\mathcal{E}_{\mathbb{C}}$ with constant arithmetic support $L_{U_{\mathbb{C}}}$,
of multiplicity $1$. 
\end{prop}

In fact, the construction of $\mathcal{E}_{\mathbb{C}}$ is not difficult;
for instance, if $\pi:L_{\mathbb{C}}\to X_{\mathbb{C}}$ is dominant,
then $U_{\mathbb{C}}$ is the set over which $\pi$ is finite etale,
and $\mathcal{E}_{\mathbb{C}}=\pi_{*}(e^{f})$ where $e^{f}$ is the
line bundle with flat connection given by $\nabla(1)=df$. In this
case the fact that $\mathcal{E}_{\mathbb{C}}$ has constant arithmetic
support is a direct calculation (c.f. \cite{key-4}, section 2.5). 

Thus if we suppose \prettyref{conj:Torsor} is true, then if $L_{\mathbb{C}}$
admits an $\mathcal{M}_{\mathbb{C}}$ with constant arithmetic support
of multiplicity $1$ equal to $L_{\mathbb{C}}$, the ``difference''
of the two modules $\mathcal{M}_{\mathbb{C}}|_{U_{\mathbb{C}}}$ and
$\mathcal{N}_{\mathbb{C}}$ will be a finite order character of $\pi_{1}(L_{U_{\mathbb{C}}})$.
Such a character will have a finite monodromy around each component
of the divisor $L_{\mathbb{C}}\backslash L_{U_{\mathbb{C}}}$. 

Below, we given an unconditional construction of a $\mathbb{Q}/\mathbb{Z}$-valued
divisor on $L_{\mathbb{C}}$ which (conjecturally)\footnote{This is related to another conjecture of Kontsevich, according to
which all of the $\mathcal{D}$-modules we are discussing should be
of the so-called motivic-exponential type. We'll elaborate in \prettyref{subsec:Invariance}
below} will yield the monodromy around each component of the divisor $L_{\mathbb{C}}\backslash L_{U_{\mathbb{C}}}$.
This divisor is defined in terms of deformations of $\mathcal{O}_{L_{\mathbb{C}}}$
to the world of $\lambda$-connections (this is essentially a rephrasing
of the notion of $\mathcal{D}_{h}$-module discussed above). We call
the resulting output (an element of $\text{Div}_{\mathbb{Q}}(L_{\mathbb{C}})/\text{Div}_{\mathbb{Z}}(L_{\mathbb{C}})$)
the monodromy divisor of $L_{\mathbb{C}}$. Now we can state: 
\begin{conjecture}
\label{conj:Deformation}Suppose $L_{\mathbb{C}}$ is a smooth, exact
Lagrangian. Then there exists a holonomic $\mathcal{D}_{X_{\mathbb{C}}}$-module
$\mathcal{M}_{\mathbb{C}}$ which has constant arithmetic support
equal to $L_{\mathbb{C}}$, with multiplicity $1$ iff there exists
a finite-order character of $\pi_{1}(L_{U_{\mathbb{C}}})$ whose monodromy
around each component of $L_{\mathbb{C}}\backslash L_{U_{\mathbb{C}}}$
is equal to the monodromy divisor of $L_{\mathbb{C}}$. 
\end{conjecture}

Together with \prettyref{conj:Torsor}, this would provide a complete
description of the structure of the set of $\mathcal{D}$-modules
whose arithmetic support is $L_{\mathbb{C}}$. Currently, this conjecture
seems out of reach. 

As mentioned above, the condition that there exists a finite-order
character of $\pi_{1}(L_{U_{\mathbb{C}}})$ whose monodromy around
each component of $L_{\mathbb{C}}\backslash L_{U_{\mathbb{C}}}$ is
equal to the monodromy divisor of $L_{\mathbb{C}}$, is related to
the notion of ``quantization'' in the sense of modules over $\mathcal{D}_{h}$.
In fact, this condition implies the existence of an ``order 2''
deformation of $\mathcal{O}_{L_{\mathbb{C}}}$ (c.f. \prettyref{prop:Characterization-of-ASSUMPTION}
below) with very special properties. If this deformation extends to
a deformation to arbitrary orders, then, by the main theorem of \cite{key-58},
the condition $c_{1}(\mathcal{L})=\frac{1}{2}(c_{1}(\omega_{L/X}))$
would be satisfied. I suspect, though cannot at the moment prove,
that this is the case. 

Now let us state the main theorem of the paper: 
\begin{thm}
\label{thm:1}Let $X_{\mathbb{C}}$ be a smooth affine variety. Let
$L_{\mathbb{C}}\subset T^{*}X_{\mathbb{C}}$ be a smooth, irreducible
Lagrangian subvariety. Suppose $\mathbb{H}_{dR}^{1}(L_{\mathbb{C}})=0$
(in particular, $L_{\mathbb{C}}$ is exact); and suppose that there
exists a finite-order character of $\pi_{1}(L_{U_{\mathbb{C}}})$
whose monodromy around each component of $L_{\mathbb{C}}\backslash L_{U_{\mathbb{C}}}$
is equal to the monodromy divisor of $L_{\mathbb{C}}$. 

Then there exists an irreducible holonomic $\mathcal{D}_{X_{\mathbb{C}}}$-module
$\mathcal{M}_{\mathbb{C}}$ which has constant arithmetic support
equal to $L_{\mathbb{C}}$, with multiplicity $1$. Furthermore, the
set of such $\mathcal{D}_{X_{\mathbb{C}}}$-modules forms a torsor
over $\pi^{*}$, the (finite) group of finite-order characters of
$\pi_{1}(L_{\mathbb{C}})$. Furthermore, each such $\mathcal{D}_{X_{\mathbb{C}}}$-module
$\mathcal{M}'_{\mathbb{C}}$ satisfies $\text{Ext}^{1}(\mathcal{M}'_{\mathbb{C}},\mathcal{M}'_{\mathbb{C}})=0$.
\end{thm}

\begin{rem}
In a previous version (arxiv:1510.05734, v.1 and v.2) of this work,
a different version of this theorem was claimed; namely, I claimed
that $\mathcal{M}_{\mathbb{C}}$ exists under the condition that $H^{0}(\Omega_{\overline{L}}^{2})=0$
for $\overline{L}$ being (any) normal crossings compactification
of $L$. Although this is an interesting condition, the argument presented
there did not work, and I suspect that the result stated there is
not correct. In fact, I discovered the current version of the argument
by analyzing the error in the previous version. Personal and world
events of the past few years have contributed to a delay in getting
the corrected version written up; for this I apologize.
\end{rem}

As explained in \cite{key-4} and \cite{key-3}, this higher dimensional
version has a number of interesting consequences, including a description
of the Picard group of the Weyl algebra. To see why the theorem must
apply here, we note the 
\begin{cor}
Suppose $L_{\mathbb{C}}\subset T^{*}X_{\mathbb{C}}$ is as in the
previous theorem; and suppose $\text{Pic}(L_{\mathbb{C}})=0$. Then
for any $\mathbb{Q}$-divisor ${\displaystyle \sum_{i}\alpha_{i}E_{i}}$
on $L_{\mathbb{C}}$ there is a closed one form $\phi$ on $L_{U_{\mathbb{C}}}$,
with log poles along any compactification of $L_{U_{\mathbb{C}}}$,
whose monodromy about $E_{i}$ is $e^{2\pi i\alpha_{i}}$; in particular,
$\mathcal{M}_{\mathbb{C}}$ as in the previous theorem always exists
in this case. 
\end{cor}

\begin{proof}
The condition is equivalent to asking that $\mathbb{C}[L_{\mathbb{C}}]$
is a UFD; i.e., any prime divisor is principal. If $f_{i}$ is a function
whose zero set is exactly $E_{i}$, with multiplicity $m_{i}$, then
we can set 
\[
\phi=\sum_{i}\frac{\alpha_{i}}{m_{i}}\frac{df_{i}}{f_{i}}
\]
and this is a closed one-form which satisfies the condition.
\end{proof}
Now we can briefly explain the application (some details are provided
in \prettyref{sec:Applications} below). Recall that $\mbox{MAut}(\mathcal{D})$,
where $\mathcal{D}$ is any algebra, is the group of isomorphism classes
of invertible $\mathcal{D}-\mathcal{D}$-bimodules (the group operation
is tensor product). By Morita theory, this isomorphic to the group
of autoequivalences of the abelian category $\text{Mod}(\mathcal{D})$. 

On the other hand, consider the group of algebraic symplectomorphisms
of the symplectic variety $T^{*}\mathbb{A}_{\mathbb{C}}^{m}$. Taking
the graph of such a morphism, $\phi$, yields a Lagrangian subvariety
$L_{\mathbb{C}}^{\phi}$ of $T^{*}\mathbb{A}_{\mathbb{C}}^{m}\times T^{*}\mathbb{A}_{\mathbb{C}}^{m}\tilde{=}T^{*}\mathbb{A}_{\mathbb{C}}^{2m}$;
here, we note that the isomorphism uses the opposite of the usual
symplectic structure on the second factor of $T^{*}\mathbb{A}_{\mathbb{C}}^{m}$.
By construction $L_{\mathbb{C}}^{\phi}\tilde{=}\mathbb{A}_{\mathbb{C}}^{2m}$
and so satisfies the assumption of the previous corollary. Thus we
obtain a unique $\mathcal{D}_{2m}\tilde{=}\mathcal{D}_{m}\otimes\mathcal{D}_{m}^{op}$-module
$\mathcal{M}_{\mathbb{C}}^{\phi}$ corresponding to $L_{\mathbb{C}}^{\phi}$.
One verifies that the bimodule corresponding to the Lagrangian $L_{\mathbb{C}}^{\phi^{-1}}$
is the inverse bimodule to $\mathcal{M}_{\mathbb{C}}^{\phi}$. Combining
this fact with the ideas of \cite{key-25} and \cite{key-3} allows
one to conclude
\begin{thm}
\label{thm:3}There is an isomorphism of groups 
\[
\mbox{Symp}(T^{*}\mathbb{A}_{\mathbb{C}}^{m})\tilde{\to}\mbox{MAut}(\mathcal{D}_{m})
\]
\end{thm}

This proves \cite{key-3}, conjecture $6$. We remark that, in the
case $m=1$, it is known that $\mbox{MAut}(\mathcal{D}_{1})=\mbox{Aut}(\mathcal{D}_{1})$.
In this case the theorem is due to Dixmier in \cite{key-41}. Its
reproof in \cite{key-31} using positive characteristic techniques
is, in a sense, the starting point for everything done here. 

In a related development, in the paper \cite{key-64}, a proof of
the statement that 
\[
\mbox{Aut}(\mathcal{D}_{m})\tilde{\to}\mbox{Symp}(T^{*}\mathbb{A}_{\mathbb{C}}^{m})
\]
was given, also making use of reduction to positive characteristic.
Their technique emphasizes the topological structure of the automorphism
group on the left (considered as an ind-scheme), and, as far as I
can tell, there seems to be almost no overlap between the two methods.
However, combining the two maps does show that every element of $\mbox{MAut}(\mathcal{D}_{m})$
is the bimodule attached to an automorphism of $\mathcal{D}_{m}$,
and in particular that all such bimodules are free of rank $1$ as
left $\mathcal{D}_{m}$-modules. 

\subsection{Key ideas of the paper}

Each chapter of the paper contains an introduction which summarizes
its contents; therefore, instead of repeating that summary here, we'll
just outline some of the key ideas that appear. Let $X_{\mathbb{C}}$
be a smooth irreducible variety and let $L_{\mathbb{C}}\subset T^{*}X_{\mathbb{C}}$
be an irreducible, smooth, exact Lagrangian.

In chapter 2 (\prettyref{sec:The-Monodromy-Divisor}), the key idea
is the construction of a certain $\mathcal{D}_{\lambda}$-module $\mathcal{E}_{\lambda}$
(see the section directly below for our conventions on $\mathcal{D}_{\lambda}$-modules)
on an open subset $U_{\mathbb{C}}\subset X_{\mathbb{C}}$. The associated
module $\mathcal{E}_{\lambda}/\lambda$ (over $\mathcal{O}_{T^{*}U_{\mathbb{C}}}$)
is simply $\mathcal{O}_{L_{U_{\mathbb{C}}}}$. 

Further, $\mathcal{E}_{1}$ solves the quantization problem over $L_{U_{\mathbb{C}}}$-
in particular, after reduction to an algebraically closed field $k$
of large positive characteristic, the $p$-support of $\mathcal{E}_{1}|_{U}$
is equal to $L_{k}^{(1)}|_{U_{k}}$ ). Constructing $\mathcal{E}_{\lambda}$
requires a bit of knowledge about the structure of $L_{\mathbb{C}}$,
and this is contained in \prettyref{thm:Actual-Structure-Theorem}
(see also the introduction to \prettyref{sec:The-Monodromy-Divisor}). 

In general, $\mathcal{E}_{1}$ will not extend to a $\mathcal{D}$-module
on $X_{\mathbb{C}}$ whose arithmetic support is exactly $L_{\mathbb{C}}$.
In order to measure this failure, we first look at the corresponding
failure of the associated infinitesimal\footnote{We use this terminology as it is a module over $R[\lambda]/\lambda^{2}$}
$\lambda$-connection $\mathcal{E}_{\lambda}/\lambda^{2}$ to extend
to a deformation of $\mathcal{O}_{L_{\mathbb{C}}}$. We show in \prettyref{thm:Microlocal-form-of-E}
than this failure is measured by a rational number attached to each
codimension $1$ component of $L_{\mathbb{C}}\backslash L_{U_{\mathbb{C}}}$.
This yields the definition of the monodromy divisor, and sets up the
obstruction class used in the paper. The (a priori, somewhat surprising)
fact that all of the obstruction information is contained in the reduction
mod $\lambda^{2}$ is explained by the $p$-curvature formulas appearing
in \prettyref{prop:Difference-of-two-connections} (c.f. also the
remarks at the end of \prettyref{subsec:Invariance}). Let me also
mention here \prettyref{lem:Extend-mod-p}, which explains why, after
reduction to $k$, the object $\mathcal{E}_{\lambda,k}$ does extend
to a suitable quantization of $L_{k}$ (in other words, there is no
obstruction theory in characteristic $p$).

In chapter 3, we pursue the idea of lifting the module constructed
in \prettyref{lem:Extend-mod-p}. The key idea, as explained in the
introduction, is to compactly the whole situation so as to limit the
space of possible lifts. One must then wrestle with the problem of
putting the correct ``conditions at infinity'' on the resulting
meromorphic connections. The ``toy model'' for this is the case
of Higgs sheaves (i.e., $\lambda$-connections for which $\lambda=0$)
and it is handled in \prettyref{subsec:Higgs-Sheaves}; this leads
to the definition of the key object, a $\theta$-regular Higgs bundle.
The abstract deformation theory\footnote{from Higgs sheaves to $\lambda$-connections over $R[\lambda]/\lambda^{n}$}
for such bundles is developed in \prettyref{subsec:Local-Deformation-theory}
and \prettyref{subsec:-regular-connections-over}; I would like to
emphasize that the proof of the key technical result (\prettyref{lem:Adjust-the-extension})
depends on the precise formulation of the $\theta$-regularity condition
over $R[\lambda]/\lambda^{n}$ (in \prettyref{def:Theta-reg-infinitesimal}).
The existence of deformations, for a suitable collection of $\theta$-regular
Higgs sheaves, is ensured by the vanishing of the obstruction class
constructed in chapter 2 (c.f. \prettyref{def:M-Lambda-2}). Furthermore,
the condition that $H_{dR}^{1}(L_{\mathbb{C}})=0$ is then used to
ensure that deformations, if they exist, are unique (c.f. \prettyref{thm:Infinitesimal-Def}).
Finally, going from formal deformations to actual algebraic $\lambda$-connections
requires an application of Grothendieck's existence theorem in formal
geometry (c.f . the proof of \prettyref{thm:Algebrization}); so this
again uses the fact that we have compactified $X$. The result is
a family of connections over an open subset $U_{\mathbb{C}}\subset X_{\mathbb{C}}$,
indexed by line bundles on $L|_{U_{\mathbb{C}}}$ with flat connection
(whose associated monodromy group is finite) each of which has constant
arithmetic support (equal to $L|_{U_{\mathbb{C}}}$). 

Chapter 4 is of a completely general nature, and can be read independently
of the rest of the paper. It investigates certain natural questions
about connections in mixed characteristic, i.e., over a variety $X_{W_{m}(k)}$
which is smooth over a truncated Witt ring $W_{m}(k)$ of the perfect
field $k$. The key ideas here are twofold- first, we consider the
construction of an object (the so-called $p^{m}$-curvature) attached
to a vector bundle $\mathcal{E}$ with flat connection on $X_{W_{m}(k)}$
whose reduction to $X_{W_{m-1}(k)}$ is locally trivial. Just as the
usual $p$-curvature measures the failure of a vector bundle with
flat connection over $X_{k}$ to be locally trivial, this invariant
measures the failure of $\mathcal{E}$ to be locally trivial (c.f.
\prettyref{cor:p^m-curvature}). The proof makes essential use of
Berthelot's higher differential operators $\mathcal{D}_{X_{k}}^{(i)}$,
and the computation of their center found in \cite{key-57}. The other
key idea of the chapter is that, if one has a smooth Lagrangian $L_{k}\subset T^{*}X_{k}$,
and a splitting bundle $\mathcal{M}_{k}$ of $\mathcal{D}_{X_{k}}$
along $L_{k}^{(1)}$, then the problem of understanding its local
lifts is reducible to the case of understanding lifts of the trivial
connection; in particular, the above-mentioned theory of the $p^{m}$-curvature
actually has an analogue in this case (c.f. \prettyref{prop:p^m-curv-over-L}).
This is obtained via the study of a certain completion of the algebra
of differential operators in mixed characteristic, and automorphisms
thereof, c.f. \prettyref{prop:Iso-All-Levels!}. 

Chapter 5 combines the ideas of the previous three chapters to prove
\prettyref{thm:1}. A few of the results proved along the way may
be surprising- let me mention here the proof of \prettyref{thm:M-sigma-is-theta-reg}
(at the end of \prettyref{subsec:Analysis-of-M-sigma}), in which
it is shown that, under suitable circumstances, an isomorphism of
connections on formal schemes can be extended to an isomorphism of
algebraic connections; the same idea is used again in the proof of
\prettyref{thm:Torsor!}. In addition, let me mention the essential
use of higher-order differential operators and the Riemann-Hilbert
correspondence for unit $F$-crystals (due to Katz, \cite{key-70},
and Emerton-Kisin \cite{key-69}); this ultimately comes from the
use of the $p^{m}$-curvature construction, which produces from two
locally isomorphic connections (over $W(k)$) a line bundle (on $L|_{U_{k}}$)
which is equipped with the action of the Grothendieck differential
operators $\mathcal{D}^{(\infty)}$. Thus the comparison between the
(abelianization of the) etale fundamental group of $L_{U_{\mathbb{C}}}$
and the (abelianization of the) ordinary fundamental group of $L_{U_{\mathbb{C}}}$
lurks behind the proof of \prettyref{thm:Torsor!}; since this group
is finite (under the assumption that $H_{dR}^{1}(L_{\mathbb{C}})=0$),
the theorem is especially easy to state (and use). 

\subsection{\label{subsec:Notations-and-Conventions}Notations and Conventions}

Throughout, $R$ will be an integral domain of finite type over $\mathbb{Z}$,
which is a subalgebra of $\mathbb{C}$. Unless otherwise specified,
undecorated letters such as $X$, $Y$, $Z$ will denote schemes over
$S=\mbox{Spec}(R)$. $k$ will denote an algebraically closed field
of positive characteristic. Decorated letters such as $X_{\mathbb{C}}$
and $X_{k}$ will denote the base change of $X$ to $\mathbb{C}$,
and, relative to a morphism $k\to\mbox{Spec}(R)$, the base change
of $X$ to $k$. As most of our objects get base-changes to different
characteristics, we insist on the subscripts throughout, with the
(relatively rare) exception of various intermediate objects which
have no hope of being base changed. Products of schemes are always
taken over the ground ring. 

We shall, on occasion, make use of the theory of complex analytic
spaces; we abuse notation slightly and use the same letter for a scheme
over $\mathbb{C}$ and its associated analytic space (the context
will make it clear which we mean).

If $\mathcal{M}$ is an object on $X$ (such as a coherent sheaf or
$\mathcal{D}$-module), the letters $\mathcal{M}_{\mathbb{C}}$ and
$\mathcal{M}_{k}$ will denote the base change (to $\mathbb{C}$ and
$k$, respectively). 

If $\varphi:X\to Y$ is a morphism, we will denote by the same letter
the induced morphism $\varphi:X_{\mathbb{C}}\to Y_{\mathbb{C}}$ and
$\varphi:X_{k}\to Y_{k}$; since we always decorate the varieties,
this should (hopefully) cause no confusion. If $Z\to Y$ is a closed
embedding, we shall often denote the base change by $X|_{Z}$.

If $X\to S$ is a smooth map, then we have the relative tangent sheaf
$\mathcal{T}_{X/S}$ and the relative differential forms $\Omega_{X/S}^{1}$;
these will be vector bundles on $X$, and, taking the relative spectrum
over $X$ of the symmetric algebras of these bundles yields the relative
cotangent and tangent bundles, respectively. The enveloping algebra
of the sheaf $\mathcal{T}_{X/S}$ is denoted $\mathcal{D}_{X/S}$,
the sheaf of relative differential operators\footnote{Note in particular that these are differential operators without divided
powers}. 

Over the scheme $X\times\mathbb{A}^{1}$, we have also the sheaf $\mathcal{D}_{\lambda,X/S}$
of $\lambda$-differential operators, satisfying the relation $[\xi,f]=\lambda\cdot\xi(f)$
for any derivation $\xi$ and function $f$; here $\lambda$ is the
coordinate on $\mathbb{A}^{1}$. Modules over $\mathcal{D}_{\lambda,X/S}$
are equivalent to (flat) $\lambda$-connections; i.e., quasicoherent
sheaves equipped with a map 
\[
\nabla:\mathcal{M}\to\mathcal{M}\otimes_{\mathcal{O}_{X}[\lambda]}\Omega_{X/S}^{1}[\lambda]
\]
satisfying $\nabla(fm)=\lambda df\cdot m+f\cdot\nabla(m)$, as well
a $\nabla\circ\nabla=0$. After inverting $\lambda$, these become
equivalent to relative connections over $S\times\mathbb{G}_{m}$.
After setting $\lambda=0$, this becomes the category of Higgs sheaves
on $X$. As $\lambda$ is a central parameter, we can also take the
quotient by $\lambda^{n}$ and obtain $\lambda$-connections over
$R[\lambda]/\lambda^{n}$, and, taking the inverse limit, $\lambda$-connections
over $R[[\lambda]]$. The algebra $\mathcal{D}_{\widehat{\lambda}}:=\lim\mathcal{D}_{\lambda}/\lambda^{n}$
has the additional property that it can be ``micro-localized'' to
a sheaf on $T^{*}X$; more precisely, for each open subset $U$ of
$T^{*}X$ there is a functorially associated algebra $\mathcal{D}_{\widehat{\lambda}}(U)$
so that, if $\mathcal{D}_{\widehat{\lambda}}(T^{*}X)=\mathcal{D}_{\widehat{\lambda}}(X)$
(c.f. \cite{key-94}, chapter 4 for a detailed introduction). In particular,
every coherent sheaf over $\mathcal{D}_{\widehat{\lambda}}$ comes
with a support inside $T^{*}X$; this is simply the support of the
associated coherent sheaf over $T^{*}X$, obtained via the identification
$\mathcal{D}_{\lambda}/\lambda\tilde{=}\pi_{*}(\mathcal{O}_{T^{*}X})$. 

This paper often uses the basic technique of ``spreading out'' objects
over $\mathbb{C}$ to objects defined (and flat) over $R$; after
possibly extending $R$. We do this for finite type quasiprojective
schemes over $\mathbb{C}$ (and morphisms between them), as well as
for coherent $\mathcal{D}$-modules (and $\mathcal{D}_{\lambda}$-modules).
In general, any finite type scheme quasiprojective scheme over $\mathbb{C}$
can be defined over $R$ by taking $R$ to contain all coefficients
of defining equations. Similarly, any locally finitely presented $\mathcal{D}$
or $\mathcal{D}_{\lambda}$-module can be defined over $R$ by taking
$R$ to contain all of the coefficients involved in a given presentation.
The flatness over $R$ is justified by an appeal to Grothendieck's
generic freeness lemma (\cite{key-93}, Theorem 14.4). In particular,
any finite type algebra over $R$ can be made free over $R$ after
localization at a single element. 

Let us explain how this works for $\mathcal{D}$-modules; for convenience\footnote{and because it suffices for the applications needed in this paper},
we will assume $X$ is affine. If $\mathcal{M}_{\mathbb{C}}$ is a
finite-type $\mathcal{D}_{X_{\mathbb{C}}}$-module, then necessarily
$\mathcal{M}_{\mathbb{C}}$ admits a good filtration $F^{\cdot}(\mathcal{M}_{\mathbb{C}})$
(by \cite{key-6}, proposition 2.1.1). In particular, $\text{gr}(\mathcal{M}_{\mathbb{C}})$
is a finitely generated graded module over $T^{*}X_{\mathbb{C}}$.
Such a good filtration can be chosen by choosing $F^{0}(\mathcal{M}_{\mathbb{C}})$
to be any $\mathcal{O}_{X_{\mathbb{C}}}$-submodule of $\mathcal{M}_{\mathbb{C}}$
which contains a set of $\mathcal{D}_{X_{\mathbb{C}}}$-module generators
for $\mathcal{M}_{\mathbb{C}}$; and setting $F^{i}(\mathcal{M}_{\mathbb{C}})=F^{i}(\mathcal{D}_{X_{\mathbb{C}}})\cdot F^{0}(\mathcal{M}_{\mathbb{C}})$.
In particular, a good filtration can be defined over $R$. Applying
again \cite{key-93}, Theorem 14.4 (the last sentence) to $\text{gr}(\mathcal{M})$,
we obtain that, after localizing $R$, $\text{gr}(\mathcal{M})$ has
each component free over $R$. But then $\mathcal{M}$ is free over
$R$ as well. Similar remarks apply to $\mathcal{D}_{\lambda}$-modules. 

Over $\mathbb{C}$, we shall follow the notations and conventions
of \cite{key-6} concerning algebraic $\mathcal{D}$ modules and the
functors between them; the one exception is that, if $j:U_{\mathbb{C}}\subset X_{\mathbb{C}}$
is an open inclusion of smooth complex varieties, and $\mathcal{M}_{U_{\mathbb{C}}}$
is an irreducible holonomic $\mathcal{D}_{U_{\mathbb{C}}}$-module,
we denote by $j_{!*}(\mathcal{M}_{U_{\mathbb{C}}})$ the unique irreducible
holonomic $\mathcal{D}_{X_{\mathbb{C}}}$-module extending $\mathcal{M}_{U_{\mathbb{C}}}$.
We will use the notation ${\displaystyle \int_{\varphi}}$ for the
$\mathcal{D}$-module pushforward over a morphism $\varphi:X\to Y$,
in addition, the same symbol will denote the pushforward of $\mathcal{D}_{\lambda}$-modules
(as defined, e.g., in \cite{key-81}, chapter 2; we note that the
parameter $\lambda$ is there denoted $h$). 

Over a field, the cotangent bundle is equipped with a standard one-form,
we shall denote $\alpha$; the two-form $d\alpha=\omega$ is given
in local coordinates by ${\displaystyle \sum_{i=1}^{n}}dx_{i}\wedge dy_{i}$
(where $n=\mbox{dim}(X)$). 

We shall sometimes encounter the following situation: we have a map
$\varphi:X_{F}\to Y_{F}$ (where $F$ is a field of any characteristic)
between smooth $F$-varieties. This yields a correspondence: 
\[
T^{*}Y_{F}\leftarrow Y_{F}\times_{X_{F}}T^{*}X_{F}\to T^{*}X_{F}
\]
We shall denote these maps $d\varphi:Y_{F}\times_{X_{F}}T^{*}X_{F}\to T^{*}X_{F}$
and $\mbox{pr}_{\varphi}:Y_{F}\times_{X_{F}}T^{*}X_{F}\to T^{*}X_{F}$,
respectively. We remark that if $\varphi$ is finite etale, the map
$d\varphi$ is an isomorphism, and thus we can (and shall) speak of
the induced map $T^{*}Y_{F}\to T^{*}X_{F}$. 

When working over a field of positive characteristic, $k$, we have
that the center of the algebra $\mathcal{D}_{\lambda,X_{k}}$ is isomorphic
to $\mathcal{O}_{T^{*}X_{k}^{(1)}}[\lambda]$. This gives rise the
to the phenomenon of $p$-support (or $p$-curvature), namely, considering
the module $\mathcal{M}$ as a sheaf over $\mathcal{O}_{T^{*}X_{k}^{(1)}}[\lambda]$,
or, the commutative subalgebra $\mathcal{O}_{X_{k}}\otimes_{\mathcal{O}_{X_{k}^{(1)}}}\mathcal{Z}(\mathcal{D}_{\lambda,X_{k}})\tilde{=}\mathcal{O}_{X_{k}\times_{X_{k}^{(1)}}T^{*}X_{k}^{(1)}}[\lambda]\subset\mathcal{D}_{\lambda,X_{k}}$.
In other words, $\mathcal{M}$ also comes equipped with a map $\Psi:\mathcal{M}\to\mathcal{M}\otimes_{\mathcal{O}_{X_{k}[\lambda]}}(F^{*}\Omega_{X_{k}^{(1)}}^{1})[\lambda]$,
here $F:X_{k}\to X_{k}^{(1)}$ is the relative Frobenius.

In the case where $\mathcal{M}$ is a line-bundle with $\lambda$-connection,
the formula is especially nice; namely, if we consider $\nabla(1)=\varphi$
(on $\mathcal{O}_{X_{k}}[\lambda]$), then $\Psi(1)=\varphi^{(p)}-\lambda^{p-1}C(\varphi)$,
where $\varphi^{(p)}$ is the image of $\varphi$ under the semi-linear
isomorphism $\Omega_{X_{k}}^{1}[\lambda]\tilde{\to}\Omega_{X_{k}^{(1)}}^{1}[\lambda]$,
and $C$ is the Cartier map $\Omega_{X_{k}}^{1,cl}[\lambda]\to\Omega_{X_{k}^{(1)}}^{1}[\lambda]$. 

Finally, we specialize to the case where $X=\mathbb{A}^{n}$. In this
case, there is a group $\text{Sp}_{n}$ of linear symplectomorphisms
acting (on $T^{*}\mathbb{A}^{n}$), and they act also on the global
sections of $\mathcal{D}_{\lambda,\mathbb{A}^{n}}$, by identifying
the span of $\{x_{1},\dots,x_{n},\partial_{1},\dots,\partial_{n}\}$
with the vector space $T^{*}\mathbb{A}^{n}$, and letting $\lambda$
go to $\lambda$. Following Belov-Kanel and Kontsevich (\cite{key-3},
proposition 7.2), after passing to $k$ for $k$ of suitably large
characteristic, the resulting automorphism of $\mathcal{Z}(\mathcal{D}_{\lambda,\mathbb{A}_{k}^{n}})=\mathcal{O}_{(T^{*}\mathbb{A}_{k}^{n})^{(1)}}[\lambda]$
is simply given by the action of $\sigma^{(1)}\times\text{id}$ on
$(T^{*}\mathbb{A}_{k}^{n})^{(1)}\times\mathbb{A}_{k}^{1}$.

\subsection{Acknowledgements }

The author would like to thank Maxim Kontsevich and Dima Arinkin for
some helpful email conversations.

\section{\label{sec:The-Monodromy-Divisor}The Monodromy Divisor of an Exact
Lagrangian}

Let $X_{\mathbb{C}}$ be an irreducible, smooth complex variety, and
$L_{\mathbb{C}}\subset T^{*}X_{\mathbb{C}}$ be a smooth irreducible
exact Lagrangian subvariety. In this chapter we define, and prove
the basic properties of, the obstruction class to quantizing $L_{\mathbb{C}}$
which is considered in this paper (the monodromy divisor). In order
to do this, we need a basic structure theorem about exact Lagrangians
inside $T^{*}X_{\mathbb{C}}$. To motivate the formulation, let us
recall the following result about holonomic $\mathcal{D}$-modules:
\begin{thm}
Let $\mathcal{M}_{\mathbb{C}}$ be a holonomic $\mathcal{D}_{X_{\mathbb{C}}}$-module,
supported along the closed subvariety $Z_{\mathbb{C}}\subset X_{\mathbb{C}}$.
Then there is an open subset $U_{\mathbb{C}}\subset X_{\mathbb{C}}$
so that $U_{\mathbb{C}}\cap Z_{\mathbb{C}}$ is dense in the smooth
locus of $Z_{\mathbb{C}}$, and a vector bundle with flat connection
$(\mathcal{V}_{\mathbb{C}},\nabla)$ on $U_{\mathbb{C}}\cap Z_{\mathbb{C}}$
so that 
\[
\mathcal{M}_{\mathbb{C}}|_{U_{\mathbb{C}}}=\int_{\iota}\mathcal{V}_{\mathbb{C}}
\]
where $\iota:U_{\mathbb{C}}\cap Z_{\mathbb{C}}\to U_{\mathbb{C}}$
is the inclusion. 
\end{thm}

For a proof, c.f. \cite{key-6}, lemma 3.1.6 and lemma 3.2.5. 

Now let us consider what this means for the arithmetic support of
$\mathcal{M}_{\mathbb{C}}|_{U_{\mathbb{C}}}$. Take flat $R$-models
for all the varieties and $\mathcal{D}$-modules appearing above,
and consider $R\to k$ where $k$ is a perfect field $k$ of positive
characteristic. As $\mathcal{V}_{\mathbb{C}}$ is a vector bundle,
so too is its reduction $\mathcal{V}_{k}$; this forces the $p$-support
of $\mathcal{V}_{k}$ to be a Lagrangian inside $T^{*}(Z_{k}\cap U_{k})$
which is finite over $Z_{k}\cap U_{k}$. To ease the notation set
$U_{k}'=Z_{k}\cap U_{k}$. Then
\begin{lem}
We may form the smooth morphism 
\[
(d\iota){}^{*}:(U')_{k}^{(1)}\times_{U_{k}^{(1)}}T^{*}U_{k}^{(1)}\to T^{*}(U')_{k}^{(1)}
\]
and if $\mathcal{N}$ is an $\mathcal{D}$-module over $Z'_{k}$,
whose scheme-theoretic $p$-support is $L'_{k}\subset T^{*}(U')_{k}^{(1)}$,
then the module ${\displaystyle \int_{\iota}\mathcal{N}}$ is scheme-theoretically
$p$-supported on the scheme-theoretic inverse image $(d^{*}\iota)^{-1}(L'_{k})$. 
\end{lem}

The proof of this is straightforward (we also state a generalization
in \prettyref{lem:p-support!} below). 

Now suppose that the projection $L_{\mathbb{C}}\to X_{\mathbb{C}}$
has image equal to $Z_{\mathbb{C}}$. From the above results, we see
that, if $\mathcal{M}_{\mathbb{C}}$ is a holonomic $\mathcal{D}$-module
with constant arithmetic support equal to $L_{\mathbb{C}}$, then,
after restricting to $U_{\mathbb{C}}$ for some open $U_{\mathbb{C}}$
we must have $L_{\mathbb{C}}=(d^{*}\iota)^{-1}(L'_{\mathbb{C}})$
for some Lagrangian $L'_{\mathbb{C}}\subset T^{*}(Z'_{\mathbb{C}})$;
here we denote $d^{*}\iota:Z'_{\mathbb{C}}\times_{U_{\mathbb{C}}}T^{*}U_{\mathbb{C}}\to T^{*}Z'_{\mathbb{C}}$
the natural map. In fact, we shall show in \prettyref{thm:Actual-Structure-Theorem}
below that every exact Lagrangian has such a structure, over s suitable
open subset $U_{\mathbb{C}}$. Furthermore, in \prettyref{subsec:The-monodromy-divisor}
we actually show that such a $\mathcal{D}$-module exists on the open
subset $U_{\mathbb{C}}$ (it is an elementary result). Moreover, we
construct a deformation of this module into the world of $\lambda$-connections,
called $\mathcal{E}_{\lambda,\mathbb{C}}$. The $\lambda$-connection
$\mathcal{E}_{\lambda,\mathbb{C}}/\lambda^{2}$ may, therefore, be
considered as an infinitesimal deformation of $\mathcal{O}_{L_{U_{\mathbb{C}}}}$.
This deformation may fail to extend from $L_{U_{\mathbb{C}}}$ to
a all of $L_{\mathbb{C}}$, and we shall see in \prettyref{thm:Microlocal-form-of-E}
that this failure takes the form of a $\mathbb{Q}/\mathbb{Z}$-valued
divisor on $L_{\mathbb{C}}$. The failure of this divisor to be equal
to the monodromy of a suitable flat connection on $L_{U_{\mathbb{C}}}$
is the obstruction to quantization considered in this paper.

Finally, we would like to make a remark on the content of \prettyref{thm:Actual-Structure-Theorem}
from the perspective of symplectic geometry. One has that 
\[
T^{*}Z'_{\mathbb{C}}\leftarrow Z'_{\mathbb{C}}\times_{U_{\mathbb{C}}}T^{*}U_{\mathbb{C}}\to T^{*}U_{\mathbb{C}}
\]
is a Lagrangian correspondence; i.e., the image of $Z'_{\mathbb{C}}\times_{U_{\mathbb{C}}}T^{*}U_{\mathbb{C}}$
in $T^{*}Z'_{\mathbb{C}}\times T^{*}U_{\mathbb{C}}\tilde{=}T^{*}(Z'_{\mathbb{C}}\times U_{\mathbb{C}})$
is Lagrangian. Therefore, one expects to be able to construct Lagrangians
in $T^{*}U_{\mathbb{C}}$ from Lagrangians in $T^{*}Z'_{\mathbb{C}}$
by taking fibre product with this correspondence. That this exactly
what the theorem does in the case of the Lagrangian $L_{\mathbb{C}}$. 

\subsection{\label{subsec:The-Generic-Structure}The Generic Structure of an
exact Lagrangian}

In this subsection, we discuss the structure of an irreducible exact
Lagrangian in a the cotangent bundle of a smooth affine\footnote{We could start with a more general $X_{\mathbb{C}}$, but the main
result of this section concerns only an open subset of $X_{\mathbb{C}}$,
which we may as well take to be affine} variety over $\mathbb{C}$. Let $L_{\mathbb{C}}\subset T^{*}X_{\mathbb{C}}$
be this Lagrangian. If $\pi:L_{\mathbb{C}}\to X_{\mathbb{C}}$ is
the projection, denote by $Z_{\mathbb{C}}$ the closure of the image
of $\pi$. Since we are in characteristic $0$, we have, by \cite{key-20},
Chapter 3, Corollary 10.7, that $\pi$ is generically smooth; i.e.,
we may select an open affine subset $U_{\mathbb{C}}\subset X_{\mathbb{C}}$,
intersecting $Z_{\mathbb{C}}$ in a nonempty way, such that on $U'_{\mathbb{C}}:=U_{\mathbb{C}}\cap Z_{\mathbb{C}}$
the morphism $\pi^{-1}(U'_{\mathbb{C}})=L_{U'_{\mathbb{C}}}\to U'_{\mathbb{C}}$
is smooth. The main goal of this subsection is to prove 
\begin{thm}
\label{thm:Actual-Structure-Theorem}From the closed embedding $\iota:U'_{\mathbb{C}}\to U_{\mathbb{C}}$
one obtains the affine space bundle $d\iota^{*}:U'_{\mathbb{C}}\times_{U_{\mathbb{C}}}T^{*}U_{\mathbb{C}}\to T^{*}U'_{\mathbb{C}}$.
Then, after possibly shrinking the open subset $U_{\mathbb{C}}$,
there is a smooth exact Lagrangian $L'_{\mathbb{C}}\subset T^{*}U'_{\mathbb{C}}$,
which is finite etale over $U'_{\mathbb{C}}$, and such that the following
holds: regarding $U'_{\mathbb{C}}\times_{U_{\mathbb{C}}}T^{*}U_{\mathbb{C}}$
as a closed subscheme of $T^{*}U_{\mathbb{C}}$, we have that $L_{U'_{\mathbb{C}}}=(d\iota^{*})^{-1}(L'_{\mathbb{C}})$
(here $(d\iota^{*})^{-1}$ is the scheme-theoretic inverse image). 
\end{thm}

To prove this, we will to analyze the behavior of $f$ on $L_{\mathbb{C}}$,
with respect to the map $\pi$. We begin with the
\begin{lem}
\label{lem:Kernel-Vanish}Let $l$ be a closed point of $L_{U'_{\mathbb{C}}}$,
and consider the map $d\pi_{l}:T_{l}(L_{\mathbb{C}})\to T_{\pi(l)}(U'_{\mathbb{C}})$.
Then $df$ vanishes on $\mbox{ker}(d\pi_{l})$. Therefore, if we set
$F_{l}=\pi^{-1}(\pi(l))\subset L_{U'_{\mathbb{C}}}$, we have that
$f$ is constant on connected components of $F_{l}$. 
\end{lem}

\begin{proof}
Let $i_{l}:F_{l}\to L_{\mathbb{C}}$ denote the inclusion. By the
smoothness of $\pi$ on $\pi^{-1}(U')$, we have that $F_{l}$ is
a smooth subvariety of $L_{\mathbb{C}}$. Furthermore, 
\[
i_{l}^{*}(df)=i_{l}^{*}(\sum_{i=1}^{n}y_{i}dx_{i})=\sum_{i=1}^{n}y_{i}(l)i_{l}^{*}(dx_{i})=0
\]
where the last equality follows from the fact that $i_{l}^{*}(dx_{i})=0$
for all $i$; which in turn follows from the fact that $F_{l}\subset T_{l}^{*}(X)$,
the cotangent fibre over the point $l$. Since, by smoothness, the
kernel of $d\pi_{l}$ is the image of the natural inclusion $di_{l}:T_{l}(F_{l})\to T_{l}(L)$,
the result of the first sentence follows; the second follows from
$df|_{F_{l}}=0$ since we are in characteristic $0$. 
\end{proof}
Recall that if $f$ is any regular function on a smooth variety $Y_{\mathbb{C}}$
we may associate the subvariety $\Gamma(df)\subset T^{*}Y$, known
as the graph of $df$, which, as a subvariety of $T^{*}Y$ is (locally)
defined by the equations
\[
\{\partial-<df,\partial>|\partial\in\mathcal{T}_{Y}\}
\]
where $<,>$ denotes the pairing between tangent vectors and one-forms.
By definition this is an exact Lagrangian, since on $\Gamma(df)$
we have $df=\theta$. 
\begin{cor}
\label{cor:df-contained-in-product}Consider the natural map $L_{U'_{\mathbb{C}}}\times_{U_{\mathbb{C}}'}T^{*}(U'_{\mathbb{C}})\to T^{*}(L_{U'_{\mathbb{C}}})$.
Since\linebreak{}
$L_{U'_{\mathbb{C}}}\to U'_{\mathbb{C}}$ is smooth, this map is a
closed embedding. Then the subvariety $\Gamma(df)\subset T^{*}(L_{U'_{\mathbb{C}}})$
is contained in the image of $L_{U'_{\mathbb{C}}}\times_{U'_{\mathbb{C}}}T^{*}(U'_{\mathbb{C}})$. 
\end{cor}

\begin{proof}
Over a given point $l\in L_{U'_{\mathbb{C}}}$, the subspace of $T_{l}^{*}(L_{U'_{\mathbb{C}}})$
in the image of $L_{U'_{\mathbb{C}}}\times_{U'_{\mathbb{C}}}T^{*}(U'_{\mathbb{C}})\to T^{*}(L_{U_{\mathbb{C}}'})$
consists of those functionals $\{v:T_{l}(L)\to\mathbb{C}\}$ which
are trivial on $\mbox{ker}(d\pi_{l})$. By the previous proposition,
$df$ is such a functional, at each $l$. The result follows. 
\end{proof}
From this, we may deduce 
\begin{cor}
\label{cor:Construction-of-L'}Consider the natural projection $\mbox{pr}:L_{U'_{\mathbb{C}}}\times_{U_{\mathbb{C}}'}T^{*}(U_{\mathbb{C}}')\to T^{*}(U_{\mathbb{C}}')$.
After possibly replacing $U_{\mathbb{C}}'$ by a smaller open subset
of $Z_{\mathbb{C}}$, we have that the image of $\Gamma(df)$ in $T^{*}(U_{\mathbb{C}}')$
is a smooth Lagrangian subvariety, $L'_{\mathbb{C}}$. The regular
function $f$ induces a regular function $f'$ on $L'_{\mathbb{C}}$,
such that $f'\circ\mbox{pr}=f$, and we have $df'=\theta|_{L'}$,
i.e., $L'$ is exact in $T^{*}U_{\mathbb{C}}'$. The natural projection
$\pi':L_{\mathbb{C}}'\to U_{\mathbb{C}}'$ is finite etale.
\end{cor}

\begin{proof}
Since $\pi:L_{U_{\mathbb{C}}'}\to U_{\mathbb{C}}'$ is surjective,
we may represent any (closed) point in $U'_{\mathbb{C}}$ as $\pi(l)$
for some $l$. As above, denote the fibre over this point by $F_{l}$.
By \prettyref{lem:Kernel-Vanish}, for each $p\in F_{l}$, the linear
functional $df_{p}:T_{p}L\to\mathbb{C}$ induces a functional on $T_{\pi(p)}U'_{\mathbb{C}}=T_{\pi(l)}U_{\mathbb{C}}'$,
i.e., a point in $T_{\pi(l)}^{*}U'$. Since $f$ is constant on connected
components of $F_{l}$, the collection 
\[
\{df_{p}|p\in F_{l}\}
\]
considered as a set of points in $T_{\pi(l)}^{*}U'_{\mathbb{C}}$,
is a finite set. However, by definition, the set of all such points
is the set-theoretic image of (the closed points of) $\Gamma(df)$
in $T^{*}(U'_{\mathbb{C}})$. This shows the quasi-finiteness of the
projection $L'_{\mathbb{C}}\to U'_{\mathbb{C}}$. 

To obtain finer information, we recall that, by generic base change
for etale cohomology\textbf{ }(\cite{key-13}, section 9.3) we have
that, after possibly shrinking $U'_{\mathbb{C}}$ , we may select
an an etale neighborhood $S_{l}$ of $\pi(l)\in U'_{\mathbb{C}}$
so that the variety $L_{S_{l}}:=S_{l}\times_{U'_{\mathbb{C}}}L_{U'_{\mathbb{C}}}$
is a disjoint union ${\displaystyle \bigsqcup_{n=1}^{r}V_{n}}$, where
each $V_{n}$ is a smooth connected variety, which is smooth over
$S_{l}$; and such that the fibers of each map $V_{n}\to S_{l}$ are
connected. The pull-back of $f$ to each $V_{n}$ yields a regular
function $f_{n}$, which is constant on the (connected) fibre of $\mbox{pr}:V_{n}\to S_{l}$.
By \prettyref{lem:regular-function} below, we see that there exists
a regular function $f'_{n}$ on $S_{l}$ such that $f_{n}=f'_{n}\circ\mbox{pr}$. 

Now we consider $V_{n}\times_{S_{l}}T^{*}S_{l}$. Since the map $V_{n}\to S_{l}$
is smooth, we may, as in \prettyref{cor:df-contained-in-product},
regard $V_{n}\times_{S_{l}}T^{*}S_{l}\subset T^{*}V_{n}$; as in that
corollary we have $\Gamma(df'_{n})\subset V_{n}\times_{S_{l}}T^{*}S_{l}$.
By definition the image of the projection map $\Gamma(df_{n})\to T^{*}S_{l}$
is precisely $\Gamma(df'_{n})$, a smooth exact Lagrangian subvariety
of $T^{*}(S_{l})$. Furthermore, since the map $S_{l}\to U'_{\mathbb{C}}$
is etale, it yields a map $T^{*}S_{l}\to T^{*}U'_{\mathbb{C}}$. Then
the variety $L'$ consists of the union, over all $n$, of the images
of the $\Gamma(df'_{n})$ inside $T^{*}U'_{\mathbb{C}}$. 

After possibly discarding a closed subset, we see that $L'_{\mathbb{C}}$
is a smooth, exact Lagrangian, with finite etale projection to $U'_{\mathbb{C}}$,
as claimed. The existence of the global regular function $f'$ now
follows from \prettyref{lem:regular-function} below, and the exactness
statement follows as it is true after passing to an etale cover. 
\end{proof}
In the above, we used the following, well-known
\begin{lem}
\label{lem:regular-function}Suppose that $\phi:Z\to W$ is a smooth
surjective morphism of smooth connected complex varieties. Let $f$
be a regular function on $Z$ which is constant along the fibers of
$\phi$. Then there exists a regular function $f'$ on $W$ such that
$f=f'\circ\phi$. 
\end{lem}

Finally, we give the
\begin{proof}
(of \prettyref{thm:Actual-Structure-Theorem}) We have defined $L'_{\mathbb{C}}$
and shown it is an exact Lagrangian above. Since $d\iota^{*}$ is
a smooth map with connected fibres, it follows that both subschemes
$L'_{\mathbb{C}}$ and $L_{U'_{\mathbb{C}}}$ are reduced and irreducible
of the same dimension. It we show $d^{*}\iota(L_{U'_{\mathbb{C}}})=L'_{\mathbb{C}}$,
then this yields $L_{U'_{\mathbb{C}}}\subset(d^{*}\iota)^{-1}(L'_{\mathbb{C}})$
which immediately implies the result. 

So, let $l'\in L'_{\mathbb{C}}\subset T^{*}U'_{\mathbb{C}}$. By the
construction of $L'_{\mathbb{C}}$ in \prettyref{cor:Construction-of-L'},
there is some $l_{0}\in L_{U_{\mathbb{C}}'}$ so that $l'=(\pi(l_{0}),df_{l_{0}})$
where we regard $df_{l_{0}}:T_{\pi(l)}U'_{\mathbb{C}}\to\mathbb{C}$.
It follows directly that 
\[
(d\iota^{*})^{-1}(l')=\{\phi:T_{\pi(l)}(U)\to\mathbb{C}|\phi|_{T_{\pi(l)}(U')}=df_{l_{0}}\}
\]
Now consider any $l\in L_{U_{\mathbb{C}}'}$. We consider $l$ as
an element of $T_{\pi(l)}^{*}(U_{\mathbb{C}})$. We have $d\iota^{*}(l)=(\pi(l),\phi_{l})$,
where $\phi_{l}\in T_{\pi(l)}^{*}(U_{\mathbb{C}}')$ is obtained by
restricting $l$ to $T_{\pi(l)}(U_{\mathbb{C}}')$. To prove the corollary
we must show that $\phi_{l}=df_{l}$. 

We have the surjective map 
\[
d\pi:T_{l}(T^{*}U_{\mathbb{C}})\to T_{\pi(l)}(U_{\mathbb{C}})
\]
and by definition we have $l\circ d\pi=\theta$, where $\theta$ is
the canonical one-form on $T^{*}U_{\mathbb{C}}$. Furthermore, the
restriction of $\theta$ to $T_{l}(L_{U_{\mathbb{C}}'})\subset T_{l}(T^{*}U_{\mathbb{C}})$
is exactly $df_{l}$, by our choice of $f$. On the other hand, the
subspace $d\pi(T_{l}(L_{U_{\mathbb{C}}'}))\subset T_{\pi(l)}(U_{\mathbb{C}})$
is exactly $T_{\pi(l)}(U'_{\mathbb{C}})$, since $L_{U_{\mathbb{C}}'}\to U_{\mathbb{C}}'$
is a smooth morphism. Therefore, the restriction of $l:T_{\pi(l)}(U_{\mathbb{C}})\to\mathbb{C}$
to $T_{\pi(l)}(U_{\mathbb{C}}')$ agrees with $df_{l}$, and we deduce
that $\phi_{l}=df_{l}$, as required. 
\end{proof}
Let us note that, in the case where $L_{\mathbb{C}}\to X_{\mathbb{C}}$
was already a dominant morphism, we have done nothing. The image of
$\Gamma(df)$ in $T^{*}(U_{\mathbb{C}})$ is precisely $L_{\mathbb{C}}$.
What we have shown, essentially, is that if we allow ourselves to
change the underlying variety, the general case reduces to this one. 

\subsection{\label{subsec:The-monodromy-divisor}The monodromy divisor of an
exact Lagrangian}

We continue with the notation of the previous section. Let us explain
how the above results allow us to construct a natural quantization
of $L_{\mathbb{C}}$ over the open subset $U_{\mathbb{C}}$. 

Recall that we constructed a regular function $f'$ on $L'_{\mathbb{C}}$
such that $df'=\theta'|_{L'}$. Thus we may define the $\lambda$-connection
$(\mathcal{O}_{L_{\mathbb{C}}'}[\lambda],df')$ for which $\nabla(1)=df'$
(c.f. \prettyref{subsec:Notations-and-Conventions} for our conventions
on $\lambda$-connections). Further, since the map $\pi:L_{\mathbb{C}}'\to U_{\mathbb{C}}'$
is finite etale we may define 
\[
\mathcal{E}'_{\lambda,\mathbb{C}}:=\pi_{*}(\mathcal{O}_{L_{\mathbb{C}}'}[\lambda],df')
\]
and 
\[
\mathcal{E}_{\lambda,\mathbb{C}}:=\int_{\iota}\mathcal{E}'_{\lambda,\mathbb{C}}
\]
Note that this $\mathcal{D}_{\lambda}$-module depends only on the
one form $\theta'=df'$, and not on $f'$ itself; the fact that $\theta=df'$
will become crucially useful when we compute in positive characteristic
below, but it is unnecessary to actually define $\mathcal{E}_{\lambda}$. 

By \prettyref{thm:Actual-Structure-Theorem}, we have $\mathcal{E}_{\lambda,\mathbb{C}}/\lambda\tilde{=}\mathcal{O}_{L_{U_{\mathbb{C}}}}$;
here, we are considering $\mathcal{E}_{\lambda,\mathbb{C}}/\lambda$
as a sheaf over $\mathcal{D}_{U,\lambda}/\lambda\tilde{=}\mathcal{O}_{T^{*}U_{\mathbb{C}}}$.
Finally, we let $j_{*}(\mathcal{E}_{\lambda,\mathbb{C}})$ denote
the push-forward of this module to $X_{\mathbb{C}}$- it is a coherent
$\mathcal{D}_{\lambda}$-module. Note that, if we choose a finitely
generated $\mathbb{Z}$-algebra $R$ such that all the objects in
question are defined over $R$, we obtain an $R$-model which we denote
$\mathcal{E}_{\lambda}$.

Our goal in this section is to describe the ``micro-local singularities''
of this module, at least to first order in $\lambda$. Over $U$,
$\mathcal{E}_{\lambda}$ has no singularities. We let $E:=L\backslash L_{U}$.
Shrinking $U$ if necessary, we can arrange that this is a principle
divisor in $L$, and we let $\{E_{i}\}_{i=1}^{s}$ denote the components.
Let $\{x_{i}\}$ denotes the generic point of the component $E_{i,\mathbb{C}}$,
and, with $i$ fixed, we let $z$ denote a local uniformizor in $\mathcal{O}_{\{x_{i}\}}$. 
\begin{lem}
\label{lem:Def-of-=00005Cpsi}The sheaf $\mathcal{O}_{L_{\mathbb{C}}}$
admits a deformation to a $\mathbb{C}[\lambda]/\lambda^{2}$ flat
$\mathcal{D}_{\lambda}/\lambda^{2}$ module, $\tilde{\mathcal{E}}_{\mathbb{C}}$.
Thus we obtain a one-form 
\[
\psi=[\mathcal{\tilde{E}}_{\mathbb{C}}]-[\mathcal{E}_{\lambda,\mathbb{C}}]\in\Gamma(\Omega_{L_{U_{\mathbb{C}}}}^{1})
\]
For each component $E_{i,\mathbb{C}}$, we obtain an element $[\psi_{E_{i,\mathbb{C}}}]\in\Omega_{L_{\mathbb{C}},\{x_{i}\}}^{1}[z^{-1}]/\Omega_{L_{\mathbb{C}},\{x_{i}\}}^{1}$,
which is independent of the choice of $\tilde{\mathcal{E}}_{\mathbb{C}}$. 
\end{lem}

\begin{proof}
As $\mathcal{D}_{\lambda}$ is flat over $\mathbb{C}[\lambda]$, we
have (c.f. \cite{key-92}, tag 08VQ) the set of isomorphism classes
of deformations of $\mathcal{O}_{L_{\mathbb{C}}}$ to a $\lambda$-connection
over $\mathbb{C}[\lambda]/\lambda^{2}$ (i.e., a module over $\mathcal{D}_{\lambda}/\lambda^{2}$)
is a pseudo-torsor over $\text{Ext}_{T^{*}X_{\mathbb{C}}}^{1}(\mathcal{O}_{L_{\mathbb{C}}},\mathcal{O}_{L_{\mathbb{C}}})\tilde{=}\Gamma(\Omega_{L_{\mathbb{C}}}^{1})$,
with the obstruction to lifting given by a class in $\Gamma(\Omega_{L_{\mathbb{C}}}^{2})$;
and the analogous statements are true for $\mathcal{O}_{L_{U_{\mathbb{C}}}}$.
We have just seen that $\mathcal{O}_{L_{U_{\mathbb{C}}}}$ admits
a deformation, namely $\mathcal{E}_{\lambda,\mathbb{C}}/\lambda^{2}$.
Thus the obstruction class in $\Gamma(\Omega_{L_{U_{\mathbb{C}}}}^{2})$
vanishes. On the other hand, as he obstruction class is constructed
via an injective resolution in the category of sheaves of $\mathcal{D}_{\lambda}/\lambda^{2}$-modules
(c.f. \cite{key-92}, tag 08L8) and so the class is compatible with
restriction to $L_{U_{\mathbb{C}}}$. Since $L_{U_{\mathbb{C}}}$
is an open dense affine inside $L_{\mathbb{C}}$, we have that the
map $\Gamma(\Omega_{L_{\mathbb{C}}}^{2})\to\Gamma(\Omega_{L_{U_{\mathbb{C}}}}^{2})$
is injective. Therefore the obstruction class in $\Gamma(\Omega_{L_{\mathbb{C}}}^{2})$
vanishes as well. 

So, let $\tilde{\mathcal{E}}_{\mathbb{C}}$ denote some deformation
of $\mathcal{O}_{L_{\mathbb{C}}}$ to a $\lambda$-connection (over
$\mathbb{C}[\lambda]/\lambda^{2}$). Then the difference $[\mathcal{E}_{\lambda,\mathbb{C}}/\lambda^{2}]-[\tilde{\mathcal{E}}_{\mathbb{C}}|_{U_{\mathbb{C}}}]$
of the isomorphism classes of $\mathcal{E}_{\lambda,\mathbb{C}}/\lambda^{2}$
and $\tilde{\mathcal{E}}_{\mathbb{C}}|_{U_{\mathbb{C}}}$ is an element
of $\Gamma(\Omega_{L_{U_{\mathbb{C}}}}^{1})$, which we denote by
$\psi$. While $\psi$ is not unique, choosing a different deformation
$\tilde{\mathcal{E}}'_{\mathbb{C}}$ amounts to adding a one-form,
which is the restriction of a form in $\Gamma(\Omega_{L_{\mathbb{C}}}^{1})$,
to $\psi$. Thus, the polar term of $\psi$ along each component of
the divisor $E_{\mathbb{C}}$, is independent of the choice of $\tilde{\mathcal{E}}_{\mathbb{C}}$,
and therefore uniquely determined by $\mathcal{E}_{\lambda,\mathbb{C}}$.
\end{proof}
\begin{rem}
As written, this element $[\psi_{E_{i,\mathbb{C}}}]$ depends on the
structure of $\mathcal{E}_{\lambda,\mathbb{C}}$ as a lift of $\mathcal{O}_{L_{U_{\mathbb{C}}}}$.
Altering this structure amounts to multiplying the map $\mathcal{E}_{\lambda,\mathbb{C}}/\lambda\to\mathcal{O}_{L_{U_{\mathbb{C}}}}$
by a unit $\alpha\in\mathcal{O}_{L_{U_{\mathbb{C}}}}^{*}$. This changes
the class $\psi$ by adding ${\displaystyle \frac{d\alpha}{\alpha}}$;
this alters the element $[\psi_{E_{i,\mathbb{C}}}]$ by a term of
the form ${\displaystyle \alpha\frac{dz}{z}}$ for $\alpha\in\mathbb{Z}$.
When we consider the monodromy divisor below, we shall see that such
terms disappear. 
\end{rem}

\begin{example}
\label{exa:Airy}The following example of the phenomenon in the previous
lemma is worth keeping in mind. Suppose $L\subset T^{*}\mathbb{A}^{1}$
is given by $\xi^{2}-x$ (here $x$ is the coordinate on $\mathbb{A}^{1}$
and $\xi$ is the coordinate in the cotangent direction) Then the
map $\pi:L\to\mathbb{A}^{1}$ is a double cover of the line, branched
at the origin. A $\mathcal{D}_{\lambda}/\lambda^{2}$-module $\tilde{\mathcal{E}}_{\mathbb{C}}$
which lifts $\mathcal{O}_{L}$ is given as follows: as a module over
$R[x]=\Gamma(\mathcal{O}_{\mathbb{A}^{1}})$, $\tilde{\mathcal{E}}_{\mathbb{C}}$
is free of rank $2$, with basis $\{e_{1},e_{2}$\}. The action of
$d={\displaystyle \frac{d}{dx}}$ is given by $de_{1}=e_{2}$, $de_{2}=xe_{1}$.
When one sets $\lambda=0$, $d$ becomes $\xi$ and this is simply
the structure of $\mathcal{O}_{L}$ as a module over $R[x,\xi]$.
The module $\tilde{\mathcal{E}}$ is the $\lambda$-version of the
Airy connection. 

On the other hand, the natural meromorphic connection structure on
the bundle $\pi_{*}(\mathcal{O}_{L},df)$ is computed as follows:
we have $df=2z^{2}dz$ where $z$ is the coordinate on $L$ (so that
$z^{2}=x$). Pushing forward to $\mathbb{A}^{1}$ gives a rank two
bundle whose $\lambda$-connection is
\[
\begin{pmatrix}0 & x\\
1 & \frac{\lambda}{2}x^{-1}
\end{pmatrix}dx
\]
in the basis $\{1,z\}$ of $\pi_{*}(\mathcal{O}_{L_{\mathbb{C}}})$.
So, if we compute the difference between $\tilde{\mathcal{E}}_{\mathbb{C}}$
and $\pi_{*}(\mathcal{O}_{L_{\mathbb{C}}},df)$ in the torsor of lifts
of $\pi_{*}(\mathcal{O}_{L_{\mathbb{C}}})$, we obtain the one-form
${\displaystyle \frac{1}{2}\frac{dz}{z}}$ on $L_{U_{\mathbb{C}}}$. 
\end{example}

Our goal is to characterize these polar terms as follows: 
\begin{thm}
\label{thm:Microlocal-form-of-E}Let $E_{i,\mathbb{C}}$ be a component
of the divisor $E_{\mathbb{C}}=L_{\mathbb{C}}\backslash U_{\mathbb{C}}$.
Then we have
\[
[\psi_{E_{i,\mathbb{C}}}]=\alpha\frac{dz}{z}
\]
for some $\alpha\in\mathbb{Q}$; i.e., $[\psi_{E_{i,,\mathbb{C}}}]$
is a one-form with log poles and rational residue along each component
of $E_{\mathbb{C}}$. 
\end{thm}

The proof of this will require a few steps, and the key point is to
reduce mod $p$ and look at the $p$-curvature of a one-form closely
related to $\psi$. Before carrying this out, it is useful to have
a purely local characterization of $\psi_{E_{i,\mathbb{C}}}$, at
any point $x\in E_{i,\mathbb{C}}$. Recall that the sheaf $\mathcal{D}_{\lambda,X_{\mathbb{C}}}/\lambda^{2}$
can be naturally extended to a sheaf on $T^{*}X_{\mathbb{C}}$ (via
the micro-localization); for any point $x\in T^{*}X_{\mathbb{C}}$
the projection $\pi$ induces an isomorphism on stalks 
\[
(\mathcal{D}_{\lambda}/\lambda^{2})_{x}\tilde{\to}(\mathcal{D}_{\lambda}/\lambda^{2})_{\pi(x)}
\]
After completing at $\pi(x)$, a choice of local coordinates induces
an isomorphism 
\[
(\mathcal{\widehat{D}}_{\lambda}/\lambda^{2})_{\pi(x)}\tilde{\to}(\widehat{D}_{n,\lambda}/\lambda^{2})_{\pi(x)}
\]
where $D_{n,\lambda}$ is the $\lambda$-deformed $n$th Weyl algebra;
i.e. the $\mathbb{C}[\lambda]$-algebra generated by $\{z_{1},\dots,z_{n},\partial_{1},\dots,\partial_{n}\}$
with relations $[\partial_{i},z_{j}]=\lambda\delta_{ij}$, $[z_{i},z_{j}]=0=[\partial_{i},\partial_{j}]$;
and the completion is at the ideal $(z_{1},\dots z_{n},\partial_{1},\dots,\partial_{n})$. 

Let $\widehat{T^{*}X}_{\mathbb{C},x}$ denote the formal completion
of $T^{*}X_{\mathbb{C}}$ at the point $x$. It follows that we may
choose an automorphism, $\sigma:(\mathcal{\widehat{D}}_{\lambda}/\lambda^{2})_{x}\to(\mathcal{\widehat{D}}_{\lambda}/\lambda^{2})_{x}$,
whose reduction mod $\lambda$ is a symplectomorphism, $\sigma':\widehat{T^{*}X}_{\mathbb{C},x}\to\widehat{T^{*}X}_{\mathbb{C},x}$,
so that $\sigma'(\widehat{L})_{x}$ projects isomorphically onto $\widehat{X}_{\pi(x)}$. 

Now suppose that $x\in L_{\mathbb{C}}$ is a closed point in the smooth
part of the divisor $E_{\mathbb{C}}=L_{\mathbb{C}}\backslash U_{\mathbb{C}}$.
Let $\{z_{1},\dots,z_{n}\}$ be local coordinates at $x$ so that
$E_{\mathbb{C}}=\{z_{1}=0\}$ near $x$. 

Then we have 
\begin{prop}
\label{prop:alternative-characterization} Let $x\in L_{\mathbb{C}}$
be a point in the smooth part of the divisor $E_{\mathbb{C}}=L_{\mathbb{C}}\backslash U_{\mathbb{C}}$.
Let $\sigma:(\mathcal{\widehat{D}}_{\lambda}/\lambda^{2})_{x}\to(\mathcal{\widehat{D}}_{\lambda}/\lambda^{2})_{x}$
be as above; and denote 
\[
\widehat{j_{*}(\mathcal{E}_{\lambda})}/\lambda^{2}:=(\mathcal{\widehat{D}}_{\lambda}/\lambda^{2})_{x}\otimes_{(\mathcal{D}_{\lambda}/\lambda^{2})_{x}}j_{*}(\mathcal{E}_{\lambda}/\lambda^{2})
\]

Then we have an isomorphism 
\[
\sigma^{*}\widehat{j_{*}(\mathcal{E}_{\lambda})}/\lambda^{2}\tilde{=}\mathbb{C}[[z_{1},\dots,z_{n}]][z_{1}^{-1}][\lambda]/(\lambda^{2})
\]
under which the $(\widehat{\mathcal{D}_{\lambda}}/\lambda^{2})_{(x)}$-module
structure is given by a flat $\lambda$-connection of the form 
\[
\nabla(1)=\lambda\cdot\psi_{P}+\psi_{O}
\]
where $\psi_{P}={\displaystyle \sum_{s=1}^{n}\sum_{j=1}^{m}z_{1}^{-j}(\sum_{I}a_{I}z_{2}^{i_{2}}\cdots z_{n}^{i_{n}}dx_{s})}$,
where $I$ ranges over multi-indices $(i_{2},\dots,i_{n})$, and the
sum is finite; and $\psi_{O}\in\Omega_{\mathbb{C}[[z_{1},\dots,z_{n}]]}^{1}[\lambda]/(\lambda^{2})$.
The residue class $[\psi_{P}]$ in $\Omega_{\mathbb{C}[[z_{1},\dots,z_{n}]][z_{1}^{-1}]}^{1}/\Omega_{\mathbb{C}[[z_{1},\dots,z_{n}]]}^{1}$
is unique, up to the addition of terms of the form $mz_{1}^{-1}dz_{1}$
where $m\in\mathbb{Z}$; and after adding such a term it agrees with
$\psi_{E_{i,\mathbb{C}},x}$; the power series expansion of $\psi_{E_{i},\mathbb{C}}$
at $x$, where $\psi_{E_{i},\mathbb{C}}$ is defined above. In particular,
this term does not depend on the choice of $\sigma$. 
\end{prop}

\begin{proof}
The choice of $\sigma'$ ensures us that 
\[
\sigma'^{*}\widehat{j_{*}(\mathcal{E}_{\lambda})}/\lambda\tilde{=}\mathbb{C}[[z_{1},\dots,z_{n}]][z_{1}^{-1}]
\]
where we have chosen the numbering so that $x_{1}$ defines the completion
at $x$ of the divisor $E$. The isomorphism 
\[
a:\sigma^{*}\widehat{j_{*}(\mathcal{E}_{\lambda})}/\lambda^{2}\tilde{=}\mathbb{C}[[z_{1},\dots,z_{n}]][z_{1}^{-1}][\lambda]/(\lambda^{2})
\]
follows immediately, as $j_{*}(\mathcal{E}_{\lambda})$ is flat over
$\lambda$. The action of the coordinate derivations endows this module
with the structure of a flat $\lambda$-connection, and writing out
$\nabla(1)$ in coordinates yields the expression $\nabla(1)=\lambda\cdot\psi_{P}+\psi_{O}$.
For this choice of $\sigma$, this expression is unique up to changing
the isomorphism $a$, which amounts to multiplication by a unit in
$\mathbb{C}[[z_{1},\dots,z_{n}]][z_{1}^{-1}][\lambda]/(\lambda^{2})$.
Any such unit is of the form $u=z_{1}^{m}(1+q)+\lambda r$ where $m\in\mathbb{Z}$,
$q\in\mathbb{C}[[z_{1},\dots,z_{n}]]$, and $r\in\mathbb{C}[[z_{1},\dots,z_{n}]][z_{1}^{-1}]$.
Multiplying by $u$ changes the connection by adding 
\[
\lambda\frac{du}{u}=\lambda mz_{1}^{-1}dz_{1}+\lambda(1+q)^{-1}dq
\]
which alters the class $[\psi_{P}]\in\Omega_{\mathbb{C}[[z_{1},\dots,z_{n}]][z_{1}^{-1}]}^{1}/\Omega_{\mathbb{C}[[z_{1},\dots,z_{n}]]}^{1}$
by adding $\lambda mz_{1}^{-1}dz_{1}$. 

To relate this to our construction of $\psi_{E_{i,\mathbb{C}}}$;
we let $\tilde{\mathcal{E}}$ denote any deformation of $\pi_{*}(\mathcal{O}_{L})$
to a $\mathcal{D}_{\lambda}/\lambda^{2}$-module. Let $\widehat{\tilde{\mathcal{E}}}$
be the micro-localization of $\tilde{\mathcal{E}}$ at $x$. After
applying the automorphism $\sigma$, we have that $\sigma^{*}(\widehat{\tilde{\mathcal{E}}})$
is a deformation of $\widehat{L}_{\mathbb{C}}$; i.e., $\sigma^{*}(\widehat{\tilde{\mathcal{E}}})\tilde{=}\mathbb{C}[[x_{1},\dots,x_{n}]][\lambda]/(\lambda^{2})$,
equipped with a flat connection $\tilde{\nabla}$. The restriction
of the one-form $\psi=[\mathcal{E}_{\lambda}/\lambda^{2}]-[\tilde{\mathcal{E}}|_{U}]$
to the formal neighborhood of $x$ can therefore be computed as 
\[
\frac{1}{\lambda}(\nabla-\tilde{\nabla})(1)
\]
where $\nabla$ is the flat connection of $\sigma^{*}\widehat{j_{*}(\mathcal{E}_{\lambda})}/\lambda^{2}$.
Thus the polar term $\psi_{P}$ of $\nabla$ is equal to the polar
term of $\psi$, in the formal neighborhood of $x$, which implies
the proposition. 
\end{proof}
In order to prove \prettyref{thm:Microlocal-form-of-E}, we shall
in fact show that the one-form $\psi_{P}$ takes the form ${\displaystyle \alpha\frac{dz_{1}}{z_{1}}}$.
The key to doing this, in turn, is to use the theorem of Katz \cite{key-24},
theorem 13.0, to limit the shape of the singularities via reduction
mod $p$. Recall that the theorem reads
\begin{thm}
(Katz) Let $\psi$ be a closed one-form on a smooth algebraic variety
$Y$ (over $R$). Let $R\to k$, where $k$ is an algebraically closed
field of characteristic $p$, and consider induced form $\psi$ on
$Y_{k}$. Suppose that the $p$-curvature of $\psi$ is $0$ for all
such $k$ of sufficiently large characteristic. Then the flat connection
defined by $\psi$ has regular singularities and quasi-unipotent monodromy. 

Concretely, this means that if $\overline{Y}$ is a smooth compactification
of $Y$, and $D$ is an irreducible divisor in $\overline{Y}\backslash Y$,
then at the generic point of $D$ we have 
\[
\psi=\alpha\frac{dz}{z}+\psi_{0}
\]
where $z$ is a local equation for $D$ and $\psi_{0}$ has no singularizes. 
\end{thm}

In fact, in loc. cit. the theorem is only stated when $Y$ is a curve;
however, the properties of having regular singularities and being
quasi-unipotent are testable via curve restriction (c.f., e.g., \cite{key-6},
section 6.1). 

We are going to show that, at least locally, we can choose a one-form
which satisfied the conditions of Katz's theorem, and whose whose
poles agree with the polar term $\psi_{P}$ of \prettyref{prop:alternative-characterization}. 

The first step is the following straightforward description of the
behavior of the $p$-support under basic maps:
\begin{lem}
\label{lem:p-support!}1) Suppose $\pi:X_{k}\to Y_{k}$ is a finite
etale morphism of smooth schemes over $k$; then we may form the finite
etale morphism 
\[
d^{*}\pi:T^{*}X^{(1)}\times_{k}\mathbb{A}^{1}\to T^{*}Y^{(1)}\times_{k}\mathbb{A}_{k}^{1}
\]
and if $\mathcal{M}$ is an $\mathcal{D}_{\lambda}$-module over $X_{k}$,
whose scheme-theoretic $p$-support is $Z_{k}\subset T^{*}X_{k}^{(1)}\times_{k}\mathbb{A}^{1}$,
then the module ${\displaystyle \int_{\pi}\mathcal{M}=\pi_{*}\mathcal{M}}$
is scheme-theoretically $p$-supported on the scheme-theoretic image
$(d^{*}\pi)_{*}(Z_{k})$. 

2) Suppose $\iota:X_{k}\to Y_{k}$ is a closed embedding. Then we
may form the smooth morphism 
\[
d^{*}\iota:X_{k}^{(1)}\times_{Y_{k}^{(1)}}T^{*}Y_{k}^{(1)}\times_{k}\mathbb{A}_{k}^{1}\to T^{*}X_{k}^{(1)}\times_{k}\mathbb{A}_{k}^{1}
\]
and if $\mathcal{M}$ is an $\mathcal{D}_{\lambda}$-module over $X_{k}$,
whose scheme-theoretic $p$-support is $Z_{k}\subset T^{*}X_{k}^{(1)}\times_{k}\mathbb{A}^{1}$,
then the module ${\displaystyle \int_{\iota}\mathcal{M}}$ is scheme-theoretically
$p$-supported on the scheme-theoretic inverse image $(d^{*}\iota)^{-1}(Z_{k})$. 
\end{lem}

From this, we conclude: 
\begin{lem}
\label{lem:p-support-of-E-l}(c.f. \cite{key-4}, section 2.5) The
scheme-theoretical $p$-support of $\mathcal{E}_{\lambda}$ is equal
to $L_{U_{k}}^{(1)}\times\mathbb{A}_{k}^{1}\subset T^{*}U_{k}^{(1)}\times\mathbb{A}_{k}^{1}$;
in fact, $\mathcal{E}_{\lambda}$ is a vector bundle of rank $p^{n}$
over this subscheme. 
\end{lem}

\begin{proof}
The $p$-support of the $\lambda$-connection $(\mathcal{O}_{L'}[\lambda],df')$
is precisely $\Gamma(df')^{(1)}\times\mathbb{A}_{k}^{1}$. Indeed,
for any local derivation we have 
\[
\partial(1)=\partial(f')\cdot1
\]
and so; if $\partial$ satisfies $\partial^{[p]}=0$, then 
\[
\partial^{p}(1)=(\partial f')^{p}+\lambda^{p-1}\partial^{p-1}(\partial f')=(\partial f')^{p}
\]
so that $\partial^{p}-(\partial f')^{p}$ annihilates $\mathcal{N}_{\lambda}$.
Thus we see directly that $(\mathcal{O}_{L'}[\lambda],df')$ is a
vector bundle on $\Gamma(df)^{(1)}\times\mathbb{A}_{k}^{1}$. Now
apply \prettyref{lem:p-support!}. 
\end{proof}
In fact, there is, essentially, a converse to this result, characterizing
$\lambda$-connections with $p$-curvature equal to $L_{U_{k}}^{(1)}\times\mathbb{A}_{k}^{1}$: 
\begin{lem}
\label{lem:All-bundles-are-pi-push}Let $\mathcal{P}_{\lambda}$ be
a vector bundle with $\lambda$-connection on $U_{k}$, such that
$\mathcal{P}_{\lambda}/\lambda$ is the Higgs bundle corresponding
to $\mathcal{O}_{L_{U_{k}}}$. Suppose also that the scheme-theoretical
$p$-support of $\mathcal{P}_{\lambda}$ is equal to $L_{U_{k}}^{(1)}\times\mathbb{A}_{k}^{1}\subset T^{*}U_{k}^{(1)}\times\mathbb{A}_{k}^{1}$.
Then there exists a $\lambda$-connection $\nabla'$ on $\mathcal{O}_{L_{U_{k}}}[\lambda]$
such that $\mathcal{P}_{\lambda}\tilde{=}\pi_{*}(\mathcal{O}_{L_{U_{k}}}[\lambda],\nabla')$.
The $p$- support of $(\mathcal{O}_{L_{U_{k}}}[\lambda],\nabla')$
is equal to $\Gamma(df)^{(1)}$. 
\end{lem}

\begin{proof}
Since we have a closed immersion $L_{U_{k}}\to T^{*}U_{k}$, we also
have a closed immersion $L_{U_{k}}\times_{U_{k}}L_{U_{k}}\to L_{U_{k}}\times_{U_{k}}T^{*}U_{k}\tilde{\to}T^{*}L_{U_{k}}$
(the last isomorphism follows because $L_{U_{k}}\to U_{k}$ is etale).
It follows that $\Gamma(df)\subset T^{*}L_{U_{k}}$ is a component
of $L_{U_{k}}\times_{U_{k}}L_{U_{k}}$. 

Since $\pi^{*}\mathcal{P}_{\lambda}$ is $p$-supported on the image
of $L_{U_{k}^{(1)}}\times_{U_{k}^{(1)}}L_{U_{k}^{(1)}}\times\mathbb{A}_{k}^{1}$,
the component $\Gamma(df)^{(1)}$ corresponds to a summand, $\mathcal{L}_{\lambda}$,
of $\pi^{*}\mathcal{P}_{\lambda}$. Looking at the stalk of $\mathcal{L}_{\lambda}$
as a sheaf on $T^{*}L_{U_{k}}^{(1)}\times\mathbb{A}_{k}^{1}$, one
sees that it is a vector bundle of rank $p^{n}$. Therefore $\mathcal{L}_{\lambda}$
is a line bundle on $L_{U_{k}}\times\mathbb{A}_{k}^{1}$. Further,
$\mathcal{L}_{\lambda}/\lambda$ is the Higgs bundle on $T^{*}L_{U_{k}}$
corresponding to $\pi^{*}\mathcal{O}_{L_{U_{k}}}|_{\Gamma(df)}=\mathcal{O}_{\Gamma(df)}$.
Since the projection $\Gamma(df)\to L_{U_{k}}$ is an isomorphism,
we see that $\mathcal{L}_{\lambda}/\lambda\tilde{=}\mathcal{O}_{L_{U_{k}}}$,
which implies $\mathcal{L}_{\lambda}\tilde{=}\mathcal{O}_{L_{U_{k}}}[\lambda]$.
It has a connection $\nabla'$. 

Now, by adjunction, the projection morphism $\pi^{*}\mathcal{P}_{\lambda}\to\mathcal{L}_{\lambda}$
corresponds to a morphism $\mathcal{P}_{\lambda}\to\pi_{*}\mathcal{L}_{\lambda}$;
by regarding both sheaves as sheaves on $L_{U_{k}}^{(1)}\times\mathbb{A}_{k}^{1}$
this is easily seen to be an isomorphism; and the result follows. 
\end{proof}
We'll need another general result about line bundles with flat $\lambda$-connection;
which have a prescribed $p$-curvature: 
\begin{prop}
\label{prop:Difference-of-two-connections}Let $Y_{k}$ be a smooth
$k$-scheme, and suppose $(\mathcal{O}_{Y_{k}}[\lambda],\nabla_{1})$
and $(\mathcal{O}_{Y_{k}}[\lambda],\nabla_{2})$ are two flat $\lambda$-connections
on the trivial bundle; suppose $(\nabla_{1}-\nabla_{2})(1)\in\lambda\cdot\Omega_{Y_{k}}^{1}$.
Let $\nabla_{1}^{(p)},\nabla_{2}^{(p)}:\mathcal{O}_{Y_{k}}[\lambda]\to\Omega_{Y_{k}^{(1)}}^{1}[\lambda]$
denote the resulting $p$-curvatures, and suppose $\nabla_{1}^{(p)}=\nabla_{2}^{(p)}$.
Then we have $(\nabla_{1}-\nabla_{2})(1)=\lambda\psi$ where $\psi\in\Omega_{Y_{k}}^{1}$
is a closed one-form with $p$-curvature $0$. In particular, $\nabla_{1}$
and $\nabla_{2}$ are locally isomorphic as $\lambda$-connections.
\end{prop}

\begin{proof}
Write 
\[
(\nabla_{1}-\nabla_{2})(1)=\sum_{i=1}^{m}\lambda^{i}\psi_{i}
\]
Then since both $\nabla_{1}$ and $\nabla_{2}$ are flat we have $0={\displaystyle \sum_{i=1}^{m}\lambda^{i}d\psi_{i}}$
so that each $d\psi_{i}=0$. We shall show $\psi_{j}=0$ for all $j>1$
and $\psi_{1}$ has $p$-curvature $0$. 

By the basic formula for the $p$-curvature of a line bundle, we have
that the $p$-curvature is additive; i.e. the $p$-curvature of the
flat connection 
\[
(\mathcal{O}_{Y_{k}}[\lambda],\nabla_{1})\otimes(\mathcal{O}_{Y_{k}}[\lambda],\nabla_{2})^{*}
\]
 which takes $1$ to $(\nabla_{1}-\nabla_{2})(1)$ is $\nabla_{1}^{(p)}-\nabla_{2}^{(p)}=0$.
Working locally, pick coordinate derivations $\{\partial_{j}\}_{j=1}^{n}$
and write 
\[
\partial_{j}\cdot1=\sum_{i=1}^{m}\lambda^{i}\psi_{i,j}
\]
where $\psi_{i,j}\in\mathcal{O}$; this is the action of $\mathcal{D}_{\lambda}$
on the module associated to the connection $(\mathcal{O}_{Y_{k}}[\lambda],\nabla_{1})\otimes(\mathcal{O}_{Y_{k}}[\lambda],\nabla_{2})^{*}$.
Then 
\[
\partial_{j}^{p}\cdot1=\sum_{i=1}^{m}\lambda^{pi}\psi_{i,j}^{p}+\sum_{i=1}^{m}\lambda^{p-1+i}\partial_{j}^{p-1}(\psi_{i,j})
\]
If this is zero, then, looking at the highest term in $\lambda$,
we see $\psi_{i,j}^{p}=0$ for all $i>1$; which gives $\psi_{i,j}=0$
for all $i>1$. Looking at the $i=1$ term, we also obtain $\psi_{i,j}^{p}+\partial_{j}^{p-1}(\psi_{i,j})=0$.
As this holds for all $j$, we obtain the result about $\psi$. Finally,
to obtain the last statement, note that by Cartier descent there exists,
locally on $Y_{k}$, an invertible function $u$ so that ${\displaystyle \psi=\frac{du}{u}}$.
But then the $\lambda$-connections $\nabla_{1}$ and ${\displaystyle \nabla_{2}=\nabla_{1}+\lambda\frac{du}{u}}$
are isomorphic on the open subset where $u$ is defined.
\end{proof}
Let's proceed to the 
\begin{proof}
(of \prettyref{thm:Microlocal-form-of-E}) To start off, we note that
it suffices to prove the result after taking an embedding $\iota:X_{\mathbb{C}}\to Y_{\mathbb{C}}$,
where $Y_{\mathbb{C}}$ is another smooth affine variety. In that
case, we have the correspondence
\[
T^{*}X_{\mathbb{C}}\xleftarrow{\rho}X_{\mathbb{C}}\times_{Y_{\mathbb{C}}}T^{*}Y_{\mathbb{C}}\xrightarrow{p}T^{*}Y_{\mathbb{C}}
\]
where $\rho$ is smooth and $p$ is a closed immersion. The exact
Lagrangian $L_{\mathbb{C}}$ is replaced by $L_{\mathbb{C}}'=\rho^{-1}(L_{\mathbb{C}})\subset T^{*}Y_{\mathbb{C}}$;
from the construction of $\mathcal{N}_{\lambda}$, one sees directly
that $L'_{\mathbb{C}}$ is an exact Lagrangian and we have that ${\displaystyle \int_{\iota}\mathcal{E}_{\lambda}}$
is exactly the $\mathcal{D}_{\lambda}$-module associated (by the
construction at the beginning of this section) to $L_{\mathbb{C}}'\subset T^{*}Y_{\mathbb{C}}$.
Furthermore, if $\tilde{\mathcal{E}}$ is a flat $\mathbb{C}[\lambda]/\lambda^{2}$-deformation
of $L_{\mathbb{C}}$ on $X_{\mathbb{C}}$, then ${\displaystyle \int_{\iota}\tilde{\mathcal{E}}}$
is a flat $\mathbb{C}[\lambda]/\lambda^{2}$ deformation of $L'_{\mathbb{C}}$.
Therefore, if $\psi\in\Gamma(\Omega_{L_{U_{\mathbb{C}}}}^{1})$ denotes
the difference class $[\tilde{\mathcal{E}}]-[{\displaystyle \mathcal{E}_{\lambda}}/\lambda^{2}]$,
then $\rho^{*}\psi$ will be the difference class $[{\displaystyle \int_{\iota}\tilde{\mathcal{E}}}]-[{\displaystyle \int_{\iota}\mathcal{E}_{\lambda}/\lambda^{2}}]$.
Thus, by the construction given in \prettyref{lem:Def-of-=00005Cpsi},
we see that it suffices to prove the result for $L'_{\mathbb{C}}$. 

Thus we may, and will, assume that $X_{\mathbb{C}}=\mathbb{A}_{\mathbb{C}}^{n}$;
from now on denote $L'_{\mathbb{C}}$ by $L_{\mathbb{C}}$ and replace
${\displaystyle \int_{\iota}\mathcal{E}_{\lambda}}$ by $\mathcal{E}_{\lambda}$.
Fix a component $E_{1,\mathbb{C}}$ of $L_{\mathbb{C}}$, and choose
a point $x\in E_{1,\mathbb{C}}^{sm}$. Let $\sigma$ be a linear symplectomorphisms
of $T^{*}\mathbb{A}_{\mathbb{C}}^{n}$ such that the differential
of the projection map $d\pi:T(\sigma(L_{\mathbb{C}}))_{x}\to\mathbb{A}_{\mathbb{C},\pi(x)}^{n}$
is an isomorphism at $x$. This implies that $\pi:\sigma(L_{\mathbb{C}})\to\mathbb{A}_{\mathbb{C}}^{n}$
is dominant. Thus, applying the constructions of the beginning of
this section to $\sigma(L_{\mathbb{C}})$, we obtain an open subset
$U'_{\mathbb{C}}\subset\mathbb{A}_{\mathbb{C}}^{n}$ and $\mathcal{D}_{\lambda}$-module
$\tilde{\mathcal{E}}_{\lambda}=e^{g}$ on $U'_{\mathbb{C}}$; here
$dg=\theta|_{\sigma(L_{\mathbb{C}})}$, such that $\tilde{\mathcal{E}}_{\lambda}/\lambda=\mathcal{O}_{L_{U'_{\mathbb{C}}}}$.
We shall assume (shrinking $U'_{\mathbb{C}}$ if needed) that $\sigma(E_{1,\mathbb{C}})$
is in the complement of $\sigma(L_{\mathbb{C}})_{U'_{\mathbb{C}}}$. 

Now, the complement of $U'_{\mathbb{C}}$ in $\mathbb{A}_{\mathbb{C}}^{n}$
is a divisor $D'_{\mathbb{C}}$, whose inverse image in $\sigma(L_{\mathbb{C}})$
is a divisor $E'_{\mathbb{C}}$. The singular locus of $E'_{\mathbb{C}}$
is a codimension $2$ subset of $\sigma(L_{\mathbb{C}})$. Therefore,
by replacing $x$ by a nearby point (in the classical topology)\footnote{this is allowable because, during the proof of \prettyref{lem:Def-of-=00005Cpsi},
we have seen that $[\psi_{P}]$ is completion of the the polar part
of an algebraic one-form on $U_{\mathbb{C}}$. So the result only
depends on the behavior of $\psi$ at the generic point of $E_{1,\mathbb{C}}^{sm}$.} on $E_{1,\mathbb{C}}^{sm}$, we can again suppose that $x$ is in
the smooth part of $E'_{\mathbb{C}}$. 

Now we employ the result of \prettyref{prop:alternative-characterization}:
since the projection to $X_{\mathbb{C}}$ is an isomorphism at $x$,
the micro-localization $\widehat{\sigma^{*}\mathcal{E}}_{\lambda}/\lambda^{2}$
can be regarded as a rank one free $\mathbb{C}[[z_{1},\dots z_{n}]][z_{1}^{-1}][\lambda]/(\lambda^{2})$
module with flat $\mathbb{C}[\lambda]/\lambda^{2}$-connection; and
$[\psi_{P}]$ is the polar part of this connection evaluated at $1$
(divided by $\lambda$). In addition, we have that the micro-localization
of $\tilde{\mathcal{E}}_{\lambda}/\lambda^{2}=e^{g}$ at $x$ can
be regarded as a rank one free $\mathbb{C}[[z_{1},\dots z_{n}]][z_{1}^{-1}][\lambda]/(\lambda^{2})$
module with flat $\mathbb{C}[\lambda]/\lambda^{2}$-connection; this
connection has no pole at $x$; since $dg=\theta$ and the projection
to $\mathbb{A}_{\mathbb{C}}^{m}$ is an isomorphism at $x$. Thus
to compute $[\psi_{P}]$ we can look at the difference between the
one-form associated to $\widehat{\sigma^{*}\mathcal{E}}_{\lambda}/\lambda^{2}$
and the one-form associated to $\widehat{\tilde{\mathcal{E}}_{\lambda}}/\lambda^{2}$. 

To compute this difference, in turn, we can employ \prettyref{prop:Difference-of-two-connections}.
Choose a finite type $\mathbb{Z}$-algebra $R$ over which everything
in sight is defined and flat. We have that the one-form $\psi$ on
$L_{U'}$, which is the difference $[\sigma^{*}\mathcal{E}_{\lambda}/\lambda^{2}]-[\tilde{\mathcal{E}}_{\lambda}/\lambda^{2}]$,
is therefore defined over $R$. Now base change to a perfect field
$k$. According to \cite{key-3}, section 7, proposition $2$, the
$p$-support of $\sigma^{*}(j_{*}\mathcal{E}_{\lambda})$ is equal
to $\sigma(L_{U_{k}})$. Restricting to $U'_{k}$, and applying \prettyref{lem:All-bundles-are-pi-push},
we have that $\sigma^{*}\mathcal{E}_{\lambda}|_{U'_{k}}$ is isomorphic
to $\pi_{*}(\mathcal{O}_{L_{U'_{k}}},\nabla')$, where $\nabla'$
is a connection whose $p$-support is $\Gamma(dg)^{(1)}$. On the
other hand, we have, by definition, $\tilde{\mathcal{E}}_{\lambda}=\pi_{*}(\mathcal{O}_{L_{U_{k}'}},\nabla)$
where $\nabla(1)=dg$. So, we may apply \prettyref{prop:Difference-of-two-connections}
to see that $(\nabla'-\nabla)(1)$ is a one-form which flat and has
$p$-curvature $0$. On the other hand, up to adding a term of the
form ${\displaystyle \frac{du}{u}}$, this one form is simply the
reduction of $\psi$ to $k$. As this is true for all fields $k$,
we conclude that $\psi$ is closed (as its reduction to $k$ is for
all $k$) and, by Katz' theorem quoted above, the associated connection
is regular singular with quasi-unipotent monodromy; but this implies
that the polar term of $\psi$ is as desired. 
\end{proof}
So, we may now make the 
\begin{defn}
\label{def:The-monodromy-divior}The monodromy divisor of $L_{\mathbb{C}}\subset T^{*}X_{\mathbb{C}}$
is the image in $\text{Div}_{\mathbb{Q}}(L_{\mathbb{C}})/\text{Div}_{\mathbb{Z}}(L_{\mathbb{C}})$
of ${\displaystyle \sum_{i}\alpha_{i}E_{i}}$ on $L_{\mathbb{C}}$,
where $E_{i}$ are the components of $E_{\mathbb{C}}$ and $\alpha_{i}$
is the rational number appearing in \prettyref{thm:Microlocal-form-of-E};
associated to any (and hence every) point in $E_{\mathbb{C}}^{sm}\cap E_{i}$.
By \prettyref{prop:alternative-characterization} this is a well-defined
element of $\text{Div}_{\mathbb{Q}}(L_{\mathbb{C}})/\text{Div}_{\mathbb{Z}}(L_{\mathbb{C}})$. 
\end{defn}

\begin{rem}
Let us note that further shrinking $U_{\mathbb{C}}$ does not alter
the monodromy divisor. For, if we an additional component to $E$
over which $\mathcal{E}_{\lambda}/\lambda^{2}$ extends to a deformation
of $\mathcal{O}_{L_{\mathbb{C}}}$. Then, by \prettyref{prop:alternative-characterization},
the resulting one-form will have no poles along any such divisor.
So the resulting monodromy divisor is the same as the original one. 
\end{rem}

As explained in the next subsection, this divisor will be the obstruction
to quantization that we consider in this paper. It will be useful
to note the following compatibility: 
\begin{lem}
\label{lem:Pullback-of-monodromy}Suppose $g:X_{\mathbb{C}}\to Y_{\mathbb{C}}$
is a closed immersion of affine varieties. There is a smooth morphism
$d^{*}g:X_{\mathbb{C}}\times_{Y_{\mathbb{C}}}T^{*}Y_{\mathbb{C}}\to T^{*}X_{\mathbb{C}}$,
as well as a closed immersion $\iota:X_{\mathbb{C}}\times_{Y_{\mathbb{C}}}T^{*}Y_{\mathbb{C}}\to T^{*}Y_{\mathbb{C}}$.
If $L_{\mathbb{C}}\subset T^{*}X_{\mathbb{C}}$ is an exact Lagrangian,
then so is $\iota((d^{*}g)^{-1}(L_{\mathbb{C}}))$. The monodromy
divisor of $\iota((d^{*}g)^{-1}(L_{\mathbb{C}}))$ is the pullback
of the monodromy divisor of $L_{\mathbb{C}}$, under the smooth morphism
$d^{*}g$. 
\end{lem}

This follows directly from the construction. Now, let us record the
following fact which will be used in the next chapter: 
\begin{lem}
\label{lem:Extend-mod-p} Let $\mathcal{E}_{\lambda}$ be as above.
Then for all $k$ of characteristic $p>>0$, $\mathcal{E}_{\lambda,k}$
can be extended to a $\mathcal{D}_{\lambda}$-module $\tilde{\mathcal{E}}_{\lambda,k}$
which is scheme-theoretically supported on $L_{k}^{(1)}\times\mathbb{A}_{k}^{1}$,
and such that $\tilde{\mathcal{E}}_{\lambda,k}/\lambda$ is a line
bundle on $L_{k}$. The set of (isomorphism classes of) such $\mathcal{D}_{\lambda}$-modules
is a torsor over $\text{Pic}(L_{k}^{(1)})$. 
\end{lem}

\begin{proof}
As $\mathcal{E}_{\lambda,k}$ is a vector bundle on $L_{U_{k}}^{(1)}\times\mathbb{A}_{k}^{1}$,
there exists an extension $\tilde{\mathcal{E}}_{\lambda,k}$ which
is scheme-theoretically supported on $L_{k}^{(1)}\times\mathbb{A}_{k}^{1}$
and torsion-free; restricting to a codimension $2$ open and pushing
forward as needed, we can suppose that $\tilde{\mathcal{E}}_{\lambda,k}$
is a reflexive sheaf on $L_{k}^{(1)}\times\mathbb{A}_{k}^{1}$. Thus
we have an injection $\tilde{\mathcal{E}}_{\lambda,k}/\lambda\to\mathcal{L}$
for some line bundle $\mathcal{L}$ on $L_{k}$, and this injection
is an isomorphism is codimension $2$. We want to show it is an isomorphism
everywhere. By pushing forward along an inclusion $X_{k}\subset\mathbb{A}_{k}^{m}$
we can suppose $X_{k}=\mathbb{A}_{k}^{m}$ from the start. Choose
some $x\in L_{k}$. After applying a suitable linear symplectomorphism,
we can suppose that the projection $\pi:L_{k}\to\mathbb{A}_{k}^{m}$
is an isomorphism at $x$. By assumption the completion $\widehat{\tilde{\mathcal{E}}}_{\lambda,k}$
of $\tilde{\mathcal{E}}_{\lambda,k}$ at the ideal of $\{x\}\times\mathbb{A}_{k}^{1}\subset L_{k}^{(1)}\times\mathbb{A}_{k}^{1}$
is reflexive. On the other hand, by \cite{key-49}, proposition 1.6,
a sheaf is reflexive iff it is torsion-free and equal to its pushforward
over an open subset whose complement has codimension $2$. Thus we
see that $\widehat{\tilde{\mathcal{E}}}_{\lambda,k}$ can be regarded
as rank $1$ reflexive coherent sheaf with $\lambda$-connection on
$\widehat{\mathcal{O}}_{\mathbb{A}^{m},\pi(x)}[\lambda]$. But any
such sheaf is a line bundle, which shows that $\widehat{\tilde{\mathcal{E}}}_{\lambda,k}/\lambda$
is a line bundle, which implies the claim. 

For the second statement, we first prove that any two such modules,
say $\tilde{\mathcal{E}}_{\lambda,k}$ and $\tilde{\mathcal{E}}'_{\lambda,k}$,
are locally isomorphic. After inverting $\lambda$, the result of
the previous paragraph implies that $\mathcal{D}_{\lambda}[\lambda^{-1}]$
is a split Azumaya algebra on $L_{k}^{(1)}\times(\mathbb{A}_{k}^{1}\backslash\{0\})$
with $\tilde{\mathcal{E}}_{\lambda,k}[\lambda^{-1}]$ and $\tilde{\mathcal{E}}'_{\lambda,k}[\lambda^{-1}]$
are splitting bundles. Therefore they differ by the action of an element
of $\text{Pic}(L_{k}^{(1)}\times(\mathbb{A}_{k}^{1}\backslash\{0\}))\tilde{=}\text{Pic}(L_{k}^{(1)})$;
in other words, there is an open affine covering of $L_{k}^{(1)}$,
$\{U_{i}\}$, such that $\tilde{\mathcal{E}}_{\lambda,k}[\lambda^{-1}]$
and $\tilde{\mathcal{E}}'_{\lambda,k}[\lambda^{-1}]$ are isomorphic
on $\{U_{i}\times(\mathbb{A}^{1}\backslash\{0\}\}$. Restricting attention
to such a $U_{i}$, we may regard both $\tilde{\mathcal{E}}_{\lambda,k}$
and $\tilde{\mathcal{E}}'_{\lambda,k}$ as $\mathcal{D}_{\lambda}$-
lattices inside $\tilde{\mathcal{E}}_{\lambda,k}[\lambda^{-1}]$.
Multiplying these lattices by powers of $\lambda$ as needed, we may
suppose that $\tilde{\mathcal{E}}_{\lambda,k}\subset\mathbf{\tilde{\mathcal{E}}'}_{\lambda,k}$,
with quotient annihilated by a power of $\lambda$, and also that
the induced map $\tilde{\mathcal{E}}_{\lambda,k}/\lambda\to\tilde{\mathcal{E}'}_{\lambda,k}/\lambda$
is nonzero. But this is a nonzero map of line bundles on the integral
scheme $L_{k}$; hence injective. Thus the quotient $\mathbf{\tilde{\mathcal{E}}'}_{\lambda,k}/\mathbf{\tilde{\mathcal{E}}}_{\lambda,k}$
is $\lambda$-torsion-free; since it is also annihilated by a power
of $\lambda$ it must be $0$ and so $\tilde{\mathcal{E}}_{\lambda,k}=\mathbf{\tilde{\mathcal{E}}'}_{\lambda,k}$
as desired. 

Next we claim 
\[
\mathcal{E}nd_{\mathcal{D}_{\lambda}}(\tilde{\mathcal{E}}_{\lambda,k})\tilde{=}\mathcal{O}_{L_{k}^{(1)}}[\lambda]
\]
There is a map $\mathcal{O}_{L_{k}^{(1)}}[\lambda]\to\mathcal{E}nd_{\mathcal{D}_{\lambda}}(\tilde{\mathcal{E}}_{\lambda,k})$
given by the action of the center $\mathcal{Z}(\mathcal{D}_{\lambda})$,
which by assumption factors through $\mathcal{O}_{L_{k}^{(1)}}[\lambda]$.
On the other hand, the induced map $\mathcal{O}_{L_{k}^{(1)}}[\lambda,\lambda^{-1}]\to\mathcal{E}nd_{\mathcal{D}_{\lambda}[\lambda^{-1}]}(\tilde{\mathcal{E}}_{\lambda,k}[\lambda^{-1}])$
is an isomorphism since $\mathcal{D}_{\lambda}[\lambda^{-1}]$ is
Azumaya and $\tilde{\mathcal{E}}_{\lambda,k}[\lambda^{-1}]$ is a
splitting bundle. Thus we have inclusions 
\[
\mathcal{O}_{L_{k}^{(1)}}[\lambda]\subset\mathcal{E}nd_{\mathcal{D}_{\lambda}}(\tilde{\mathcal{E}}_{\lambda,k})\subset\mathcal{O}_{L_{k}^{(1)}}[\lambda,\lambda^{-1}]
\]
Further, since $\tilde{\mathcal{E}}_{\lambda,k}$ is a coherent sheaf
on $L_{k}^{(1)}\times\mathbb{A}_{k}^{1}$, any endomorphism of it
satisfies a monic polynomial with coefficients in $\mathcal{O}_{L_{k}^{(1)}}[\lambda]$.
Since $\mathcal{O}_{L_{k}^{(1)}}[\lambda]$ is integrally closed we
deduce $\mathcal{O}_{L_{k}^{(1)}}[\lambda]=\mathcal{E}nd_{\mathcal{D}_{\lambda}}(\tilde{\mathcal{E}}_{\lambda,k})$. 

In sum, we see that the set of isomorphism classes of $\mathcal{D}_{\lambda}$-modules
which are scheme-theoretically supported on $L_{k}^{(1)}\times\mathbb{A}_{k}^{1}$,
and whose reduction mod $(\lambda)$ is a line bundle on $L_{k}$,
form a torsor over 
\[
H^{1}(\text{Aut}(\tilde{\mathcal{E}}_{\lambda,k}))\tilde{=}\text{Pic}(L_{k}^{(1)}\times\mathbb{A}^{1})\tilde{=}\text{Pic}(L_{k}^{(1)})
\]
as desired.
\end{proof}

\subsection{\label{subsec:Invariance}Invariance}

Now we can introduce our obstruction to the quantizing $L_{\mathbb{C}}$
\begin{defn}
\label{def:Unobstructed!}Let ${\displaystyle \sum_{i}\alpha_{i}E_{i}}$
be the monodromy divisor of $L_{\mathbb{C}}$. We say that $L_{\mathbb{C}}$
is unobstructed if there exists a line bundle with flat connection
$(\mathcal{K},\nabla)$ on $L_{U_{\mathbb{C}}}$, with finite monodromy
group, whose monodromy around the component $E_{i}$ of $E$ is equal
to $-\alpha_{i}$ (inside $\text{Div}_{\mathbb{Q}}(L_{\mathbb{C}})/\text{Div}_{\mathbb{Z}}(L_{\mathbb{C}})$). 
\end{defn}

As mentioned in the introduction, this condition should be the obstruction
to finding a $\mathcal{D}_{X_{\mathbb{C}}}$-module with constant
arithmetic support $L_{\mathbb{C}}$, of multiplicity $1$. The rest
of the paper is devoted to understanding this question. For now, we'll
just prove the following useful characterization:
\begin{prop}
\label{prop:Characterization-of-ASSUMPTION}Let $L_{\mathbb{C}}\subset T^{*}X_{\mathbb{C}}$
be an exact Lagrangian, with monodromy divisor $\sum\alpha_{i}E_{i}$.
Then $L_{\mathbb{C}}$ is unobstructed iff there is an open subset
$V_{\mathbb{C}}\subset L_{\mathbb{C}}$ whose complement has codimension
$2$, a $\mathcal{D}_{\lambda}/\lambda^{2}(V_{\mathbb{C}})$-module
$\mathcal{P}_{\lambda,\mathbb{C}}$, which is flat over $\mathbb{C}[\lambda]/\lambda^{2}$,
such that $\mathcal{P}_{\lambda,\mathbb{C}}/\lambda^{2}$ is a line
bundle on $V_{\mathbb{C}}$, and which satisfies the following: for
any $R$-model $\mathcal{P}_{\lambda}$, and for all $k$ of characteristic
$p>>0$, $\mathcal{P}_{\lambda,k}|_{U_{k}}$ can be lifted to a $k[\lambda]$-flat
$\mathcal{D}_{\lambda,k}$-module $\tilde{\mathcal{P}}_{\lambda,k}$
which is scheme-theoretically supported on $L_{U_{k}}^{(1)}\times\mathbb{A}_{k}^{1}$. 
\end{prop}

\begin{proof}
Suppose $L_{\mathbb{C}}$ is unobstructed. Momentarily shrinking $U_{\mathbb{C}}$
if necessary, there is a closed one-form $\phi$ on $L_{U}$ and so
that the flat connection on $\mathcal{O}_{L_{U_{\mathbb{C}}}}$, defined
by $\nabla(1)=\phi$, which has a finite monodromy group and whose
monodromy, in $\text{Div}_{\mathbb{Q}}(L_{\mathbb{C}})/\text{Div}_{\mathbb{Z}}(L_{\mathbb{C}})$,
is equal to ${\displaystyle -\sum_{i}\alpha_{i}E_{i}}$. So we can
deduce the existence of $\mathcal{P}_{\lambda,\mathbb{C}}$ as follows:
over $U_{\mathbb{C}}$ define $[\mathcal{P}_{\lambda,\mathbb{C}}]=[\mathcal{E}_{\lambda}/\lambda^{2}]-\phi$
(as a deformation of the Higgs sheaf $\pi_{*}(\mathcal{O}_{L_{\mathbb{C}}})$).
Let $x$ be a closed point contained in the smooth part of $E_{\mathbb{C}}$.
By \prettyref{prop:alternative-characterization}, after applying
an automorphism of $\mathcal{\widehat{D}}_{\lambda,x}/\lambda^{2}$,
we have the description 
\[
\sigma^{*}\widehat{j_{*}(\mathcal{P}_{\lambda,\mathbb{C}})}\tilde{=}\mathbb{C}[[z_{1},\dots,z_{n}]][z_{1}^{-1}][\lambda]/(\lambda^{2})
\]
equipped with a flat $\lambda$-connection. By its very definition,
$\mathcal{P}_{\lambda,\mathbb{C}}$ has been chosen so that this connection
has a pole of the form ${\displaystyle \frac{mdz_{1}}{z_{1}}}$ for
some $m\in\mathbb{Z}$. Therefore, multiplying the basis element by
$z_{1}^{-m}$ we obtain a basis over which the $\lambda$-connection
has no singularity; and therefore an extension of the $\lambda$-connection
$\mathcal{P}_{\lambda,\mathbb{C}}$ to a bundle with $\lambda$-connection
over a formal neighborhood of $x$. We can then consider the subsheaf
of $j_{*}(\mathcal{P}_{\lambda,\mathbb{C}})$ consisting of sections
whose image in the completion at $x$ is contained in this extension.
Doing this over each component, we obtain an open subset $V_{\mathbb{C}}\subset L_{\mathbb{C}}$
of codimension $2$, a line bundle $\mathcal{L}$ on $V_{\mathbb{C}}$,
and a $\mathcal{D}_{\lambda}/\lambda^{2}$-module $\mathcal{P}_{\lambda,\mathbb{C}}$
which deforms the Higgs sheaf corresponding to $\mathcal{L}$ on $V_{\mathbb{C}}$.
Since $\phi$ has $p$-curvature $0$ for all $p>>0$, the condition
on the reduction mod $p$ of $\mathcal{P}_{\lambda}|_{U}$ follows
from \prettyref{prop:Difference-of-two-connections}. 

For the converse, choose an open affine $U'\subset U$ on which $\mathcal{P}_{\lambda}/\lambda$
is the trivial bundle. Then $[\mathcal{P}_{\lambda}|_{U'}]-[\mathcal{E}_{\lambda}/\lambda^{2}|_{U'}]$
is a one form $\phi$ on $U'$, and, arguing as in the proof of \prettyref{thm:Microlocal-form-of-E}
we see that $d\phi=0$ and that the reduction mod $p$ of $\phi$
has $p$-curvature $0$ for all $p>>0$. Thus, by \prettyref{thm:CC},
$\phi$ defines a finite order connection on $U'$, and by construction
$\overline{\text{res}}(\phi)=\sum\alpha_{i}E_{i}$, and the result
follows. 
\end{proof}
Here is a very useful consequence: 
\begin{cor}
\label{cor:Invariance} If $L_{\mathbb{C}}\subset T^{*}\mathbb{A}_{\mathbb{C}}^{m}$
is an exact Lagrangian, then the condition that $L_{\mathbb{C}}$
is unobstructed is invariant under linear symplectomorphism; i.e.,
it is satisfied for $L_{\mathbb{C}}$ iff it is satisfies for $\sigma(L_{\mathbb{C}})$,
when $\sigma$ is some linear symplectomorphism of $T^{*}\mathbb{A}_{\mathbb{C}}^{m}$.
Therefore, \prettyref{thm:1} holds iff it holds for $X=\mathbb{A}^{m}$
and in the case where $L_{\mathbb{C}}\to X_{\mathbb{C}}$ is dominant. 
\end{cor}

\begin{proof}
The first part follows immediately from the previous proposition and
the invariance of the $p$-curvature under linear symplectomorphisms
of $T^{*}\mathbb{A}^{m}$. For the second sentence, note that, if
$X_{\mathbb{C}}\subset\mathbb{A}_{\mathbb{C}}^{m}$, then Kashiwara's
equivalence of $\mathcal{D}_{X_{\mathbb{C}}}$-modules and $\mathcal{D}_{\mathbb{A}_{\mathbb{C}}^{m}}$
modules supported on $X_{\mathbb{C}}$ reduces the theorem to $\mathbb{A}_{\mathbb{C}}^{m}$. 
\end{proof}
Finally, to finish out this section, we give a few remarks about the
meaning of the obstruction. According to \cite{key-4}, conjecture
$6$, any $\mathcal{D}$-module with $\text{Ext}^{1}(\mathcal{M},\mathcal{M})=0$
should be of the extended motivic exponential type. Roughly speaking,
this means that $\mathcal{M}$ is obtained from modules of the type
$"e^{f}"$ by the basic $\mathcal{D}$-module operations: pushing
forward, pulling back, tensor product, etc. As $e^{f}$ admits an
extension to a $\lambda$-connection, which has constant $p$-support
in $\lambda$ for all $p>>0$, one can conjecture that the same is
true of $\mathcal{M}$. If this is so, then the ``difference'' between
$\mathcal{M}$ and $\mathcal{E}$ should be visible mod $\lambda^{2}$
(via \prettyref{prop:Difference-of-two-connections}). 

\section{\label{sec:Higher-D}Meromorphic Connections}

Consider an irreducible smooth exact Lagrangian $L\subset T^{*}X$.
In the previous chapter, specifically \prettyref{lem:Extend-mod-p},
we have seen that, for any $k$, we can find some $\mathcal{D}_{X_{k}}$-module
$\mathcal{E}$ which is a splitting bundle for $L_{k}^{(1)}$. In
fact, we found a $\mathcal{D}_{\lambda}$-module $\mathcal{E}_{\lambda}$,
specializing to $\mathcal{E}$ at $\lambda=1$, whose $p$-curvature
is equal to $L_{k}^{(1)}\times\mathbb{A}_{k}^{1}$. We want to lift
$\mathcal{E}$ (in fact, $\mathcal{E}_{\lambda}$ ) to something in
characteristic $0$. There are several problems with doing so, both
of which we shall solve by employing a compactification. 

The first problem is the very large space of possible deformations.
Choose a morphism $R\to W(k)$ lifting $R\to k$. As $\mathcal{E}$
is a splitting bundle for $L_{k}^{(1)}$, deformations of $\mathcal{E}$
over $W_{2}(k)$ are indexed by $\text{Ext}^{1}(\mathcal{E},\mathcal{E})\tilde{=}\Omega_{L_{k}^{(1)}}^{1}$
(for this isomorphism see \prettyref{lem:Ext for a splitting bundle}
below); the same holds for infinitesimal deformations and every level.
Further, even if one is able to single out a particular sequence of
deformations, taking the inverse limit yields an object over the formal
scheme $\mathfrak{X}_{W(k)}$; there is no guarantee that it comes
from an algebraic $\mathcal{D}$-module on $X$. 

Passing to a suitable compactification of $X$ will allow us to both
limit the space of deformations, and then, once one has a suitable
object over $W(k)$, algebrize it via Grothendieck's existence theorem
in formal geometry. We wish to compactify in such a way that the set
of deformations becomes a torsor over $H^{0}(\Omega_{\overline{L}_{k}^{(1)}}^{1})\oplus H^{1}(\mathcal{O}_{\overline{L}_{k}^{(1)}})$,
where $\overline{L}$ is a suitable compactification of $L$. The
assumption that $H_{dR}^{1}(L_{\mathbb{C}})=0$ then forces $H^{0}(\Omega_{\overline{L}_{k}^{(1)}}^{1})\oplus H^{1}(\mathcal{O}_{\overline{L}_{k}^{(1)}})=0$
for $k$ of sufficiently large characteristic; thus allowing us to
construct unique infinitesimal deformations to all orders. There is
also the issue of obstructions to deforming. This is why we need $\lambda$-connections
instead of just ordinary connections, as we will deal with the obstruction
in the world of $\lambda$-connections using precisely the assumption
that the Lagrangian is unobstructed, c.f. \prettyref{def:M-Lambda-2}
below. 

Let us give here the basic set-up for how this is done. Then we will
discuss a bit more the contents of the different sections of the chapter. 

We suppose that the projection $L_{\mathbb{C}}\to X_{\mathbb{C}}$
is dominant. Therefore, there is an open set $U_{\mathbb{C}}\subset X_{\mathbb{C}}$
such that $L|_{U_{\mathbb{C}}}\to U_{\mathbb{C}}$ is finite etale.
Let $D_{\mathbb{C}}$ be the hypersurface $X_{\mathbb{C}}\backslash U_{\mathbb{C}}$.
As we are in characteristic $0$, we may invoke resolution of singularities
to deduce that there is a birational projective map $\varphi:\tilde{X}_{\mathbb{C}}\to X_{\mathbb{C}}$
so that $\varphi^{-1}(D_{\mathbb{C}})$ is a global normal crossings
divisor in $\tilde{X}_{\mathbb{C}}$. The map $\varphi$ is an isomorphism
over $U_{\mathbb{C}}$, and we shall write $U_{\mathbb{C}}\subset\tilde{X}_{\mathbb{C}}$
for $\varphi^{-1}(U_{\mathbb{C}})$. This implies that $T^{*}U_{\mathbb{C}}$
is an open dense subset inside $T^{*}\tilde{X}_{\mathbb{C}}$. In
addition, we can further compactify $\tilde{X}_{\mathbb{C}}$ to $\overline{\tilde{X}}_{\mathbb{C}}$,
such that $\overline{\tilde{X}}_{\mathbb{C}}\backslash\tilde{X}_{\mathbb{C}}$
is also a normal crossings divisor, whose union with $\tilde{D}_{\mathbb{C}}$,
also denoted $\tilde{D}_{\mathbb{C}}$, is normal crossings as well. 

Since $L|_{U_{\mathbb{C}}}\to U_{\mathbb{C}}$ is a finite map, we
may define $\overline{L}_{\mathbb{C}}$ to be the normalization of
$\overline{\tilde{X}}_{\mathbb{C}}$ inside $K(L_{\mathbb{C}})$.
Then $\overline{L}_{\mathbb{C}}\to\tilde{\bar{X}}_{\mathbb{C}}$ is
a finite map, which is simply equal to $L|_{U_{\mathbb{C}}}\to U_{\mathbb{C}}$
upon restriction to $U_{\mathbb{C}}$. 

For later use, we shall need a few technical assumptions, which can
always be arranged after further blowing up. 

\begin{assumption}For each componant $E_{i,\mathbb{C}}$ of $E_{i,\mathbb{C}}$, the valuation associated to $E_{i,\mathbb{C}}$ defined a divisor in $\overline{\tilde{X}}_{\mathbb{C}}$
\end{assumption}

Let $\tilde{E}_{\mathbb{C}}$ denote the complement of $L_{U_{\mathbb{C}}}$
in $\overline{L}_{\mathbb{C}}$. Then $\tilde{E}_{\mathbb{C}}$ is
a divisor in $\overline{L}_{\mathbb{C}}$, and $\pi:\tilde{E}_{\mathbb{C}}\to\tilde{D}_{\mathbb{C}}$
is onto; in particular the birational map $L_{\mathbb{C}}\dasharrow\overline{L}_{\mathbb{C}}$
extends over the generic point of each $E_{i,\mathbb{C}}$, and is
therefore an open immersion on an open subset, whose complement has
codimension $\geq2$ in $L_{\mathbb{C}}$. 
\begin{defn}
\label{def:Belongs-to-L}Let $\tilde{E}_{i,\mathbb{C}}$ be a component
of $\tilde{E}_{\mathbb{C}}$. We say that $\tilde{E}_{i,\mathbb{C}}$
belongs to $L_{\mathbb{C}}$ if the generic point of $\tilde{E}_{i,\mathbb{C}}$
is in the image of the map $L_{\mathbb{C}}\dasharrow\overline{L}_{\mathbb{C}}$. 
\end{defn}

With this in hand, we can now state our second technical assumption: 

\begin{assumption} \label{assumption:Main-Assump} Let $\tilde{E}_i$ be a componant of the divisor $E \subset \overline{L}$, which belongs to $L$. Let $\tilde{D}_i$ be the image of $\tilde{E}_i$ under $\pi$. Then, if $\tilde{D}_j$ is another componant of $\tilde{D}$ which has nonempty intersection with $\tilde{D}_i$, we suppose that there is no componant of $\tilde{E}$ belonging to $L$ whose image under $\pi$ is equal to $\tilde{D}_i$.
\end{assumption}

This can be arranged by blowing up intersections of divisors inside
$\overline{\tilde{X}}_{\mathbb{C}}$. 

Now we may choose a ring $R\subset\mathbb{C}$, finitely generated
over $\mathbb{Z}$, so that $X_{\mathbb{C}}$, $\tilde{X}_{\mathbb{C}}$,
$L_{\mathbb{C}}$,$\overline{L}_{\mathbb{C}}$ and $\tilde{D}_{\mathbb{C}}$
all admit $R$-models. We choose such models now and denote them $X$,
$\tilde{X}$, $L$, $\overline{L}$,$\tilde{D}$,$\tilde{E}$; extending
$R$ as necessary, we may also suppose that each of these models has
an $R$-point. We may also suppose that the function $f$ belongs
to $\Gamma(\mathcal{O}_{L})$. 

Further, since $R$ is a finite type $\mathbb{Z}$-algebra, after
localizing at an integer it becomes smooth over $\mathbb{Z}$. Thus
we may suppose that all schemes appearing here (except $\overline{L}$)
are smooth over $\mathbb{Z}$, and, in particular, are regular schemes.
We may suppose $\tilde{\bar{L}}$ is a normal scheme.

In \prettyref{subsec:Higgs-Sheaves} below, we will construct a family
(indexed by rank one reflexive coherent sheaves on $\overline{L}$)
of meromorphic Higgs bundles on $\overline{\tilde{X}}$, which capture
well the ``behavior at infinity'' of $\overline{L}$. we call them
$\theta$-regular at infinity; or just $\theta$-regular for short.
The definition is chosen so that infinitesimal deformations of such
Higgs bundles (which have the same proscribed behavior at infinity)
are in bijection with $H^{0}(\Omega_{\overline{L}}^{1})\oplus H^{1}(\mathcal{O}_{\overline{L}})$
(which is in particular $0$ in the case of interest to us, c.f. \prettyref{thm:Infinitesimal-Def}
below. 

In \prettyref{subsec:-regular--connections-in-positive-char}, we
deform the $\theta$-regular Higgs sheaves to $\lambda$-connections
in positive characteristic, essentially by the method of \prettyref{lem:Extend-mod-p}.
However, we have to keep track of the behavior at infinity in a way
that parallels the discussion of the previous section; we obtain in
particular \prettyref{def:Theta-Reg-Pos-Char} for a $\theta$-regular
connection in positive characteristic. Such connections have a residue,
and, once one fixes it, the resulting collection is a torsor over
rank one reflexive coherent sheaves on $\overline{L}_{k}^{(1)}$ (c.f.
\prettyref{cor:Pic-with-residue-over-k}). 

In \prettyref{subsec:Infinitesimal--regular-Connectio}, we fully
develop the theory of infinitesimal $\theta$-regular connections,
whose definition over $R[\lambda]/\lambda^{n}$ is modeled after the
constructions of the previous section. Here we show the aforementioned
results about uniqueness of infinitesimal deformations (when $H_{dR}^{1}(L_{\mathbb{C}})=0$).
We also use the unobstructedness condition to construct a family of
$\theta$-regular sheaves over $R[\lambda]/\lambda^{2}$, denoted
$\mathcal{L}\star\overline{\mathcal{M}}_{\lambda,2}$ (indexed by
line bundles with finite order connection on $L_{U}$). After reduction
mod $p$ these connections match up with the ones constructed in the
previous section; this allows us to kill the obstructions to lifting
$\mathcal{L}\star\overline{\mathcal{M}}_{\lambda,2}$ to obtain a
family of $\lambda$-connections over $R[\lambda]/\lambda^{n}$ (this
is carried out in \prettyref{subsec:-regular-connections-over}),
and then a family of $\lambda$-connections over over $R[[\lambda]]$;
denoted by $\mathcal{L}\star\overline{\mathcal{M}}_{\widehat{\lambda}}$
In particular, we can conclude that, after reduction to $k$, each
$\mathcal{L}\star\overline{\mathcal{M}}_{\widehat{\lambda},k}$ is
the $\lambda$-adic completion of a unique $\overline{\mathcal{N}}_{\lambda,k}$
which was constructed in \prettyref{subsec:-regular--connections-in-positive-char}. 

In \prettyref{subsec:Lifting-to}, we show that if $\overline{\mathcal{N}}_{\lambda,k}$
is a meromorphic $\lambda$-connection over $\overline{\tilde{X}}_{k}$,
whose completion along $(\lambda)$ agrees with $\mathcal{L}\star\overline{\mathcal{M}}_{\widehat{\lambda},k}$,
then $\overline{\mathcal{N}}_{\lambda,k}$ admits a unique lifting
to $\overline{\mathcal{N}}_{\lambda,W_{m}(k)}$ for each $m$ (the
fact that the completion along $(\lambda)$ is already known to admit
such a lifting is used crucially). Taking the inverse limit, we obtain
an algebraic vector bundle with $\lambda$-connecton over $\overline{\tilde{X}}_{W(k)}$
(c.f. \prettyref{thm:Algebrization}). 

Finally, in \prettyref{subsec:Arithmetic-Support}, we show that these
constructions ``agree'' for different fields $k$. More precisely,
we have the following: if $\overline{\mathcal{N}}$ is a $\theta$-regular
Higgs sheaf over $\overline{\tilde{X}}$, which is the reduction mod
$\lambda$ of a $\lambda$-connection of the form $\mathcal{L}\star\overline{\mathcal{M}}_{\widehat{\lambda}}$,
then, if we have $R\to W(k)$, we have constructed a $\lambda$-connection
$\overline{\mathcal{N}}_{\lambda,W(k)}$. Then, after possibly extending
$R$, there is a vector bundle with connection on $U$, $\mathcal{N}_{1}$,
so that 
\[
\mathcal{N}_{1}\otimes_{R}W(k)\tilde{=}\overline{\mathcal{N}}_{\lambda,W(k)}
\]
Furthermore, if $R\to W(k')$ is any other map to an algebraically
closed field of positive characteristic, then we also have 
\[
\mathcal{N}_{1}\otimes_{R}W(k')\tilde{=}\overline{\mathcal{N}}_{\lambda,W(k')}
\]
In particular, $\mathcal{N}_{1}$ is constant arithmetic support equal
to $L_{U}$ (c.f. \prettyref{thm:Uniqueness-of-lambda-conns} for
details). 

\subsection{Higgs Sheaves \label{subsec:Higgs-Sheaves}}

We suppose that the projection $L_{\mathbb{C}}\to X_{\mathbb{C}}$
is dominant. Therefore, there is an open set $U_{\mathbb{C}}\subset X_{\mathbb{C}}$
such that $L|_{U_{\mathbb{C}}}\to U_{\mathbb{C}}$ is finite etale. 
\begin{defn}
\label{def:The-Higgs-Bundle-H}Let $\mathcal{L}$ be any line bundle
on $L_{U_{\mathbb{C}}}$; equip it with the Higgs field given by the
one form $df$. Via the BNR correspondence of \cite{key-45}, the
fact that $L_{U_{\mathbb{C}}}\subset T^{*}U_{\mathbb{C}}$ implies
that the bundle $\pi_{*}\mathcal{L}:=\mathcal{N}_{\mathcal{L}}$ is
equipped with a canonical Higgs field, denoted $\Theta$. Said more
concretely, the fact that $L_{U_{\mathbb{C}}}\subset T^{*}U_{\mathbb{C}}$
means that $\pi_{*}(\mathcal{O}_{L_{U_{\mathbb{C}}}})$ inherits the
structure of a module over $\pi_{*}(\mathcal{O}_{T^{*}U_{\mathbb{C}}})$
which is the symmetric algebra of the tangent sheaf. The associated
action of tangent vectors yields the aforementioned Higgs field $\Theta$. 
\end{defn}

Most of the time, the line bundle $\mathcal{L}$ will be implicit,
and we will simply write $\mathcal{N}$. 

Let $\mathcal{L}$ be a line bundle on $L_{U}$. 

We would like to study how the Higgs bundle $\mathcal{N}=\pi_{*}(\mathcal{L})$
degenerates as we approach the divisor $\tilde{D}_{\mathbb{C}}$.
The key to doing this is Abhyankar's Theorem; to state it, let $x\in\tilde{D}$
be any closed point, let \textbf{$N_{x}$ }be an open neighborhood
of $x$ in which $\tilde{D}$ is defined by local coordinates\textbf{
$\{z_{i}\}_{i=1}^{d}$.} Recall that $L_{U}\to U$ is a finite etale
cover of degree $r>0$, and, after localizing $R$ at $r$, it is
a tamely ramified cover at each point of $U$. 

For any integer $l>0$, define the affine scheme $N_{x}^{(l)}$ as
the $\mbox{Spec}$ of\linebreak{}
 $O(N_{x})[y_{1},\dots,y_{d}]/(y_{i}-z_{i}^{l})$. Then we have finite
flat morphism $p_{l}:N_{x}^{(l)}\to N_{x}$. 
\begin{thm}
\label{thm:Abhyankar}(Abhyankar's Theorem) After possibly shrinking
$N_{x}$, there exists $l>0$ so that the base change of $L_{U}\to N_{x}\cap U_{\mathbb{C}}$
over the map $p_{l}:N_{x}^{(l)}\to N_{x}$ extends to a finite etale
cover $\pi_{l}:\tilde{L}^{(l)}\to N_{x}^{(l)}$. 
\end{thm}

The proof of this theorem may be found in {[}SGA1{]}, chapter 13,
appendix 1. Let us consider the consequences for $\mathcal{N}$. If
$j:U\to\tilde{\overline{X}}$ denotes the inclusion, we can consider
$j_{*}\mathcal{N}$ as a Higgs sheaf on $\tilde{\overline{X}}$.

Given the map $p_{l}:N_{x}^{(l)}\to N_{x}$ as in \prettyref{thm:Abhyankar},
we may further consider the pullback $p_{l}^{*}(j_{*}\mathcal{N})$
as a Higgs sheaf on $N_{x}^{(l)}$; we note that this sheaf is naturally
a sheaf of modules over the sheaf of rings $\mathcal{O}_{\tilde{\overline{X}_{\mathbb{C}}}}[y_{1}^{-1},\dots,y_{m}^{-1}]$,
where, as above $\{y_{1},\dots,y_{m}\}$ are defining equations of
the components of $\tilde{D}$ near $x_{i}$. 

By construction, the sheaf $p_{l}^{*}(j_{*}\mathcal{N})$ admits an
action of the group $G_{l}$. Then we have the 
\begin{prop}
\label{prop:Theta-i's-higher-dim} After passing to an etale neighborhood
$V_{x}$ of $x$, for any point $x'\in V_{x}$, the Higgs sheaf $p_{l}^{*}(j_{*}\mathcal{N})$
decomposes, as a Higgs sheaf, as a direct sum of rank one free modules
$\mathcal{O}_{x',V_{x}}[y_{1}^{-1},\dots,y_{m}^{-1}]$, equipped with
pairwise distinct Higgs fields. The group $G_{l}$ acts by permuting
the summands. 
\end{prop}

\begin{proof}
We have the commutative diagram $$ \begin{CD} L^{(l)}_{U} @> p_{l}^{'} >> L_{U}  \\ \pi_{l} @VVV \pi @VVV  \\\  N^{(l)}  @> p_l >> N \end{CD} $$

By \prettyref{thm:Abhyankar}, the etale map $\pi_{l}:L_{U}^{(l)}\to N$
extends to a $G_{l}$-equivariant etale map $\pi_{l}:L^{(l)}\to N^{(l)}$.
After passing to a further $G_{l}$-equivariant etale cover $V_{x}^{(l)}$
of $N^{(l)}$, the cover $\pi_{l}$ splits ($G_{l}$-equivariantly).
On the other hand, the $G_{l}$-invariant subalgebra of the Henselization
of $N^{(l)}$ at $x$ is the Henselization of $N$ at $x$. Therefore
we may suppose that $V_{x}^{(l)}=V_{x}\times_{N}N^{(l)}$ for some
etale neighborhood $V_{x}$ of $x$. By construction we have

\begin{equation}
L^{(l)}\times_{N^{(l)}}V_{x}^{(l)}\tilde{=}\bigsqcup_{i=1}^{r}V_{x}^{(l)}\label{eq:split}
\end{equation}
and the induced action of $G_{l}$ permutes the components. 

Now we examine the Higgs sheaves. Pulling everything back to $V_{x}$,
we have the isomorphisms
\[
p_{l}^{*}(j_{*}\mathcal{N}_{V_{x}})=p_{l}^{*}((\pi)_{*}\mathcal{L}|_{U_{x}^{(l)}})\tilde{=}(\pi_{l})_{*}(p_{l}')^{*}\mathcal{L}|_{U_{x}^{(l)}}
\]
where $U_{x}^{(l)}$ is the inverse image of $U$ in $V_{x}^{(l)}$.
By definition, $(p_{l}')^{*}\mathcal{L}|_{U_{x}^{(l)}}$ is a rank
one Higgs bundle on $L_{U_{x}^{(l)}}$. After pushing forward to $V_{x}^{(l)}$,
we obtain a rank one Higgs bundle $j_{*}(p_{l}')^{*}\mathcal{L}|_{U_{x}^{(l)}}$
over $\mathcal{O}_{V_{x}^{(l)}}[y_{1}^{-1},\dots,y_{d}^{-1}]$. On
the other hand, as noted above, the sheaf $p_{l}^{*}(j_{*}\mathcal{N}_{V_{x}})$
is obtained from $j_{*}(p_{l}')^{*}\mathcal{L}|_{U_{x}^{(l)}}$ by
applying the pushforward $\pi_{l}$. Thus \prettyref{eq:split} now
implies that the bundle is a direct sum of rank one Higgs bundles. 

To see that the corresponding Higgs fields are pairwise distinct,
note that the map $V_{x}^{(l)}\to U$ is etale. Therefore the closed
immersion
\[
L_{U}\subset T^{*}U
\]
pulls back to a closed immersion
\[
L^{(l)}\times_{N^{(l)}}V_{x}^{(l)}\subset T^{*}V_{x}^{(l)}
\]
so the result follows from \prettyref{eq:split}. 
\end{proof}
Given this, we make the
\begin{defn}
\label{def:omega-reg-prime-higher-dim}Let $\overline{\mathcal{N}}'$
be an extension of the Higgs bundle $p_{l}^{*}\mathcal{N}$ to a $G_{l}$-equivariant
meromorphic Higgs bundle on $N^{(l)}$. Let $\{e_{i}\}_{i=1}^{r}$
denote a $G_{l}$-equivariant basis of $\mathcal{N}'_{V_{x}^{(l)}}[y_{1}^{-1},\dots,y_{d}^{-1}]$
on which the Higgs fields acts diagonally (which exists by \prettyref{prop:Theta-i's-higher-dim}).
Then $\overline{\mathcal{N}}'$ is said to be $\theta$-regular at
infinity (at $x$) if $\overline{\mathcal{N}}'_{V_{x}^{(l)}}$ has
a basis $\{\overline{e}_{i}\}$ whose localization at $\{y_{1},\dots,y_{d}\}$
is $\{e_{i}\}$. 

Fixing such a basis for each closed point, we let $[\Theta]_{x}$
(or simply $[\Theta]$ if the point $x$ is understood) denote the
matrix of the Higgs field of $\overline{\mathcal{N}}'_{x}$ in this
basis. It follows immediately (from the fact that the $\overline{e}_{i}$
are eigenvectors for the Higgs field) that this matrix is diagonal;
let $\theta_{i}\in\Omega_{\widehat{N}_{x}^{(l)}}^{1}$ denote the
$(i,i)$ entry of $[\Theta]$. 
\end{defn}

To motivate this definition, we give the 
\begin{example}
\label{exa:Push} As in the proof of \prettyref{prop:Theta-i's-higher-dim},
we let $\pi_{l}:L^{(l)}\to N_{x}^{(l)}$ be the etale cover from \prettyref{thm:Abhyankar}.
Then, for any line bundle $\mathcal{L}$ on $L^{(l)}$, we have the
meromorphic Higgs sheaf $(\mathcal{L},df)$. By pushing forward under
the etale map $\pi_{l}$ we see that $(\pi_{l})_{*}\mathcal{L}$ inherits
the structure of a meromorphic Higgs bundle. If we set $\overline{\mathcal{N}}'=(\pi_{l})_{*}\mathcal{L}$,
we obtain (by the proof of \prettyref{prop:Theta-i's-higher-dim})
that $\overline{\mathcal{N}}'$ is $\theta$-regular at infinity. 

From the construction, we see that the eigenvalues of the Higgs field
on this $(\pi_{l})_{*}\mathcal{L}$ correspond to the distinct components
of the completion of $L^{(l)}$ along the fibre $\pi_{l}^{-1}(x)$. 
\end{example}

Next, we consider the descent of such bundles to $\overline{\tilde{X}}$.
Since $V_{x}^{(l)}\to V_{x}$ is finite flat, each $G_{l}$-isotypic
componant of $\overline{\mathcal{N}}'$ is a vector bundle over $V_{x}$.
So we can make the
\begin{defn}
\label{def:omega-reg-higher-dim}Let $\overline{\mathcal{N}}$ be
an extension of $\mathcal{N}$ to a meromorphic Higgs bundle on $\overline{\tilde{X}}$.
Then $\overline{\mathcal{N}}$ is said to be $\theta$-regular at
infinity if, for each point $x\in\overline{\tilde{X}}\backslash U$,
and any etale neighborhood $V_{x}$ as in \prettyref{thm:Abhyankar},
there is a $G_{l}$-equivariant meromorphic Higgs bundle $\overline{\mathcal{N}}'$
on $V_{x}^{(l)}$ which is $\theta$-regular at infinity, 
\[
\overline{\mathcal{N}}=(\overline{\mathcal{N}}')^{G_{l}}
\]
\end{defn}

Let $\text{Pic}^{r}(\overline{L})$ denote the group of isomorphism
classes of rank $1$ reflexive coherent sheaves on $\overline{L}$-
these are equivalent to line bundles on the codimension $2$ smooth
open subset of $\overline{L}$. Then we have
\begin{prop}
\label{prop:push-L-is-theta-reg}Let $\mathcal{L}\in\text{Pic}^{r}(\overline{L})$.
Then $(\mathcal{L},df)$ is a meromorphic Higgs sheaf, and $\pi_{*}(\mathcal{L},df)$
is a Higgs bundle on $\overline{\tilde{X}}$, which is $\theta$-regular
at infinity. 
\end{prop}

\begin{proof}
The result is clear over $U$. Let $x\in\overline{\tilde{X}}\backslash U$
and let $N_{x}^{(l)}\to N_{x}$ be as above. Then since $\mathcal{L}$
is a line bundle in codimension $2$ on $\overline{\tilde{L}}$, we
can pull it back (in codimension $2$) under $p_{l}:L^{(l)}\to\overline{\tilde{L}}|_{N^{(l)}}$;
the resulting object extends uniquely to a $G_{l}$-equivariant line
bundle $\mathcal{L}'$ on $L^{(l)}$ (by \cite{key-49}, proposition
1.8; since $L^{(l)}$ is a regular scheme). We have $(\mathcal{L}')^{G_{l}}=\mathcal{L}$.
By \prettyref{exa:Push} $(\pi_{l})_{*}\mathcal{L}'$ is $\theta$-regular
at infinity. Therefore so is 
\[
((\pi_{l})_{*}\mathcal{L}')^{G_{l}}=\pi_{*}(\mathcal{L})
\]
as claimed. 
\end{proof}
Our next goal is the converse to this theorem; which can be considered
a BNR correspondence with singularities:
\begin{thm}
\label{thm:Pic} There is a bijection between $\text{Pic}^{r}(\overline{L})$
and the set of Higgs bundles which are $\theta$-regular at infinity
on $\overline{\tilde{X}}$; it is given by $\mathcal{L}\to\pi_{*}\mathcal{L}$. 
\end{thm}

In order to prove this, we first show:
\begin{lem}
\label{lem:Local-Endomorphisms}Let $\overline{\mathcal{N}}$ be a
Higgs sheaf on $\overline{\tilde{X}}$ which is $\theta$-regular
at infinity; and let $\mathcal{E}nd(\overline{\mathcal{N}})$ denote
the sheaf of endomorphisms of $\overline{\mathcal{N}}$ which respect
the Higgs field. Then we have 
\[
\mathcal{E}nd(\overline{\mathcal{N}})\tilde{=}\pi_{*}\mathcal{O}_{\overline{L}}
\]
as sheaves of algebras on $\overline{\tilde{X}}$. 
\end{lem}

\begin{proof}
Over $U$ we have that $\mathcal{N}=\pi_{*}(\mathcal{L})$ where $\mathcal{L}$
is a line bundle on $L_{U}$. Since $L_{U}\to U$ is etale, the natural
action of $\mathcal{O}_{L_{U}}$ on $\mathcal{L}$ induces a map $\pi_{*}(\mathcal{O}_{L_{U}})\to\mathcal{E}nd(\mathcal{N})$.
As this map is an isomorphic etale locally (in particular after pullback
to $L_{U}$), it is already an isomorphism. 

As $\overline{\mathcal{N}}$ is a vector bundle, restriction of endomorphisms
yields an inclusion $\mathcal{E}nd(\overline{\mathcal{N}})\subset\pi_{*}j_{*}\mathcal{O}_{\overline{L}_{U}}$;
therefore $\mathcal{E}nd(\overline{\mathcal{N}})$ is commutative
sheaf of domains, whose function field is everywhere equal to $K(\overline{L})$.
Since $\mathcal{E}nd(\overline{\mathcal{N}})$ is a coherent sheaf
over $\overline{\tilde{X}}$, it must be contained in the normalization
of $\overline{\tilde{X}}$ in $K(\overline{L})$, which is $\pi_{*}\mathcal{O}_{\overline{L}}$.
We must therefore show that the resulting map $\mathcal{E}nd(\overline{\mathcal{N}})\to\pi_{*}\mathcal{O}_{\overline{L}}$
is surjective.

Choose a point $x\in\overline{\tilde{X}}\backslash U$, a neighborhood
$N_{x}$ of $x$ (as in \prettyref{thm:Abhyankar}) and a finite flat
cover $N_{x}^{(l)}\to N_{x}$. The $\theta$-regularity condition
implies that there is $\overline{\mathcal{N}}'$ on $N^{(l)}$ so
that $(\overline{\mathcal{N}}')^{G_{l}}=\overline{\mathcal{N}}$.
After pulling back to $V_{x}^{(l)}$ we have 
\[
\overline{\mathcal{N}}_{V_{x}^{(l)}}'=\bigoplus_{i=1}^{r}\mathcal{O}_{V_{x}^{(l)}}\cdot e_{i}
\]
Since the Higgs field acts on the $\{e_{i}\}$ with distinct eigenvalues,
the endomorphisms of this Higgs sheaf are ${\displaystyle \bigoplus_{i=1}^{r}\mathcal{O}_{V_{x}^{(l)}}\tilde{=}\mathcal{O}_{L^{(l)}\times_{N^{(l)}}V_{x}^{(l)}}}$
(here we used \prettyref{eq:split}). 

Now, if $s$ is any local section of $\pi_{*}\mathcal{O}_{\overline{L}}$,
it can be regarded as a $G_{l}$-invariant local section of $\mathcal{O}_{L^{(l)}}$,
and hence, after pulling back to $V_{x}^{(l)}$, it yields an endomorphism
of $\overline{\mathcal{N}}'$, which is equivariant with respect to
the $G_{l}$-action. But the map 
\[
\mathcal{E}nd(\overline{\mathcal{N}}')^{G_{l}}\to\mathcal{E}nd((\overline{\mathcal{N}}')^{G_{l}})=\mathcal{E}nd(\overline{\mathcal{N}})
\]
is clearly surjective (consider the decomposition into character spaces).
Therefore the section $s$ is contained in the image of the map $\mathcal{E}nd(\overline{\mathcal{N}})\to\pi_{*}\mathcal{O}_{\overline{L}}$
as required. 
\end{proof}
Now we proceed to the 
\begin{proof}
(of \prettyref{thm:Pic}) We have already seen (in \prettyref{prop:push-L-is-theta-reg})
that each such $\pi_{*}\mathcal{L}$ is $\theta$-regular at infinity.
So we must prove the other direction. Let $\overline{\mathcal{N}}$
be $\theta$-regular at infinity.

By the usual BNR correspondence, $\mathcal{N}$ is isomorphic to $\pi_{*}(\mathcal{L}_{U})$
for a unique line bundle $\mathcal{L}_{U}$ on $L_{U}$. An application
of flat base change yields
\[
\overline{\pi}_{*}(j_{*}\mathcal{L}_{U})\tilde{=}j_{*}\pi_{*}(\mathcal{L}_{U})=j_{*}\mathcal{N}
\]
Now, using this isomorphism, for any open subset $V\subset\overline{L}$
we define 
\[
\mathcal{L}(V):=\{s\in(j_{*}\mathcal{L}_{U})(V)|\overline{\pi}_{*}(s)\in\overline{\mathcal{N}}\}
\]
this is a sheaf on $\overline{L}$ which extends $U$. By \prettyref{lem:Local-Endomorphisms},
this is a sheaf of $\mathcal{O}_{\overline{L}}$-modules. We now check
that it is a reflexive coherent sheaf. 

Let $x\in\overline{\tilde{X}}\backslash U$. By definition, there
is an etale neighborhood $V_{x}$ of $x$, a root cover $V_{x}^{(l)}\to V_{x}$
and a Higgs bundle $\overline{\mathcal{N}}'$ on $V_{x}^{(l)}$ so
that $\overline{\mathcal{N}}=(\overline{\mathcal{N}}')^{G_{l}}$.
There is a $G_{l}$-equivariant line bundle $\mathcal{L}^{(l)}$ on
$L^{(l)}\times_{N^{(l)}}V_{x}^{(l)}|_{U}$ so that $\overline{\mathcal{N}}'|_{V_{x}^{(l)}|_{U}}=(\overline{\pi}_{l})_{*}(\mathcal{L}_{V_{x}^{(l)}|_{U}}^{(l)})$.
Further, the Higgs bundle $\overline{\mathcal{N}}'$ splits as a direct
sum of line bundles with Higgs field. Therefore, if we consider the
analogue of the above construction for $\overline{\mathcal{N}}'$
and define
\[
\mathcal{L}^{(l)}(V):=\{s\in(j_{*}\mathcal{L}_{V_{x}^{(l)}|_{U}}^{(l)})(V)|\overline{\pi}_{*}(s)\in\overline{\mathcal{N}}'\}
\]
then we see that $\mathcal{L}^{(l)}$ is a $G_{l}$-equivariant line
bundle on $L^{(l)}\times_{N^{(l)}}V_{x}^{(l)}$. But then since $\overline{\mathcal{N}}=(\overline{\mathcal{N}}')^{G_{l}}$
we see that $\mathcal{L}_{V_{x}}=(\mathcal{L}^{(l)})^{G_{l}}$ which
shows that the pullback of $\mathcal{L}$ to an etale neighborhood
of each point is a reflexive coherent sheaf; this implies the result. 
\end{proof}
\begin{rem}
The definitions and constructions in this section extend, in the obvious
way, to reflexive coherent Higgs bundles on $\overline{\tilde{X}}_{F}$,
where $F$ is either a field of characteristic zero with $R\subset F$,
or $F=k$, a perfect field such that there is a map $R\to k$. 
\end{rem}

To finish this section, we give a convenient result (to be used later)
about the etale local structure of the of the Higgs bundle $\overline{\mathcal{N}}$,
and the finite flat map $\pi:\overline{L}\to\overline{\tilde{X}}$,
at a point $x\in D$. 
\begin{cor}
\label{cor:Local-Struc-of-Closure} 1) Let $x\in D$ be any point.
There is an etale neighborhood $V_{x}$ of $x$ so that the scheme
$\overline{L}_{V_{x}}$ is a disjoint union of its components
\[
\overline{L}_{V_{x}}=\bigsqcup_{i}\overline{L}_{V_{x},i}
\]
each map $\overline{L}_{V_{x},i}\to V_{x}$ is finite flat, and the
restriction to a neighborhood of the generic point of any component
of $D$, is itself a disjoint union of branched covers.

2) Let $\overline{\mathcal{N}}$ be $\theta$-regular at infinity,
and let $x\in\tilde{D}$ be the generic point of the intersection
of some components of $\tilde{D}$. There is an etale neighborhood
$\varphi:V_{x}\to\overline{\tilde{X}}$ of $x$, and a direct sum
decomposition 
\[
\varphi^{*}\overline{\mathcal{N}}=\bigoplus_{i}\overline{\mathcal{N}}_{V_{x},i}
\]
into sub meromorphic Higgs sheaves, indexed by the components $\overline{L}_{V_{x},i}$,
such that
\[
\overline{\mathcal{N}}_{V_{x},i}\tilde{=}\pi_{*}(\mathcal{O}_{\overline{L}_{V_{x},i}},df)
\]
\end{cor}

\begin{proof}
The second part clearly follows from the first and \prettyref{thm:Pic};
so we prove the first statement only. Let $p:V_{x}^{(l)}\to V_{x}$
be as in \prettyref{thm:Abhyankar}; so that $L^{(l)}\to V_{x}^{(l)}$
is a disjoint union of isomorphisms. Fix local coordinates $\{y_{1},\dots,y_{d},z_{d+1}\dots,z_{n}\}$
on $V_{x}^{(l)}$, so that that action of $G_{l}=<g_{1},\dots,g_{d}>$
is given by $g_{i}\cdot y_{i}=\zeta y_{i}$, for a primitive $l$th
root of unity $\zeta$ (and $z_{i}=y_{i}^{l}$) for $1\leq i\leq d$.
Fix an orbit $\mathfrak{O}$ of $G_{l}$ on the components of $L_{V_{x}}^{(l)}\to V_{x}^{(l)}$.
Call the members of the orbit $\{Z_{i}^{(l)}\}$. So, for each $i$,
if we write ${\displaystyle \theta_{i}=\theta_{1}dy_{1}+\sum_{j\geq2}\theta_{ij}dz_{j}}$,
then the equations for $Z_{i}^{(l)}$ inside $T^{*}V_{x}^{(l)}$ are
given by $\{\partial_{s}-\theta_{is}\}_{s=1}^{n}$. 

Consider the diagram of schemes 
\[
T^{*}V_{x}^{(l)}\leftarrow T^{*}V_{x}\times_{V_{x}}V_{x}^{(l)}\rightarrow T^{*}V_{x}
\]
where the first map is $dp^{*}$ and the second map is projection
along the second factor. Then we choose coordinates on these spaces
so that $\mathcal{O}_{T^{*}V_{x}^{(l)}}=\mathcal{O}_{V_{x}^{(l)}}[\xi_{1},\dots,\xi_{n}]$,
and $\mathcal{O}_{T^{*}V_{x}\times_{V_{x}}V_{x}^{(l)}}=\mathcal{O}_{T^{*}V_{x}^{(l)}}[\xi'_{1},\dots\xi_{n}']$;
and the map $dp^{*}$ is given by the identity on $\mathcal{O}_{T^{*}V_{x}^{(l)}}$
and $\xi_{j}\to\xi_{j}$ for $j>d$ while ${\displaystyle \xi{}_{i}\to lz_{i}^{l-1}\xi'_{i}}$
for $i\leq d$. Thus the inverse image of the subscheme ${\displaystyle (\xi_{1}-\theta{}_{i1},\xi_{2}-\theta{}_{i2},\dots\xi_{n}-\theta{}_{in})}$
inside $T^{*}V_{x}\times_{V_{x}}V_{x}^{(l)}$ is given by ${\displaystyle (lz_{1}^{l-1}\xi'_{1}-\theta{}_{i1},\dots,lz_{d}^{l-1}\xi'_{d}-\theta{}_{d1},\xi{}_{d+1}-\theta{}_{i1},\dots.\xi_{n}-\theta'_{in})}$. 

Now define $Z'_{i}\subset T^{*}V_{x}\times_{V_{x}}V_{x}^{(l)}$ to
be the reduced closure of the subscheme defined by ${\displaystyle (\xi'_{1}-\frac{1}{lz_{1}^{l-1}}\theta'_{i1},\dots,\xi'_{d}-\frac{1}{lz_{d}^{l-1}}\theta'_{d1},\xi_{d+1}-\theta'_{i(d+1)},\dots\xi_{n}-\theta'_{in})}$
inside $T^{*}(V_{x}\backslash D)\times_{V_{x}}(V_{x}^{(l)}\backslash D)$.
The projection to the base $V_{x}^{(l)}$ is an open immersion away
from the intersection of divisors; it surjects onto the complement
of the intersection iff $z_{j}^{l-1}$ divides $\theta_{ij}$ for
all $1\leq j\leq d$. Either way, if we now consider the union (over
$i\in\mathfrak{O}$) of all $Z'_{i}$, we obtain a reduced subscheme
of $T^{*}V_{x}\times_{V_{x}}V_{x}^{(l)}$ with smooth components,
whose components are permuted by the group $G_{l}$. 

Now, consider the scheme $\overline{Z}_{i}'$, which we define to
be the integral closure of $\mathcal{O}_{V_{x}^{(l)}}$ inside $\mathcal{O}_{Z'_{i}}$;
this is a smooth scheme which is isomorphic to $V_{x}^{(l)}$. Since
the projection map $T^{*}V_{x}\times_{V_{X}}V_{x}^{(l)}\rightarrow T^{*}V_{x}$
is simply the quotient by $G_{l}$, we see that we have attached to
the orbit $\mathfrak{O}$, the scheme $\overline{L}_{V_{x},\mathfrak{O}}$,
which is the quotient 
\[
\overline{L}_{V_{x},\mathfrak{O}}=(\bigsqcup_{i\in\mathfrak{O}}\overline{Z}'_{i})/G_{l}
\]
As scheme $\overline{L}_{V_{x}}$ is the union of these schemes, this
implies the result (where $\overline{L}_{V_{x},\mathfrak{O}}$ is
labelled $\overline{L}_{V_{x}i}$ in the statement). 
\end{proof}
In fact, this proof gives slightly more, as the map ${\displaystyle \bigsqcup_{i\in\mathfrak{O}}Z'_{i}\to T^{*}V_{x}\times_{V_{x}}N_{V_{x}}^{(l)}}$
extends to a rational map ${\displaystyle \bigsqcup_{i\in\mathfrak{O}}\overline{Z}'_{i}\dasharrow T^{*}V_{x}\times_{V_{x}}N_{V_{x}}^{(l)}}$;
taking the quotient by $G_{l}$ then yield the rational map $\overline{L}_{V_{x}}\dasharrow T^{*}V_{x}$. 
\begin{defn}
\label{def:embedded-in-T^*}Let $\{x\}$ be the generic point of a
component of $\tilde{D}$, and let $V_{x}$ be an etale neighborhood
of $\{x\}$ as above. Let $\overline{L}_{V_{x}}={\displaystyle \bigsqcup\overline{L}_{V_{x,i}}}$
be the decomposition into components, and let $\overline{L}_{V_{x,i}}\dasharrow T^{*}V_{x}$
be the rational map constructed directly above. Then we say that $\overline{L}_{V_{x,i}}$
is embedded in $T^{*}V_{x}$ if the rational map above extends to
a closed immersion on $\overline{L}_{V_{x,i}}$. 
\end{defn}

\subsection{\label{subsec:-regular--connections-in-positive-char}$\theta$-regular
$\lambda$-connections in positive characteristic}

In this section, working in positive characteristic, we construct
natural deformations of the $\theta$-regular Higgs sheaves encountered
in the previous section, to objects which we call $\theta$-regular
$\lambda$-connections. As our ultimate goal is to construct $\mathcal{D}$-modules
with a certain $p$-curvature, the $\lambda$-connections we consider
will have a strong $p$-curvature constraint; and we will end up with
a very small family of objects- in fact, under the assumption that
$H_{dr}^{1}(L)=0$, we will see that each $\theta$-regular Higgs
bundle admits at most one deformation to a $\theta$-regular $\lambda$-connection
(c.f. \prettyref{cor:At-most-one-N-lambda} below). 

Before giving the details, let us consider the problem to be solved.
Looking at \prettyref{thm:Pic}, one sees that, for the simplest $\theta$-regular
Higgs sheaf $\pi_{*}(\mathcal{O}_{\overline{L}})$, there is a very
natural meromorphic $\lambda$-connection one can associate to it-
namely $\pi_{*}(\mathcal{O}_{\overline{L}}[\lambda])$, which is equipped
with the $\lambda$-connection $\nabla(1)=df$. However, this connection
may not be the one we seek to solve the quantization problem. This
is because, wherever the map $\overline{L}\to\overline{\tilde{X}}$
is branched, the $\lambda$-connection $\pi_{*}(\mathcal{O}_{\overline{L}}[\lambda])$
has singularities when $\lambda\neq0$. So, if the map $L\to X$ is
a branched cover (as in \prettyref{exa:Airy}), this bundle will have
singularities along the branchings, and is therefore not a good candidate
to solve the quantization problem. 

On the other hand, over the locus $U$ where $L_{U}\to U$ is etale,
the bundle $\pi_{*}(\mathcal{O}_{\overline{L}}[\lambda])$ behaves
as expected (as detailed in \prettyref{subsec:The-monodromy-divisor}
above). So we seek to extend this bundle to an object on all of $\overline{\tilde{X}}$
which has no ``unexpected'' singularities. In this section we accomplish
this over $k$, where $k$ is an algebraically closed field of positive
characteristic. 

To set things up, let $(\mathcal{L},\nabla)$ be any line bundle on
$L_{U_{k}}$, equipped with a $\lambda$-connection such that the
Higgs field on $\mathcal{L}/\lambda$ is given by $df$; and such
that $(\mathcal{L},\nabla)$ is locally isomorphic to the connection
given by
\[
\nabla(1)=df
\]
(there is at least one such, namely $(\mathcal{O},\nabla)$ where
$\nabla(1)=df$). 
\begin{defn}
\label{def:N-lambda}Define a $\lambda$-connection on $U$ via $\mathcal{N}_{\lambda,\mathcal{L}}=\pi_{*}(\mathcal{L},\nabla)$.
We set ${\displaystyle \mathcal{N}_{\widehat{\lambda}}:=\lim_{n}(\mathcal{N}_{\lambda}/\lambda^{n})}$.
Let $\mathcal{N}:=\mathcal{N}_{\lambda}/\lambda$, this is the Higgs
bundle (attached to $\mathcal{L}$) that we constructed in the previous
section. 
\end{defn}

As above, we will almost always suppress the subscript $\mathcal{L}$. 

The sheaves $\mathcal{N}_{\lambda}$ are locally isomorphic to the
sheaf $\mathcal{E}_{\lambda}$ which we considered in \prettyref{subsec:The-monodromy-divisor}
above; recall that it was defined to be $\pi_{*}(\mathcal{O}_{L_{U_{k}}},df)$.
In particular, by \prettyref{lem:p-support-of-E-l} we have:
\begin{lem}
\label{lem:p-curvature-of-N-lambda}The sheaf $\mathcal{N}_{\lambda,k}$,
considered as a sheaf on $T^{*}U_{k}^{(1)}\times\mathbb{A}^{1}$ via
the identification $\mathcal{Z}(\mathcal{D}_{\lambda})\tilde{=}\mathcal{O}_{T^{*}U_{k}^{(1)}}[\lambda]$,
is scheme theoretically supported on $L_{U_{k}}^{(1)}\times\mathbb{A}_{k}^{1}$.
Similarly, $\mathcal{N}_{\widehat{\lambda},k}$ is scheme-theoretically
supported on $L_{U_{k}}^{(1)}\times\widehat{\mathbb{A}_{k}^{1}}$. 
\end{lem}

Let us consider how to obtain the correct extension of these objects
to $\overline{\tilde{X}}_{k}$. We will first work in codimension
$2$, obtaining a reflexive coherent sheaf on $\overline{\tilde{X}}_{k}$,
and then show that the resulting object is a bundle in \prettyref{thm:N-lambda-is-theta-reg}
below. We have 
\begin{lem}
\label{lem:Decomop-before-extn}Let $x\in\tilde{D}_{k}^{\text{sm}}$.
Then there is an etale neighborhood $\varphi:V_{x}\to\overline{\tilde{X}}_{k}$
of $x$ in which we have a direct sum of $\lambda$-connections
\[
\varphi^{*}(j_{*}\mathcal{N}_{\lambda})=\bigoplus_{i}j_{*}(\mathcal{N}_{\lambda})_{V_{x},i}
\]
where $j:U_{k}\to\overline{\tilde{X}}_{k}$ is the inclusion, and
each summand $j_{*}(\mathcal{N}_{\lambda})_{V_{x},i}$ is $p$-supported
along a component of the inverse image of $L_{k}^{(1)}\times\mathbb{A}_{k}^{1}$
inside $(T^{*}(V_{x,k})^{(1)}\times\mathbb{A}^{1})\backslash\tilde{D}$.
In particular, the summands are indexed by components of $\overline{L}_{V_{x}}\times\mathbb{A}_{k}^{1}$. 
\end{lem}

\begin{proof}
If we consider $\varphi^{*}(j_{*}\mathcal{N}_{\lambda})$ as a sheaf
on $T^{*}V_{x,k}^{(1)}\times\mathbb{A}_{k}^{1}$. It is $p$-supported
along the inverse image of $L_{k}^{(1)}\times\mathbb{A}_{k}^{1}$.
Over $(T^{*}V_{x,k}{}^{(1)}\times\mathbb{A}^{1})\backslash\tilde{D}$,
this scheme is a disjoint union of components, indexed by orbits of
the group $G_{l}$ on the components of the scheme $L^{(l)}\times_{N^{(l)}}V_{x}^{(l)}$
(by \prettyref{cor:Local-Struc-of-Closure}). Thus the action of the
center of $\mathcal{D}_{\lambda}$ on $\varphi^{*}(j_{*}\mathcal{N}_{\lambda})$
induces the claimed decomposition. 
\end{proof}
This reduces the problem of extending the sheaf $\varphi^{*}(j_{*}\mathcal{N}_{\lambda})$
to a bundle on $V_{x}$ to the problem of extending each individual
summand $j_{*}(\mathcal{N}_{\lambda})_{V_{x},i}$. There are two distinct
cases that we have to deal with, according to weather or not the associated
component $\overline{L}_{V_{x},i}$ is embedded in $T^{*}V_{x}$ (c.f.
\prettyref{def:embedded-in-T^*}). In fact, as our only need in this
paper is to control the behavior our connection on $L$ itself, we
can cut things even finer: 
\begin{defn}
Let $x\in\tilde{D}_{k}^{\text{sm}}$ and consider the decomposition
$\overline{L}_{V_{x}}={\displaystyle \bigsqcup\overline{L}_{V_{x,i}}}$
into components. As each map $\overline{L}_{V_{x,i}}\to V_{x}$ is
a branched cover, each component $L_{V_{x},i}$ contains a single
component of the divisor $\tilde{E}_{k}$. We shall say that the component
is of type I if $\overline{L}_{V_{x},i}$ is embedded in $T^{*}V_{x}$
and the component of $\tilde{E}_{k}$ in $\overline{L}_{V_{x},i}$
belongs to $L_{k}$ (as in \prettyref{def:Belongs-to-L}). We say
that the component is of type II in every other case. 
\end{defn}

Now we can construct the (first incarnation of) the extensions we
are looking for: 
\begin{prop}
\label{prop:Extension-in-codim-2}For each $\mathcal{N}_{\lambda}$
as above, there is a reflexive coherent extension, $\overline{\mathcal{N}}_{\lambda}$,
with the following property: let $x$ be the generic point of a component
of $\tilde{D}_{k}$. There is an etale neighborhood $\varphi:V_{x}\to\overline{\tilde{X}}_{k}$
of $x$, on which $\overline{\mathcal{N}}_{\lambda}$ is a bundle,
and for which we have a direct sum of bundles with meromorphic $\lambda$-connection
\[
\varphi^{*}(\overline{\mathcal{N}}_{\lambda})=\bigoplus_{i}(\overline{\mathcal{N}}_{\lambda})_{V_{x},i}
\]
whose restriction to $V_{x}\backslash\tilde{D}_{k}$ agrees with the
decomposition of \prettyref{lem:Decomop-before-extn}. If the summand
$(\overline{\mathcal{N}}_{\lambda})_{V_{x},i}$ corresponds to a component
of type I, then we have that this summand is a bundle with $\lambda$-connection;
i.e., it has no singularities. If the summand $(\overline{\mathcal{N}}_{\lambda})_{V_{x},i}$
corresponds to a component of type II, then we have $(\overline{\mathcal{N}}_{\lambda})_{V_{x},i}=(\overline{\mathcal{N}}'_{i,\lambda})^{G_{l}}$
where $\overline{\mathcal{N}}'_{\lambda}$ is a $G_{l}$-equivsriant
bundle on $V_{x}^{(l)}$ with meromorphic connection, which has a
basis $\{e_{j}\}$ on which $G_{l}$ acts transitively, so that 
\[
\nabla(e_{j})=\theta_{j}e_{j}
\]
where $\theta_{j}$ are the one-forms defined in \prettyref{def:omega-reg-prime-higher-dim}. 

The meromorphic Higgs sheaf $\overline{\mathcal{N}}_{\lambda}/\lambda$
is $\theta$-regular at infinity in codimension $2$. 
\end{prop}

In other words, we take the naive kind of $\lambda$-connection (where
we imitate the definition of $\theta$-regular Higgs bundle) when
we are in type II, but we need to do something different in type I
in order to get a bundle without singularities. 
\begin{proof}
Consider the generic point $\{x\}$ of a component of the divisor
$\tilde{D}_{k}$. By flat descent, to obtain a bundle we can work
in an etale neighborhood of each such $\{x\}$. We apply \prettyref{lem:Decomop-before-extn}
to get the decomposition 
\[
\varphi^{*}(j_{*}\mathcal{N}_{\lambda})=\bigoplus_{i}(j_{*}\mathcal{N}_{\lambda})_{V_{x},i}
\]

Now we shall explain how to define an appropriate sub-bundle of each
$j_{*}(\mathcal{N}_{\lambda})_{V_{x},i}$. First, suppose that the
corresponding component $L_{V_{x},i,k}$ is of type I. This implies
that the center of $\mathcal{D}_{\lambda,V_{x}}$ acts on $j_{*}(\mathcal{N}_{\lambda})_{V_{x},i}$
through a quotient which is a finite flat module over $\mathcal{O}_{V_{x}}[\lambda]$.
Thus any coherent $\mathcal{D}_{\lambda,V_{x}}$ -submodule of $j_{*}(\mathcal{N}_{\lambda})_{V_{x},i}$
is also coherent over $\mathcal{O}_{V_{x}}[\lambda]$. 

Choose a finite locally free module $\mathcal{F}$ over $\mathcal{O}_{V_{x}}[\lambda]$
such that $\mathcal{F}[z^{-1}]=\varphi^{*}(j_{*}\mathcal{N}_{\lambda})_{i}$
(here $z$ is a local coordinate for $\varphi^{-1}(\tilde{D)}$).
Let $\mathcal{G}$ be he coherent $\mathcal{D}_{\lambda,V_{x}}$ -submodule
generated by $\mathcal{F}$. By the above it is contained in $z^{-M}\mathcal{F}$
for some $M>0$. Let $\mathcal{G}'$ denote $\{m\in\varphi^{*}(j_{*}\mathcal{N}_{\lambda})_{i}|\lambda^{i}m\in\mathcal{G}\phantom{a}\text{for some}\phantom{b}i>0\}$.
Then $\mathcal{G}'$ is a $\mathcal{D}_{\lambda,V_{x}}$ -submodule,
clearly contained in $z^{-M}\mathcal{F}$, and hence also coherent
over $V_{x,k}\times\mathbb{A}_{k}^{1}$. By the choice of $\mathcal{G}'$
we have that the map
\[
\mathcal{G}'/\lambda\to j_{*}(\mathcal{N}_{\lambda})_{V_{x},i}/\lambda
\]
is injective. Thus $\mathcal{G}'/\lambda$ is a torsion-free coherent
sheaf over $V_{x}$ which is supported along $\overline{L}_{V_{x,i}}\subset T^{*}(V_{x,k})$.
After replacing $\mathcal{G}'$ by its double dual over $\mathcal{O}_{V_{x}}[\lambda]$
if necessary, we see, applying (the proof of) \prettyref{lem:Extend-mod-p},
that $\mathcal{G}'$ is a bundle over $V_{x,k}\times\mathbb{A}_{k}^{1}$,
which is coherent over $\mathcal{D}_{\lambda,V_{x}}$ ; i.e., it is
a bundle with $\lambda$-connection, without singularities. The reduction
mod $\lambda$ of this bundle is a Higgs bundle, necessarily supported
along the image of $\overline{L}_{V_{x},i}$ in $T^{*}V_{x}$ (as
this is so generically); therefore, by the BNR correspondence, it
is the pushforward of a line bundle on $\overline{L}_{V_{x},i}$. 

Now suppose the component $\overline{L}_{V_{x},i}$ is of type II.
Then, after taking the $l$th root pullback $V_{x,k}^{(l)}\to V_{x,k}$
(along $\tilde{D}_{k}$), we have that $j_{*}(\mathcal{N}_{\lambda})_{V_{x},i}$
becomes isomorphic to a direct sum of line bundles with connection.
Thus we may choose an extension of this sheaf to a direct sum of line
bundles with meromorphic connection on $V_{x,k}^{(l)}$. Then taking
$G_{l}$-invariants yields an appropriate sub-bundle of $j_{*}(\mathcal{N}_{\lambda})_{V_{x},i}$. 

Now, we have defined a bundle on $U\times\mathbb{A}^{1}$ (namely
$\mathcal{N}_{\lambda}$ ) and sub-bundles of $j_{*}(\mathcal{N}_{\lambda})_{V_{x},i}$
along the an etale cover of the generic point of each component of
the divisor $\tilde{D}_{k}$. Thus, after restricting to an open subscheme
whose complement has codimension $2$, faithfully flat descent yields
a bundle, and we may push forward to obtain a reflexive coherent sheaf
on $\overline{\tilde{X}}_{k}\times\mathbb{A}_{k}^{1}$. To obtain
the last sentence, note that we proved it above (at the generic point
of a divisor) for $(\overline{\mathcal{N}}_{\lambda})_{V_{x},i}$
in type I, and for type II it follows immediately from the construction. 
\end{proof}
Now, in order to proceed (and, eventually, to match these constructions
with those of the previous chapter), we shall need to have an action
of the category of locally trivial log connections on the set of $\overline{\mathcal{N}}_{\lambda}$.
The above construction is not quite flexible enough to allow us to
do this; essentially, we need to be able to change the ``monodromy''
along components of the divisor $\tilde{E}_{k}\subset\overline{L}_{k}$.
Therefore, we make the 
\begin{defn}
\label{def:Theta-Reg-Pos-Char}Let $\overline{\mathcal{N}}_{\lambda}$
be a reflexive coherent sheaf with $\lambda$-connection on $\overline{\tilde{X}}_{k}\times\mathbb{A}_{k}^{1}$,
whose restriction to $U_{k}\times\mathbb{A}_{k}^{1}$ is equal to
a bundle of the type defined in \prettyref{def:N-lambda}. We say
that $\overline{\mathcal{N}}_{\lambda}$ is $\theta$-regular at infinity
if, for each $x$ which is the generic point of a component of $\tilde{D}_{k}$,
there is an etale neighborhood $\varphi:V_{x}\to\overline{\tilde{X}}_{k}$
so that
\[
\varphi^{*}(\overline{\mathcal{N}}_{\lambda})=\bigoplus_{i}(\overline{\mathcal{N}}_{\lambda})_{V_{x},i}
\]
whose restriction to $V_{x}\backslash\tilde{D}_{k}$ agrees with the
decomposition of \prettyref{lem:Decomop-before-extn}. If the summand
$(\overline{\mathcal{N}}_{\lambda})_{V_{x},i}$ corresponds to a component
of type I, then we demand that this summand is of the form 
\[
z^{\alpha}\cdot\mathcal{V}_{\lambda}
\]
where $\mathcal{V}_{\lambda}$ is a bundle with $\lambda$-connection,
$\alpha\in\mathbb{Z}$, and the terminology is as follows: we have
$j_{*}(\mathcal{N}_{\lambda})_{V_{x},i}\tilde{=}\pi_{*}(\mathcal{L})$
where $\mathcal{L}$ is a line bundle on $\overline{L}_{V_{x},i}$.
Thus $j_{*}(\mathcal{N}_{\lambda})_{V_{x},i}$ has an action of $\pi_{*}\mathcal{O}_{\overline{L}_{V_{x},i}}$,
and so we can rescale any sub-bundle by a power of $z$, which is
a local coordinate on $\overline{L}_{V_{x},i}$. 

If the summand $(\overline{\mathcal{N}}_{\lambda})_{V_{x},i}$ corresponds
to a component of type II, then we have $(\overline{\mathcal{N}}_{\lambda})_{V_{x},i}=(\overline{\mathcal{N}}'_{i,\lambda})^{G_{l}}$
where $\overline{\mathcal{N}}'_{\lambda}$ is a $G_{l}$-equivariant
bundle on $V_{x}^{(l)}$ with meromorphic connection, which has a
basis $\{e_{j}\}$ on which $G_{l}$ acts transitively, so that 
\[
\nabla(e_{j})=(\theta_{j}+\lambda\beta\frac{dz}{z}))e_{j}
\]
where $\theta_{j}$ are the one-forms defined in \prettyref{def:omega-reg-prime-higher-dim}
and and $\beta\in\mathbb{F}_{p}$. 
\end{defn}

We note that the integer $\alpha$ appearing in the first part of
the definition in unique, up to adding a multiple of $p$. Indeed,
as we have seen during the proof of \prettyref{lem:Extend-mod-p},
the bundle $\mathcal{V}_{\lambda}$ is unique up to multiplication
by a power of $\lambda$, and up to rescaling by an element of $\mathcal{O}_{\overline{L}_{V_{x,i}}^{(1)}}$.
As this bundle is supposed to equal $\mathcal{N}_{\lambda}$ after
restriction to $U_{k}$, we see that the power of $\lambda$ must
be $0$, and that the rescale by an element of $\mathcal{O}_{\overline{L}_{V_{x,i}}^{(1)}}$
must be a rescale by $z^{pn}\cdot u$ where $u$ is a unit. Therefore
$\mathcal{V}_{\lambda}$ is unique up to multiplication by by an element
of the form $z^{pn}$. 

From this definition we see that each $\lambda$-connection which
is $\theta$-regular at infinity comes with a ``residue,'' which
is an $\mathbb{F}_{p}$-divisor in $\overline{L}_{k}$. More precisely,
we have 
\begin{defn}
Let $\overline{\mathcal{N}}_{\lambda}$ be a $\lambda$-connection
which is $\theta$-regular at infinity. To $\overline{\mathcal{N}}_{\lambda}$
we attach an $\mathbb{F}_{p}$-divisor supported on $\tilde{E}_{k}$
as follows: let $E_{i}$ be a component of $\tilde{E}_{k}$, living
over some component $D'_{k}$of $\tilde{D}_{k}$. After passing to
an etale neighborhood $V_{x}$ of the generic point of $D'_{k}$,
we have that $E_{i}$ is contained in a unique component $\overline{L}_{V_{x},i}$
of $\overline{L}_{V_{x}}$. If $\overline{L}_{V_{x},i}$ is of type
I, then we have $(\overline{\mathcal{N}}_{\lambda})_{V_{x},i}=z^{\alpha}\mathcal{V}_{\lambda}$
for some $\alpha\in\mathbb{Z}$, and we attach $\overline{\alpha}E_{i}$,
where $\overline{\alpha}$ is the image of $\alpha$ in $\mathbb{F}_{p}$.
If $\overline{L}_{V_{x},i}$ is of type II, then we have $\overline{\mathcal{N}}_{\lambda}=(\overline{\mathcal{N}}'_{i,\lambda})^{G_{l}}$
where $\overline{\mathcal{N}}'_{\lambda}=\pi_{*}(\mathcal{L},\nabla+\lambda\beta\frac{dz}{z})$,
and we attach $\beta E_{i}$. The sum over all these divisors is the
residue of $\overline{\mathcal{N}}_{\lambda}$. 
\end{defn}

Before proceeding, we record for later use the 
\begin{rem}
\label{rem:Theta-reg-over-W}Suppose $W\subset\overline{\tilde{X}}_{k}\times\mathbb{A}_{k}^{1}$
is an open subset. Then if $\overline{\mathcal{P}}_{\lambda}$ is
some vector bundle with meromorphic $\lambda$-connection on $W$,
we can extend the notion of $\theta$-regularity to $\overline{\mathcal{P}}_{\lambda}$,
by noting that the construction of \prettyref{prop:Extension-in-codim-2}
is completely local. 
\end{rem}

Now we can classify $\theta$-regular $\lambda$-connections. We start
with the analogues of \prettyref{lem:Local-Endomorphisms} and \prettyref{thm:Pic}
for $\theta$-regular $\lambda$-connections:
\begin{lem}
\label{lem:Endomorphisms-of-N-lambda}Let $\overline{\mathcal{N}}_{\lambda}$
be strongly $\theta$-regular at infinity. Let $\mathcal{E}nd(\overline{\mathcal{N}}_{\lambda})$
denote the sheaf of endomorphisms of $\overline{\mathcal{N}}_{\lambda}$
which respect the $\lambda$-connection. Then 
\[
\mathcal{E}nd(\overline{\mathcal{N}}_{\lambda})\tilde{=}\pi_{*}\mathcal{O}_{\overline{L}_{k}^{(1)}}[\lambda]
\]
\end{lem}

\begin{proof}
As in the proof of \prettyref{lem:Local-Endomorphisms}, one easily
obtains an injective morphism 
\[
\mathcal{E}nd(\overline{\mathcal{N}}_{\lambda})\to\pi_{*}\mathcal{O}_{\overline{L}_{k}^{(1)}}[\lambda]
\]
which is an isomorphism over $U_{k}$. To show that it is surjective,
let $W$ denote an open subset of $\overline{\tilde{X}}_{k}$ of codimension
$2$ over which $\overline{\mathcal{N}}_{\lambda}$ is a bundle; and
$j:W\to\overline{\tilde{X}}_{k}$ the inclusion. Note that
\[
j_{*}\mathcal{E}nd_{\mathcal{O}_{W}}(\overline{\mathcal{N}}_{\lambda}|_{W})\tilde{=}\mathcal{E}nd_{\mathcal{O}_{\overline{\tilde{X}}_{k}}}(\overline{\mathcal{N}}_{\lambda})
\]
which implies that the same holds for $\mathcal{E}nd(\overline{\mathcal{N}}_{\lambda})$.
Thus it suffices to prove the result over $W$. To do that, we can
work locally and pull back to the etale neighborhood $V_{x}$ of $\{x\}$,
the generic point of some component of $\tilde{D}_{k}$. We can then
use the decomposition 
\[
\varphi^{*}\mathcal{N}_{\lambda}=\bigoplus_{i}(\overline{\mathcal{N}_{\lambda}})_{V_{x},i}
\]
For a summand of type II, then by definition $(\overline{\mathcal{N}_{\lambda}})_{V_{x},i}$
is the $G_{r_{i}}$-invariants inside a direct sum of line bundles
with connection; and we can proceed in an identical manner to the
proof of \prettyref{lem:Local-Endomorphisms}. In the type I case,
we can apply the argument of \prettyref{lem:Extend-mod-p} to see
that the endomorphisms are equal to $\pi_{*}\mathcal{O}_{\overline{L}_{V_{x,i}}^{(1)}}[\lambda]$,
as needed.
\end{proof}
Next, we have the following consequence, the analogue of \prettyref{thm:Pic}: 
\begin{thm}
\label{thm:Pic-over-k}The set of $\theta$-regular $\lambda$-connections
with a given residue is a torsor over $\text{Pic}^{r}(\overline{L}_{k}^{(1)})$. 
\end{thm}

\begin{proof}
First, we note that there is a $\theta$-regular connection which
has any given residue. To see it, take the $\theta$-regular connection
$\overline{\mathcal{N}}_{\lambda}$ of \prettyref{prop:Extension-in-codim-2},
and then modify it at the generic point of each component of $\tilde{D}_{k}$
as follows: let $V_{x}$ be an etale neighborhood of the generic point
of a component of $\tilde{D}_{k}$, so that we have the decomposition
\begin{equation}
\varphi^{*}\mathcal{\overline{N}}_{\lambda}=\bigoplus_{i}(\overline{\mathcal{N}_{\lambda}})_{V_{x},i}\label{eq:Direct-sum-decomp}
\end{equation}
Then, by rescaling each summand by an appropriate power of the uniformizor
in $\overline{L}_{V_{x},i}$, we can arrange the residue to be any
divisor (supported in $\tilde{E}_{k}$) with coefficients in $\mathbb{F}_{p}$. 

Now, let $\overline{\mathcal{N}_{\lambda}}$ and $\overline{\mathcal{P}_{\lambda}}$
be two meromorphic $\lambda$-connections which are $\theta$-regular
at infinity, with the same residue. We start by showing that they
are locally isomorphic over an open subset of codimension $2$. Over
the open subset $U$ this is true by definition. Let's examine the
situation over the generic point $\{x\}$ of a component of $\tilde{D}_{k}$.
There, we have the etale neighborhood $\varphi:V_{x}\to\overline{\tilde{X}}_{k}$,
and we have the decomposition \prettyref{eq:Direct-sum-decomp} and
its analogue for $\overline{\mathcal{P}}_{\lambda}$. 

If we are in type II, we have $(\overline{\mathcal{N}_{\lambda}})_{V_{x},i}=(\overline{\mathcal{N}}'_{i,\lambda})^{G_{l}}$
for a bundle $\overline{\mathcal{N}}'_{i,\lambda}$ on $N_{k}^{(l)}$
which possesses a basis of eigenvectors for the connection; and the
same for $(\overline{\mathcal{P}_{\lambda}})_{V_{x},i}$. So the result
follows immediately (from the fact that both the $p$-curvature and
the residue are fixed). If we are in type I, we have that $(\overline{\mathcal{N}_{\lambda}})_{V_{x},i}$
and $(\overline{\mathcal{P}_{\lambda}})_{V_{x},i}$ are $\lambda$-connections
without singularities; or a multiple of such a connection by a power
of the uniformizor. Then the proof of \prettyref{lem:Extend-mod-p}
shows that $(\overline{\mathcal{N}_{\lambda}})_{V_{x},i}\tilde{=}(\overline{\mathcal{P}_{\lambda}})_{V_{x},i}$
as claimed. 

Therefore, $\overline{\mathcal{N}}_{\lambda}$ and $\overline{\mathcal{P}}{}_{\lambda}$
are locally isomorphic in the etale topology over an open subset of
codimension $2$. Applying the previous lemma, we see that $\text{\ensuremath{\mathcal{H}}om}(\overline{\mathcal{N}}_{\lambda},\overline{\mathcal{P}}_{\lambda})$
is, locally in the etale topology, isomorphic to $\pi_{*}(\mathcal{O}_{\overline{L}_{k}^{(1)}}[\lambda])$;
therefore this is true in the Zariski topology as well and we see
that $\overline{\mathcal{N}}_{\lambda}$ and $\overline{\mathcal{P}_{\lambda}}$
are locally isomorphic in codimension $2$. 

Thus, over an open subset $V$ of codimension $2$, the set of all
$\theta$-regular $\lambda$-connections is a torsor over $H^{1}(\text{Aut}(\overline{\mathcal{N}}_{\lambda}))=H^{1}(\pi_{*}(\mathcal{O}_{\pi^{-1}(V^{(1)})}[\lambda])^{*})\tilde{=}H^{1}(\mathcal{O}_{\pi^{-1}(V^{(1)})}^{*})$.
But this is just the group of line bundles on $\pi^{-1}(V^{(1)})\subset\overline{L}_{k}^{(1)}$.
Since any $\theta$-regular $\lambda$-connection is a pushforward
of its restriction to codimension $2$, the result follows. 
\end{proof}
Now we would like to consider the case where the residue is not fixed.
Let $\text{Pic}^{r}(\overline{L},\tilde{E},\nabla)$ denote the group
of reflexive coherent sheaves of rank $1$ with log connection with
respect to $\tilde{E}_{k}$ on $\overline{L}_{k}$, of trivial $p$-curvature.
In \prettyref{cor:Marked-Descent} below, we show that to give an
element $(\mathcal{L},\nabla)\in\text{Pic}^{r}(\overline{L}_{k},\tilde{E}_{k},\nabla)$
is to give an element $\mathcal{L}'\in\text{Pic}^{r}(\overline{L}_{k}^{(1)})$,
and embedding of sheaves $F^{*}\mathcal{L}'\to\mathcal{L}$ and for
each component $\tilde{E}_{i}$ of $\tilde{E}_{k}$, an element of
$\{0,\dots,p-1\}$ so that $z^{-\alpha}F^{*}\mathcal{L}'=\mathcal{L}$
at the generic point of $\tilde{E}_{i}$. With this in hand, we can
show 
\begin{cor}
\label{cor:Pic-with-residue-over-k}The set of $\theta$-regular $\lambda$-connections
is a torsor over the group $\text{Pic}^{r}(\overline{L},\tilde{E},\nabla)$.
If $\overline{\mathcal{N}}_{\lambda}$ is strongly $\theta$-regular
with residue $A$, and $(\mathcal{L},\nabla)$ is such a log connection,
whose residue along $\tilde{E}_{k}$ is given by ${\displaystyle B=\sum_{i}\beta_{i}\tilde{E}_{i}}$,
then the residue of the strongly $\theta$-regular sheaf $(\mathcal{L},\nabla)\star\overline{\mathcal{N}}_{\lambda}$
is $A+B$. Further, if $(\overline{\mathcal{N}}_{\lambda})/\lambda=\pi_{*}(\mathcal{L}')$
(in codimension $2$) via \prettyref{thm:Pic}, then we have $((\mathcal{L},\nabla)\star\overline{\mathcal{N}}_{\lambda})/\lambda\tilde{=}\pi_{*}(\mathcal{L}\otimes\mathcal{L}')$
in codimension $2$. 
\end{cor}

\begin{proof}
Let $\mathcal{L}'\in\text{Pic}^{r}(\overline{L}_{k}^{(1)})$ be associated
to $(\mathcal{L},\nabla)$, as discussed above. We can define the
action of such a bundle as follows: first, act by the line bundle
$\mathcal{L}'\in\text{Pic}^{r}(L_{k}^{(1)})$ as in the previous theorem
to obtain a $\theta$-regular bundle $\mathcal{L}'\star\overline{\mathcal{N}_{\lambda}}$,
whose residue is equal to that of $\overline{\mathcal{N}}_{\lambda}$. 

Now we argue as in the proof of \prettyref{lem:2-d-goodness}; let
the notation be as in the that proof. So we have 
\[
\varphi^{*}(\mathcal{L}'\star\overline{\mathcal{N}_{\lambda}})=\bigoplus_{i}(\mathcal{L}'\star\overline{\mathcal{N}_{\lambda}})_{V_{x},i}
\]
For each $i$, we let $\alpha$ be the element of $\{0,\dots,p-1\}$
associated to the divisor $\tilde{E}_{i}$ by the construction of
\prettyref{cor:Marked-Descent}. If we are in type I, then we modify
$(\mathcal{L}'\star\overline{\mathcal{N}_{\lambda}})_{V_{x},i}$ to
$z^{-\alpha}(\mathcal{L}'\star\overline{\mathcal{N}_{\lambda}})_{V_{x},i}$.
If we are in type II, then we have $(\mathcal{L}'\star\overline{\mathcal{N}_{\lambda}})_{V_{x},i}=(\mathcal{L}'\star\overline{\mathcal{N}}'_{i,\lambda})^{G_{l}}$
where $\mathcal{L}'\star\overline{\mathcal{N}}'_{\lambda}$ possesses
a basis $\{e_{i}\}$ as in \prettyref{def:Theta-Reg-Pos-Char}.. We
modify this bundle by multiplying each basis element by $z^{-\alpha}$.
The required properties of the action follow directly from the construction. 
\end{proof}
Now, the final major result we need about $\theta$-regular connections
is:
\begin{thm}
\label{thm:N-lambda-is-theta-reg}Let $\overline{\mathcal{N}}_{\lambda}$
be $\theta$-regular at infinity. Then $\overline{\mathcal{N}}_{\lambda}$
is a vector bundle. In particular, $\overline{\mathcal{N}}_{\lambda}/\lambda$
is a $\theta$-regular Higgs sheaf on all of $\overline{\tilde{X}}_{k}$. 
\end{thm}

This follows immediately from the following lemma, in which we show
that $\overline{\mathcal{N}}_{\lambda}$ is, etale locally, the $G_{l}$-invariants
inside some vector bundle- which we explicitly construct, on a root
cover.
\begin{lem}
\label{lem:2-d-goodness}Let $x$ be a point of $\overline{\tilde{X}}_{k}$
which is contained in the intersection of exactly $m$ components
of $\tilde{D}_{k}$. Then there is an etale neighborhood of $x$,
$\varphi:V_{x,k}\to\overline{\tilde{X}}_{k}$ , and a vector bundle
$\overline{\mathcal{N}}''_{\lambda}$ on $V_{x,k}^{(l)}$, so that
$\varphi^{*}\overline{\mathcal{N}}_{\lambda}=(\overline{\mathcal{N}}''_{\lambda})^{G_{l}}$. 
\end{lem}

\begin{proof}
Let $\varphi:V_{x,k}\to\overline{\tilde{X}}_{k}$ be the etale morphism
of \prettyref{cor:Local-Struc-of-Closure}. Let $\{z_{1},\dots,z_{n}\}$
be local coordinates on $V_{x}$, for which $\tilde{D}$ is given
by $\{z_{1}\cdots z_{m}=0\}$. We have the decomposition 
\[
\varphi^{*}\overline{\mathcal{N}_{\lambda}}=\bigoplus_{i}(\overline{\mathcal{N}_{\lambda}})_{V_{x},i}
\]
according to $p$-curvature. We can work with one summand at a time,
and the only nontrivial case is the one where the component $\overline{L}_{V_{x},i}$
embeds into $T^{*}V_{x,k}$ over $\{z_{1}=0\}$ (i.e., $\overline{L}_{V_{x},i}$
is type I). Here we are using assumption $2$ concerning the structure
of $\overline{L}$, which says in particular that any type I divisor
does does not intersect another. 

Consider the partial root cover $V_{x,k}^{(0,l\dots,l)}\to V_{x,k}$
(where we take $l$th roots of $\{z_{2},\dots z_{m}\}$). We denote
by $L_{V_{x,i,k}}^{(0,l,\dots,l)}$ the scheme whose structure sheaf
is the integral closure of $\mathcal{O}_{V_{x,k}^{(0,l\dots,l)}}$
inside $\mathcal{O}_{V_{x,k}^{(0,l\dots,l)}\times_{V_{x,k}}L_{V_{x,i,k}}}[w_{1}^{-1}\cdots w_{m}^{-1}]$.
Then $L_{V_{x,i,k}}^{(0,l,\dots,l)}\to V_{x,k}^{(0,l\dots,l)}$ is
a $G_{(0,l,\dots,l)}$-equivariant finite map, where $G_{(0,l,\dots,l)}$
is the $(m-1)$-fold product of cyclic groups of order $l$, acting
in the obvious way on $\{z_{2},\dots,z_{m}\}$. 

As all of the divisors living above $\{z_{2}\cdots z_{m}=0\}$ are
type II, we have that $(\overline{\mathcal{N}_{\lambda}})_{V_{x},i}[z_{1}^{-1}]=(\overline{\mathcal{N}}'_{i,\lambda}[z_{1}^{-1}])^{G_{(0,l,\dots,l)}}$
where $\overline{\mathcal{N}}'_{i,\lambda}[z_{1}^{-1}]$ is a $G_{(0,l,\dots l)}$-equivariant
meromorphic connection, with a basis which is an eigenbasis for the
connection on which $G_{(0,l,\dots,l)}$ acts transitively. We may
now extend this sheaf to a sheaf $\overline{\mathcal{N}}'_{i,\lambda}$,
which is a reflexive meromorphic $\lambda$-connection on $V_{x,k}^{(0,l,\dots,l)}$
constructed by the method of \prettyref{prop:Extension-in-codim-2}.
In other words, it is a bundle whose connection form has no singularities,
along $\{z_{1}=0\}$; we can readjust the residue by multiplying by
a power of $z_{1}$ as needed. Then we have $(\overline{\mathcal{N}_{\lambda}})_{V_{x},i}=(\overline{\mathcal{N}}'_{i,\lambda})^{G_{(0,l,\dots,l)}}$. 

Let $p:V_{x,k}^{(l)}\to V_{x,k}^{(0,l\dots,l)}$ be the root cover
along $w_{1}$. Then we claim that $p^{*}\overline{\mathcal{N}}'_{i,\lambda}$
is a $G_{l}$-equivariant vector bundle with $(p^{*}\overline{\mathcal{N}}'_{i,\lambda})^{G_{l}}=(\overline{\mathcal{N}_{\lambda}})_{V_{x},i}$.
Note that it is reflexive as the pullback of a reflexive sheaf under
a flat morphism is reflexive.

To prove that it is actually a vector bundle, we first note that $p^{*}\overline{\mathcal{N}}'_{i,\lambda}[z_{2}^{-1},\dots,z_{m}^{-1}]$
is a vector bundle on $V_{x,k}^{(l)}\backslash\{z_{2}\cdots z_{m}=0\}\times\mathbb{A}_{k}^{1}$
by the argument of \prettyref{lem:Extend-mod-p}- we showed there
that a reflexive coherent $\lambda$-connection, whose reduction mod
$\lambda$ is a line bundle on a smooth variety $L$ (in codimension
$2$), and whose $p$-support is contained in $L^{(1)}\times\mathbb{A}_{k}^{1}$,
is actually a bundle. Its reduction mod $(\lambda)$ is equal to the
free module $p^{*}\overline{\mathcal{N}}'_{i,\lambda}=p^{*}\pi_{*}(\mathcal{L}_{L_{V_{x},i}^{(0,l\dots,l)}})$,
where $\mathcal{L}$ is the trivial line bundle on $\mathcal{L}_{L_{V_{x},i}^{(0,l\dots,l)}}$,
with $G_{(0,l,\dots l)}$-action given by some character. The projection
$L_{V_{x},i}^{(0,l\dots,l)}\to V_{x,k}^{(0,l\dots,l)}$ is a finite
map, branched over $\{z_{1}=0\}$ and etale after removing $\{z_{1}=0\}$.
It follows that we can choose a basis of $\pi_{*}(\mathcal{L}_{L_{V_{x},i}^{(0,l\dots,l)}})$
which consists of eigenvectors for the action of $G_{(0,l,\dots l)}$.
The same is therefore true after pulling back to $V_{x,k}^{(l)}$,
and so we obtain a basis $\{e_{1},\dots,e_{r'}\}$ of $p^{*}\pi_{*}(\mathcal{L}_{L_{V_{x},i,k}^{(0,l\dots,l)}})$
which consists of eigenvectors for $G_{l}$. In particular, each $e_{i}$
has no zeros on $\overline{L}_{V_{x,i,k}^{(l)}}$. 

Now, we claim that there is a set of elements $\{\tilde{e}_{1},\dots,\tilde{e}_{r'}\}\subset p^{*}\overline{\mathcal{N}}'_{i,\lambda}[z_{2}^{-1},\dots,z_{m}^{-1}]$
on which $G_{l}$ acts by a character, which lift $\{e_{1},\dots,e_{r'}\}$,
and which have no zeros on $V_{x}^{(l)}|_{U_{k}}\times\mathbb{A}_{k}^{1}$.
To see this, we first rescale the $\tilde{e}_{i}$ by powers of $\{z_{2},\dots,z_{m}\}$
until the subgroup $G_{(0,l,\dots,l)}$ acts trivially. Then, note
that there is a decomposition 
\[
p^{*}\overline{\mathcal{N}}'_{i,\lambda}[z_{2}^{-1},\dots,z_{m}^{-1}]=\bigoplus_{\chi}(p^{*}\overline{\mathcal{N}}'_{i,\lambda}[z_{2}^{-1},\dots,z_{m}^{-1}])_{\chi}
\]
of $p^{*}\overline{\mathcal{N}}'_{i,\lambda}[z_{2}^{-1},\dots,z_{m}^{-1}]$
into submodules on which $G_{(l,0,\dots,0})$ acts with character
$\chi$. Each of these submodules is a vector bundle over $\mathcal{O}_{V_{x}^{(0,l\dots,l)}}[\lambda]$.
So applying Lindel's theorem (\cite{key-71}, Theorem on the bottom
of page 1), we see that each $(p^{*}\overline{\mathcal{N}}'_{\lambda,i}[z_{2}^{-1},\dots,z_{m}^{-1}])_{\chi}$
is induced from $\mathcal{O}_{V_{x,k}^{(0,l\dots,l)}}$. In particular,
we can choose lifts $\{\tilde{e}_{i}\}$ to sections, on which $G_{(l,0,\dots,0)}$
acts by a character, and which have no zeros on $V_{x}^{(0,l,\dots,)}|_{U_{k}}\times\mathbb{A}_{k}^{1}$;
i.e., when regarding them as sections of the pushforward of $p^{*}\overline{\mathcal{N}}'_{\lambda,i}[z_{2}^{-1},\dots,z_{m}^{-1}]$
to $V_{x}^{(0,l,\dots,l)}$. As $V_{x}^{(l)}|_{U_{k}}\times\mathbb{A}_{k}^{1}$
is an etale $G_{(l,0,\dots,0)}$-cover of $V_{x}^{(0,l,\dots,)}|_{U_{k}}\times\mathbb{A}_{k}^{1}$
and $G_{(l,0,\dots,0)}$ acts by a character on each section $\tilde{e}_{i}$,
this implies that these sections, when regarded as elements of $p^{*}\overline{\mathcal{N}}'_{i,\lambda}[z_{2}^{-1},\dots,z_{m}^{-1}]$,
have no zeros on $V_{x}^{(l)}|_{U_{k}}\times\mathbb{A}_{k}^{1}$.
Now we may again rescale by powers of $\{z_{2},\dots,z_{m}\}$ to
see that we can lift the original $\{e_{i}\}_{i=1}^{r'}$. 

Now choose a collection $\{f_{i}\}_{i=1}^{r'}$ of eigenvectors for
$\nabla$ inside $p^{*}\overline{\mathcal{N}}'_{i,\lambda}[z_{1}^{-1}]$
which are a basis for it it as a bundle, on which $G_{l}$ acts transitively.
We may write any of the sections $\tilde{e}_{i}\in p^{*}\overline{\mathcal{N}}'_{i,\lambda}[z_{2}^{-1},\dots,z_{m}^{-1}]$
as a sum of eigenvectors of the form ${\displaystyle \sum_{j=1}^{r'}\alpha_{j}f_{j}}$
(now working inside $p^{*}\overline{\mathcal{N}}'_{i,\lambda}[z_{1}^{-1},z_{2}^{-1},\dots,z_{m}^{-1}]$).
Since the action by $G_{l}$ is transitive, we see that each ratio
${\displaystyle \frac{\alpha_{j}}{\alpha_{j'}}}$ is a root of unity.
From this and the non-vanishing condition for the section $\tilde{e}_{i}$,
it follows that, $\alpha_{j}\in(\mathcal{O}_{V_{x}^{(l)}}[z_{1}^{-1},z_{2}^{-1},\dots,z_{m}^{-1},\lambda])^{*}=(\mathcal{O}_{V_{x}^{(l)}}[z_{1}^{-1},z_{2}^{-1},\dots,z_{m}^{-1}])^{*}$
for all $\alpha_{j}$; and since the reduction mod $\lambda$ of each
such section is contained in $p^{*}\overline{\mathcal{N}}_{i}'$,
we see that in fact $\alpha_{j}$ is contained in $\mathcal{O}_{V_{x,k}^{(l)}}[z_{1}^{-1}]$;
therefore, the sections $\{\tilde{e}_{1},\dots,\tilde{e}_{r'}\}$
are contained in $p^{*}\overline{\mathcal{N}}'_{i,\lambda}$ (since
this sheaf is reflexive, to check that these are sections we only
have to check it in codimension $2$; i.e., after inverting each $z_{i}$),
and in fact these sections must generate $p^{*}\overline{\mathcal{N}}'_{i,\lambda}[z_{1}^{-1}]$
as the change of basis matrix between the $\{e_{i}\}_{i=1}^{r}$ and
the $\{f_{i}\}_{i=1}^{r}$ has nowhere vanishing determinant. 

To see that $\{\tilde{e}_{i}\}_{i=1}^{r'}$ actually generate $p^{*}\overline{\mathcal{N}}'_{i,\lambda}$,
it is enough to check it in codimension $2$; as a sub-bundle of full
rank of a reflexive coherent sheaf, which is equal to the whole sheaf
in codimension $2$, is equal to it everywhere. Since $\{e_{1},\dots,e_{r'}\}$
are a basis for $p^{*}\overline{\mathcal{N}}'_{i}$, it follows that
$p^{*}\overline{\mathcal{N}}'_{i,\lambda}/\lambda\to p^{*}\overline{\mathcal{N}}_{i}'$
is surjective, and therefore\footnote{Since $\overline{\mathcal{N}}_{\lambda}$ is reflexive, by, \cite{key-76}
corollary 1.1.14 , $\overline{\mathcal{N}}_{\lambda}/\lambda$ is
torsion-free} an isomorphism and that $\{\tilde{e}_{i}\}_{i=1}^{r'}$ generate
in an open neighborhood of $V_{x}^{(l)}\subset V_{x}^{(l)}\times\mathbb{A}_{k}^{1}$;
however, by the previous paragraph they also generate $p^{*}\overline{\mathcal{N}}'_{i,\lambda}[z_{1}^{-1}]$;
so they generate in codimension $2$ as required. 
\end{proof}
Finally, to finish off this section, we note:
\begin{cor}
\label{cor:At-most-one-N-lambda}Suppose $H^{0}(\Omega_{\overline{L}_{k}}^{1})=0$.
For each $\mathcal{L}\in\text{Pic}^{r}(\overline{L}_{k})$ there is
at most one $\overline{\mathcal{N}}_{\lambda}$, with a given residue,
for which $\overline{\mathcal{N}}_{\lambda}/\lambda=\pi_{*}(\mathcal{L})$. 
\end{cor}

\begin{proof}
There is a natural map $\text{Pic}^{r}(\overline{L}_{k}^{(1)})\to\text{Pic}^{r}(\overline{L}_{k})$
which is given by the Frobenius pullback. By \prettyref{thm:Pic-over-k}
and \prettyref{thm:Pic}, we have to show that this map is injective.
So, suppose $\mathcal{L}'\in\text{Pic}^{r}(\overline{L}_{k}^{(1)})$
satisfies $F^{*}\mathcal{L}'\tilde{=}\mathcal{O}_{\overline{L}_{k}}$.
By Cartier descent, applied over the smooth locus $\overline{L}_{k}^{sm}$,
we obtain a flat connection on $\mathcal{O}_{\overline{L}_{k}^{sm}}$,
for which $\nabla(1)\in H^{0}(\Omega_{\overline{L}_{k}^{sm}}^{1})$.
But since $\overline{L}_{k}$ is a $V$-manifold, we have $H^{0}(\Omega_{\overline{L}_{k}^{sm}}^{1})=H^{0}(\Omega_{\overline{L}_{k}}^{1})=0$.
Therefore the connection is trivial, and we see that $\mathcal{L}'_{\overline{L}_{k}^{sm,(1)}}\tilde{=}\mathcal{O}_{\overline{L}_{k}^{sm,(1)}}$.
Since $\mathcal{L}'$ is reflexive, we must have $\mathcal{L}'_{\overline{L}_{k}^{(1)}}\tilde{=}\mathcal{O}_{\overline{L}_{k}^{(1)}}$. 
\end{proof}

\subsection{\label{subsec:Infinitesimal--regular-Connectio}Infinitesimal $\theta$-regular
Connections}

In this subsection, we give the definition of a $\theta$-regular
$\lambda$-connection, in the case of $\lambda$-connections over
$R[\lambda]/\lambda^{m}.$ Unlike in the case of positive or mixed
characteristic, we cannot start with a description of our object over
$U$ and then extend it; instead, we demand that our $\lambda$-connection
is $\theta$-regular as a Higgs sheaf (when $m=1$), and then specify
conditions ``at infinity.'' The strong assumptions on vanishing
of cohomology will then ensure that we get a unique object for each
$m$. 

This definition is fairly technically involved, but (we hope) the
constructions of the previous section can act as motivation. After
giving the definition, we'll go on the discuss the local deformation
theory, and then turn to proving the uniqueness and existence of deformations
in this context. This proof, in turn, relies on reduction mod $p$,
and therefore on the results of the previous section. 

Before giving the definition, we recall the following: suppose $\pi:L\to X$
is a finite morphism of varieties, which is a branched cover over
a smooth divisor $D\subset X$, let $E=\pi^{-1}(D)$. Then, if $\Omega_{X}^{1}(D)$
denotes log one-forms along $D$, we have a canonical isomorphism
$\pi^{*}(\Omega_{X}^{1}(D))\tilde{=}\Omega_{L}^{1}(E)$. Therefore,
if $\mathcal{L}$ is a coherent sheaf on $L$, we have 
\[
\pi_{*}(\mathcal{L}\otimes\Omega_{L}^{1}(E))\tilde{\to}\pi_{*}(\mathcal{L})\otimes\Omega_{X}^{1}(D)
\]
Now suppose $\mathcal{L}$ is a line bundle. Then, if $\varphi$ is
a section of $\Omega_{L}^{1}(E)$, $\varphi$ yields a map, which
we shall also call $\varphi:\mathcal{L}\to\mathcal{L}\otimes\Omega^{1}(E)$.
Applying $\pi_{*}$ we obtain a map
\[
\pi_{*}(\varphi):\pi_{*}(\mathcal{L})\to\pi_{*}(\mathcal{L})\otimes\Omega_{X}^{1}(D)
\]
If we pick a basis of $\pi_{*}(\mathcal{L})$, we get a matrix of
log one-forms which we shall denote $A_{\varphi}$. 

Now let $\overline{\mathcal{N}}_{\lambda,n}$ denote a meromorphic
$\lambda$-connection over $R[\lambda]/\lambda^{n}$, which is $R[\lambda]/\lambda^{n}$-flat.
Suppose that $\overline{\mathcal{N}}_{\lambda,n}/\lambda$ is a Higgs
bundle which is $\theta$-regular at infinity. We're going to adapt
\prettyref{def:Theta-Reg-Pos-Char}. To begin with, we should give
a version of \prettyref{lem:Decomop-before-extn}. Before stating
it, note that $\mathcal{N}_{\lambda,n}$ can be regarded as a sheaf
on $U$ via micro-localization. 
\begin{lem}
\label{lem:Decomp-before-extn-infinitesimal}Let $x\in\tilde{D}^{\text{sm}}$.
Then there is an etale neighborhood $\varphi:V_{x}\to\overline{\tilde{X}}$
of $x$ in which we have a direct sum of $\lambda$-connections
\[
\varphi^{*}(j_{*}\mathcal{N}_{\lambda,n})=\bigoplus_{i}j_{*}(\mathcal{N}_{\lambda,n})_{V_{x},i}
\]
where $j:U\to\overline{\tilde{X}}_{k}$ is the inclusion, and each
summand $j_{*}(\mathcal{N}_{\lambda})_{V_{x},i}$ is supported along
a component of the inverse image of $L$ inside $T^{*}(V_{x})\backslash\tilde{D}$.
In particular, the summands are indexed by components of $\overline{L}_{V_{x}}$. 
\end{lem}

This follows immediately from the decomposition of \prettyref{cor:Local-Struc-of-Closure},
as we demand $\overline{\mathcal{N}}_{\lambda,n}/\lambda$ is $\theta$-regular
at infinity (and $\lambda$ is nilpotent). So, we can proceed to the 
\begin{defn}
\label{def:Weakly-Theta-Reg-infinitesimal}We say that $\overline{\mathcal{N}}_{\lambda,n}$
is weakly $\theta$-regular at infinity if, for each $x$ which is
the generic point of a component of $\tilde{D}$, there is an etale
neighborhood $\varphi:V_{x}\to\overline{\tilde{X}}$ so that
\[
\varphi^{*}(\overline{\mathcal{N}}_{\lambda,n})=\bigoplus_{i}(\overline{\mathcal{N}}_{\lambda,n})_{V_{x},i}
\]
whose restriction to $V_{x}\backslash\tilde{D}_{k}$ agrees with the
decomposition of \prettyref{lem:Decomp-before-extn-infinitesimal}. 

1) If the summand $(\overline{\mathcal{N}}_{\lambda,n})_{V_{x},i}$
corresponds to a component of type I, then we demand that there is
a rational number $\alpha$ so that, for any basis, the matrix of
the connection satisfies
\[
[\nabla]=\Theta+\lambda A_{\frac{\alpha dz}{z}}+\lambda^{2}\Phi
\]
where $\Theta$ and $\Phi$ is a matrices of one-forms with no poles,
and $A_{\frac{\alpha dz}{z}}$ is the matrix of one forms described
directly above (with respect to the reduction of our given basis to
$(\overline{\mathcal{N}}_{\lambda,n})_{V_{x},i}/\lambda=\pi_{*}(\mathcal{L})$). 

2) If the summand $(\overline{\mathcal{N}}_{\lambda})_{V_{x},i}$
corresponds to a component of type II, then we have $(\overline{\mathcal{N}}_{\lambda,n})_{V_{x},i}=(\overline{\mathcal{N}}'_{i,\lambda,n})^{G_{l}}$
where $\overline{\mathcal{N}}'_{\lambda,n}$ is a $G_{l}$-equivariant
bundle on $V_{x}^{(l)}$ with meromorphic connection, which has a
basis $\{e_{j}\}$ on which $G_{l}$ acts transitively, so that 
\[
\nabla(e_{j})=(\theta_{j}+\lambda\beta\frac{dz}{z}))e_{j}
\]
where $\theta_{j}$ are the one-forms defined in \prettyref{def:omega-reg-prime-higher-dim}
and and $\beta\in\mathbb{Q}$. 
\end{defn}

Let us note that this definition (in type I) does not depend on the
choice of basis; in other words, if the description given there holds
in one basis it holds in all of them. To see this, we rewrite the
condition as follows: consider the Higgs field $\Theta$ on $(\overline{\mathcal{N}}_{\lambda,n})_{V_{x},i}/\lambda$,
let the matrix of $\Theta$ in the given basis be $[\Theta]$. Then
\[
[\nabla]-[\Theta]=\lambda A_{\frac{\alpha dz}{z}}+\lambda^{2}\Phi
\]
If we change basis, both $[\Theta]$ and $\lambda A_{\frac{\alpha dz}{z}}$
are altered by conjugation, and the condition that $\Phi$ has no
poles is still satisfied. 

We can therefore define the residue of $\overline{\mathcal{N}}_{\lambda,n}$,
denoted $\text{res}(\overline{\mathcal{N}}_{\lambda,n})$, as the
$\mathbb{Q}$-divisor determined by $\alpha E$ if $E$ is a divisor
in a component $L_{V_{x},i}$ of type I, and $\beta E$ in type II.
In type I, if $\alpha=0$ then we have that $(\overline{\mathcal{N}}_{\lambda,n})_{V_{x},i}$
is a bundle with $\lambda$-connection (with no singularities). 

We note that this definition makes sense over any field $k$ to which
$R$ maps, if we modify it by demanding that the numbers $\alpha$
and $\beta$ live in $k$. In particular, if we consider the $\theta$-regular
connections $\overline{\mathcal{N}}_{\lambda,k}$ of the previous
section, then they necessarily satisfy the definitions; this follows
from looking at the condition for divisors of type I satisfied by
$\overline{\mathcal{N}}_{\lambda,k}$ (that the bundle is $z^{\alpha}\cdot\mathcal{V}_{\lambda}$,
where $\mathcal{V}_{\lambda}$ has no singularities); writing this
out in a basis gives the form $A_{\frac{\alpha dz}{z}}$ considered
in the definition above. 

However, the connections $\overline{\mathcal{N}}_{\lambda,k}$ also
satisfy extra rigidity conditions, which are not automatically satisfied
in the infinitesimal case. That is because, when we don't demand that
our bundles extend all the way to $k[\lambda]$, there could be ``extra''
infinitesimal deformations. We need to guarantee that these don't
occur. 

To do this, consider a point $x\in\tilde{X}$ which is contained in
exactly $m$ components of $\tilde{D}$. Recall that, by assumption
$2$ concerning the structure of $\overline{L}$, we can suppose that
at most $1$ component of $\tilde{D}$ containing $x$ has a component
of type I (in $\overline{L}$) living above it. If no such components
are of type I, then, in fact, the above definition is already enough. 

So, let $V_{x}$ be an etale neighborhood of $x$ and let $\{z_{1},\dots,z_{m}\}$
be local equations for the components of $\tilde{D}$, and suppose
that, over the generic point of $\{z_{1}=0\}$, there is a component
of $L_{V_{x}}$, called $L_{V_{x},i}$ of type I . Consider the partial
root cover 
\[
V_{x}^{(0,l,\dots,l)}\to V_{x}
\]
which consists of taking $l$th roots of the variables $\{z_{2},\dots,z_{m}\}$.
This scheme is acted upon by $G_{(0,l,\dots,l)}$, a product of cyclic
groups of order $l$. Arguing exactly as in \prettyref{cor:Local-Struc-of-Closure},
we see that there is disjoint union of schemes $\{L_{V_{x,i}}^{(0,l,\dots,l)}\}$,
equipped with an action of $G_{(0,l,\dots,l)}$, and $G_{(0,l,\dots,l)}$-equivariant
projections $\pi_{(0,l,\dots,l)}:L_{V_{x,i}}^{(0,l,\dots,l)}\to V_{x}^{(0,l,\dots,l)}$.
Note that these schemes are obtain from the smooth scheme $L_{V_{v}}^{(l)}$
by the action of a single cyclic group of order $l$, and are therefore
smooth themselves. The orbits under the action of $G_{(0,l,\dots,l)}$
yield the components $L_{V_{x},i}$. 

Then, there is a $G_{(0,l,\dots,l)}$-equivariant line bundle $\mathcal{L}$
on ${\displaystyle \bigsqcup_{i}L_{V_{x,i}}^{(0,l,\dots,l)}}$ so
that 
\[
\overline{\mathcal{N}}_{V_{x}}=(\pi_{(0,l,\dots,l)})_{*}(\mathcal{L},df)^{G_{(0,l,\dots,l)}}
\]
as Higgs bundles. 

Furthermore, we can decompose
\[
(\pi_{(0,l,\dots,l)})_{*}(\mathcal{L},df)=\bigoplus_{i}(\pi_{(0,l,\dots,l)})_{*}(\mathcal{L}|_{L_{V_{x,i}}^{(0,l,\dots,l)}},df):=\bigoplus_{i}\overline{\mathcal{N}}'_{i}
\]

Now, let 
\[
\overline{\mathcal{N}}'_{\lambda,n}=\bigoplus_{i}(\overline{\mathcal{N}}_{\lambda,n}')_{i}
\]
be an $R[\lambda]/\lambda^{n}$-flat deformation of ${\displaystyle \bigoplus_{i}\overline{\mathcal{N}}'_{i}}$. 
\begin{defn}
\label{def:Theta-reg-infinitesimal}We say that $\overline{\mathcal{N}}'_{\lambda,n}$
is $\theta$-regular at infinity each summand $(\overline{\mathcal{N}}_{\lambda,n}')_{i}$
satisfies conditions analogous to those of \prettyref{def:Weakly-Theta-Reg-infinitesimal};
i.e., if $(\overline{\mathcal{N}}_{\lambda,n}')_{i}$ corresponds
to a component which is embedded over $\{z_{1}=0\}$, then we demand
that the first condition of \prettyref{def:Weakly-Theta-Reg-infinitesimal}
is satisfied etale locally at the generic point of $\{z_{1}=0\}$,
and in all other cases we demand that the second condition of \prettyref{def:Weakly-Theta-Reg-infinitesimal}
is satisfied etale locally at the generic point of $\{z_{1}=0\}$;
in all cases we demand that this second condition is satisfied etale
locally at the generic point of $\{z_{i}=0\}$ for $i\neq1$. 

We say that our bundle $\overline{\mathcal{N}}_{\lambda,n}$ is $\theta$-regular
at infinity if, for each $x$in exactly $m$ components of $\tilde{D}$,
there is a bundle $\overline{\mathcal{N}}'_{\lambda,n}$ on $V_{x}^{(0,l,\dots,l)}$
so that $(\overline{\mathcal{N}}_{\lambda,n})_{V_{x}}=(\overline{\mathcal{N}}'_{\lambda,n})^{G_{(0,l,\dots,l)}}$.
Note that that (the proof of) \prettyref{lem:2-d-goodness} ensures
that, after passing to $k$, the bundles $\overline{\mathcal{N}}_{\lambda}/\lambda^{n}$
satisfy this condition. Furthermore, in the sections directly below,
we'll see that this definition gives us good control over the local
deformation theory of $\overline{\mathcal{N}}_{\lambda,n}$. 
\end{defn}

\subsubsection{\label{subsec:Local-Deformation-theory}Local Deformation theory}

We turn now to understanding the local structure of deformations of
a $\theta$-regular bundle. To set the stage for these results, let
us begin by working over $U$. We have a useful general result which
is similar in spirit to \prettyref{lem:All-bundles-are-pi-push}. 
\begin{lem}
\label{lem:Deformations-over-U}Let $U'\subset U$ be an affine open
subset. Then the set of (isomorphism classes of) deformations of $\mathcal{N}_{\lambda,n}(U')$
is a pseudo-torsor over $\pi_{*}\Omega_{L}^{1}(U')\tilde{=}\text{Ext}_{U'}^{1}(\mathcal{N},\mathcal{N})$.
Further, we have that $\mathcal{N}_{\lambda,n}|_{U'}\tilde{=}\pi_{*}\mathcal{L}_{\lambda,n}$
where $\mathcal{L}_{\lambda,n}$ is a line bundle with $\lambda$-connection
over $R[\lambda]/\lambda^{n}$ on $L_{U'}$. The obstruction to deforming
$\mathcal{N}_{\lambda,n}|_{U'}$ is a class in $\pi_{*}\Omega_{L}^{2}(U')$. 
\end{lem}

\begin{proof}
As $L_{U'}\to U'$ is etale, the scheme $L_{U'}\times_{U'}L_{U'}$
has the diagonal $L_{U'}$ as a component. Since the Higgs bundle
$\mathcal{N}|_{U'}$ is scheme-theoretically supported along $L_{U'}\subset T^{*}U'$,
the Higgs bundle $\pi^{*}\mathcal{N}|_{L_{U'}}$ is supported along
$L_{U'}\times_{U'}L_{U'}\subset T^{*}L_{U'}\tilde{=}L_{U'}\times_{U'}T^{*}U'$.
Thus there is a summand of $\pi^{*}\mathcal{N}|_{L_{U'}}$ which is
scheme-theoretically supported along $L_{U'}\subset L_{U'}\times_{U'}L_{U'}$;
i.e. this summand is a line bundle $(\mathcal{L},\eta)$ where the
one-form $\eta$ has graph equal to $L_{U'}\subset T^{*}L_{U'}$. 

If we write $\pi^{*}\mathcal{N}|_{L_{U'}}=\mathcal{L}\oplus\mathcal{E}$,
then $\mathcal{E}$ is a Higgs bundle supported on the complement
of $L_{U'}$ in $L_{U'}\times_{U'}L_{U'}$; therefore $\text{Ext}^{1}(\mathcal{L},\mathcal{E})=0$.
Thus any formal deformation of $\pi^{*}\mathcal{N}|_{L_{U'}}$ is
a direct sum of deformations of $\mathcal{L}$ and $\mathcal{E}$;
so we may write 
\[
\pi^{*}\mathcal{N}_{n,\lambda}|_{L_{U'}}=\mathcal{L}_{n,\lambda}\oplus\mathcal{E}_{n,\lambda}
\]
where $\mathcal{L}_{n,\lambda}$ is a line bundle with $\lambda$-connection
over $R[\lambda]/\lambda^{n}$ on $L_{U'}$. Thus the projection $\pi^{*}\mathcal{N}_{n,\lambda}\to\mathcal{L}_{n,\lambda}$
yields by adjunction a map $\mathcal{N}_{n,\lambda}\to\pi_{*}\mathcal{L}_{n,\lambda}$;
this map is an isomorphism mod $\lambda$ by \prettyref{thm:Pic},
and hence an isomorphism by Nakayama's lemma. 

Thus, the set of deformations is a torsor over $\text{Ext}_{U'}^{1}(\mathcal{N},\mathcal{N})\tilde{=}\text{Ext}_{L_{U'}}^{1}(\mathcal{L},\mathcal{L})\tilde{=}\Omega_{L}^{1}(L_{U'})$;
while the obstruction is given by choosing any lift of $\mathcal{L}_{\lambda,n}$
and taking the differential of the $\lambda$-connection. 
\end{proof}
Now we consider the situation near a suitable point in $\tilde{D}$.
In the following we fix a $\mathbb{Q}$-divisor and consider $\theta$-regular
$\lambda$-connections whose residue is equal to this fixed divisor. 
\begin{lem}
\label{lem:Local-Deformation-Statement}$1)$ Let $\{x\}$ be the
generic point of a component of $\tilde{D}$. Let $n\geq1$. Suppose
that $\overline{\mathcal{N}}_{\lambda,n}$ is a $\theta$-regular
$\lambda$-connection over $R[\lambda]/\lambda^{n}$. Then there is
an etale neighborhood $\varphi:V_{x}\to\overline{\tilde{X}}$ so that
set of $\theta$-regular deformations of $\varphi^{*}\overline{\mathcal{N}}_{\lambda,n}$
is a pseudo-torsor over $\pi_{*}\Omega_{\overline{L}_{V_{x}}}^{1}$.
The obstruction to deforming $\varphi^{*}\overline{\mathcal{N}}_{\lambda,n}$
is a class in $\pi_{*}\Omega_{\overline{L}_{V_{x}}}^{2}$. 

2) Suppose $\{x\}$ is the generic point of the intersection of $m$
components of $\tilde{D}$, for some $m\geq2$. Then here is an etale
neighborhood $\varphi:V_{x}\to\overline{\tilde{X}}$ so that set of
$\theta$-regular deformations of $\varphi^{*}\overline{\mathcal{N}}_{\lambda,n}$
is a pseudo-torsor over $\pi_{*}\Omega_{\overline{L}_{V_{x}}}^{1}$.
\end{lem}

\begin{proof}
$1)$ First suppose $\{x\}$ is the generic point of a component of
$\tilde{D}$. Write 
\[
\varphi^{*}\overline{\mathcal{N}}_{\lambda,n}=\bigoplus_{i}\overline{\mathcal{N}}_{\lambda,n,i}
\]
a decomposition as in \prettyref{def:Weakly-Theta-Reg-infinitesimal}.
Then either $\overline{\mathcal{N}}_{\lambda,n,i}$ is $G_{l}$-invariants
inside some direct sum of line bundles with meromorphic connection
(in which case the proof of the result is exactly as in the previous
lemma); or, $\overline{\mathcal{N}}_{\lambda,n,i}$ is of type I;
in other words, up to adding the monodromy term $\lambda A_{\frac{\alpha dz}{z}}$,
the bundle has a connection which is a deformation of $\pi_{*}(\mathcal{O}_{L,V_{x,i}},df)$.
This, in turn, is a Higgs bundle corresponding to the line bundle
on a smooth Lagrangian. In this case, deformations form a pseudotorsor
over 
\[
\text{Ext}_{V_{x,i}}^{1}(\pi_{*}(\mathcal{O}_{L,V_{x,i}},df),\pi_{*}(\mathcal{O}_{L,V_{x,i}},df))\tilde{=}\pi_{*}\Omega_{\overline{L}_{V_{x}}}^{1}
\]
with an obstruction class in $\pi_{*}\Omega_{\overline{L}_{V_{x}}}^{2}$.
So we obtain the lemma over the generic point of a component of $\tilde{D}$. 

$2)$ Now suppose $\{x\}$ is the generic point of the intersection
of $m$ components of $\tilde{D}$. Let $\{z_{1},\dots,z_{m}\}$ denote
local equations for the components of $\tilde{D}$. Then either $\overline{\mathcal{N}}_{\lambda,n}$
is simply $G_{l}$-invariants inside a sum of line bundles on the
root cover $V_{x}^{(l)}$, in which case the result is obvious; or
this is only true near the divisors $\{z_{i}=0\}$ for $i>1$. In
that case, we can argue by pulling back under the root cover $V_{x}^{(0,l,\dots,l)}\to V_{x}$
(where the we take the $l$th root of each of $\{z_{2},\dots,z_{m}\}$). 

To carry this out, consider the decomposition
\[
\overline{\mathcal{N}}'_{\lambda,n}=\bigoplus_{i}(\overline{\mathcal{N}}_{\lambda,n}')_{i}
\]
where $\overline{\mathcal{N}}'_{\lambda,n}$ is the bundle appearing
in \prettyref{def:Theta-reg-infinitesimal}, and each $\overline{\mathcal{N}}_{\lambda,n,i}$
is a deformation of the Higgs sheaf $\pi_{*}(\mathcal{O}_{\overline{L}_{V_{x},i}},df)$
appearing in \prettyref{cor:Local-Struc-of-Closure}. Then $\overline{\mathcal{N}}'_{\lambda,n}$
is $G_{(0,l,\dots,l)}$-equivariant vector bundle on $V_{x}^{(0,l,\dots,l)}$;
whose restriction to the generic point of each component of $\tilde{D}$
(except $\{z_{1}=0\}$) is a direct sum of eigenvectors for the connection.

Suppose we have a $\theta$-regular deformation $\overline{\mathcal{N}}'_{\lambda,n+1,i}$
of $\overline{\mathcal{N}}'_{\lambda,n,i}=p^{*}\overline{\mathcal{N}}_{\lambda,n,i}$.
Then $\overline{\mathcal{N}}'_{\lambda,n+1,i}[(z_{2}\cdots z_{m})^{-1}]$
is a flat deformation of the free module $\overline{\mathcal{N}}'_{\lambda,n,i}[(z_{2}\cdots z_{m})^{-1}]$,
and the set of isomorphism classes of such deformations are a torsor
over $\pi_{*}(\Omega_{L_{V_{x},i}}^{1}[(z_{2}\cdots z_{m})^{-1}])$.
Consider a section $\phi$ of $\pi_{*}(\Omega_{L_{V_{x},i}}^{1})\subset\pi_{*}(\Omega_{L_{V_{x},i}}^{1}[(z_{2}\cdots z_{m})^{-1}])$.
We have that $\overline{\mathcal{N}}'_{\lambda,n+1,i}\subset\overline{\mathcal{N}}'_{\lambda,n+1,i}[(z_{2}\cdots z_{m})^{-1}]$
is a sub-bundle (with $\theta$-regular meromorphic $\lambda$-connection).
The deformation $\overline{\mathcal{N}}'_{\lambda,n+1,i}[(z_{2}\cdots z_{m})^{-1}]+\phi$
is the same bundle, with the connection structure modified by adding
a term of the form $\lambda^{n}\Psi$ for a suitable operator $\Psi$.
So we can can define the meromorphic $\lambda$-connection $\overline{\mathcal{N}}'_{\lambda,n+1,i}+\phi$
by adding the term $\lambda^{n}\Psi$ to the connection form. It follows
(by looking at an eigenbasis for the connection in a neighborhood
of each component of $\tilde{D}$) that the deformation $\overline{\mathcal{N}}'_{\lambda,n+1,i}+\phi$
is $\theta$-regular at infinity as required. 
\end{proof}

\subsubsection{Construction of $\theta$-regular sheaves over $R[\lambda]/\lambda^{2}$}

Now we will, for the first time in this chapter, employ the assumption
that our Lagrangian $L$ is unobstructed (in the sense of \prettyref{def:Unobstructed!}),
in order to construct a family of $\theta$-regular $\lambda$-connections
over $R[\lambda/\lambda^{2}]$. We start with a purely local construction,
which allows us to match the definition of $\theta$-regularity in
this chapter with the constructions of chapter 2. 
\begin{lem}
\label{lem:Correct-extension}In the notations of \prettyref{def:embedded-in-T^*},
suppose $L_{V_{x},i}$ is embedded in $T^{*}V_{x}$ (here, $V_{x}$
is an etale neighborhood of the generic point $\{x\}$ of a component
of $\tilde{D}$). Let $\mathcal{E}_{\lambda,2}=\pi_{*}(\mathcal{O}_{L_{V,x}i}[\lambda]/\lambda^{2})$.
Then there is an extension $\tilde{\mathcal{E}}_{\lambda,2}$ of $\mathcal{E}_{\lambda,2}|_{L_{V_{x},i}\backslash\tilde{D}}$
to a bundle with meromorphic connection, in which the matrix for the
connection has the form 
\[
[\nabla]=\Theta+\lambda A_{\frac{\alpha dz}{z}}
\]
as in \prettyref{def:Weakly-Theta-Reg-infinitesimal}; in particular,
$\alpha\in\mathbb{Q}$. 
\end{lem}

\begin{proof}
Consider the log one form $\beta{\displaystyle \frac{dz}{z}}$ on
$L_{V,x}i$ (here, $z$ is coordinate of $\tilde{E}$), where $\beta\in\mathbb{Q}$
is arbitrary. Then, we can define a $\lambda$-connection 
\[
\mathcal{E}_{\lambda,2}|_{L_{V_{x},i}\backslash\tilde{D}}+[\beta{\displaystyle \frac{dz}{z}}]
\]
where $+$ denotes the action of $\Omega_{L_{V_{x},i}\backslash\tilde{D}}^{1}$
on the torsor of infinitesimal deformations of $\pi_{*}(\mathcal{O}_{L_{V,x}i\backslash\tilde{D}},df)$.
We know from \prettyref{lem:Def-of-=00005Cpsi} and \prettyref{thm:Microlocal-form-of-E},
that, for appropriate choice of $\beta$, this object extends to a
$\lambda$-connection on all of $V_{x}$, which deforms $\pi_{*}(\mathcal{O}_{L_{V,x}i})$.
Call this $\lambda$-connection $\mathcal{E}'_{\lambda,2}$. In any
basis, the matrix for the connection, call it $M$, has no singularities.
But then the matrix $M+\lambda A_{\frac{\alpha dz}{z}}$ defines a
new flat $\lambda$-connection on $\mathcal{E}'_{\lambda,2}$, and
if we set $\beta=-\alpha$ we obtain the result. 
\end{proof}
With this is hand, we can construct a $\theta$-regular $\lambda$-connection
in codimension $2$: 
\begin{cor}
There is an open subset $V\subset\overline{\tilde{X}}$, whose complement
has codimension $2$, on which there is a sheaf $\tilde{\mathcal{E}}_{\lambda,2}$
which satisfies the conditions of \prettyref{def:Weakly-Theta-Reg-infinitesimal}
at the generic points of divisors. 
\end{cor}

\begin{proof}
Start with the meromorphic $\lambda$-connction$\mathcal{E}_{\lambda,2}:=\pi_{*}(\mathcal{O}_{\bar{L}}[\lambda]/\lambda^{2},df)$.
For any type II divisor, this sheaf satisfies the $\theta$-regularity
condition automatically; and we can modify it at the generic points
of divisors of type I by the previous lemma to obtain a $\theta$-regular
sheaf on an open subset $V$ as required. 
\end{proof}
Now, we want to extend $\tilde{\mathcal{E}}_{\lambda,2}$ to a $\theta$-regular
bundle on all of $\overline{\tilde{X}}$. For this, we're going to
use the deformation theory of the previous subsection to prove:
\begin{lem}
\label{lem:Adjust-the-extension}Suppose $H^{1}(\mathcal{O}_{\overline{L}})=0=H^{0}(\Omega_{\overline{L}}^{1})$.
Let $\tilde{\mathcal{E}}_{\lambda,2}/\lambda=\pi_{*}(\mathcal{L},df)$
as a Higgs sheaf on $V$ (here, $\mathcal{L}$ is a line bundle on
$L_{V}$). Then, there is a unique deformation of $\pi_{*}(\mathcal{L},df)$
to an $R[\lambda]/\lambda^{2}$-flat $\lambda$-connection on $V$,
which is locally isomorphic to $\tilde{\mathcal{E}}_{\lambda,2}$,
and which extends to a $\theta$-regular bundle on all of $\overline{\tilde{X}}$. 
\end{lem}

\begin{proof}
Consider the set of isomorphism classes of deformations of $\pi_{*}(\mathcal{L},df)$,
on $V$. If $\mathcal{F}_{\lambda,2}$ is any such, then by \prettyref{lem:Local-Deformation-Statement},
the difference $[\tilde{\mathcal{E}}_{\lambda,2}]-[\mathcal{F}_{\lambda,2}]$
yields a section of $\Omega_{\overline{L}_{V}}^{1}$ (i.e., we can
define such a section locally, and they must agree on overlaps). As
$H^{0}(\Omega_{\overline{L}}^{1})=H^{0}(\Omega_{\overline{L}_{V}}^{1})=0$,
we see that $\mathcal{F}_{\lambda,2}$ and $\tilde{\mathcal{E}}_{\lambda,2}$
are locally isomorphic, as deformations. Therefore, there is an open
cover $\{V_{i}\}$ of $V$ on which we have a collection of isomorphisms
$\varphi_{i}:\tilde{\mathcal{E}}_{\lambda,2}\to\mathcal{F}_{\lambda,2}$
. This yields sections $\alpha_{ij}\in\mathcal{O}(\overline{L}_{V_{i}\cap V_{j}})$,
so that the maps $\varphi_{i}\circ\varphi_{j}^{-1}$ are given by
multiplication by $1+\lambda\alpha_{ij}$. Thus we obtain a class
in 
\[
H^{1}(\mathcal{O}_{L_{V}})\tilde{=}H^{0}R^{1}j_{*}(\mathcal{O}_{L_{V}})
\]
where the last isomorphism uses $H^{1}(\mathcal{O}_{\overline{L}})=0$.
Conversely, this module clearly acts on the set of deformations by
modifying the transition maps. 

Now, the sheaf $R^{1}j_{*}(\mathcal{O}_{L_{V}})$ is concentrated
at points of codimension $2$ in $\overline{L}\backslash\overline{L}_{V}$,
and there are only finitely many of these. So, if we can show that
we can locally modify $\tilde{\mathcal{E}}_{\lambda,2}$ to a sheaf
which is $\theta$-regular at infinity about each such point, then
we obtain the desired extension. 

Let $\{x\}$ be such a point. We shall assume that $\{x\}$ is the
intersection of two components of $\tilde{D}$, call then $D_{1}$
and $D_{2}$, the other cases being similar (but simpler). As usual,
the only interesting case is where there is a component $L_{V_{x},i}$
of $L_{V_{x}}$ which is of type I over one of the divisors, say $D_{1}$.
Pulling back to the root cover $V_{x}^{(l,0)}$, we can perform the
analogous construction for $\tilde{\mathcal{E}}_{\lambda,2}$ on $V_{x}^{(l,0)}$to
obtain a sheaf $\tilde{\mathcal{E}}'_{\lambda,2}$ on $V_{x}^{(l,0}\backslash\{x\}$
whose $G_{(0,l)}$-invariants are equal to $\tilde{\mathcal{E}}_{\lambda,2}$.
We have the decomposition 
\[
\tilde{\mathcal{E}}'_{\lambda,2}=\bigoplus_{i}(\tilde{\mathcal{E}}'_{\lambda,2})_{i}
\]
according to the support of the sheaf $\tilde{\mathcal{E}}'_{\lambda,2}$
in $T^{*}V_{x}^{(l,0)}$ (the sum is over components of $L_{V_{x}}^{(l,0)}$).
Then, as above, the space of deformations of $\tilde{\mathcal{E}}'_{\lambda,2}$
which are locally isomorphic to it, are a torsor over $R^{1}j_{*}(\mathcal{O}_{L_{V_{x}}^{(l,0)}})$.
As $L_{V_{x}}^{(l,0)}\to V_{x}^{(l,0)}$ is a branched covering of
smooth varieties (along $D_{1}$), we have an injection 
\begin{equation}
R^{1}j_{*}(\mathcal{O}_{L_{V_{x}}^{(l,0)}})\to R^{1}j_{*}(\mathcal{E}nd_{\mathcal{O}_{V_{x}^{(l,0)}}}(\pi_{*}\mathcal{O}_{L_{V_{x},i}^{(l,0)}}))\label{eq:injection}
\end{equation}
This is because the map $\mathcal{O}_{L_{V_{x}}^{(l,0)}}\to\pi^{*}\pi_{*}\mathcal{O}_{L_{V_{x},i}^{(l,0)}}$
is a split injection of bundles; and these cohomology groups can be
described as 
\begin{equation}
\mathcal{O}_{L_{V_{x}}^{(l,0)}}[(z_{1}z_{2})^{-1}]/\mathcal{O}_{L_{V_{x}}^{(l,0)}}z_{1}^{-1}+\mathcal{O}_{L_{V_{x}}^{(l,0)}}z_{2}^{-1}\label{eq:explicit-groups}
\end{equation}
(and the analogous formula for $\mathcal{E}nd_{\mathcal{O}_{V_{x}^{(l,0)}}}(\pi_{*}\mathcal{O}_{L_{V_{x},i}^{(l,0)}})$).
On the other hand, we can attach to $\tilde{\mathcal{E}}'_{\lambda,2}$
a canonical class in $R^{1}j_{*}(\mathcal{E}nd_{\mathcal{O}_{V_{x}^{(l,0)}}}(\pi_{*}\mathcal{O}_{L_{V_{x}^{(l,0)},i}}))$,
which measures its failure to extend to a bundle on all of $V_{x}^{(l,0)}$.
We want to know that this class is in the image of \prettyref{lem:Correct-extension}.
However, this is the case after reduction mod $p$ for all $p>>0$,
because $\tilde{\mathcal{E}}'_{\lambda,2,k}$ is locally isomorphic
to $\overline{\mathcal{N}}'_{\lambda,k}/\lambda^{2}$, which extends
to a bundle on $V_{x}^{(l,0)}$. Therefore, it must be true over $R$,
as follows from the explicit description of the groups $R^{1}j_{*}(\mathcal{O}_{L_{V_{x}}^{(l,0)}})$,
$R^{1}j_{*}(\mathcal{E}nd_{\mathcal{O}_{V_{x}^{(l,0)}}}(\pi_{*}\mathcal{O}_{L_{V_{x},i}^{(l,0)}}))$.
Thus we can make the required modification of $\tilde{\mathcal{E}}_{\lambda,2}$,
and it is unique by the injectivity of the map \prettyref{eq:injection}.
\end{proof}
\begin{rem}
The previous proof shows the necessity of the $\theta$-regularity
condition (as opposed to only considering weak $\theta$-regularity).
If we did not pull back to $V_{x}^{(l,0)}$ in the previous proof,
we could repeat all the steps until we got to the map 
\[
R^{1}j_{*}(\mathcal{O}_{L_{V_{x}}})\to R^{1}j_{*}(\mathcal{E}nd_{\mathcal{O}_{V_{x}^{(l,0)}}}(\pi_{*}\mathcal{O}_{L_{V_{x},i}}))
\]
As far as I can tell, this map is not injective in general. 
\end{rem}

From now on, we abuse notation and replace $\tilde{\mathcal{E}}_{\lambda,2}$
with the correct extension to $\overline{\tilde{X}}$ constructed
above. Note that, for $k$ of characteristic $p>>0$, the reduction
mod $p$ of this sheaf agrees with $\overline{\mathcal{N}}_{\lambda}/\lambda_{2}$
for a suitable $\overline{\mathcal{N}}_{\lambda}$; this is because
of the construction of $\overline{\mathcal{N}}_{\lambda}$ as a modification
of $\mathcal{E}_{\lambda}=\pi_{*}(\mathcal{O}_{\overline{L}}[\lambda],df)$. 

Next, we need to define an action of a suitable group of line bundles
with log connection (with respect to $\tilde{E}$) on $L_{V}$, on
a subset of the set of $\theta$-regular connections. 

In positive characteristic, we used locally trivial connections. Here,
we'll consider all connections whose monodromy group is finite; this
implies in particular that we suppose that the residue of the log
connection is rational. When $H_{dR}^{1}(L_{V})=0$, the latter condition
actually implies the finiteness of the associated monodromy group.
At any rate, let $\text{Pic}^{\text{fin}}(L_{V},\tilde{E})$ denote
this group. We have 
\begin{prop}
Let $(\mathcal{L},\nabla)\in\text{Pic}^{\text{fin}}(L_{V},\tilde{E})$,
and suppose it has associated residue $B={\displaystyle \sum\beta_{i}\tilde{E}_{i}}$.
Then there is a $\theta$-regular connection over $V$, $(\mathcal{L},\nabla)\star\tilde{\mathcal{E}}_{\lambda,2}$,
whose residue is given by $B+\text{res}(\tilde{\mathcal{E}}_{\lambda,2})$.
This operation defines an action of $\text{Pic}^{\text{fin}}(L_{V},\tilde{E})$
on the subset of (isomorphism classes of) $\theta$-regular connections
of type $(\mathcal{L},\nabla)\star\tilde{\mathcal{E}}_{\lambda,2}$.
We have 
\[
((\mathcal{L},\nabla)\star\tilde{\mathcal{E}}_{\lambda,2})/\lambda=\mathcal{L}\star\tilde{\mathcal{E}}_{\lambda,2}/\lambda
\]
where the object on the right is the action of line bundles on $\overline{L}$
on $\theta$-regular Higgs sheaves given by \prettyref{thm:Pic}.
\end{prop}

\begin{proof}
Over $U$, we have that $\tilde{\mathcal{E}}_{\lambda,2}=\pi_{*}(\mathcal{O}[\lambda]/\lambda^{2},df)$.
So we can define the required sheaf as $\pi_{*}(\mathcal{L}[\lambda]/\lambda^{2},df+\lambda\nabla)$.
Here, we are using the fact that $(\mathcal{L}[\lambda],\lambda\nabla)$
is a line bundle with $\lambda$-connection). Let us consider extensions
of this to $V$. Let $x$ be the generic point of some component of
$\tilde{D}$; employ the notation of \prettyref{cor:Local-Struc-of-Closure}.
If $L_{V_{x,i}}\to V_{x}$ is a component of type II, then we have
\[
\tilde{\mathcal{E}}_{\lambda,2}=\pi_{*}(\mathcal{O}_{L_{V_{x},i}^{(l)}}[\lambda]/\lambda^{2},df+\lambda\beta dz)^{G_{l}}
\]
where $L_{V_{x},i}^{(l)}$ is a suitable union of the components of
the etale cover $L_{V_{x}}^{(l)}\to V_{x}^{(l)}$. The connection
$(\mathcal{L},\lambda\nabla)$ extends to a line bundle with log connection
on $L_{V_{x}}^{(l)}$ (as $L_{V_{x}}^{(l)}\to L_{V_{x,i}}$ is a $G_{l}$-cover),
and so we can define 
\[
\mathcal{L}\star\tilde{\mathcal{E}}_{\lambda,2}=\pi_{*}(\mathcal{L}_{L_{V_{x},i}^{(l)}}[\lambda]/\lambda^{2},df+\lambda\beta dz+\lambda\nabla)^{G_{l}}
\]
which is a $\theta$-regular extension of $\pi_{*}(\mathcal{L}[\lambda]/\lambda^{2},df+\lambda\nabla)$. 

Now consider the case where $L_{V_{x,i}}\to V_{x}$ is a component
of type I. Since the line bundle $(\mathcal{L}[\lambda],\lambda\nabla)$
has trivial $p$-cuvature after reduction mod $p$, we see (by employing
the exact proof of \prettyref{thm:Microlocal-form-of-E} and then\prettyref{lem:Correct-extension}),
that the connection $j_{*}\pi_{*}(\mathcal{L}_{L_{V_{x},i}}[\lambda]/\lambda^{2},df+\lambda\nabla)$
extends to a bundle $\mathcal{L}_{L_{V_{x},i}}\star\pi_{*}(\mathcal{L}_{L_{V_{x},i}})$
which is $\theta$-regular along the divisor $\tilde{E}$. 

As the line bundle $(\mathcal{L},\nabla)$ defines a connection with
finite order, we have that $(\mathcal{L},\nabla)$ is locally trivial
after reduction mod $p$ (and its residue is in $\mathbb{F}_{p}$
if we take $p>>0$. Thus the sheaf $(\mathcal{L},\nabla)\star\tilde{\mathcal{E}}_{\lambda,2}$
agrees with a sheaf of the type $(\mathcal{L},\nabla)\star\overline{\mathcal{N}}_{\lambda,2}$,
where the action is the one defined in \prettyref{thm:Pic-over-k}.
Therefore this sheaf extends to one which is $\theta$-regular at
infinity on all of $\overline{\tilde{X}}$ (as this is true mod $p$
for $p>>0$). 

This defines the action as claimed, the formal properties follow immediately. 
\end{proof}
Now let use the unobstructedness condition to define the bundles we
want to study. Choose a line bundle with log connection $(\mathcal{K},\nabla)$
on $\overline{L}$ with finite monodromy group, whose residue (when
restricted to $L$) is equal to the negative of the monodromy divisor
of $L$. Then we make the 
\begin{defn}
\label{def:M-Lambda-2}We define the $\theta$-regular connection
$\overline{\mathcal{M}}_{\lambda,2}$ on $V$ to be $(\mathcal{K},\nabla)\star\tilde{\mathcal{E}}_{\lambda,2}$. 
\end{defn}

The choice of $(\mathcal{K},\nabla)$ is arbitrary, up to twist by
a character of the fundamental group of $\overline{L}_{\mathbb{C}}$.
Thus we could equally well consider any of the bundles $(\mathcal{L},\nabla)\star\overline{\mathcal{M}}_{\lambda,2}$
where $(\mathcal{L},\nabla)$ has trivial monodromy along each divisor
in $L$. There is, as far as I know, no canonical choice of $\overline{\mathcal{M}}_{\lambda,2}$.

\subsubsection{\label{subsec:-regular-connections-over}$\theta$-regular connections
over $R[\lambda]/\lambda^{n}$}

In this subsection we impose the conditions $H^{0}(\Omega_{\overline{L}}^{1})=0=H^{1}(\mathcal{O}_{\overline{L}})$.
Under this condition we'll show the following
\begin{thm}
\label{thm:Infinitesimal-Def}Let $\overline{\mathcal{M}}_{\lambda,2}$
be as in \prettyref{def:M-Lambda-2}. Then for any $(\mathcal{L},\nabla)\in\text{Pic}^{\text{fin}}(L_{V},\tilde{E})$,
the $\theta$-regular $\lambda$-connection $(\mathcal{L},\nabla)\star\overline{\mathcal{M}}_{\lambda,2}$
admits, for each $n>2$, a unique lift to a $\theta$-regular $\lambda$-connection
over $R[\lambda]/\lambda^{n}$. 
\end{thm}

To prove this, we need to show that deformation theory works as expected
in this setting. Suppose we have found our unique $\overline{\mathcal{M}}_{\lambda,n}$
for some $n\geq2$ which deforms $\overline{\mathcal{M}}_{\lambda,2}$
(identical arguments will work for deformations of $(\mathcal{L},\nabla)\star\overline{\mathcal{M}}_{\lambda,2}$).
We say that $\overline{\mathcal{M}}_{\lambda,n}$ is \emph{locally
liftable }if, for each $x\in\overline{\tilde{X}}$, there is an etale
neighborhood $V_{x}$ of $x$ on which $\overline{\mathcal{M}}_{\lambda,n}$
lifts to a $\theta$-regular connection $\overline{\mathcal{M}}_{\lambda,n+1}$. 
\begin{lem}
The $\lambda$-connection $\overline{\mathcal{M}}_{\lambda,n}$ is
locally liftable. 
\end{lem}

\begin{proof}
First suppose the point $x\in U$. Choose any lift of the bundle $\overline{\mathcal{M}}_{\lambda,n}|_{U}$
to a bundle with (not necessarily flat) $\lambda$-connection over
$R[\lambda]/\lambda^{n+1}$. The curvature of this connection can
be regarded as an $\mathcal{O}_{U}$-linear map from $\overline{\mathcal{M}}_{\lambda,n}/\lambda^{n}$
to $\overline{\mathcal{M}}_{\lambda,n}/\lambda^{n}\otimes_{\mathcal{O}_{U}}\Omega_{U}^{2}$,
which does not depend on the choice of lift. To show that it vanishes,
we can reduce mod $p$. But there, we have that $\overline{\mathcal{M}}_{\lambda,n,k}$
must be isomorphic to $\overline{\mathcal{M}}_{\lambda}/\lambda^{n}$
for some $\theta$-regular bundle $\overline{\mathcal{M}}_{\lambda}$
on $\overline{\tilde{X}}_{k}$ (because this is true when $n=2$,
and by the induction assumption, deformations are unique). But this
$\lambda$-connection lifts to $\overline{\mathcal{M}}_{\lambda}/\lambda^{n+1}$,
which shows that the curvature vanishes. 

Now suppose $x\in\tilde{D}^{\text{sm}}$. Then we can employ a very
similar argument to the above, working in $V_{x}$, and locally lifting
the components in the decomposition 
\[
(\overline{\mathcal{M}}_{\lambda,n})_{V_{x}}=\bigoplus_{i}(\overline{\mathcal{M}}_{\lambda,n,i})_{V_{x}}
\]

Finally, suppose $x\in D$ is in the intersection of more than one
component. Pull back to $V_{x}^{(0,l,l,\dots,l)}$; and let $\overline{\mathcal{M}}'_{\lambda,n}$
denote a $\theta$-regular bundle there whose $G_{(0,l,l,\dots,l)}$-invariants
are $\overline{\mathcal{M}}_{\lambda,n}$. As above we can locally
lift $\overline{\mathcal{M}}'_{\lambda,n}$ in codimension $2$. Therefore,
the obstruction to lifting $\overline{\mathcal{M}}'_{\lambda,n}|_{V_{x}^{(0,l,l,\dots,l)}\backslash\{x\}}$
is a class in $H^{1}(\Omega_{V_{x}^{(0,l,l,\dots,l)}\backslash\{x\}}^{1})$.
(c.f. the proof of the lemma directly below). But, arguing as in \prettyref{lem:Adjust-the-extension},
we see that this class vanishes if it does so mod $p$ for $p>>0$,
which it must as a deformation exists there. Then, to see that this
lift (on $V_{x}^{(0,l,l,\dots,l)}\backslash\{x\}$) can be extended
to all of $V_{x}^{(0,l,l,\dots,l)}$, we act by a class in $H^{1}(\mathcal{O}_{L_{V_{x}}^{(0,l,\dots,l)}})$,
exactly as in the proof of \prettyref{lem:Adjust-the-extension}. 
\end{proof}
Now let's show that local liftability implies liftability: 
\begin{lem}
There is a lift of $\overline{\mathcal{M}}_{\lambda,n}$ to a $\theta$-regular
$\lambda$-connection over $R[\lambda]/\lambda^{n+1}$. 
\end{lem}

\begin{proof}
The previous lemma gives us local liftability. Choosing a collection
of local lifts on an etale cover $\{U_{i}\}$ of $\overline{\tilde{X}}$,
we know from \prettyref{lem:Local-Deformation-Statement} that their
differences (on $U_{ij}$) live in $\Omega_{\overline{L}_{U_{ij}}}^{1}$.
Thus the obstruction to making these local lifts locally isomorphic
is a class in $H^{1}(\Omega_{\overline{L}}^{1})$. But this group
is finite over $R$, so to test the vanishing we can test mod $p$,
where it clearly holds (because $\overline{\mathcal{M}}_{\lambda,n}$
does lift). Then, the obstruction to gluing these local isomorphisms
is a class in $H^{2}(\mathcal{O}_{\overline{L}})$. But this class
again vanishes by reduction mod $p$. 
\end{proof}
Now we can give the 
\begin{proof}
(of \prettyref{thm:Infinitesimal-Def}) We've seen the existence already.
For uniqueness, note that any two such lifts are locally isomorphic
because $H^{0}(\Omega_{\overline{L}}^{1})=0$, and therefore the set
of such lifts forms a torsor over $H^{1}(\mathcal{O}_{\overline{L}})=0$
as needed. 
\end{proof}

\subsection{\label{subsec:Lifting-to}Lifting to $W(k)$}

We continue with the assumption that $H^{0}(\Omega_{\overline{L}}^{1})=0=H^{1}(\mathcal{O}_{\overline{L}})$.
In the previous section we have constructed a family of bundles with
$\lambda$-connection over $R[[\lambda]]$
\[
(\mathcal{L},\nabla)\star\overline{\mathcal{M}}_{\widehat{\lambda}}:=\lim_{n}(\mathcal{L},\nabla)\star\overline{\mathcal{M}}_{\lambda,n}
\]
indexed by log connections with finite monodromy group. This bundle
has the property that it is the completion at $(\lambda)$ of a (necessarily
unique, by \prettyref{cor:At-most-one-N-lambda}) $\theta$-regular
connection $\overline{\mathcal{N}}_{\lambda,k}$. 

In this section we will ``algebrize'' this construction to make
a family of bundles with $\lambda$-connection over $W(k)[\lambda]$,
where $k$ is an algebraically closed field of suitably large characteristic.
The first main result reads 
\begin{thm}
\label{thm:Theta-reg-in-mixed}Let $R\to k$ where $\text{char}(k)=p$,
and choose a lift of this map to $R\to W(k)$. Then for each $m\geq1$
there is a bundle with $\lambda$-connection $(\mathcal{L},\nabla)\star\overline{\mathcal{M}}_{\lambda,W_{m}(k)}$
over $\overline{\tilde{X}}_{W_{m}(k)}\times\mathbb{A}_{W_{m}(k)}^{1}$
which is a flat deformation of $(\mathcal{L},\nabla)\star\overline{\mathcal{M}}_{\lambda,k}$,
and whose completion at $(\lambda)$ is isomorphic to $(\mathcal{L},\nabla)\star\overline{\mathcal{M}}_{\widehat{\lambda},W_{m}(k)}$. 
\end{thm}

The basic structure of the proof is as follows: first we define an
object in codimension $2$ in a a way similar to \prettyref{prop:Extension-in-codim-2}.
Then we use the deformation theory to modify it at points of codimension
two as needed (as in the proof of, e.g., \prettyref{lem:Adjust-the-extension}).
The starting point is the following 
\begin{lem}
\label{lem:Extend-over-W_m}Let $\{x\}$ be a closed point of $\tilde{D}_{W_{m}(k)}^{\text{sm}}$,
and let $V_{x}$ be an etale neighborhood of $x$; suppose that the
component $\overline{L}_{V_{x,i}}$ embeds into $T^{*}V_{x}$. Then,
after possibly shrinking $V_{x}$, the $\lambda$-connection $\pi_{*}(\mathcal{O}_{\overline{L}_{V_{x},i}}[\lambda],df)|_{V_{x,W_{m}(k)}\backslash\tilde{D}}$
admits an extension to a bundle with $\lambda$-connection on all
of $V_{x}$ (with no singularities). 
\end{lem}

The proof, which is built around the ideas of chapter 2, is somewhat
technical, and uses a ``forward reference'' to \prettyref{lem:(Technical)-We-must}.
To begin, consider the coherent sheaf with $\lambda$-connection ${\displaystyle \int_{\pi}(\mathcal{O}_{L_{V_{x},i}}[\lambda],df):}=\mathcal{F}_{\lambda}$.

Let $V_{x,i}\to\mathbb{A}^{m}$ be a closed embedding, and denote
by $L'$ the Lagrangian defined by $(d\iota^{*})^{-1}(L_{V_{x},i})$;
replace also $\mathcal{F}_{\lambda}$ with ${\displaystyle \int_{\iota}\mathcal{F}_{\lambda}}$.
Choose a point $y$ in the smooth part of $E'$, and let $\sigma:T^{*}\mathbb{A}^{m}\to T^{*}\mathbb{A}^{m}$
be a linear symplectomorphism so that $\sigma(L')$ projects isomorphically
onto $\mathbb{A}^{m}$ in a formal neighborhood of $\sigma(y)$. We
can (and will) assume $\sigma$ is chosen as a composition of simple
coordinate transpositions, as specified at the beginning of \prettyref{subsec:Analysis-of-M-sigma}.
We can apply the automorphism $\sigma$ to obtain $\mathcal{F}_{\lambda}^{\sigma}$
and then complete along $(\lambda)$ to obtain a micro-local sheaf
on $T^{*}\mathbb{A}^{m}$; we can then formally complete at $\sigma(y)$
to obtain a module $\widehat{\mathcal{F}}_{\widehat{\lambda},W}^{\sigma}$
which is a coherent $\widehat{\mathcal{D}}_{\widehat{\lambda}}$-module.
Here $\widehat{\mathcal{D}}_{\widehat{\lambda}}$ denotes the quantization
of a formal disc, i.e., the formal completion along $(x_{1},\dots x_{m},\partial_{1},\dots,\partial_{m},\lambda)$
of the $m$th Weyl algebra $D_{m,\lambda}$. 

Then we have 
\begin{lem}
Let $z_{1}$ denote a local coordinate of $\sigma(E')$. The module
$\widehat{\mathcal{F}}_{\widehat{\lambda},W}^{\sigma}[z_{1}^{-1}]$
is a line bundle over $R[[z_{1},\dots,z_{n},\lambda]][z_{1}^{-1}]$,
whose natural connection is meromorphic in $z_{1}$ (i.e., there is
a power $M$ so that $z_{1}^{M}\nabla$ preserves a generator $e$
of this line bundle). Further, there is an generator $e$ of the line
bundle so that 
\[
\nabla(e)=\alpha\lambda\frac{dz_{1}}{z_{1}}e+ue
\]
where $u$ is a one form with no poles, and $\alpha\in\mathbb{Q}$. 
\end{lem}

\begin{proof}
The fact that the connection is meromorphic is a consequence of \prettyref{lem:(Technical)-We-must}.
Once this is known, we can argue as we did in chapter 2. Namely, choose
a line bundle with connection, denoted $e^{g}$, over $\widehat{\mathcal{D}}_{\widehat{\lambda}}$,
whose reduction mod $p$ has $p$-curvature equal to $\sigma(L'_{k})[[\lambda]]$
for all $R\to k$; this is possible as $L'$ is an exact algebraic
Lagrangian, so we can apply the construction at the beginning of \prettyref{sec:The-Monodromy-Divisor}
and then micro-localize. Then the module $\widehat{\mathcal{F}}_{\widehat{\lambda},W}^{\sigma}[z_{1}^{-1}]\otimes e^{-g}$
is a line bundle over $R[[z_{1},\dots,z_{n},\lambda]][z_{1}^{-1}]$,
possessing a meromorphic connection, whose reduction mod $p$ has
$p$-curvature $0$ for all $p>>0$. 

Now look at the polar term $\Psi$ of the connection, computed in
the basis $e$. This is a finite sum of terms, whose $p$-curvature
is computed by the formula
\[
\Psi^{p}+\lambda^{p-1}\sum_{i=1}^{n}\partial_{i}^{p-1}(\Psi_{i})dz_{i}
\]
where $\Psi_{i}$ is defined via 
\[
\Psi=\sum_{i=1}^{n}\Psi_{i}dz_{i}
\]
and by $\Psi^{p}$ we mean ${\displaystyle \sum_{i=1}^{n}\Psi_{i}^{p}dz_{i}}$. 

Then, the condition that the $p$-curvature vanishes immediately implies
that the connection is is log with respect to $z_{1}$, and the coefficient
of $z_{1}^{-1}$ satisfies $a_{-1}^{p}=\lambda^{p-1}a_{-1}$ after
reduction mod $p$ for $p>>0$; thus the result follows (just as in
\cite{key-24}, section 13). 
\end{proof}
Now this yields the 
\begin{proof}
(of \prettyref{lem:Extend-over-W_m}) It is enough the prove the existence
of such an extension after embedding into $\mathbb{A}^{m}$ and applying
an automorphism $\sigma$. But then it follows from the previous result,
as the reduction of $\alpha\in\mathbb{Q}$ mod $p^{m}$ is an element
in $\mathbb{Z}/p^{m}$ for all $p>>0$, so we may rescale the generating
element $\{e\}$ by a power of $z$ to obtain a connection which extends
across $\{z_{1}=0\}$. 
\end{proof}
Now we turn to the 
\begin{proof}
(of \prettyref{thm:Theta-reg-in-mixed}) Since we have \prettyref{lem:Extend-over-W_m}
at our disposal, we can mimic the argument of \prettyref{prop:Extension-in-codim-2}
to obtain an extension $\overline{\mathcal{N}}_{\lambda,W_{m}(k)}$
of $\pi_{*}(\mathcal{O}_{\overline{L}}[\lambda],df)$ over an open
subset $V\subset\overline{\tilde{X}}_{W_{m}(k)}$ whose complement
had codimension $2$, and which has the required properties there. 

To extend to all of $\overline{\tilde{X}}_{W_{m}(k)}$, we use the
argument of \prettyref{lem:Adjust-the-extension}; namely, the failure
to extend $\overline{\mathcal{N}}_{\lambda,W_{m}(k)}$ to a $W_{m}(k)$-flat
sheaf which deforms $(\mathcal{L},\nabla)\star\overline{\mathcal{M}}_{\lambda,W_{m}(k)}$
and whose completion is $(\mathcal{L},\nabla)\star\overline{\mathcal{M}}_{\widehat{\lambda},W_{m}(k)}$,
is given by a collection of sections $[\mathfrak{o}]_{x}$ of $R^{1}j_{*}(\mathcal{E}nd_{\mathcal{O}_{V_{x}^{(l,0)}}}(\pi_{*}\mathcal{O}_{L_{V_{x},i}^{(l,0)},k})[\lambda])$
over points $\{x\}$ of codimension $2$ (the notation is as in \prettyref{lem:Adjust-the-extension}).
We wish to show that these sections are in the image of 
\[
\eta:R^{1}j_{*}(\mathcal{O}_{L_{V_{x,k}}^{(l,0)}}[\lambda])\to R^{1}j_{*}(\mathcal{E}nd_{\mathcal{O}_{V_{x}^{(l,0)}}}(\pi_{*}\mathcal{O}_{L_{V_{x},i}^{(l,0)},k})[\lambda])
\]
But, after passing to the completion at $(\lambda)$, the corresponding
sections of $R^{1}j_{*}(\mathcal{E}nd_{\mathcal{O}_{V_{x}^{(l,0)}}}(\pi_{*}\mathcal{O}_{L_{V_{x},i}^{(l,0)},k})[[\lambda]])$
are in the image of the map 
\[
\widehat{\eta}:R^{1}j_{*}(\mathcal{O}_{L_{V_{x,k}}^{(l,0)}}[[\lambda]])\to R^{1}j_{*}(\mathcal{E}nd_{\mathcal{O}_{V_{x}^{(l,0)}}}(\pi_{*}\mathcal{O}_{L_{V_{x},i}^{(l,0)},k})[[\lambda]])
\]
precisely because the completion of $\overline{\mathcal{N}}_{\widehat{\lambda},W_{m}(k)}$
can be readjusted at points of codimension $2$ to make $(\mathcal{L},\nabla)\star\overline{\mathcal{M}}_{\widehat{\lambda},W_{m}(k)}$;
therefore $[\mathfrak{o}]_{x}$ is in the image of $\eta$ as required. 
\end{proof}
Denote by $(\mathcal{L},\nabla)\star\widehat{\overline{\mathcal{M}}}_{\lambda,W(k)}$
the inverse limit of the sheaves $(\mathcal{L},\nabla)\star\overline{\mathcal{M}}_{\lambda,W_{n}(k)}$
constructed in the previous proof. A priori, this is a family of $\lambda$-connections
on the formal completion (along the ideal $(p)$) of the scheme $\overline{\tilde{X}}_{W(k)}\times\mathbb{A}_{W(k)}^{1}$.
However, we in fact have: 
\begin{thm}
\label{thm:Algebrization}There is a vector bundle with $\lambda$-connection
$(\mathcal{L},\nabla)\star\overline{\mathcal{M}}_{\lambda,W(k)}$
on $\overline{\tilde{X}}_{W(k)}\times\mathbb{A}_{W(k)}^{1}$, whose
$p$-adic completion is isomorphic to $\widehat{\overline{\mathcal{M}}}_{\lambda,W(k)}$. 

After inverting the prime $p$, we obtain a vector bundle with $\lambda$-connection
$(\mathcal{L},\nabla)\star\overline{\mathcal{M}}_{\lambda,K}$ on
$\overline{\tilde{X}}_{K}\times\mathbb{A}_{K}^{1}$. 
\end{thm}

\begin{proof}
For notational simplicity, we'll give the proof for $\overline{\mathcal{M}}_{\lambda,W(k)}$;
the identical argument works after acting by a line bundle. From Grothendieck's
existence theorem in formal geometry (c.f. \cite{key-37}), we must
show that each $\overline{\mathcal{M}}_{\lambda,W_{m}(k)}$ extends
to a coherent sheaf $(\overline{\mathcal{M}}_{\lambda,W_{m}(k)})'$
on $\overline{\tilde{X}}_{W_{m}(k)}\times\mathbb{P}_{W_{m}(k)}^{1}$,
so that 
\[
(\overline{\mathcal{M}}_{\lambda,W_{m}(k)})'/p^{n-1}\tilde{=}(\overline{\mathcal{M}}_{\lambda,W_{m-1}(k)})'
\]
and so that the $\lambda$-connection extends to an operator on $(\overline{\mathcal{M}}_{\lambda,W_{m}(k)})'$.
To see this, consider the $\lambda$-connection $\overline{\mathcal{E}}_{\lambda,W_{m}(k)}$.
After inverting $\lambda$, we can regard this as a $\lambda^{-1}$-connection,
which degenerates to a Higgs sheaf $\overline{\mathcal{M}}_{\infty,W_{m}(k)}$
as $\lambda^{-1}\to0$. When changing from $\lambda$-connections
to modules over $\mathcal{D}[\lambda,\lambda^{-1}]$, the $\lambda$-connection
$\nabla(e)=df\cdot e$ becomes $\nabla(e)=(\lambda^{-1}df)\cdot e$
(c.f. \prettyref{subsec:Notations-and-Conventions}); and it follows
that $\overline{\mathcal{M}}_{\infty,W_{m}(k)}$ is a torsion free
sheaf on $\overline{\tilde{X}}_{W_{m}(k)}$ which, in codimension
$2$, is equal to the Higgs sheaf $\pi_{*}(\mathcal{L})$, equipped
with the trivial Higgs field. We want to show that $\overline{\mathcal{M}}_{\infty,W_{m}(k)}$
is in fact a bundle; then the fact that its reduction to $W_{m-1}(k)$
is $\overline{\mathcal{M}}_{\infty,W_{m-1}(k)}$ follows directly. 

We start with the case of $m=1$. Let $\{x\}\in\overline{\tilde{X}}_{k}$
be a point which is in the intersection of exactly $m$ components
of the divisor $\tilde{D}_{k}$. Let $V_{x}$ be an etale neighborhood
of $\{x\}$, and call the coordinates of these components $\{z_{i}\}_{i=1}^{m}$.
We're going to prove that there is a bundle with $k[[\lambda^{-1}]]$-connection
on $V_{x}$, call it $\mathcal{F}_{\lambda^{-1}}$, which is locally
isomorphic to $\overline{\mathcal{M}}_{\widehat{\lambda^{-1}},k}$
in codimension $2$ on $V_{x}$; here, $\overline{\mathcal{M}}_{\widehat{\lambda^{-1}},k}$
denotes the $\lambda^{-1}$-adic completion of the extension of $\overline{\mathcal{M}}_{\lambda,k}$
to $\overline{\tilde{X}_{k}}\times\mathbb{P}_{k}^{1}$. In fact, this
is enough to prove the result; because we can then look at the sheaf
\[
\mathcal{H}om_{\nabla}(\mathcal{F}_{\lambda^{-1}},\overline{\mathcal{M}}_{\widehat{\lambda^{-1}},k})
\]
which is a reflexive coherent sheaf over the formal scheme $L_{V_{x}}^{(1)}[[\lambda^{-1}]]$,
which in codimension $2$ is locally isomorphic to $\mathcal{O}_{L_{V_{x}}^{(1)}}[[\lambda^{-1}]]$
(by \prettyref{lem:Endomorphisms-of-N-lambda}, applied to $\overline{\mathcal{M}}_{\widehat{\lambda^{-1}},k}$).
But any reflexive coherent sheaf on a regular (formal) scheme which
is generically a line bundle is a line bundle; so there is an everywhere
non-vanishing section of this sheaf which is necessarily an isomorphism
between $\mathcal{F}_{\lambda^{-1}}$ and $\overline{\mathcal{M}}_{\widehat{\lambda^{-1}},k}$. 

Let us now construct $\mathcal{F}_{\lambda^{-1}}$. We construct a
bundle with meromorphic connection $\mathcal{F}_{\lambda^{-1},n}=\mathcal{F}_{\lambda^{-1}}/(\lambda^{-1})^{n}$
for each $n\geq1$ by induction on $n$; the induction assumption
being that $\mathcal{F}_{\lambda^{-1},n}$ and $\overline{\mathcal{M}}_{\widehat{\lambda^{-1}},k}/\lambda^{n}$
are isomorphic on $\{V_{i}\}$, where the union of the $\{V_{i}\}$
is a set whose complement has codimension $2$. When $n=1$ we set
$\mathcal{F}_{\lambda^{-1},1}=\pi_{*}(\mathcal{L})$, a bundle with
trivial Higgs field. Supposing $\mathcal{F}_{\lambda^{-1},n}$ has
been constructed, let $\mathcal{H}_{n+1}$ be any lift of $\mathcal{F}_{\lambda^{-1},n}$
to a meromorphic connection, whose poles along each $\{z_{i}\}$ have
order bounded by the order of the poles of $\overline{\mathcal{M}}_{\widehat{\lambda^{-1}},k}$.
The set of isomorphism classes of such deformations is a torsor over
\[
\mathcal{E}nd(\pi_{*}(\mathcal{L}))\otimes\Omega_{V_{X}}^{1}(n_{1}D_{1}+\dots+n_{m}D_{m})
\]
where the bundle $\Omega_{V_{X}}^{1}(n_{1}D_{1}+\dots+n_{m}D_{m})$
denotes one-forms with poles in $\tilde{D}$ of order along $D_{i}$
bounded by $n_{i}$ (we do not have to take a cohomology group because
the Higgs field is trivial). 

We can therefore consider $[\mathcal{H}_{n+1}|_{V_{i}}]-[\overline{\mathcal{M}}_{\widehat{\lambda^{-1}},k}/\lambda^{n+1}|_{V_{i}}]$
as a section of the above bundle on $V_{i}$, and these sections agree
on the overlaps $V_{i}\cap V_{j}$. Since the union of $V_{i}$ has
complement of codimension $2$, we obtain a section on all of $V_{x}$,
and modifying the bundle $\mathcal{H}_{n+1}$ by this section yields
$\mathcal{F}_{\lambda^{-1},n+1}$. 

Now suppose $m>1$. Proceed by induction on $m$. The sheaf $\overline{\mathcal{M}}_{\widehat{\lambda^{-1}},W_{m}(k)}$
deforms the bundle $\overline{\mathcal{M}}_{\widehat{\lambda^{-1}},W_{m-1}(k)}$
in codimension $2$, and therefore, looking at local cohomology, yields
the section of a sheaf supported on points of codimension $2$. But
this section vanishes after inverting $\lambda^{-1}$, because $\overline{\mathcal{M}}_{\widehat{\lambda^{-1}},W_{m}(k)}[\lambda]$
is a vector bundle. So the class vanishes already, and $\overline{\mathcal{M}}_{\widehat{\lambda^{-1}},W_{m}(k)}$
is a bundle as required. 
\end{proof}

\subsection{\label{subsec:Arithmetic-Support}Arithmetic Support}

Now we devote ourselves to understanding the arithmetic support of
the objects $(\mathcal{L},\nabla)\star\overline{\mathcal{M}}_{\lambda,K}$
which we have constructed in the previous theorem. As this object
is an algebraic $\lambda$-connection over a field of characteristic
$0$, we can spread out and define $(\mathcal{L},\nabla)\star\overline{\mathcal{M}}_{\lambda,F}$
over a field $F$ which is finitely generated over $\mathbb{Q}$,
and then over the ring $R$ (after possibly extending $R$). Denote
the corresponding object $(\mathcal{L},\nabla)\star\overline{\mathcal{M}}_{\lambda}$.
Our goal in this section is to show this object has constant $p$-support
for $p>>0$. To that end, let $k'$ be an algebraically closed field
of positive characteristic $l$, with $R\to k'$. 

Since $R$ is assumed smooth over $\mathbb{Z}$, we can choose for
each $m\geq1$ a morphism $R\to W_{m}(k')$ which lifts the chosen
morphism $R\to W_{m-1}(k')$. If $R/l\to k'$ is injective, then the
map $R\to W(k')$ is flat. 
\begin{thm}
\label{thm:Uniqueness-of-lambda-conns}1) With notation as above,
the base change $(\mathcal{L},\nabla)\star\overline{\mathcal{M}}_{\lambda,k'}$
is $\theta$-regular at infinity, and satisfies $(\mathcal{L},\nabla)\star\overline{\mathcal{M}}_{\lambda,k'}/\lambda\tilde{=}\mathcal{L}\star\overline{\mathcal{M}}_{k'}$.
In particular, the $\lambda$-connection $(\mathcal{L},\nabla)\star\overline{\mathcal{M}}_{\lambda,k'}$
is the unique $\theta$-regular $\lambda$-connection over $k'$ whose
reduction mod $\lambda$ is $\mathcal{L}\star\overline{\mathcal{M}}_{k'}$. 

2) Let $F$ be a field of characteristic zero, suppose that $(\mathcal{L},\nabla)\star\overline{\mathcal{M}}_{\lambda,F}$
and $(\mathcal{L},\nabla)\star\overline{\mathcal{M}}'_{\lambda,F}$
are two vector bundles with $\lambda$-connection, both of which arise
from the construction of \prettyref{thm:Algebrization}, for possibly
different primes. Then, after possibly extending $F$, we have 
\[
(\mathcal{L},\nabla)\star\overline{\mathcal{M}}_{1,F}|_{U_{F}}\tilde{=}(\mathcal{L},\nabla)\star\overline{\mathcal{M}'}_{1,F}|_{U_{F}}
\]
as algebraic connections on $U_{F}$. In particular, all of the connections
$(\mathcal{L},\nabla)\star\overline{\mathcal{M}}_{1,F}|_{U_{F}}$
have constant arithmetic support, equal to $L_{U_{F}}$, of multiplicity
$1$. 
\end{thm}

As in the previous section, for notational convenience we just prove
the result for $\overline{\mathcal{M}}_{\lambda,F}$, the case of
$(\mathcal{L},\nabla)\star\overline{\mathcal{M}}_{\lambda,F}$ being
identical. We start with 
\begin{lem}
\label{lem:completion-is-regular}For each $m\geq1$, the sheaf $\overline{\mathcal{M}}_{\lambda}/\lambda^{m}$
is $\theta$-regular at infinity over $R$; when $m=1$ we have $\overline{\mathcal{M}}_{\lambda}/\lambda\tilde{=}\overline{\mathcal{M}}$.
Therefore, for any $k'$ as above, the completion $\overline{\mathcal{M}}_{\widehat{\lambda},k'}$
(along $(\lambda)$) agrees with the completion along $(\lambda)$
of the $\lambda$-connection constructed in \prettyref{prop:Extension-in-codim-2}
\end{lem}

\begin{proof}
By construction we have $\overline{\mathcal{M}}_{\lambda}/\lambda\tilde{=}\overline{\mathcal{M}}$,
and for each $m\geq1$ we have that $\overline{\mathcal{M}}_{\lambda,E}/\lambda^{m}$
is $\theta$-regular at infinity, for some field extension $E$ of
$F=\text{Frac}(R)$ (we can take $E$ to be any field which contains
$\text{Frac}(W(k))$ for the $k$ which we used to construct $\overline{\mathcal{M}}_{\lambda,W(k)}$).
We shall show that this implies $\overline{\mathcal{M}}_{\lambda}/\lambda^{m}$
is $\theta$-regular at infinity. 

First, we note that is is enough to prove this over an open subset
$V$ whose complement has codimension $2$. For, if we suppose (by
induction) that $\overline{\mathcal{M}}_{\lambda}/\lambda^{m-1}$
is $\theta$-regular at infinity, then the failure of $\overline{\mathcal{M}}_{\lambda}/\lambda^{m}$
to extended to a sheaf on all of $\overline{\tilde{X}}$ is given
by classes in the cohomology group $H^{0}(R^{1}j_{*}(\mathcal{O}_{\overline{L}_{V_{X,i}}}))$
(where the notation is as in \prettyref{lem:Adjust-the-extension}).
These classes vanish after base changing to a field extension, which
implies that they already vanish (for instance, by the explicit description
of the cohomology groups given in \prettyref{eq:explicit-groups}). 

Now we must show that $\overline{\mathcal{M}}_{\lambda}/\lambda^{m}|_{V}$
is $\theta$-regular for any $m\geq2$. To see this, we first note
that, since $\mathcal{M}_{\lambda}/\lambda^{m}$ is a $\mathcal{D}_{\lambda}$-module
on $U$, by micro-localization we can consider it as sheaf on $T^{*}U$.
As $\mathcal{M}$ is supported along $L\subset T^{*}U$, so is $\mathcal{M}_{\lambda}/\lambda^{m}$.
Let $x\in\tilde{D}$ be a point which is contained in a single component
of $\tilde{D}$. Let $\varphi:V_{x}\to\overline{\tilde{X}}$ be an
etale neighborhood as in \prettyref{cor:Local-Struc-of-Closure};
and let $z$ be a local coordinate for $D$ in $V_{x}$. Consider
the sheaf $\varphi^{*}(\overline{\mathcal{M}}_{\lambda}/\lambda^{m})$.
By the condition on the support, we have 
\[
\varphi^{*}(\overline{\mathcal{M}}_{\lambda}/\lambda^{m})[z^{-1}]=\bigoplus_{i}(\overline{\mathcal{M}}_{\lambda}/\lambda^{m})_{i}[z^{-1}]
\]
where $(\overline{\mathcal{M}}_{\lambda}/\lambda^{m})_{i}[z^{-1}]$
is the subsheaf of elements supported on $L_{V_{x},i}\backslash\tilde{D}$.
So if $m\in\varphi^{*}(\overline{\mathcal{M}}_{\lambda}/\lambda^{m})$
is any section, it admits a unique representation $m=\sum_{i}m_{i}$
where $m_{i}\in(\overline{\mathcal{M}}_{\lambda}/\lambda^{m})_{i}[z^{-1}]$.
On the other hand, we have 
\[
\varphi^{*}(\overline{\mathcal{M}}_{\lambda,E}/\lambda^{m})=\bigoplus_{i}(\overline{\mathcal{M}}_{\lambda,E}/\lambda^{m})_{i}
\]
since $\overline{\mathcal{M}}_{\lambda,E}/\lambda^{m}$ is $\theta$-regular,
and $(\overline{\mathcal{M}}_{\lambda,E}/\lambda^{m})_{i}[z^{-1}]$
is supported along $L_{V_{x},i,E}\backslash\tilde{D}_{E}$. So each
$m_{i}\in\varphi^{*}(\overline{\mathcal{M}}_{\lambda,E}/\lambda^{m})\cap\varphi^{*}(\overline{\mathcal{M}}_{\lambda}/\lambda^{m})[z^{-1}]=\varphi^{*}(\overline{\mathcal{M}}_{\lambda}/\lambda^{m})$
(the last equality since $\overline{\mathcal{M}}_{\lambda}/\lambda^{m}$
is a bundle near $x$). Thus we see that there is a decomposition
\[
\varphi^{*}(\overline{\mathcal{M}}_{\lambda}/\lambda^{m})=\bigoplus_{i}(\overline{\mathcal{M}}_{\lambda}/\lambda^{m})_{i}
\]
according to the support. 

Now we must check that each summand satisfies the conditions for $\theta$-regularity.
As always there are two cases to consider. In type I, we require that
$(\overline{\mathcal{M}}_{\lambda}/\lambda^{m})_{i}$, after possibly
adding a term of the form $A_{\alpha\frac{dz}{z}}$, for some $\alpha\in\mathbb{Q}$,
is a flat connection with no singularities. But this follows immediately
from the corresponding condition for $(\overline{\mathcal{M}}_{\lambda,E}/\lambda^{m})_{i}$.
In type II, we take the pullback of $\pi^{*}(\overline{\mathcal{M}}_{\lambda}/\lambda^{m})_{i}$
under the cyclic cover $\pi:V_{x}^{(l)}\to V_{x}$. In this case we
need to show that $\pi^{*}\overline{\mathcal{M}}_{\lambda,V_{x},i}$
is spanned by a $G_{l}$-invariant set of eigenvectors for the connection.
Any section $m\in\pi^{*}(\overline{\mathcal{M}}_{\lambda}/\lambda^{m})_{i}[z^{-1}]$
has a unique representation $m=\sum_{j}m_{j}$ as a sum of eigenvectors
for the connection; this follows by looking at the support. Arguing
as above, using the fact that $(\overline{\mathcal{M}}_{\lambda,E}/\lambda^{m})_{i}$
is $\theta$-regular at infinity, we see that each $m_{j}\in\pi^{*}(\overline{\mathcal{M}}_{\lambda}/\lambda^{m})_{i}$
as required. 
\end{proof}
This lemma gives us control over the behavior of $\overline{\mathcal{M}}_{\widehat{\lambda},k'}$.
We're going to use the geometry of the Hilbert scheme to obtain similar
control over $\overline{\mathcal{M}}_{\lambda,k'}$. We recall that,
by projectivizing the fibers of $T^{*}\overline{\tilde{X}}_{k'}\to\overline{\tilde{X}}_{k'}$,
the variety $T^{*}\overline{\tilde{X}}_{k'}$ admits a smooth, projective
compactification which we will denote $\overline{T^{*}(\tilde{X}_{k'})}$. 

We consider the reduced closure $(\overline{L_{k'}^{cl}})^{(1)}$
of $(L_{k'}^{cl})^{(1)}$ inside $\overline{T^{*}(\tilde{X}_{k'})}^{(1)}$.
Then we have the following 
\begin{lem}
\label{lem:Flatness-in-closure}Let $\mathcal{I}\subset O(T^{*}(\overline{\tilde{X}}_{k'})^{(1)}\times\mathbb{A}_{k}^{1})$
be the ideal sheaf $\mathcal{I}=\text{Ann}(j_{*}(\mathcal{M}_{\lambda,k'}|_{U_{k}}))$.
Let $\overline{\mathcal{I}}\subset O(\overline{T^{*}(\tilde{X}_{k'})}^{(1)}\times\mathbb{A}^{1})$
be the ideal sheaf of the scheme theoretic closure of $\mathcal{I}$
(i.e., $f\in\overline{\mathcal{I}}$ if the restriction of $f$ to
$T^{*}(\overline{\tilde{X}}_{k'})^{(1)}\times\mathbb{A}_{k}^{1}$
is in $\mathcal{I}$). Then $O(\overline{T^{*}(\tilde{X}_{k'})}^{(1)}\times\mathbb{A}^{1})/\overline{\mathcal{I}}$
is a flat deformation (over $\mathbb{A}_{k'}^{1}$) of $O(\overline{L_{k'}^{cl}})^{(1)}$. 
\end{lem}

\begin{proof}
This follows easily from the fact that $\mathcal{M}_{\lambda,k'}|_{U_{k}}$
is flat over $k'[\lambda]$. 
\end{proof}
We need one more piece of information to proceed. We recall some notions
from \cite{key-36}, chapter 1.2: let $Y_{k'}\subset Z_{k'}$ be a
closed immersion of schemes over the field $k'$. Suppose that we
have a subscheme $\tilde{Y}\subset Z_{k'}\times k'[\epsilon]/\epsilon^{n}$
which specializes to $Y_{k}$ at $\epsilon=0$. Then an infinitesimal
deformation of $\tilde{Y}$ is a $k'[\epsilon]/\epsilon^{n+1}$-flat
subscheme $\tilde{Y}'\subset Z_{k'}\times k'[\epsilon]/\epsilon^{n+1}$
which specializes to $\tilde{Y}$. The set of such deformations is
isomorphic to $H^{0}(\mathcal{N}_{Y_{k'}}):=H^{0}(\mathcal{H}om_{Y_{k'}}(\mathcal{I}/\mathcal{I}^{2},O_{Y_{k'}}))$
where $Y_{k'}=\tilde{Y}\times_{k'[\epsilon]/\epsilon^{n}}k'$ (this
is \cite{key-36}, Theorem 2.4).

This implies that infinitesimal deformations are parametrized by a
coherent sheaf which is torsion free over $Y_{k'}$. Therefore, if
$Y_{k'}$ is an integral scheme and a deformation is trivial at the
generic point of $Y_{k'}$, then it is trivial on all of $Y_{k'}$.
This we conclude: 
\begin{cor}
Consider the $\lambda$-adic completion of the sheaf $O(\overline{T^{*}(\tilde{X}_{k'})}^{(1)}\times\mathbb{A}^{1})/\overline{\mathcal{I}}$
(of \prettyref{lem:Flatness-in-closure}). Then this completion is
trivial (over $k'[[\lambda]]$) as a deformation of $(\overline{L_{k'}^{cl}})^{(1)}$. 
\end{cor}

\begin{proof}
Since $\overline{\mathcal{M}}_{\widehat{\lambda},k'}$ is $\theta$-regular
at infinity, it is the $\lambda$-adic completion of a unique $\overline{\mathcal{M}}'_{\lambda,k'}$
which is strongly $\theta$-regular at infinity. Therefore, the restriction
of this sheaf to $T^{*}U_{k'}^{(1)}[[\lambda]]$ is scheme-theoretically
supported along $L_{U_{k}}^{(1)}[[\lambda]]$. So the $\lambda$-completion
of $O(\overline{T^{*}(\tilde{X}_{k'})}^{(1)}\times\mathbb{A}^{1})/\overline{\mathcal{I}}$
becomes trivial (as a deformation of $L_{U_{k}}^{(1)}$) when restricted
to $T^{*}U_{k}^{(1)}$. So the remarks directly above imply the result. 
\end{proof}
Now, recall the Hilbert scheme 
\[
\mathcal{H}ilb_{\overline{L_{k'}^{cl}}}
\]
which is the scheme representing the functor which assigns to any
$k'$-scheme $T_{k'}$ the set of subschemes of $\overline{T^{*}(\tilde{X}_{k'})}\times T_{k'}$,
flat over $T_{k'}$, whose Hilbert polynomial is equal to that of
$\overline{L_{k'}^{cl}}\times T_{k'}$. (c.f., e.g., \cite{key-34}
for a very complete introduction). We have, by taking support, that
the bundle $\overline{\mathcal{M}}_{\lambda,k'}$ defines a morphism
$\mathbb{A}_{k'}^{1}\to\mathcal{H}ilb_{(\overline{L_{k'}^{cl}})^{(1)}}$
(i.e., the $p$-support is flat over $k'[\lambda]$). The completion
of this map at $\{0\}$ is necessarily the trivial map by the corollary.
Since $\mathcal{H}ilb_{(\overline{L_{k'}^{cl}}){}^{(1)}}$ is quasi-projective,
we see that the morphism $\mathbb{A}_{k'}^{1}\to\mathcal{H}ilb_{(\overline{L_{k'}^{cl}})^{(1)}}$
is the trivial morphism; i.e., it maps every element in $\mathbb{A}_{k'}^{1}$
to $(\overline{L_{k'}^{cl}}){}^{(1)}$. Therefore the (closure in
$\overline{T^{*}(\tilde{X}_{k'})}\times\mathbb{A}_{k'}^{1}$ of the)
$p$-support of this bundle is necessarily $(\overline{L_{k'}^{cl}})\times\mathbb{A}_{k}^{1}$. 

We now have the
\begin{prop}
\label{prop:Meromorphics-with-trivial-support}1) We have $\mathcal{M}_{\lambda,k'}\tilde{=}\pi_{*}(\mathcal{L}_{\lambda,k'})$
where $\mathcal{L}_{\lambda,k'}$ is a line bundle with $\lambda$-connection
on $L_{U_{k'}}$ whose $p$-curvature is equal to $\Gamma(df)^{(1)}\times\mathbb{A}_{k'}^{1}$. 

2) The bundle $\overline{\mathcal{M}}_{\lambda,k'}$ is strongly $\theta$-regular
at infinity.
\end{prop}

\begin{proof}
1) This follows immediately from \prettyref{lem:All-bundles-are-pi-push}
and the remarks directly above. 

2) This is similar to \prettyref{lem:completion-is-regular}. It suffices
to check the condition in codimension $2$ since $\overline{\mathcal{M}}_{\lambda,k'}$
is reflexive. Let $\{x\}$ be the generic point of a component of
$\tilde{D}_{k'}$, and let $\varphi:V_{x}\to\overline{\tilde{X}}_{k}$
be an etale neighborhood of $\{x\}$, so that 
\[
\varphi^{*}\overline{\mathcal{M}}_{k'}=\bigoplus_{i}\overline{\mathcal{M}}_{V_{x},i}
\]
as in \prettyref{cor:Local-Struc-of-Closure}. By $1)$, the action
of the $p$-curvature gives a decomposition 
\[
\varphi^{*}\overline{\mathcal{M}}_{\lambda,k'}[z^{-1}]=\bigoplus_{i}\overline{\mathcal{M}}_{\lambda,V_{x},i}[z^{-1}]
\]
(where $z$ is a local coordinate for $\varphi^{-1}(\tilde{D})$).
Let $m\in\varphi^{*}\overline{\mathcal{M}}_{\lambda,k'}$. Then we
have a unique representation $m=\sum_{i}m_{i}$ where $m_{i}\in\overline{\mathcal{M}}_{\lambda,V_{x},i}[z^{-1}]$.
On the other hand, the bundle $\varphi^{*}\overline{\mathcal{M}}_{\widehat{\lambda},k'}$
is $\theta$-regular at infinity; so that there is a decomposition
\[
\varphi^{*}\overline{\mathcal{M}}_{\widehat{\lambda},k'}=\bigoplus_{i}\overline{\mathcal{M}}_{\widehat{\lambda},V_{x},i}
\]
according to the action of $p$-curvature. So, regarding $m$ as a
section of $\varphi^{*}\overline{\mathcal{M}}_{\widehat{\lambda},k'}$,
we see that each $m_{i}\in\varphi^{*}\overline{\mathcal{M}}_{\widehat{\lambda},k'}$.
But $\varphi^{*}\overline{\mathcal{M}}_{\widehat{\lambda},k'}\cap\varphi^{*}\overline{\mathcal{M}}_{\lambda,k'}[z^{-1}]=\varphi^{*}\overline{\mathcal{M}}_{\lambda,k'}$,
so we see that each $m_{i}\in\varphi^{*}\overline{\mathcal{M}}_{\lambda,k'}$.
So we have a direct sum decomposition 
\[
\varphi^{*}\overline{\mathcal{M}}_{\lambda,k'}=\bigoplus_{i}\overline{\mathcal{M}}_{\lambda,V_{x},i}
\]
which lifts the analogous one for $\varphi^{*}\overline{\mathcal{M}}_{k'}$. 

Now we must check that each summand satisfies the conditions for $\theta$-regularity.
By looking at the $l$-curvature (or, the reduction mod $(\lambda)$)
we see that each summand is attached to a unique component $\overline{L}_{V_{x},i}$
of $\overline{L}_{V_{x}}$. As always there are two cases to consider:
first, if we are in type I. In this case, we require that, up to a
action by $z^{\alpha}$, $\overline{\mathcal{M}}_{\lambda,V_{x},i}$,
is a flat connection with no singularities. Now since $\overline{\mathcal{M}}_{\widehat{\lambda},V_{x},i}$
has no singularities, we see that $\overline{\mathcal{M}}_{(\lambda),V_{x},i}$
(the localization of $\overline{\mathcal{M}}_{\lambda,V_{x},i}$ at
$(\lambda)$ ) also has no singularities.

Now, we move on to type II, and complete the proof below. In this
case, we take the pullback of $\pi^{*}\overline{\mathcal{M}}_{\lambda,V_{x},i}$
under the cyclic cover $\pi:V_{x}^{(r_{i})}\to V_{x}$. In this case
we need to show that $\pi^{*}\overline{\mathcal{M}}_{\lambda,V_{x},i}$
is spanned by a $G_{r'}$-invariant set of eigenvectors for the connection.
Any section $m\in\pi^{*}\overline{\mathcal{M}}_{\lambda,V_{x},i}[z^{-1}]$
has a unique representation $m=\sum_{j}m_{j}$ as a sum of eigenvectors
for the connection (it is induced by the action of $l$-curvature).
Arguing as above, using the fact that $\overline{\mathcal{M}}_{\widehat{\lambda},V_{x},i}$
is $\theta$-regular at infinity, we see that each $m_{j}\in\pi^{*}\overline{\mathcal{M}}_{\lambda,V_{x},i}$
as required. 

So, applying this over all components of $\tilde{D}_{k'}$, we see
that there is an open subset $W\subset\overline{\tilde{X}}_{k'}\times\mathbb{A}_{k'}^{1}$
so that $\overline{\mathcal{M}}_{\lambda,k'}|_{W}$ is $\theta$-regular
at infinity (as explained in remark \prettyref{rem:Theta-reg-over-W}).
Clearly $U_{k'}\times\mathbb{A}_{k'}^{1}\subset W$, and, by what
we have already seen, $W\cap\overline{\tilde{X}}_{k'}$ contains an
open subset whose complement has codimension $2$. Therefore $W$
has codimension $2$ in $\overline{\tilde{X}}_{k'}\times\mathbb{A}_{k'}^{1}$. 

Now, applying the proof of \prettyref{thm:Pic-over-k} to strongly
$\theta$-regular $\lambda$-connections over $W$, with a given residue
mod $\lambda^{2}$, we see that the set of such connections is parametrized
by $\text{Pic}((\pi^{(1)})^{-1}(W))$ where $\pi^{(1)}:\overline{L}_{k'}^{(1)}\times\mathbb{A}_{k'}^{1}\to\overline{\tilde{X}}_{k'}\times\mathbb{A}_{k'}^{1}$
is the natural map. As this group is isomorphic to $\text{Pic}(\overline{L}_{k'}^{(1)}\times\mathbb{A}_{k'}^{1})=\text{Pic}(\overline{L}_{k'}^{(1)})$,
it follows that each $\theta$-regular $\lambda$-connection on $W$
extends uniquely to one on all of $\overline{\tilde{X}}_{k'}\times\mathbb{A}_{k'}^{1}$,
and the result follows. 
\end{proof}
Now we give the 
\begin{proof}
(of \prettyref{thm:Uniqueness-of-lambda-conns}) We have just proved
1). To prove $2)$, we consider the meromorphic $\lambda$-connection
$(\overline{\mathcal{M}}_{\lambda,F})\otimes(\overline{\mathcal{M}}'_{\lambda,F})^{*}$.
We can spread it out over $R$. By what we have just shown, there
is an isomorphism $\overline{\mathcal{M}}_{\lambda,k}\tilde{\to}\overline{\mathcal{M}}'_{\lambda,k}$
for all $k$ of large enough characteristic. Therefore the module
$\mathbb{H}_{dR}^{0}((\overline{\mathcal{M}}_{\lambda,F})\otimes(\overline{\mathcal{M}}'_{\lambda,F})^{*})$
must be nonzero; i.e., there is a nonzero map of meromorphic $\lambda$-connections
$\overline{\mathcal{M}}_{\lambda,F}\to\overline{\mathcal{M}}'_{\lambda,F}$. 

Now set $\lambda=1$. In this case, we have that $\mathcal{M}_{1,F}$
and $\mathcal{M}'_{1,F}$ are both irreducible connections on $U_{F}$-
indeed, we have seen both have an irreducible arithmetic support of
multiplicity $1$, which is equal to $L_{U_{F}}$ (which is finite
flat over $U_{F}$). Therefore the irreducibility follows from Bitoun's
theorem (\cite{key-7}, theorem 2.21), and the fact that the module
is a bundle over $U_{F}$ follows from the fact that it is so after
reduction mod $p$ for $p>>0$. Thus any nonzero morphism between
these connections is an isomorphism. So there is a map $\overline{\mathcal{M}}_{1,F}\to\overline{\mathcal{M}}'_{1,F}$
which becomes an isomorphism over $U_{F}$ as claimed. 
\end{proof}

\section{\label{sec:Some-P-adic-Microlocal}Some P-adic Micro-local Theory}

In this chapter, we will establish some general results on ``micro-local
analysis'' in the $p$-adic case. There are many things one might
mean by this, but our intention (serving the interests of the paper)
is to study the deformations (over $W_{m}(k)$) of some $D_{X_{k}}$-module
$\mathcal{M}$, which is a splitting bundle for a smooth Lagrangian
$L_{k}^{(1)}\subset T^{*}X_{k}^{(1)}$ (to avoid tricky issues with
symplectic forms, we'll assume in this section that $\text{char}(k)>2$).
Assuming that $X_{k}$ admits a flat lift to a smooth formal scheme
$\mathfrak{X}_{W(k)}$, what can be said about lifts of $\mathcal{M}_{k}$?
Looking at the ``canonical case'' of $L_{k}^{(1)}=X_{k}^{(1)}$,
$\mathcal{M}_{k}=\mathcal{O}_{X_{k}}$ and lifts to $W_{2}(k)$, we
have a given flat lift, namely $\mathcal{O}_{W_{2}(k)}$. For any
other flat lift, say $\mathcal{N}_{W_{2}(k)}$, we shall see below\footnote{Under the assumption that $\mathcal{N}_{W_{2}(k)}$ it itself the
reduction mod $p^{2}$ of a further lift} in \prettyref{def:p^m-curv}that there is a natural invariant (the
$p^{2}$-curvature) which measures the failure of $\mathcal{N}_{W_{2}(k)}$
to be locally isomorphic to $\mathcal{O}_{W_{2}(k)}$. As with the
usual $p$-curvature, it is given by a suitable differential form. 

We shall then go on to see that there is a similar structure in the
general case. Namely, if we fix a lift $\mathcal{M}_{W(k)}$ of $\mathcal{M}_{k}$,
then any other lift over $W_{m}(k)$ can be compared to $\mathcal{M}_{W_{m}(k)}$
via a version of the $p^{m}$-curvature; which vanishes iff the two
lifts are locally isomorphic; c.f. \prettyref{prop:p^m-curv-over-L}.
The set of lifts which are locally isomorphic to $\mathcal{M}_{W_{m}(k)}$
form a torsor over the group of lifts which are locally isomorphic
to $\mathcal{O}_{W_{m}(k)}$. The key to transferring from the case
of the trivial connection to the general case is the use of automorphisms
of differential operators (defined in the local case) to move any
Lagrangian to one which (locally) projects isomorphically to $X_{k}\subset T^{*}X_{k}$.
We develop the requisite theory in \prettyref{subsec:Local-Structure-of}
directly below. 

\subsection{\label{subsec:Local-Structure-of}Local Structure of Differential
operators}

We begin by working formally locally around a point in $L_{k}^{(1)}$.
Our aim is to show that, after suitably completing and applying an
automorphism, all modules supported along $L_{k}^{(1)}$ (and their
lifts) look trivial. In fact, we go a little bit further in classifying
such $\mathcal{D}$-modules in directly below; making heavy use of
the results of this section. 

We are going to work with the formal scheme $\mathfrak{X}_{W(K)}$,
as well as $T^{*}\mathfrak{X}{}_{W(k)}$, the formal cotangent bundle\footnote{As opposed to the algebraic cotangent bundle, given by taking the
relative spec of the sheaf of continuous derivations}- in other works, the direct limit of the schemes $T^{*}X_{W_{m}(k)}$.
We shall be working with differential operators on $\mathfrak{X}_{W(k)}$.
Following Berthelot's conventions, the usual (PD differential operators)
will be denoted $\mathcal{D}_{\mathfrak{X}_{W_{m}(k)}}^{(0)}$, instead
of $\mathcal{D}_{\mathfrak{X}_{W_{m}(k)}}$. This is because, in this
chapter, we will also be making use of the higher level differential
operators $\mathcal{D}_{\mathfrak{X}_{W_{m}(k)}}^{(i)}$ for $i\geq0$,
and we will need to distinguish them. 
\begin{defn}
\label{def:D-complete}We let ${\displaystyle \widehat{\mathcal{D}}_{\mathfrak{X}{}_{W(k)}}^{(0)}:=\lim_{m}\mathcal{D}_{X_{W_{m}(k)}}^{(0)}}$
(this is following \cite{key-61}). 
\end{defn}

To set things up, let $x$ be any $k$-point of $T^{*}X_{k}$. By
the infinitesimal lifting property, the point $x$ lifts to a $W(k)$-point
$\mathbf{x}$ of $T^{*}\mathfrak{X}{}_{W(k)}$; we shall regard $\mathbf{x}$
as a compatible collection of $W_{m}(k)$ points of $T^{*}\mathfrak{X}{}_{W_{m}(k)}$
for each $m$. 

More concretely, we let $\{x_{1},\dots,x_{n}\}$ be local coordinates
at $\pi(x)\in X_{k}$, and $\{x_{1},\dots,x_{n},\xi_{1},\dots\xi_{n}\}$
be local coordinates at $x\in T^{*}X_{k}$. As the construction is
local, we may shrink $X_{k}$ and assume that the inclusion
\[
k[x_{1},\dots,x_{n},\xi_{1},\dots,\xi_{n}]\subset\mathcal{O}_{T^{*}X_{k}}
\]
is etale. We may further suppose that these coordinates are chosen
so that, under the natural Poisson bracket, we have $\{x_{i},x_{j}\}=0=\{\xi_{i},\xi_{j}\}$
and $\{x_{i},\xi_{j}\}=\delta_{ij}$ for all $i$ and $j$. The $k$-point
$x$ yields an ideal of the form $\mathfrak{m}_{x}=\{x_{i}-a_{i},\xi_{j}-b_{j}\}_{1\leq i,j\leq n}$,
where $\{a_{i}\}$ and $\{b_{i}\}$ are in $k$. Then we may lift
these local coordinates to local coordinates on the formal scheme
$T^{*}\mathfrak{X}_{W(k)}$, which we will also denote by $\{x_{1},\dots,x_{n},\xi_{1},\dots\xi_{n}\}$.
The choice of point $\mathbf{x}$ corresponds to a choice of $2n$-tuple
$(A_{1},\dots,A_{n},B_{1},\dots,B_{n})$ in $W(k)$ which lifts $(a_{1},\dots,a_{n},b_{1},\dots,b_{n})$. 
\begin{defn}
\label{def:p-adic-microlocal-completion}We let $\mathfrak{n}_{x}\subset\mathcal{D}_{X_{W_{m}(k)}}^{(0)}$
be the two-sided ideal generated by the central elements $\{(x_{1}-\bar{A}_{1})^{p^{m}},\dots,(x_{n}-\bar{A}_{n})^{p^{m}}\}$
and $\{(\partial_{1}-\bar{B}_{1})^{p^{m}},\dots,(\partial_{n}-\bar{B}_{n})^{p^{m}}\}$
(here $\bar{A}_{i}$ and $\bar{B}_{j}$ denote the image of $A_{i}$
and $B_{j}$ in $W_{m}(k)$)\footnote{This ideal depends only on $x\in T^{*}X$, and not on the choice of
lift $\mathbf{x}$. This is because, if $a,b$ are any elements of
a commutative $W_{m}(k)$-algebra $R$, then $a\equiv b\phantom{i}\text{mod}\phantom{i}p$
implies $a^{p^{m}}=b^{p^{m}}$ }. Let $\mathcal{D}_{X_{W_{m}(k),}\widehat{x}}^{(0)}$ denote the completion
of $\mathcal{D}_{X_{W_{m}(k)}}^{(0)}$ along $\mathfrak{n}_{x}$.
Define
\[
\widehat{\mathcal{D}}_{\mathfrak{X}_{W(k)},\widehat{x}}^{(0)}:=\lim_{m}\widehat{\mathcal{D}}_{X_{W_{m}(k)},x}^{(0)}
\]
For each $m\geq1$ we have an algebra morphism $\mathcal{D}_{X_{W_{m}(k)}}^{(0)}\to\mathcal{D}_{X_{W_{m}(k),}\widehat{x}}^{(0)}$,
which in the limit yields $\widehat{\mathcal{D}}_{\mathfrak{X}_{W(k)}}^{(0)}\to\widehat{\mathcal{D}}_{\mathfrak{X}_{W(k)},\widehat{x}}^{(0)}$. 
\end{defn}

Here we have used the (easy) fact that the algebra\linebreak{}
 $\mathfrak{Z}:=W_{m}(k)[(x_{i}-\bar{A}_{i})^{p^{m}},(\partial_{j}-\bar{B}_{j})^{p^{m}}]_{1\leq i,j\leq n}$
is central inside $\mathcal{D}_{X_{W_{m}(k)}}^{(0)}$. As $\mathcal{D}_{X_{W_{m}(k)}}^{(0)}$
is finite flat over $\mathfrak{Z}$, we see that
\[
\mathcal{D}_{X_{W_{m}(k),}\widehat{x}}^{(0)}\tilde{=}\mathcal{D}_{X_{W_{m}(k)}}^{(0)}\otimes_{\mathfrak{Z}}\widehat{\mathfrak{Z}}
\]
where $\widehat{\mathfrak{Z}}$ is the completion of $\mathfrak{Z}$
along $((x_{i}-\bar{A}_{i})^{p^{m}},(\partial_{j}-\bar{B}_{j})^{p^{m}})$.
It follows that each $\mathcal{D}_{X_{W_{m}(k),}\widehat{x}}^{(0)}$
is flat over $W_{m}(k)$, and that natural map $\mathcal{D}_{X_{W_{m}(k),}\widehat{x}}^{(0)}\to\mathcal{D}_{X_{W_{m-1}(k),}\widehat{x}}^{(0)}$
is onto for each $m$ (as $\widehat{\mathfrak{Z}}$ can also be described
as the completion of $\mathfrak{Z}$ along $((x_{i}-\bar{A}_{i})^{p^{m'}},(\partial_{j}-\bar{B}_{j})^{p^{m'}})$
where $m'\geq m$ is any integer). Thus $\widehat{\mathcal{D}}_{\mathfrak{X}_{W(k)},\widehat{x}}^{(0)}$
is a $W(k)$-flat $p$-adically complete algebra such that 
\[
\widehat{\mathcal{D}}_{\mathfrak{X}_{W(k)},\widehat{x}}^{(0)}/p\tilde{=}\mathcal{D}_{X_{k},\widehat{x}}^{(0)}
\]

In fact, it is not difficult to describe this algebra as a more traditional
completion of $\widehat{\mathcal{D}}_{\mathfrak{X}_{W(k)}}^{(0)}$:
\begin{lem}
\label{lem:Desciption-of-D-hat}The elements
\[
\{(x_{1}-A_{1})^{p},\dots,(x_{n}-A_{n})^{p},(\partial_{1}-B_{1})^{p},\dots,(\partial_{n}-B_{n})^{p},p\}
\]
generate a proper two-sided ideal $\mathcal{I}\subset\widehat{\mathcal{D}}_{\mathfrak{X}_{W(k)}}^{(0)}$,
and there is an isomorphism of the completion of $\widehat{\mathcal{D}}_{\mathfrak{X}_{W(k)}}^{(0)}$
along $\mathcal{I}$ with $\widehat{\mathcal{D}}_{\mathfrak{X}_{W(k)},\widehat{x}}^{(0)}$.
In particular, the algebra $\widehat{\mathcal{D}}_{\mathfrak{X}_{W(k)},\widehat{x}}^{(0)}$
is flat over $\widehat{\mathcal{D}}_{\mathfrak{X}_{W(k)}}^{(0)}$,
and so the functor $\mathcal{N}\to\widehat{\mathcal{N}}_{x}:=\mathcal{N}\otimes_{\widehat{\mathcal{D}}_{\mathfrak{X}_{W(k)}}^{(0)}}\widehat{\mathcal{D}}_{\mathfrak{X}_{W(k)},\widehat{x}}^{(0)}$
is exact. 
\end{lem}

\begin{proof}
Without loss of generality we can take all $A_{i}$ and $B_{i}$ to
be $0$. To obtain the first statement, we must show that any element
in the left ideal generated by $\{x_{1}^{p},\dots,x_{n}^{p},\partial_{1}^{p},\dots,\partial_{n}^{p},p\}$
is also in the right ideal generated by these elements. This, in turn,
follows directly from the fact that for any $\Phi\in\widehat{\mathcal{D}}_{\mathfrak{X}_{W(k)}}^{(0)}$,
we have $[\Phi,x_{i}^{p}]=p\Phi'$ for some $\Phi'\in\widehat{\mathcal{D}}_{\mathfrak{X}_{W(k)}}^{(0)}$
(and similarly for $\partial_{i}^{p}$ instead of $x_{i}^{p}$). 

Now, let $(\widehat{\mathcal{D}}_{\mathfrak{X}_{W(k)}}^{(0)})_{\mathcal{I}}^{\widehat{}}$
denote the completion of $\widehat{\mathcal{D}}_{\mathfrak{X}_{W(k)}}^{(0)}$
along $\mathcal{I}$. For each $m\geq1$ one obtains a map 
\[
(\widehat{\mathcal{D}}_{\mathfrak{X}_{W(k)}}^{(0)})_{\mathcal{I}}^{\widehat{}}/p^{m}\to(\widehat{\mathcal{D}_{X_{W_{m}(k)}}^{(0)})}_{(\mathcal{I}/p^{m})}
\]
where the latter algebra denotes the completion of $\mathcal{D}_{X_{W_{m}(k)}}^{(0)}$
along $\mathcal{I}/p^{m}$; from the fact that $p$ is nilpotent in
$\mathcal{D}_{X_{W_{m}(k)}}^{(0)}$ and the fact that $\mathfrak{n}_{x}$
is generated by powers of elements in $\mathcal{I}/p^{m}$, one sees
directly that this algebra is isomorphic to $\mathcal{D}_{X_{W_{m}(k),}\widehat{x}}^{(0)}$.
Since $(p)\subset\mathcal{I}$, we have that $(\widehat{\mathcal{D}}_{\mathfrak{X}_{W(k)}}^{(0)})_{\mathcal{I}}^{\widehat{}}$
is $p$-adically complete, and so we can take the inverse limit to
obtain a map 
\[
(\widehat{\mathcal{D}}_{\mathfrak{X}_{W(k)}}^{(0)})_{\mathcal{I}}^{\widehat{}}\to\widehat{\mathcal{D}}_{\mathfrak{X}_{W(k)},\widehat{x}}^{(0)}
\]
of $p$-adically complete algebras. The reduction mod $(p)$ of this
map an isomorphism (both sides are identified with $\mathcal{D}_{X_{k},\widehat{x}}^{(0)}$),
so this map is surjective by the complete Nakayama lemma. Thus we
obtain an exact sequence 
\[
0\to\mathcal{K}\to(\widehat{\mathcal{D}}_{\mathfrak{X}_{W(k)}}^{(0)})_{\mathcal{I}}^{\widehat{}}\to\widehat{\mathcal{D}}_{\mathfrak{X}_{W(k)},\widehat{x}}^{(0)}\to0
\]
and since $\widehat{\mathcal{D}}_{\mathfrak{X}_{W(k)},\widehat{x}}^{(0)}$
is clearly $p$-torsion free we obtain that $\mathcal{K}/p=0$; since
$\mathcal{K}$ is an ideal in the noetherian, $p$-adically complete
ring $(\widehat{\mathcal{D}}_{\mathfrak{X}_{W(k)}}^{(0)})_{\mathcal{I}}^{\widehat{}}$,
$\mathcal{K}$ is also $p$-adically complete and so $\mathcal{K}=0$
as desired. 
\end{proof}
This allows us to prove
\begin{lem}
\label{lem:Complete-flatness}Let $\mathcal{N}$ be a finite $\widehat{\mathcal{D}}_{\mathfrak{X}_{W(k)}}^{(0)}$-module.
Then there is an isomorphism 
\[
\widehat{\mathcal{N}}\tilde{=}\widehat{\mathcal{D}}_{\mathfrak{X}_{W(k)},\widehat{x}}^{(0)}\otimes_{\widehat{\mathcal{D}}_{\mathfrak{X}_{W(k)}}^{(0)}}\mathcal{N}\tilde{=}\lim_{m}(\mathcal{D}_{X_{W_{m}(k)},\widehat{x}}^{(0)}\otimes_{\mathcal{D}_{X_{W_{m}(k)}}^{(0)}}\mathcal{N}/p^{m}\mathcal{N})
\]
where $\widehat{\mathcal{N}}$ denotes the completion of $\mathcal{N}$
along $\mathcal{I}$.
\end{lem}

\begin{proof}
The first isomorphism is standard (c.f., e.g., \cite{key-79} proposition
10.13). For the second; note that for each $m\geq1$ there is an isomorphism
\[
(\widehat{\mathcal{D}}_{\mathfrak{X}_{W(k)},\widehat{x}}^{(0)}\otimes_{\widehat{\mathcal{D}}_{\mathfrak{X}_{W(k)}}^{(0)}}\mathcal{N})/p^{m}\tilde{\to}\mathcal{D}_{X_{W_{m}(k)},\widehat{x}}^{(0)}\otimes_{\mathcal{D}_{X_{W_{m}(k)}}}\mathcal{N}/p^{m}\mathcal{N}
\]
\[
\tilde{\to}(\widehat{\mathcal{D}_{X_{W_{m}(k)}}^{(0)})}_{(\mathcal{I}/p^{m})}\otimes_{\mathcal{D}_{X_{W_{m}(k)}}^{(0)}}\mathcal{N}/p^{m}\mathcal{N}\tilde{\to}(\widehat{\mathcal{N}/p^{m}\mathcal{N}})_{(\mathcal{I}/p^{m})}
\]
where the latter denotes the completion of $\mathcal{N}/p^{m}\mathcal{N}$
along $\mathcal{I}/p^{m}$. Further, since 
\[
(\mathcal{N}/\mathcal{I}^{j})/p^{m}(\mathcal{N}/\mathcal{I}^{j})\tilde{=}(\mathcal{N}/p^{m}\mathcal{N})/(\mathcal{I}/p^{m})^{j}
\]
for all $j\geq m$, we also have
\[
(\widehat{\mathcal{N}/p^{m}\mathcal{N}})_{(\mathcal{I}/p^{m})}\tilde{=}\widehat{\mathcal{N}}/p^{m}\widehat{\mathcal{N}}
\]
Since $\widehat{\mathcal{N}}$ is $p$-adically complete, the result
follows. 
\end{proof}
Essentially by definition, the algebra $\widehat{\mathcal{D}}_{\mathfrak{X}_{W(k)},\widehat{x}}^{(0)}$
depends only on the formal local neighborhood of $x$:
\begin{lem}
\label{lem:little-iso}There is an isomorphism $\widehat{\mathcal{D}}_{\mathfrak{X}_{W(k)},\widehat{x}}^{(0)}\tilde{=}\widehat{\mathcal{D}}_{\mathfrak{A}_{W(k),}\widehat{0}}^{(0)}$
where $\mathfrak{A}_{W(k)}$ denotes the $p$-adic completion of $\mathbb{A}_{W(k)}^{n}$. 
\end{lem}

\begin{proof}
For each $m\geq1$ we have an automorphism of $\widehat{\mathcal{D}}_{\mathfrak{X}_{W(k)},\widehat{x}}^{(0)}$
which fixes $\mathcal{O}_{X_{W_{m}(k)}}$ and takes $\partial_{i}\to\partial_{i}-\overline{B}_{i}$.
This automorphism yields an isomorphism 
\[
\mathcal{D}_{X_{W_{m}(k),}\widehat{x}}^{(0)}\tilde{=}\mathcal{D}_{X_{W_{m}(k),}\widehat{y}}^{(0)}
\]
where $y$ is the image of $x$ under $T^{*}X_{k}\to X_{k}\subset T^{*}X_{k}$.
Taking the limit of these automorphisms yields 
\[
\widehat{\mathcal{D}}_{\mathfrak{X}_{W(k)},\widehat{x}}^{(0)}\tilde{=}\widehat{\mathcal{D}}_{\mathfrak{X}_{W(k)},\widehat{y}}^{(0)}
\]
Now let $\mathbf{y}=\pi(\mathbf{x})\in\mathfrak{X}_{k}$. There is
an etale morphism $\mathfrak{X}_{W(k)}\to\mathfrak{A}_{W(k)}$ which
sends the $W(k)$-point $\mathbf{y}$ to $\mathbf{0}$. Such a morphism
induces a map 
\[
\mathcal{\widehat{D}}_{\mathfrak{A}_{W(k)}}^{(0)}\to\mathcal{\widehat{D}}_{\mathfrak{X}_{W(k)}}^{(0)}
\]
which is easily seen to induce the required isomorphism. 
\end{proof}
Now we can analyze what happens to a module under a suitable choice
of automorphism. Let $\mathcal{N}_{k}$ denote a $\mathcal{D}_{X_{k}}^{(0)}$-module,
which is a splitting bundle for $\mathcal{D}_{X_{k}}^{(0)}|_{L_{k}^{(1)}}$
near some given point $x\in L_{k}^{(1)}$. Suppose $\mathcal{N}_{W(k)}$
is a $p$-adically complete, $p$-torsion-free lift of $\mathcal{N}_{k}$
to a $\mathcal{D}_{\mathfrak{X}_{W(k)}}^{(0)}$-module. Then:
\begin{prop}
\label{prop:Iso-All-Levels!}There is an isomorphism $\Psi:\mathcal{D}_{X_{k},\widehat{x}}^{(0)}\tilde{\to}\mathcal{D}_{\mathbb{A}_{k}^{n},\widehat{0}}^{(0)}$
under which $\widehat{\mathcal{N}}_{k}=\mathcal{D}_{X_{k},\widehat{x}}^{(0)}\otimes_{\mathcal{D}_{X_{k}}}\mathcal{N}_{k}$
corresponds to the standard module $\widehat{\mathcal{O}}_{\mathbb{A}_{k}^{n},0}$
over $\mathcal{D}_{\mathbb{A}_{k}^{n},\widehat{0}}^{(0)}$. There
is a lift of this isomorphism to an isomorphism $\Psi':\widehat{\mathcal{D}}_{\mathfrak{X}_{W(k)},\widehat{x}}^{(0)}\tilde{\to}\widehat{\mathcal{D}}_{\mathfrak{A}_{W(k),}\widehat{0}}^{(0)}$
under which $\mathcal{\widehat{N}}_{W(k)}$ corresponds to $\widehat{\mathcal{O}}_{\mathfrak{A}_{W(k),}0}$.
The analogous statement holds over $W_{m}(k)$ if we replace $\mathcal{N}_{W(k)}$
with a flat lift $\mathcal{N}_{W_{m}(k)}$ over $W_{m}(k)$.
\end{prop}

\begin{proof}
By the previous lemma, there is an isomorphism $\mathcal{D}_{X_{k},\widehat{x}}^{(0)}\tilde{\to}\mathcal{D}_{\mathbb{A}_{k}^{n},\widehat{0}}^{(0)}$
which can be lifted to $\widehat{\mathcal{D}}_{\mathfrak{X}_{W(k)},\widehat{x}}^{(0)}\tilde{\to}\widehat{\mathcal{D}}_{\mathfrak{A}_{W(k),}\widehat{0}}^{(0)}$.
By \cite{key-3}, section 5.2, one sees that the induced map on centers,
$Z\Phi:\widehat{\mathcal{O}}_{T^{*}X_{k}^{(1)},x}\tilde{\to}\widehat{\mathcal{O}}_{T^{*}\mathbb{A}_{k}^{n,(1)},0}$
preserves the natural symplectic structures on both sides . Thus the
formal subscheme $\widehat{L}_{k,x}^{(1)}\subset(\widehat{T^{*}X_{k}})_{x}^{(1)}$
corresponds, under $Z\Phi$, to a smooth formal Lagrangian subscheme
of $\widehat{\mathcal{O}}_{T^{*}\mathbb{A}_{k}^{n,}{}^{(1)},0}$,
which we shall also call $\widehat{L}_{k,x}^{(1)}$. 

We now identify $\mathbb{A}_{k}^{2n}$ with the tangent space to $0$
in $T^{*}\mathbb{A}_{k}^{n}$; it carries a natural symplectic form.
Let $\mathfrak{l}_{k}\subset\mathbb{A}_{k}^{2n}$ be the tangent space
to $\widehat{L}_{k,x}^{(1)}$. We may apply an element $\tilde{\sigma}$
of the linear symplectic group $\text{Sp}(\mathbb{A}_{k}^{2n})$ to
interchange $\mathfrak{l}_{k}$ with $T_{0}(\mathbb{A}_{k}^{n})$,
the tangent space to $\mathbb{A}_{k}^{n}\subset T^{*}\mathbb{A}_{k}^{n}$.
Further, by completing its standard action on $\mathcal{D}_{\mathbb{A}_{k}^{n}}^{(0)}$
, $\text{Sp}(\mathbb{A}_{k}^{2n})$ also acts on $\mathcal{D}_{\mathbb{A}_{k}^{n},\widehat{0}}^{(0)}$.
The induced action on the center is given via the $p$-th power isomorphism
$\text{Sp}(\mathbb{A}_{k}^{2n})\tilde{=}\text{Sp}(\mathbb{A}_{k}^{2n})^{(1)}$.
Therefore we obtain an isomorphism $\Phi:\mathcal{D}_{X_{k,}\widehat{x}}^{(0)}\to\mathcal{D}_{\mathbb{A}_{k}^{n},\widehat{0}}^{(0)}$,
so that $\Phi^{*}(\widehat{\mathcal{M}}_{k})$ is module, over $\mathcal{D}_{\mathbb{A}_{k}^{n},\widehat{0}}^{(0)}$
whose $p$-support, as a subscheme of $(\widehat{T^{*}\mathbb{A}_{k}^{n}})_{0}^{(1)}$,
has tangent space equal to $T_{0}^{*}(\mathbb{A}^{n})^{(1)}$. 

Let $\widehat{S}_{k}^{(1)}$ denote the $p$-support of $\Phi^{*}(\widehat{\mathcal{N}}_{k})$;
so that the ideal of $\widehat{S}_{k}^{(1)}$ corresponds, under $Z\Phi$,
to the ideal of $\widehat{L}_{k}^{(1)}$. Then the induced projection
map
\[
\widehat{S}_{k}^{(1)}\to(\widehat{\mathbb{A}_{k}^{n}})_{0}^{(1)}
\]
is an isomorphism. Therefore the module $\widehat{\mathcal{N}}_{k}$,
when regarded as a module over the center $\widehat{\mathcal{O}}_{(T^{*}\mathbb{A}_{k}^{n})^{(1)},0}$,
is a free module of rank $p^{n}$ over $\widehat{\mathcal{O}}_{(\mathbb{A}_{k}^{n}){}^{(1)},0}\subset\widehat{\mathcal{O}}_{(T^{*}\mathbb{A}^{n})^{(1)},0}$.
Since $\widehat{\mathcal{O}}_{\mathbb{A}_{k}^{n},0}$ is finite free
of rank $p^{n}$ over $\widehat{\mathcal{O}}_{(\mathbb{A}_{k}^{n}){}^{(1)},0}$.
we see\footnote{It is clearly reflexive by \cite{key-49}, proposition 1.6; since
the generic rank is $1$ it is free} that $\widehat{\mathcal{N}}_{k}$ is a free of rank $1$ over $\widehat{\mathcal{O}}_{\mathbb{A}_{k}^{n},0}$.
Thus we may write $\widehat{\mathcal{N}}_{k}=\widehat{\mathcal{O}}_{\mathbb{A}_{k}^{n},0}\cdot\mathbf{e}$
for some element $\mathbf{e}$. We have 
\begin{equation}
\partial_{i}\mathbf{e}=\bar{g}_{i}\mathbf{e}\label{eq:rank-1-connection}
\end{equation}
for $1\leq i\leq n$. Computing the $p$-th power yields 
\[
\partial_{i}^{p}\mathbf{e}=(\bar{g}_{i}^{p}+\partial_{i}^{p-1}(\bar{g}_{i}))\mathbf{e}
\]
so that $\widehat{S}_{k}^{(1)}$ is defined by the equations $\{\partial_{i}^{p}-\bar{g}_{i}^{p}-\partial_{i}^{p-1}(\bar{g}_{i})\}{}_{1\leq i\leq n}$.
This implies that each function $\bar{g}_{i}^{p}+\partial_{i}^{p-1}(\bar{g}_{i})$
is contained in the maximal ideal of $\widehat{\mathcal{O}}_{\mathbb{A}_{k}^{n},0}$. 

Choosing a lift of $\tilde{\sigma}$ to a linear symplectomorphism
of $T^{*}\mathbb{A}_{W(k)}^{n}$, we see that we can choose an isomorphism
$\widehat{\mathcal{D}}_{\mathfrak{X}_{W(k)},\widehat{x}}^{(0)}\tilde{\to}\widehat{\mathcal{D}}_{\mathfrak{A}_{W(k),}\widehat{0}}^{(0)}$
under which $\widehat{\mathcal{N}}_{W(k)}$ corresponds to a free
rank $1$ module $\widehat{\mathcal{O}}_{\mathbb{A}_{W(k)}^{n},0}\cdot\mathbf{e}$;
write 
\[
\partial_{i}\mathbf{e}=g_{i}\mathbf{e}
\]
for $1\leq i\leq n$. We now claim that there is an automorphism $\sigma$
of $\widehat{\mathcal{D}}_{\mathbb{A}_{W(k)}^{n},0}^{(0)}$ which
preserves $\widehat{\mathcal{O}}_{\mathbb{A}_{W(k)}^{n},0}$ and satisfies
\[
\partial_{i}\to\partial_{i}-g_{i}
\]

Indeed, we have 
\[
(\partial_{i}-g_{i})^{p}=\partial_{i}^{p}-g_{i}^{p}-\partial_{i}^{p-1}(g_{i})+p\Phi
\]
for some $\Phi\in\widehat{\mathcal{D}}_{\mathfrak{A}_{W(k),}\widehat{0}}^{(0)}$;
and the above discussion implies $g_{i}^{p}+\partial_{i}^{p-1}(g_{i})=\tilde{g}_{i}+p\alpha$
where $\tilde{g}_{i}$ is contained in the ideal of $\widehat{\mathcal{O}}_{\mathbb{A}_{W(k)}^{n},0}$
generated by $\{x_{1}^{p},\dots,x_{n}^{p}\}$, and $\alpha\in\widehat{\mathcal{O}}_{\mathbb{A}_{W(k)}^{n},0}$
is some function. In particular, we see that $(\partial_{i}-g_{i})^{p}$
is topologically nilpotent in $\widehat{\mathcal{D}}_{\mathfrak{X}_{W(k)},\widehat{x}}^{(0)}\tilde{\to}\widehat{\mathcal{D}}_{\mathfrak{A}_{W(k),}\widehat{0}}^{(0)}$.
Therefore the map which sends $x_{i}\to x_{i}$ and $\partial_{i}\to\partial_{i}-g_{i}$
does extend to an automorphism of $\widehat{\mathcal{D}}_{\mathfrak{X}_{W(k)},\widehat{x}}^{(0)}\tilde{\to}\widehat{\mathcal{D}}_{\mathfrak{A}_{W(k),}\widehat{0}}^{(0)}$
as claimed; but then composing with this automorphism proves the result.
The case where $W(k)$ is replaced by $W_{m}(k)$ is essentially identical. 
\end{proof}
Here is a typical corollary. To state it, recall from \cite{key-80}
that the center of $\mathcal{D}_{X_{W_{m}(k)}}^{(0)}$ is isomorphic
to the ring of Witt vectors on the $m$th Frobenius twist of the cotangent
bundle of $X_{k}$; $W_{m}(T^{*}X_{k}^{(m)})$. 
\begin{cor}
\label{cor:Support-of-N-W_m(k)}Let $m\ge1$, and let $\mathcal{N}_{W_{m}(k)}$
be a $W_{m}(k)$-flat $\mathcal{D}_{X_{W_{m}(k)}}^{(0)}$-module such
that $\mathcal{N}_{W_{m}(k)}/p$ is is a splitting bundle for $\mathcal{D}_{X_{k}}^{(0)}|_{L_{k}^{(1)}}$.
Then, under the action of $\mathcal{Z}(\mathcal{D}_{X_{W_{m}(k)}})\tilde{=}W_{n}(\mathcal{O}_{T^{*}X_{k}^{(m)}})$,
$\mathcal{N}_{W_{m}(k)}$ is scheme-theoretically supported along
$W_{m}(L_{k}^{(m)})$. 
\end{cor}

\begin{proof}
As this can be checked formally locally, we can apply the previous
proposition and assume $L_{k}=X_{k}\subset T^{*}X_{k}$. Choose local
coordinates $\{x_{1},\dots,x_{n},\partial_{1},\dots\partial_{n}\}$
on $X_{k}$. Then the proof comes down to checking that the annihilator
of any flat deformation (over $W_{m}(k)$) of $\mathcal{O}_{X_{k}}$
is exactly the ideal generated by 
\[
\{\partial_{1}^{p^{m+1}},\dots,\partial_{n}^{p^{m+1}},p\partial_{1}^{p^{m}},\dots,p\partial_{n}^{p^{m}},\dots,p^{m}\partial_{1}^{p},\dots,p^{m}\partial_{n}^{p}\}
\]
But this this a straightforward computation.
\end{proof}

\subsection{\label{subsec:Locally-trivial-connections}Locally trivial connections
in mixed characteristic}

In this section, we discuss the theory of the $p^{m}$-curvature alluded
to above. Let $k$ be a perfect field of positive characteristic,
and let $X_{k}$ be a smooth $k$-scheme. Let $\mathfrak{X}$ be a
flat lift to $W(k)$. Let $\mathcal{\widehat{D}}_{\mathfrak{X}}^{(i)}$
denote Berthelot's ring of differential operators of level $i$ on
$\mathfrak{X}$ (c.f. \cite{key-61}); recall that this is a $W(k)$-flat
$p$-adically complete sheaf of algebras; we denote $\mathcal{\widehat{D}}_{\mathfrak{X}}^{(i)}/p^{n}:=\mathcal{D}_{X_{W_{n}(k)}}^{(i)}$.
If $X$ is affine, then we have also the algebra $D_{\mathfrak{X}}^{(i)}$,
the algebra of finite order differential operators generated by operators
of order $\leq p^{i}$. It is not $p$-adically complete and does
not glue to a sheaf on $\mathfrak{X}$, however, if $\mathfrak{X}$
possesses local coordinates, then we do have $D_{\mathfrak{X}}^{(i)}/p^{n}\tilde{=}\Gamma(\mathcal{D}_{X_{W_{n}(k)}}^{(i)})$. 

We recall (\cite{key-57}, proposition 3.6) that the center $\mathcal{Z}(\mathcal{D}_{X_{k}}^{(i)})$
is isomorphic to $\mathcal{O}_{T^{*}X_{k}^{(i+1)}}$. Furthermore,
by \cite{key-57}, theorem 3.7, $\mathcal{D}_{X_{k}}^{(i)}$ is Azumaya
over its center. Let $\mathcal{I}\subset\mathcal{Z}(\mathcal{D}_{X_{k}}^{(i)})$
denote the ideal sheaf of $X_{k}^{(i+1)}\subset T^{*}X_{k}^{(i+1)}$.
If $\overline{\mathcal{D}_{X_{k}}^{(i)}}$ denotes the reduction by
the central ideal generated by $\mathcal{I}$, then this is a split
Azumaya algebra (the splitting bundle is $\mathcal{O}_{X}$); and
so we see that a $\mathcal{D}_{X_{k}}^{(i)}$-module is locally trivial
(i.e., locally isomorphic to a finite direct sum of copies of the
$\mathcal{D}_{X_{k}}^{(i)}$-module $\mathcal{O}_{X}$) iff it is
a vector bundle, equipped with an action of $\mathcal{D}_{X_{k}}^{(i)}$,
for which the ideal $\mathcal{I}$ acts trivially. 
\begin{thm}
\label{thm:Higher-diff-structure-on-deformations}Let $(\mathcal{E},\nabla)$
be a vector bundle with flat connection on $\mathfrak{X}$; set $\mathcal{E}_{n}:=\mathcal{E}/p^{n}$.
Suppose that $(\mathcal{E}_{n-1},\nabla):=(\mathcal{E}_{n}/p^{n},\nabla)$
is a locally trivial connection. Then for each $j\in\{1,\dots,n\}$
the bundle $\mathcal{E}_{j}:=\mathcal{E}/p^{j}$ inherits an action
of $\mathcal{D}_{X_{W_{j}(k)}}^{(n-j)}$. The connection $\mathcal{E}_{n}$
is locally trivial iff $\mathcal{E}_{j}$ is locally trivial over
$\mathcal{D}_{X_{W_{j}(k)}}^{(n-j)}$ for each $j$; in turn, this
holds iff $\mathcal{E}_{j}$ is locally trivial over $\mathcal{D}_{X_{W_{j}(k)}}^{(n-j)}$
for some $j\in\{1,\dots,n\}$. 

Setting $j=1$, we see that $\mathcal{E}_{n}$ is locally trivial
iff $\mathcal{E}_{1}$ is locally trivial over $\mathcal{D}_{X_{k}}^{(n-1)}$;
by the above remarks, this holds iff the ideal $\mathcal{I}\subset\mathcal{O}_{T^{*}X_{k}^{(n+1)}}$
acts trivially on $\mathcal{E}_{1}$. 
\end{thm}

\begin{proof}
When $n=1$ this is trivial, so we may assume $n>1$. As all statements
are local, we may suppose that we have coordinate derivations $\{\partial_{1},\dots,\partial_{d}\}$
on $X_{n}$, and that $\mathcal{E}_{n}$ is trivial as a vector bundle.

As the $\mathcal{D}_{X_{k}}^{(0)}$-module $\mathcal{E}_{1}$ has
$p$-curvature $0$, we see that $\partial_{i}^{p}(\mathcal{E})\subset p\cdot\mathcal{E}$.
Thus the operators $\partial_{i}^{[p]}={\displaystyle \frac{\partial_{i}^{p}}{p!}}$
act also on $\mathcal{E}$, and so we get an action of $D_{\mathfrak{X}}^{(1)}$
on $\mathcal{E}$. Thus $\mathcal{E}_{n-1}$ inherits an action of
$\mathcal{D}_{X_{W_{n-1}(k)}}^{(1)}$. This action is independent
of the choice of coordinates, and so exists when $X_{k}$ is not affine. 

By induction, since $\mathcal{E}_{2}$ is a locally trivial connection,
we may suppose that the $\mathcal{D}_{X_{k}}^{(1)}$-module $\mathcal{E}_{n-1}/p^{n-1}\tilde{=}\mathcal{E}_{1}$
is locally trivial. Therefore we have $(\partial_{i}^{[p]})^{p}=0$
on $\mathcal{E}_{1}$. As ${\displaystyle (\frac{\partial_{i}^{p}}{(p!)})^{p}=\frac{\partial_{i}^{p^{2}}}{(p!)^{p}}}$
we see ${\displaystyle \frac{\partial_{i}^{p^{2}}}{(p!)^{p}}}$ preserves
$\mathcal{E}$; therefore, as above, we can obtain an action of ${\displaystyle \frac{1}{p}\frac{\partial_{i}^{p^{2}}}{(p!)^{p}}}$
(and therefore an action of $\partial_{i}^{[p^{2}]}$) on $\mathcal{E}_{n-2}$.
Thus $\mathcal{E}_{n-2}$ inherits an action of $\mathcal{D}_{X_{W_{n-2}(k)}}^{(2)}$.
Iterating this construction proves the first claim. 

As for the second, it is clear that if $\mathcal{E}_{n}$ is locally
trivial then so if each $\mathcal{E}_{j}$. For the converse, proceed
by downward induction on $j$. Since $\mathcal{E}_{n-1}$ is locally
trivial as a connection, we can choose a basis $\{e_{i}\}_{i=1}^{m}$
of $\mathcal{E}_{n}$ for which
\[
\partial_{j}(e_{i})=\sum_{s=1}^{m}p^{n-1}\alpha_{is,j}e_{s}
\]
for some $\alpha_{is,j}\in\mathcal{O}_{X_{k}}$ (identifying $p^{n-1}\mathcal{O}_{X_{W_{n}(k)}}$
with $\mathcal{O}_{X_{k}}$). Applying $\partial_{j}^{p-1}$to this
equation yields 
\[
\partial_{j}^{p}(e_{i})=\sum_{s=1}^{m}p^{n-1}(\partial_{j}^{p-1}\alpha_{is,j})e_{s}
\]
So the $\mathcal{D}_{X_{W_{n-1}(k)}}^{(1)}$-module action on $\mathcal{E}_{n-1}$
is given, in the basis $\{e_{i}\phantom{i}\text{mod}\phantom{i}p^{n-1}\}_{i=1}^{m}$
(which we will again denote by $\{e_{i}\}$), by $\partial_{j}(e_{i})=0$
and 
\[
\partial_{j}^{[p]}(e_{i})=-\sum_{s=1}^{m}p^{n-2}(\partial_{j}^{p-1}\alpha_{is,j})e_{s}
\]
If we suppose that this $\mathcal{D}_{X_{W_{n-1}(k)}}^{(1)}$-module
is trivial, then we can choose a basis in which $\partial_{j}^{p-1}\alpha_{is,j}=0$
for all $i,j,s$. However, the one-form 
\[
{\displaystyle \sum_{s=1}^{m}\sum_{j=1}^{d}}\partial_{j}^{p-1}\alpha_{is,j}dz_{j}\cdot e_{i}^{*}\otimes e_{s}\in\text{End}(\mathcal{E}_{1})\otimes\Omega_{X_{k}}^{1}
\]
is exactly the Cartier operator applied to the one-form $\psi={\displaystyle \sum_{s=1}^{m}\sum_{j=1}^{d}\alpha_{is,j}dz_{j}e_{i}^{*}\otimes e_{s}}\in\text{End}(\mathcal{E}_{1})\otimes\Omega_{X_{k}}^{1}$;
i.e., it is the image of $\psi$ in $H_{dR}^{1}(\text{End}(\mathcal{E}_{1}))$
under the Cartier operator for the trivial connection $\mathcal{E}_{1}$.
If this vanishes, then there is a matrix $A\in\text{End}(\mathcal{E}_{1})$
with $dA=\psi$, and changing basis by $(1+p^{n-1}A)$ trivializes
the connection $(\mathcal{E}_{n},\nabla)$. 

Now, suppose $\mathcal{E}_{n-1}$ is trivial over $\mathcal{D}_{X_{W_{n-1}(k)}}^{(1)}$,
and consider the action of $\mathcal{D}_{X_{W_{n-2}(k)}}^{(2)}$ on
$\mathcal{E}_{n-2}$. To compute this, we start with the previous
formula for $\partial_{j}^{[p]}e_{i}$, and write $-\partial_{j}^{p-1}\alpha_{is,j}=\beta_{is,j}^{p}$
for some $\beta_{is,j}\in\mathcal{O}_{X_{k}}$. Using the fact that
$[\partial_{i}^{[p]},g^{p}]=(\partial_{i}(g))^{p}$ for all $g\in\mathcal{O}_{X_{k}}$,
we repeat the above computation to arrive at 
\[
\partial_{j}^{[p^{2}]}(e_{i})=-\sum_{s=1}^{m}p^{n-2}((\partial_{j}^{[p]})^{p-1}\beta_{is,j})e_{s}
\]
and as above the triviality of this $\mathcal{D}_{X_{W_{n-2}(k)}}^{(2)}$-module
is equivalent to the triviality of $\mathcal{E}_{n-1}$ as a $\mathcal{D}_{X_{W_{n-1}(k)}}^{(1)}$-module.
Continuing in this way proves this theorem. 
\end{proof}
It is useful to specialize the construction to the case of line bundles.
Suppose, in fact that we are working with the trivial line bundle
on $X_{W_{n}(k)}$, with connection 
\[
\nabla(1)=p^{n-1}\psi
\]
where $\psi$ is an arbitrary closed one-form in $\Omega_{X_{k}}^{1}$;
i.e., we make no assumption on the liftability of the connection.
We can look at the sequence $\psi^{(i)}\in\Omega_{X_{k}^{(i)}}^{1}$
defined as follows: $\psi^{(1)}=\text{image}(\psi)\in\mathcal{H}_{dR}^{1}(\Omega_{X_{k}}^{1})\tilde{=}\Omega_{X_{k}^{(1)}}^{1}$.
If $\psi^{(1)}$ is closed\footnote{which is guaranteed if the connection is liftable},
then we define $\psi^{(2)}=\text{image}(\psi^{(1)})\in\mathcal{H}_{dR}^{1}(\Omega_{X_{k}^{(1)}}^{1})\tilde{=}\Omega_{X_{k}^{(2)}}^{1}$.
If this is closed, we define $\psi^{(3)}$ analogously, and we continue
on to $\psi^{(n-1)}$. Then we make the 
\begin{defn}
\label{def:p^m-curv}The $p^{m}$-curvature of $\psi$, denoted $\Psi$,
is the $p$-curvature of $\psi^{(m-1)}$, i.e. 
\[
\Psi=(\psi^{(m-1)})^{p}-C(\psi^{(m-1)})\in\Omega_{X_{k}^{(m)}}^{1}
\]
Where $C:\Omega_{X_{k}^{(m-1)}}^{1,\text{cl}}\to\Omega_{X_{k}^{(m)}}^{1}$
denotes the Cartier operator.
\end{defn}

The condition that the $\{\psi^{(i)}\}$ are defined is guaranteed
by the existence $\mathcal{D}_{X_{W_{j}(k)}}^{(j)}$-module structure
of \prettyref{thm:Higher-diff-structure-on-deformations} is defined
for all $j\in\{0,\dots,m-1\}$; i.e., the coefficients of the one
form $\psi^{(i)}$ are determined by the action of the operators $\partial_{j}^{[p^{i}]}$.
Thus the above construction yields 
\begin{cor}
\label{cor:p^m-curvature}Let $\psi\in\Omega_{X_{k}}^{1}$ be such
that $\{\psi^{(i)}\}_{i=1}^{m-1}$ are defined (for instance, if the
one-form is liftable to $W(k)$). Then the $p^{n}$-curvature $\Psi=0$
iff there is some $u\in\Omega_{X_{W_{m}(k)}}^{1}$ such that 
\[
\frac{du}{u}=p^{m-1}\psi
\]
\end{cor}

It should be remarked right away that the consideration of the operators
$\{\psi^{(i)}\}_{i=1}^{m-1}$ is not new- indeed, they have been considered
in Illusie's fundamental work on the de Rham-Witt complex (\cite{key-82},
section 2.2), and the previous corollary can be deduced from the results
of that work. However, the connection with higher differential operators
seems to be new. Finally, we record the 
\begin{rem}
\label{rem:The-p^m-curvature-works}The theory works in an identical
fashion in the relative situation; e.g. for vector bundles with connection
over $\mathcal{D}_{X_{k}}[\lambda,\lambda^{-1}]$ or $\mathcal{D}_{X_{k}}[[\lambda]][\lambda^{-1}]$. 
\end{rem}

\subsection{\label{subsec:Local-Lifts-of-Connections}Local Lifts of Connections}

In this subsection we consider the structure of lifts of a given $\mathcal{D}_{X_{k}}^{(0)}$
module $\tilde{\mathcal{M}}_{k}$, which we assume to be a splitting
bundle for $L_{k}^{(1)}$. To that end, fix a $W_{m}(k)$-flat $\mathcal{D}_{X_{W_{m}(k)}}^{(0)}$-module
$\tilde{\mathcal{M}}_{W_{m-1}(k)}$ which lifts $\tilde{\mathcal{M}}_{k}$;
suppose this module admits a lift to a $W_{m}(k)$-flat $\mathcal{D}_{X_{W_{m}(k)}}^{(0)}$-module,
$\tilde{\mathcal{M}}_{W_{m}(k)}$. We start with 
\begin{lem}
\label{lem:End=00003DWitt}We have $\mathcal{E}nd_{\nabla}(\tilde{\mathcal{M}}_{W_{m}(k)})\tilde{=}W_{m}(\mathcal{O}_{L_{k}^{(m)}})$
\end{lem}

\begin{proof}
The action of the center yields a map $W_{m}(\mathcal{O}_{L_{k}^{(m)}})\to\mathcal{E}nd_{\nabla}(\tilde{\mathcal{M}}_{W_{m}(k)})$.
To show its an isomorphism, we can work formally locally. Then we
can apply \prettyref{prop:Iso-All-Levels!} and the analogous fact
for the trivial connection $\mathcal{O}_{X_{W_{m}(k)}}$. 
\end{proof}
Now we can analyze the structure of the set of lifts: 
\begin{prop}
\label{prop:Pic-for-Local-Lifts}For a fixed lift $\tilde{\mathcal{M}}_{W_{m}(k)}$,
then the subgroup of (isomorphism classes of) line bundles with locally
trivial connection on $L_{W_{m}(k)},$ acts simply transitively on
the set of lifts which are locally isomorphic to $\mathcal{\tilde{M}}_{W_{m}(k)}$. 
\end{prop}

\begin{proof}
Recall from \prettyref{cor:Marked-Descent} we have an equivalence
between line bundles with locally trivial connection on $L_{W_{m}(k)}$
and line bundles on $W_{m}(L_{k}^{(m)})$, given by pulling back under
the natural inclusion $\mathcal{O}_{W_{m}(L_{k}^{(m)})}\to\mathcal{O}_{L_{W_{m}(k)}}$.
To each such line bundle we can associate an open affine cover $\{U_{i}\}$
and an element $\alpha_{ij}\in\mathcal{O}_{W_{m}(U_{ij}^{(m)})}^{*}$.
We also have isomorphisms of $\mathcal{D}_{X_{W_{n}(k)}}^{(0)}$-modules
$\psi_{ij}:\mathcal{M}_{W_{n}(k)}(U_{i})|_{U_{ij}}\tilde{\to}\mathcal{M}_{W_{n}(k)}(U_{j})|_{U_{ij}}$.
By \prettyref{lem:End=00003DWitt}, $\alpha_{ij}$ acts on $\mathcal{M}_{W_{n}(k)}(U_{j})|_{U_{ij}}$,
and so, by the cocycle condition for $\{\alpha_{ij}\}$, we can obtain
a new $\mathcal{D}_{X_{W_{n}(k)}}^{(0)}$-module by replacing $\psi_{ij}$
with $\alpha_{ij}\cdot\psi_{ij}$; whence the result. 
\end{proof}
We'll also need
\begin{lem}
\label{lem:Ext for a splitting bundle}We have $\mathcal{E}xt_{\mathcal{D}_{X_{k}}^{(0)}}^{1}(\tilde{\mathcal{M}_{k}},\tilde{\mathcal{M}_{k}})\tilde{=}\Omega_{L_{k}^{(1)}}^{1}$
as sheaves on $T^{*}X_{k}^{(1)}$. 
\end{lem}

\begin{proof}
By assumption we have that $\mathcal{D}_{X_{k}}^{(0)}|_{L_{k}^{(1)}}$
is a split Azumaya algebra, and that $\tilde{\mathcal{M}_{k}}$ is
a splitting bundle. Let $L_{k,m}^{(1)}$ denote the $m$th infinitesimal
neighborhood of $L_{k}^{(1)}$ in $T^{*}X_{k}^{(1)}$. We claim that
$\mathcal{D}_{X_{k}}^{(0)}|_{L_{k,m}^{(1)}}$ is also split. To see
this, recall from \cite{key-23}, chapter 4, theorem 2.5, that there
is for any scheme $Y$ an injection $\text{Br}(Y)\to H_{et}^{2}(Y,\mathbb{G}_{m})$
(here $\text{Br}(Y)$ denotes the Brauer group). Since $L_{k}^{(1)}$
is affine, the nilpotent immersion $L_{k}^{(1)}\to L_{k,m}^{(1)}$
induces an isomorphism
\[
H_{et}^{2}(L_{k,m}^{(1)},\mathbb{G}_{m})\tilde{\to}H_{et}^{2}(L_{k}^{(1)},\mathbb{G}_{m})
\]
Therefore the class of $\mathcal{D}_{X_{k}}^{(0)}|_{L_{k,m}^{(1)}}$
in $\text{Br}(L_{k,m}^{(1)})$ must be trivial as claimed. 

Furthermore, as isomorphism classes of splitting bundles on $L_{k,m}^{(1)}$
form a torsor over $\text{Pic}(L_{k,m}^{(1)})\tilde{=}\text{Pic}(L_{k}^{(1)})$,
we see that these splitting bundles can be chosen compatibly with
the restriction from $L_{k,m}^{(1)}$ to $L_{k,m-1}^{(1)}$. Taking
the inverse limit yields a splitting bundle. $\widehat{\tilde{\mathcal{M}}}_{k}$,
for $\mathcal{D}_{X_{k}}$ on $\widehat{L_{k}^{(1)}}$, the full formal
neighborhood of $L_{k}^{(1)}$ in $T^{*}X_{k}^{(1)}$. Therefore,
there is an equivalence of categories 
\[
\text{QCoh}_{L_{k}^{(1)}}(T^{*}X_{k}^{(1)})\to\text{QCoh}_{L_{k}^{(1)}}(\mathcal{D}_{X_{k}}^{(0)})
\]
where the left hand side denotes quasi-coherent sheaves on $T^{*}X_{k}^{(1)}$
which are set-theoretically supported on $L_{k}^{(1)}$ and the right
hand side denotes quasi-coherent $\mathcal{D}_{X_{k}}^{(0)}$-modules
which are set-theoretically supported on $L_{k}^{(1)}$. The functor
is $\mathcal{F}\to\widehat{\tilde{\mathcal{M}}}_{k}\otimes_{\mathcal{O}_{T^{*}X_{k}^{(1)}}}\mathcal{F}$.
Thus we obtain an isomorphism 
\[
\mathcal{E}xt_{\mathcal{D}_{X_{k}}}^{1}(\tilde{\mathcal{M}_{k}},\tilde{\mathcal{M}_{k}})\tilde{=}\mathcal{E}xt_{T^{*}X_{k}^{(1)}}^{i}(\mathcal{O}_{L_{k}^{(1)}},\mathcal{O}_{L_{k}^{(1)}})
\]
and setting $i=1$ implies the result.
\end{proof}
Now we'll apply the theory of the $p^{m}$-curvature, developed above
in \prettyref{subsec:Locally-trivial-connections}, to this situation:
\begin{prop}
\label{prop:p^m-curv-over-L}Let $\tilde{\mathcal{N}}_{W_{m}(k)}$
be another lift of $\tilde{\mathcal{M}}_{k}$, and suppose that $\tilde{\mathcal{N}}_{W_{m-1}(k)}$
and $\tilde{\mathcal{M}}_{W_{m-1}(k)}$ are locally isomorphic. Then
there is a one-form $\Psi\in\Omega_{L_{k}^{(m)}}^{1}$ which vanishes
iff $\tilde{\mathcal{N}}_{W_{m}(k)}$ and $\tilde{\mathcal{M}}_{W_{m}(k)}$
are locally isomorphic. 
\end{prop}

\begin{proof}
By assumption, there exists an open affine cover $\{U_{i}\}$ of $L_{k}$,
so that there are isomorphisms $\psi_{i}:\mathcal{\tilde{M}}_{W_{m-1}(k)}(U_{i})\tilde{=}\mathcal{\tilde{N}}_{W_{m-1}(k)}(U_{i})$.
Via these isomorphisms, we can regard both $\mathcal{M}_{W_{n}(k)}(U_{i})$
and $\mathcal{N}_{W_{n}(k)}(U_{i})$ as being lifts of $\mathcal{N}_{W_{n-1}(k)}(U_{i})$;
then the class $[\mathcal{M}_{W_{n}(k)}(U_{i})]-[\mathcal{N}_{W_{n}(k)}(U_{i})]$
is an element $\varphi_{i}\in\text{Ext}{}_{U_{i}}^{1}(\mathcal{\tilde{M}}_{k},\mathcal{\tilde{M}}_{k})\tilde{=}\Omega_{L_{k}^{(1)}}^{1}(U_{i})$.
Pick a closed form $\varphi_{i}'\in\Omega_{L_{k}}^{1}(U_{i})$ whose
image in $\Omega_{L_{k}^{(1)}}^{1}(U_{i})$ is $\varphi_{i}$. We
claim that, in the notation of \prettyref{def:p^m-curv}, the one-forms
$\{(\varphi_{i}')^{(j)}\}_{j=0}^{m-1}$ are all defined. To see this,
note that this condition is checkable formally locally. So, applying
\prettyref{prop:Iso-All-Levels!} (which reduces the question to the
case of lifts of line bundles with connection) the result follows
from the fact that $\mathcal{M}_{W_{n}(k)}$ and $\mathcal{N}_{W_{n}(k)}$
are both liftable to $W(k)$. Therefore, we can take the $p^{m}$-curvature
$\Psi_{i}$; which a priori depends on the choice of $\psi_{i}$. 

Now we look at what happens when we change the map $\psi_{i}$. Suppose
we have $\alpha\in\mathcal{O}_{W_{m-1}(U_{ij}^{(m-1)})}^{*}$. Writing
${\displaystyle \alpha=g_{1}^{p^{m-1}}+pg_{2}^{p^{m-2}}+\dots+p^{m-2}g_{m-2}^{p}}$
for $g_{i}\in\mathcal{O}_{U_{ij,W_{m-1}(k)}}$, we see that, if $\tilde{\alpha}\in\mathcal{O}_{U_{ij,W_{m}(k)}}$
is a lift of $\alpha$, then ${\displaystyle \frac{d\tilde{\alpha}}{\tilde{\alpha}}\in p^{m-1}\Omega_{U_{ij,W_{m}(k)}}^{1}\tilde{=}}\Omega_{U_{ij,k}}^{1}$.
Denote the image of this closed one-form in $\Omega_{L_{k}^{(1)}}^{1}(U_{i})$
by ${\displaystyle [\frac{d\alpha}{\alpha}]}$. 

Then, if we alter the map $\psi_{i}$ by replacing it by $\alpha\cdot\psi_{i}$
for some $\alpha\in\mathcal{O}_{W_{m-1}(U_{ij}^{(m-1)})}^{*}$, then
this has the effect of replacing $\varphi_{i}$ with ${\displaystyle \varphi_{i}+[\frac{d\alpha}{\alpha}]}$.
To prove this, we may work (formally) locally and apply \prettyref{prop:Iso-All-Levels!}
to reduce to proving the analogous statement for line bundles with
connection on $L_{W_{m}(k)}$, where it is a direct computation. In
particular, replacing $\psi_{i}$ by $\alpha\cdot\psi_{i}$ does not
alter the one-form $\Psi_{i}$, because adding ${\displaystyle [\frac{d\alpha}{\alpha}]}$
corresponds to replacing the line bundle whose $p^{m}$-curvature
we are calculating with an isomorphic one, and the $p^{m}$-curvature
depends only the isomorphism class of the line bundle. It follows
that this construction glues, and we obtain a one-form $\Psi$ in
$\Omega_{L_{k}^{(m)}}^{1}$.

Clearly if $\tilde{\mathcal{N}}_{W_{m}(k)}$ and $\tilde{\mathcal{M}}_{W_{m}(k)}$
are locally isomorphic then $\Psi=0$. Conversely, suppose $\Psi=0$.
Choose $\{U_{i}\}$ as above and pick $\varphi_{i}'$ whose image
in $H_{dR}^{1}(U_{i})$ is $\varphi_{i}$. Then, by \prettyref{cor:p^m-curvature}
there is some unit $u\in\mathcal{O}_{L_{W_{m}(k)}}$ with ${\displaystyle p^{m-1}\varphi'_{i}=\frac{du}{u}}$.
But this equation implies that $\bar{u}$, the image of $u$ in $\mathcal{O}_{L_{W_{m-1}(k)}}$
satisfies $d\bar{u}=0$; in other words $\bar{u}\in\mathcal{O}_{W_{m-1}(U_{ij}^{(m-1)})}^{*}$.
Altering the map $\psi_{i}$ to $\bar{u}\cdot\psi_{i}$ then alters
$\varphi_{i}$ to $0$, which shows that $\tilde{\mathcal{N}}_{W_{m}(k)}(U_{i})\tilde{=}\tilde{\mathcal{M}}_{W_{m}(k)}(U_{i})$
as required. 
\end{proof}

\section{\label{sec:Quantizations-of-L}Quantizations of $L$}

The main goal of this chapter is to finish the proof of \prettyref{thm:1}.
Let us recall that, in \prettyref{thm:Uniqueness-of-lambda-conns},
we have constructed a family of meromorphic connections on $\overline{\tilde{X}}_{\mathbb{C}}$,
whose restriction to $U_{\mathbb{C}}$ has constant arithmetic support
(equal to $L_{U_{\mathbb{C}}}$). This family was indexed by line
bundles with log connection on $\overline{L}_{\mathbb{C}}$ with finite
monodromy group. Amongst these we can consider those which have trivial
monodromy along each divisor in $L_{\mathbb{C}}$. Since $H_{dR}^{1}(L_{\mathbb{C}})=0$,
this gives us a finite collection of connections, which we denoted
$\{(\mathcal{L},\nabla)\star\overline{\mathcal{M}}_{1,\mathbb{C}}\}$
(the choice of a particular $\overline{\mathcal{M}}_{1,\mathbb{C}}$
is for notational convenience only). 

Now, each such $(\mathcal{L},\nabla)\star\overline{\mathcal{M}}_{1,\mathbb{C}}|_{U_{\mathbb{C}}}$
is an irreducible flat connection. Thus we may consider the intermediate
extension, which is an irreducible holonomic $\mathcal{D}$-module
on $X_{\mathbb{C}}$; let us denote these holonomic $\mathcal{D}$-modules
by $(\mathcal{L},\nabla)\star\mathcal{M}_{\mathbb{C}}$, and (again
for notational convenience), let us fix our attention on one of these
modules, $\mathcal{M}_{\mathbb{C}}$. We want to calculate its arithmetic
support; i.e., the $p$-support of $\mathcal{M}_{k}$ for $\text{char}(k)>>0$. 

To accomplish this, we'll use the method of \prettyref{subsec:The-monodromy-divisor},
and apply a linear symplectomorphism $\sigma$ to analyze the situation
at a given point. It is not obvious that the resulting object, $\mathcal{M}^{\sigma}$,
is the specialization at $\lambda=1$ of a $\theta$-regular $\lambda$-connection.
However, this is true, and we'll use the theory developed in \prettyref{sec:Some-P-adic-Microlocal}
to show it. Namely, we'll first construct a suitable $\theta$-regular
$\lambda$-connection, and the, we'll use the theory of the $p^{m}$-curvature
to see that for $p>>0$, the $p$-adic completion of $\mathcal{M}^{\sigma}$
(over some open subset) is isomorphic to the $p$-adic completion
of a suitable $\theta$-regular $\lambda$-connection, specialized
to $\lambda=1$. Somewhat surprisingly, this turns out to be enough
to show that the two $\mathcal{D}$-modules are actually isomorphic
(c.f. the proof of \prettyref{thm:M-sigma-is-theta-reg} below). Once
this is done, the calculation of the $p$-support follows; at least
in codimension $2$; and then in general by the results of \prettyref{subsec:Calculation-of-the-p-support}
below. The same technique turns out to be the key point in showing
that the action of $\pi^{*}$ is transitive on the set of all $\mathcal{D}$-modules
with constant arithmetic support equal to $L_{\mathbb{C}}$, with
multiplicity $1$; as a by-product, we'll see that they all come from
the construction of the lemma above. 

We shall, as usual, suppose that $R$ is smooth over $\mathbb{Z}$,
so that any morphism $R\to k$ lifts to a flat morphism $R\to W(k)$. 

\subsection{\label{subsec:Analysis-of-M-sigma}Analysis of $\mathcal{M}^{\sigma}$. }

Now we can begin the proof that $\mathcal{M}_{\mathbb{C}}$ has constant
arithmetic support. We start with some preliminaries over $R$. Fix
a component $E'$ of the divisor $E=L\backslash L_{U}$. Choose an
embedding $X\subset\mathbb{A}^{m}$, and a linear symplectomorphism
$\sigma$ of $T^{*}\mathbb{A}^{m}$ such that the projection $T^{*}\mathbb{A}^{m}\to\mathbb{A}^{m}$
is an isomorphism in a formal neighborhood of $\sigma(E')$ in $\sigma(L)$.
For later use, we shall specify that $\sigma$ is of a particular
form. Let $\{x_{1},\dots,x_{m},\xi_{1},\dots,\xi_{m}\}$ be standard
coordinates on $T^{*}\mathbb{A}^{m}$. Then we demand that 
\[
\sigma=\sigma_{1}\circ\sigma_{2}\circ\cdots\circ\sigma_{r}
\]
where $\sigma_{i}$ is either the linear map which swaps $\xi_{r}$
and $\xi_{s}$ (for some $r$ and $s$), or the map which swaps $x_{t}$
and $\xi_{t}$ (for some $t$). It is elementary to see that there
is a $\sigma$, inducing an isomorphism on a formal neighborhood of
some point in $E'^{,\text{sm}}$, which factors this way. 

In particular, the projection $\sigma(L)\to\mathbb{A}^{m}$ is dominant;
and we have a function $f'$ on $\sigma(L)$ so that $df'=\theta|_{\sigma(L)}$.
In particular we can apply the notation and results of the previous
chapter to this situation; let $\tilde{\mathbb{P}}^{m}$ denote a
suitable modification of $\mathbb{P}^{m}$ (e.g., satisfying the assumptions
laid out in \prettyref{subsec:Higgs-Sheaves}). 

Replace $\mathcal{M}$ with ${\displaystyle \int_{i}\mathcal{M}}$.
Let $U'\subset\mathbb{A}^{m}$ be an open subset over which $\sigma(L)|_{U'}\to U'$
is finite etale. Note that $\mathcal{M}_{\lambda}(U)$ can be extended
a coherent $\mathcal{D}_{\lambda}$-module to all of $X$, and then
we can apply $\sigma^{*}$, and restrict to $U'$; shrinking $U'$
if necessary we have that the result, call it $\mathcal{M}_{\lambda}^{\sigma}$
, is a bundle on $U'$. We now replace $\sigma(L)$ with $L$ and
$\sigma(E')$ with $E'$, to ease notation. By our assumptions on
$\sigma$, we have that the image of $E'$ is a divisor $D'$ in $\mathbb{A}^{m}$. 

The main aim of this section is to prove the following preliminary
result: 
\begin{thm}
\label{thm:M-sigma-is-theta-reg}There is a $\theta$-regular $\lambda$-connection
$\mathcal{N}_{\lambda}$, whose residue along $E'$ is $0$, so that
$\mathcal{N}_{1}|_{U'}\tilde{=}\mathcal{M}_{1}^{\sigma}|_{U'}$. 
\end{thm}

In the next subsection, we'll show how this implies the main result
on the $p$-support of $\mathcal{M}$. Meanwhile, the proof of this
theorem will use all the tools developed in the previous chapters. 
\begin{prop}
\label{prop:Theta-regular-extension} Let the notations be as above.
For each $n\geq1$, $\mathcal{M}_{\lambda}^{\sigma}/\lambda^{n}$
extends to a $\theta$-regular meromorphic connection on $\tilde{\mathbb{P}}^{m}$,
denoted $\overline{\mathcal{N}_{\lambda,n}}$. The residue of this
connection is trivial along $E'$. 
\end{prop}

\begin{proof}
When $n=1$ this is clear as $\mathcal{M}_{\lambda}^{\sigma}/\lambda$
is a line bundle on $\sigma(L)_{U'}$; i.e., $\mathcal{M}_{\lambda}^{\sigma}/\lambda=\pi_{*}(\mathcal{L},df')$
for some line bundle $\mathcal{L}$ on $L_{U'}$. Shrinking $U'$
if necessary (for the last time), we'll assume $\mathcal{L}=\mathcal{O}_{L_{U'}}$. 

When $n=2$, we have (by \prettyref{lem:Deformations-over-U}) $\mathcal{M}_{\lambda}^{\sigma}/\lambda^{2}=\pi_{*}(\mathcal{L},df'+\lambda\psi)$
where $\psi$ is a one-form on $L_{U'}$. As $\sigma$ is a linear
symplectomorphism, we have (as in the proof of \prettyref{thm:Microlocal-form-of-E})
that $\mathcal{M}_{\lambda,k}^{\sigma}$ has $p$-curvature equal
to $\sigma(L_{k}^{(1)})\times\mathbb{A}_{k}^{1}$ for all $k$ of
characteristic $p>>0$. Therefore, applying the argument of \prettyref{prop:Difference-of-two-connections},
we see that $\psi$ is a closed one-form of $p$-curvature $0$ for
$p>>0$. It follows that the one-form $\psi$ has regular singularities
with rational monodromy around each divisor of $\sigma(L)$. Along
a given component of $\sigma(L)\backslash\sigma(L)_{U'}$, this monodromy
measures the failure of $\mathcal{M}_{\lambda}^{\sigma}/\lambda^{2}$
to extend to a flat deformation of a line bundle on $\sigma(L)$ (as
in \prettyref{prop:alternative-characterization}). However, $\mathcal{M}_{\lambda}^{\sigma}/\lambda^{2}$
certainly does extend to a flat deformation of a line bundle on $\sigma(L)$,
since $\mathcal{M}_{\lambda}(U)/\lambda^{2}$ extends to a deformation
of a line bundle on $L$. Thus this monodromy vanishes on each component
of $\sigma(L)\backslash\sigma(L)_{U'}$. Now, applying the argument
of \prettyref{lem:Adjust-the-extension} one deduces the existence
of $\overline{\mathcal{M}_{\lambda,2}^{\sigma}}$, with residue trivial
along $E'$ (as the projection of $E'\subset T^{*}X$ to $X$ is generically
an isomorphism, the blowup $\tilde{X}\to X$ is an isomorphism at
the generic point of $\pi(E)$. Thus we see that $E'$ must correspond
to a component of $\overline{L}_{V_{x},i}$ which is embedded, and
the residue of $\overline{\mathcal{M}_{\lambda,2}^{\sigma}}$ at $E'$
is equal to the residue of $\mathcal{M}_{\lambda,2}^{\sigma}$ at
$E'$, which is trivial). 

Now, let $n>2$. By induction, we shall assume that $\overline{\mathcal{N}_{\lambda,n-1}}$
exists, and that we have found an isomorphism $\overline{\mathcal{N}_{\lambda,n-1}}|_{U'}\tilde{=}\mathcal{N}_{\lambda}^{\sigma}/\lambda^{n-1}$.
Applying \prettyref{thm:Infinitesimal-Def}, we see that there is
a unique $\theta$-regular deformation $\overline{\mathcal{N}{}_{\lambda,n}}$
of $\overline{\mathcal{N}_{\lambda,n-1}}$. By the general deformation
theory, we have that $[\overline{\mathcal{N}_{\lambda,n}}|_{U'}]-[\mathcal{M}_{\lambda}^{\sigma}/\lambda^{n}]=\psi_{n}$
for some $\psi_{n}\in\Omega_{\sigma(L)|_{U'}}^{1}$; explicitly, if
we write $\overline{\mathcal{N}_{\lambda,n}}|_{U'}=\pi_{*}(\mathcal{O}[\lambda]/\lambda^{n},\varphi)$,
then $\mathcal{M}_{\lambda}^{\sigma}/\lambda^{n}=\pi_{*}(\mathcal{O}[\lambda]/\lambda^{n},\varphi+\lambda^{n-1}\psi_{n})$.
These two connections are isomorphic iff there exists some $g\in\mathcal{O}_{L_{U'}}$
with $dg=\psi_{n}$; the isomorphism is given by multiplying a generator
by $1+\lambda^{n-2}g$. 

On the other hand, the reduction to $k$ of both $\overline{\mathcal{N}_{\lambda,n}}|_{U'}$
and $\mathcal{M}_{\lambda}^{\sigma}/\lambda^{n}$ extend to $\lambda$-connections
over $k[\lambda]$ with $p$-curvature equal to $\sigma(L_{k}^{(1)})\times\mathbb{A}_{k}^{1}$
(the former by \prettyref{cor:At-most-one-N-lambda}, the latter by
definition, since it is $\theta$-regular at infinity). So $\overline{\mathcal{N}{}_{\lambda,n,k}}|_{U'}$
and $\mathcal{M}_{\lambda,k}^{\sigma}/\lambda^{n}$ are isomorphic
as $\lambda$-connections. But this implies that $\psi_{n,k}=dg$
for some function $g$. Therefore $\psi_{n}$ is a closed one-form,
whose image in the de Rham cohomology group $H_{dR}^{1}(L_{U',k})$
is $0$ for all $k$. Then, using the canonical injection 
\[
H_{dR}^{1}(L_{U'})/p\to H_{dR}^{1}(L_{U',k})
\]
we see that $\bar{\psi}_{n}\in p\cdot H_{dR}^{1}(L_{U'})$ for all
primes $p$ in $R$. (we use $\bar{\psi}_{n}$ to denote the image
of $\psi$ in de Rham cohomology) As we show below in \prettyref{lem:p-adically-seperated},
the group $H_{dR}^{1}(L_{U'})$ is a direct sum of finite type $R$-modules.
This shows that $\bar{\psi}_{n}$ is a torsion class in $H_{dR}^{1}(L_{U'})$;
which means it is a sum of $p^{r}$-torsion classes for finitely many
primes. Using the canonical surjection
\[
H_{dR}^{0}(L_{U',W_{r}(k)})\to p^{r}-\text{tors}(H_{dR}^{1}(L_{U'}))
\]
we see that, along any irreducible divisor $E''\subset\tilde{E}$,
if $\psi_{n}$ has a pole, then (up to adding an exact one-form) it
must be a sum of terms of the form of the form $p^{a}z^{-(p^{b}-1)}dz$
(where $z$is a local coordinate for $E''$, $b\geq1$, and $a\geq0$
is a suitable power with $a\leq b$). 

We claim that, for large $p$, no such description of the poles is
possible. If this is so, then $\bar{\psi}_{n}$ is the image of a
function in $H_{dR}^{0}(L_{U',W_{r}(k)})$ which has no poles along
any divisor in $\overline{L}_{W_{r}(k)}$, which implies $\bar{\psi}_{n}=0$
as required to finish the induction step of the proof. 

To prove this claim, consider the completion $\mathcal{M}_{\widehat{\lambda}}^{\sigma}$,
as a sheaf on $T^{*}U'$. As it is supported along $L_{U'}$, it follows
that, after possibly pulling back along a root cover, for each irreducible
divisor $E\subset\tilde{\mathbb{P}}^{m}\backslash U'$, there is a
decomposition 
\[
\mathcal{M}_{\widehat{\lambda}}^{\sigma}[z^{-1}]=\bigoplus_{i}\widehat{\mathcal{O}[\lambda,z^{-1}]\cdot e_{i}}
\]
where $z$ is a local equation for $E$, and $\widehat{\mathcal{O}[\lambda,z^{-1}]\cdot e_{i}}$
is the $\lambda$-adic completion of $\mathcal{O}[\lambda,z^{-1}]$,
equipped with some $\lambda$-connection. A priori, this connection
has the form 
\[
\nabla(e_{i})=\sum_{m\in\mathbb{Z}}\alpha_{m}z^{m}\cdot e_{i}
\]
where $\alpha_{m}\to0$ in the $\lambda$-adic topology as $m\to-\infty$.
However, as proved directly below, we actually have $\alpha_{m}=0$
for $m<N$, for some fixed negative number $N$; for some choice of
$\{e_{i}\}$. We also have that $\overline{\mathcal{N}_{\lambda,n}}[z^{-1}]$
admits an analogous decomposition, as it is $\theta$-regular at infinity.
Changing the basis $\{e_{i}\}$ to another such basis allows us to
add a term of the form $\lambda^{n-1}du$, however, since $\bar{\psi}_{n}\neq0$
we see that we cannot obtain $\psi_{n}$ this way. So, for $p>|N|$,
we cannot add $\psi_{n}$ to the connection form of $(\mathcal{M}_{\lambda}^{\sigma}/\lambda^{n})[z^{-1}]$
to obtain the connection form of $\overline{\mathcal{N}_{\lambda,n}}[z^{-1}]$.
Thus $\psi_{n}$ cannot have any poles and the result follows. 
\end{proof}
The proof above required a technical result. To state it, we introduce
some terminology. Let $U$ be an open subset of $\mathbb{A}^{m}$
and let $L_{U}\subset T^{*}U$ be a smooth exact Lagrangian whose
projection to $U$ is finite etale, of rank $r$. Let $\mathcal{F}_{\lambda}$
be a coherent $\mathcal{D}_{\lambda}$-module on $\mathbb{A}^{m}$,
so that $\mathcal{F}_{\lambda}/\lambda|_{U}\tilde{=}\pi_{*}(\mathcal{O}_{L_{U}})$
as Higgs bundles. Let $\{x\}$ be the generic point of an irreducible
divisor in $\tilde{\mathbb{P}}^{m}$ and let $V_{x}$ be an etale
neighborhood of $\{x\}$, with $V_{x}^{(l)}$ a root cover along the
divisor; denote by $\varphi:V_{x}^{(l)}\to\tilde{\mathbb{P}}^{m}$.
Then, exactly as in \prettyref{subsec:Higgs-Sheaves}, the Higgs bundle
$\pi_{*}(\mathcal{O}_{L_{U}})$ satisfies the following: 
\[
\varphi^{*}\pi_{*}(\mathcal{O}_{L_{U}})[z^{-1}]=\bigoplus_{i=1}^{r}\mathcal{L}_{i}[z^{-1}]
\]
where $z$ is the local coordinate for the divisor and $\mathcal{L}_{i}[z^{-1}]$
is a meromorphic Higgs bundle of rank $1$. In other words, by Abhyankar's
theorem, the Lagrangian breaks up into $r$ distinct pieces near $\{x\}$,
possibly after a root cover. 

Then we say that $\mathcal{F}_{\lambda}$ is micro-locally decomposable
if there is a decomposition 
\[
\varphi^{*}\widehat{\mathcal{F}_{\lambda}[z^{-1}]}=\bigoplus_{i=1}^{r}\widehat{\mathcal{F}_{i}[z^{-1}]}
\]
where $\widehat{\mathcal{F}_{\lambda}[z^{-1}]}$ is the $\lambda$-adic
completion of $\mathcal{F}_{\lambda}[z^{-1}]$ and $\widehat{\mathcal{F}_{i}[z^{-1}]}$
are line bundles with meromorphic $\lambda$-connection; i.e., there
is some $e_{i}\in\widehat{\mathcal{F}_{i}[z^{-1}]}$ which generates
it and for which $\nabla(e_{i})\in z^{-N}e_{i}$ for some $N\in\mathbb{N}$. 

As indicated in the previous proof, the fact that such a decomposition
exists, without the meromorphicity condition, is obvious on general
grounds. Weather or not it always exists, is not clear to me a this
time. However, we have
\begin{lem}
\label{lem:(Technical)-We-must}Suppose $\mathcal{F}_{\lambda}$ is
micro-locally decomposable, and let $\sigma$ be a linear symplectomorphism
of $T^{*}\mathbb{A}^{m}$ of the type considered at the beginning
of the chapter. Then $\mathcal{F}_{\lambda}^{\sigma}$ is also micro-locally
decomposable. 
\end{lem}

\begin{proof}
By definition, $\mathcal{F}_{\lambda}^{\sigma}$ is defined as the
tensor product 
\[
\mathcal{F}_{\lambda}^{\sigma}:=\mathcal{D}_{\lambda}^{\sigma}\otimes_{\mathcal{D}_{\lambda}}\mathcal{F}_{\lambda}
\]
where $\mathcal{D}_{\lambda}^{\sigma}$ is the invertible $(\mathcal{D}_{\lambda},\mathcal{D}_{\lambda})$
bimodule defined by the automorphism $\sigma$. Under the isomorphism
$\mathcal{D}_{\lambda}\otimes_{R[\lambda]}\mathcal{D}_{\lambda}\tilde{=}\mathcal{D}_{\mathbb{A}^{2m},\lambda}$
we may regard $\mathcal{D}_{\lambda}^{\sigma}$ as a $\mathcal{D}_{\mathbb{A}^{2m},\lambda}$-module,
and the we may rewrite the above in terms of the $\mathcal{D}_{\lambda}$-module
operations: 
\[
\mathcal{F}_{\lambda}^{\sigma}:=\int_{p_{2}}\mathcal{D}_{\lambda}^{\sigma}\otimes_{\mathcal{O}_{\mathbb{A}^{2m}}[\lambda]}^{L}p_{1}^{*}\mathcal{F}_{\lambda}
\]
where $p_{1},p_{2}$ are the two projections from $\mathbb{A}^{2m}=\mathbb{A}^{m}\times\mathbb{A}^{m}$
to $\mathbb{A}^{m}$ (this is a general fact about application of
bimodules to $\mathcal{D}$-modules). 

Now, suppose $\sigma$ is the symplectomorphism which interchanges
$\xi_{i}$ and $x_{i}$. Then $\mathcal{D}_{\lambda}^{\sigma}$ is
the $\mathcal{D}_{\mathbb{A}^{2m},\lambda}$-module $\text{exp}(x_{i}+y_{i})$;
i.e., it is a line bundle with $\lambda$-connection where $\nabla$
acts on the generator $e$ as $edx_{i}+edy_{i}$. And if $\sigma$
is the symplectomorphism which interchanges $\xi_{i}$ and $\xi_{j}$,
then $\mathcal{D}_{\lambda}^{\sigma}$ is the $\mathcal{D}_{\mathbb{A}^{2m},\lambda}$-module
$\text{exp}(x_{i}y_{i})$. So, in either case it follows directly
that the $\mathcal{D}_{\mathbb{A}^{2m},\lambda}$-module 
\[
\mathcal{D}_{\lambda}^{\sigma}\otimes_{\mathcal{O}_{\mathbb{A}^{2m}}[\lambda]}^{L}p_{1}^{*}\mathcal{F}_{\lambda}=\mathcal{D}_{\lambda}^{\sigma}\otimes_{\mathcal{O}_{\mathbb{A}^{2m}}[\lambda]}p_{1}^{*}\mathcal{F}_{\lambda}
\]
is micro-locally decomposable, as a module over $\mathcal{D}_{\mathbb{A}^{2m},\lambda}$.
But then the result for $\mathcal{F}_{\lambda}^{\sigma}$ follows
as well, as the pushforward over $p_{2}$ of a bundle with meromorphic
connection is easily seen to be meromorphic. 
\end{proof}
Let $\overline{\mathcal{N}}_{\lambda}$ denote the unique algebraic
$\theta$-regular $\lambda$-connection whose reduction mod $\lambda^{n}$
is $\overline{\mathcal{N}_{\lambda,n}}$ (constructed via \prettyref{thm:Algebrization}).
Fix $k$ of characteristic $p>>0$. For each $m\geq1$ we are going
to compare $\mathcal{M}_{\lambda,W_{m}(k)}^{\sigma}$ with $\overline{\mathcal{N}}_{\lambda,W_{m}(k)}$.
In fact we have 
\begin{cor}
For each $m\geq1$, the $\lambda$-connections $\mathcal{M}_{\lambda,W_{m}(k)}^{\sigma}$
and $\overline{\mathcal{N}}_{\lambda,W_{m}(k)}$ are locally isomorphic
over $U'_{W_{m}(k)}\times\mathbb{A}_{W_{m}(k)}^{1}$. 
\end{cor}

\begin{proof}
We proceed by induction on $m$, for $m=1$ is is clear by looking
at the $p$-support (and using \prettyref{prop:Difference-of-two-connections}).
For $m>1$, we start by inverting $\lambda$ and applying the theory
of the $p^{m}$-curvature (via \prettyref{rem:The-p^m-curvature-works})
to the connections $\mathcal{M}_{\lambda,W_{m}(k)}^{\sigma}[\lambda^{-1}]$
and $\overline{\mathcal{N}}_{\lambda,W_{m}(k)}[\lambda^{-1}]$. Thus
we obtain an element $\Psi\in\Omega_{(L_{U'_{k}})^{(m)}}^{1}[\lambda,\lambda^{-1}]$.
Further, since $\mathcal{M}_{\widehat{\lambda},W_{m}(k)}^{\sigma}\tilde{=}\overline{\mathcal{N}}_{\widehat{\lambda},W_{m}(k)}$
(by the previous proposition), we see that the image of $\Psi$ in
$\Omega_{(L_{U'_{k}})^{(m)}}^{1}[[\lambda]][\lambda^{-1}]$ vanishes.
So $\Psi$ vanishes also, and we see that $\mathcal{M}_{\lambda,W_{m}(k)}^{\sigma}[\lambda^{-1}]$
and $\overline{\mathcal{N}}_{\lambda,W_{m}(k)}[\lambda^{-1}]$ are
locally isomorphic. 

To see that they are locally isomorphic over all of $U'_{W_{m}(k)}\times\mathbb{A}_{W_{m}(k)}^{1}$,
note that, as they are both flat over $W_{m}(k)$, any map whose reduction
mod $p$ is an isomorphism is an isomorphism. We have a natural map
\[
\mathcal{H}om(\mathcal{M}_{\lambda,W_{m}(k)}^{\sigma},\overline{\mathcal{N}}_{\lambda,W_{m}(k)})/p\to\mathcal{H}om(\mathcal{M}_{\lambda,k}^{\sigma},\overline{\mathcal{N}}_{\lambda,k})
\]
The right hand side is a line bundle over $\mathcal{O}_{L_{U'_{k}}^{(1)}}[\lambda]$
(since we already know that $\mathcal{M}_{\lambda,k}^{\sigma}$ and
$\overline{\mathcal{N}}_{\lambda,k}$ are locally isomorphic on $U'_{k}\times\mathbb{A}_{k}^{1}$).
The left hand side is a coherent sheaf over $\mathcal{O}_{W_{m}(L_{U'_{k}}^{(m)})}[\lambda]$,
and it is a line bundle there; to see this, note that by flat descent
it can be checked after passing to $U'_{W_{m}(k)}[\lambda,\lambda^{-1}]$
and $U'_{W_{m}(k)}[[\lambda]]$ where the result is clear as the two
sheaves are locally isomorphic on each of these pieces. 

Let $\mathcal{H}$ denote the image of the above map, it is a sub
$\mathcal{O}_{L_{U'_{k}}^{(m)}}$-module of $\mathcal{H}om(\mathcal{M}_{\lambda,k}^{\sigma},\overline{\mathcal{N}}_{\lambda,k})$,
and we have 
\[
\mathcal{O}_{L_{U'_{k}}^{(1)}}\otimes_{\mathcal{O}_{L_{U'_{k}}^{(m)}}}\mathcal{H}\to\mathcal{H}om(\mathcal{M}_{\lambda,k}^{\sigma},\overline{\mathcal{N}}_{\lambda,k})
\]
is an isomorphism; again by checking it on the same flat cover. Therefore
$\mathcal{H}$ locally contains a non-vanishing section of the line
bundle $\mathcal{H}om(\mathcal{M}_{\lambda,k}^{\sigma},\overline{\mathcal{N}}_{\lambda,k})$,
which is necessarily an isomorphism, and the result follows.
\end{proof}
Now, applying $\mathcal{E}nd(\mathcal{M}_{\lambda,W_{m}(k)}^{\sigma})\tilde{=}\mathcal{O}_{W_{m}(L_{U'_{k}}^{(m)})}[\lambda]$,
we see that the two sheaves $\mathcal{M}_{\lambda,W_{m}(k)}^{\sigma}$
and $\overline{\mathcal{N}}_{\lambda,W_{m}(k)}$ define a unique line
bundle on $W_{m}(L_{U'_{k}}^{(m)})\times\mathbb{A}_{W_{m}(k)}^{1}$.
Since $W_{m}(L_{U'_{k}}^{(m)})$ is affine and is an infinitesimal
deformation of a regular scheme, we see that 
\[
\text{Pic}(W_{m}(L_{U'_{k}}^{(m)})\times\mathbb{A}_{W_{m}(k)}^{1})\tilde{\to}\text{Pic}(L_{U'_{k}}^{(m)}\times\mathbb{A}_{k}^{1})\tilde{\to}\text{Pic}(L_{U'_{k}}^{(m)})\tilde{\to}\text{Pic}(W_{m}(L_{U'_{k}}^{(m)}))
\]
Ans so our line bundle is the pullback of a unique line bundle on
$W_{m}(L_{U'_{k}}^{(m)})$; we therefore obtain via \prettyref{prop:Cartier-Over-W_m},
a line bundle with connection $(\mathcal{K}_{W_{m}(k)},\nabla)$ on
$L_{U'_{W_{m}(k)}}$. Taking the inverse limit, we obtain a line bundle
with continuous connection $(\mathcal{K}_{W(k)},\nabla)$ on the formal
scheme $L_{\mathfrak{U}'_{W(k)}}$. 
\begin{cor}
\label{cor:K-is-trivial}The connection $(\mathcal{K}_{W(k)},\nabla)$
is trivial on $L_{\mathfrak{U}'_{W(k)}}$; i.e., it is isomorphic
to $\mathcal{O}_{L_{\mathfrak{U}'_{W(k)}}}$ with the standard connection.
Therefore $\mathcal{\widehat{M}}_{W(k)}^{\sigma}\tilde{\to}\widehat{\mathcal{N}}_{W(k)}$,
where $\widehat{?}$ denotes the $p$-adic completion. 
\end{cor}

\begin{proof}
By construction we have, for each $m\geq1$, that $\mathcal{M}_{\lambda,W_{m}(k)}^{\sigma}\tilde{=}\pi_{*}(\mathcal{O}_{L_{U'_{W_{m}(k)}}}[\lambda],\varphi_{m})$
where $\varphi$ is a closed one-form in $\Omega_{L_{U'_{W_{m}(k)}}}^{1}[\lambda]$,
and similarly $\mathcal{N}_{\lambda,W_{m}(k)}\tilde{=}\pi_{*}(\mathcal{O}_{L_{U'_{W_{m}(k)}}}[\lambda],\varphi_{m}')$.
As the map $L_{U'}\to U'$ is etale, a (local) isomorphism between
these connections corresponds to a (local) isomorphism between the
connections $(\mathcal{O}_{L_{U'_{W_{m}(k)}}}[\lambda],\varphi_{m})$
and $(\mathcal{O}_{L_{U'_{W_{m}(k)}}}[\lambda],\varphi_{m}')$; as
is seen by pulling back along $L_{U'_{W_{m}(k)}}\to U'_{W_{m}(k)}$. 

If we set $\psi_{m}=\varphi_{m}'-\varphi_{m}$, then, because $\mathcal{M}_{\lambda,W_{m}(k)}^{\sigma}$
and $\mathcal{N}_{\lambda,W_{m}(k)}$ are locally isomorphic, we have
that $\psi_{m}$ is a closed one form which is locally trivial, i.e.,
${\displaystyle \psi_{m}=\lambda\frac{du}{u}}$ locally on $L_{U'_{W_{m}(k)}}$.
In particular we have $\psi_{m}=\lambda\psi_{0,m}$, where $\psi_{0,m}\in\Omega_{L_{U'_{W_{m}(k)}}}^{1}$.
The form $\psi_{0,m}$ defines a flat connection, which is precisely
$\mathcal{K}_{W_{m}(k)}$. Taking the inverse limit over $m$, we
obtain a closed one form $\psi_{0}\in\Omega_{L_{\mathfrak{U}'_{W(k)}}}^{1}$
which defines the connection $(\mathcal{K}_{W(k)},\nabla)$. 

Now, by construction there is an isomorphism 
\[
\mathcal{M}_{\lambda}^{\sigma}/\lambda^{2}\tilde{=}\mathcal{N}_{\lambda}/\lambda^{2}
\]
of $\lambda$-connections over $U'_{R}$. Base changing to $\mathfrak{U}'_{W(k)}$,
we obtain an isomorphism 
\[
\widehat{\mathcal{M}}_{\lambda,W(k)}^{\sigma}/\lambda^{2}\tilde{=}\mathcal{\widehat{N}}_{\lambda,W(k)}/\lambda^{2}
\]
or, equivalently, 
\[
\pi_{*}(\mathcal{O}_{L_{\mathfrak{U}_{W(k)}}}[\lambda]/\lambda^{2},\varphi)\tilde{\to}\pi_{*}(\mathcal{O}_{L_{\mathfrak{U}_{W(k)}}}[\lambda/\lambda^{2}],\varphi+\lambda\psi_{0})
\]
where $\varphi$ is the inverse limit of the $\varphi_{m}$, and we've
used $\varphi_{m}'=\varphi_{m}+\lambda\psi_{0,m}$. But this means
that there is a unit $u$ in $\mathcal{O}_{L_{\mathfrak{U}_{W(k)}}}$
so that $\lambda{\displaystyle \frac{du}{u}=\lambda\psi_{0}}$which
shows that the connection defined by $\psi_{0}$ is trivial as required. 
\end{proof}
Now we proceed to the
\begin{proof}
(of \prettyref{thm:M-sigma-is-theta-reg}) We are going to show that,
for any $k$ of large enough characteristic, we have a nonzero morphism
$(\overline{\mathcal{N}}'_{1,W(k)}|_{U_{W(k)}})\to\mathcal{M}_{1,W(k)}^{\sigma}|_{U_{W(k)}}$.
Since both of these are irreducible flat connections over $\text{Frac}(W(k))$,
after enlarging $R$ we obtain the desired isomorphism. Let $\mathcal{V}_{W(k)}:=\overline{\mathcal{N}}'_{1,W(k)}\otimes(\mathcal{M}_{1,W(k)}^{\sigma})^{*}$.
This is an algebraic vector bundle with flat connection on $U'_{W(k)}$.
Let $\widehat{\mathcal{V}}_{W(k)}$ denote its $p$-adic completion.
Then, we have the short exact sequence 
\[
0\to\mathcal{V}_{W(k)}\to\widehat{\mathcal{V}}_{W(k)}\to\mathcal{Q}_{W(k)}\to0
\]
where $\mathcal{Q}_{W(k)}$ is a module with flat connection over
$\mathcal{O}_{U_{W(k)}}$ on which $p$ acts invertible. Thus we obtain
the long exact sequence 
\[
\mathbb{H}_{dR}^{0}(\mathcal{Q}_{W(k)})\to\mathbb{H}_{dR}^{1}(\mathcal{V}_{W(k)})\to\mathbb{H}_{dR}^{1}(\widehat{\mathcal{V}}_{W(k)})\to\mathbb{H}_{dR}^{1}(\mathcal{Q}_{W(k)})
\]
and \prettyref{lem:p-adically-seperated} implies that $\mathbb{H}_{dR}^{1}(\mathcal{V}_{W(k)})$
is $p$-adically seperated. Thus the first map in this sequence is
zero, and we see that the induced map
\[
p-\text{tors}(\mathbb{H}_{dR}^{1}(\mathcal{V}_{W(k)}))\to p-\text{tors}(\mathbb{H}_{dR}^{1}(\widehat{\mathcal{V}}_{W(k)}))
\]
is an isomorphism. On the other hand, since $\mathcal{V}_{W(k)}$
is flat over $W(k)$ we have $\mathbb{H}_{dR}^{\cdot}(\mathcal{V}_{k})\tilde{=}\mathbb{H}_{dR}^{\cdot}(\mathcal{V}_{W(k)})\otimes_{W(k)}^{L}k$.
Therefore we have the short exact sequence 
\[
\mathbb{H}_{dR}^{0}(\mathcal{V}_{W(k)})/p\to\mathbb{H}_{dR}^{0}(\mathcal{V}_{k})\to p-\text{tors}(\mathbb{H}_{dR}^{1}(\mathcal{V}_{W(k)}))
\]
and the analogous one for $\widehat{\mathcal{V}}_{W(k)}$. Therefore
the map 
\[
\mathbb{H}_{dR}^{0}(\mathcal{V}_{W(k)})/p\to\mathbb{H}_{dR}^{0}(\widehat{\mathcal{V}}_{W(k)})/p
\]
is an isomorphism. Further, since $\overline{\mathcal{N}}'_{1,W(k)}$
and $\mathcal{M}_{1,W(k)}^{\sigma}$ become isomorphic after $p$-adic
completion, we see that $\mathbb{H}_{dR}^{0}(\widehat{\mathcal{V}}_{W(k)})\neq0$
(in fact $\mathbb{H}_{dR}^{0}(\widehat{\mathcal{V}}_{W(k)})=W(k)$
as can be easily seen from the fact that both $\overline{\mathcal{N}}'_{1,W(k)}$
and $\mathcal{M}_{1,W(k)}^{\sigma}$ are flat deformations of a splitting
bundle on $L_{U'_{k}}$). Therefore $\mathbb{H}_{dR}^{0}(\mathcal{V}_{W(k)})\neq0$
and the result follows. 
\end{proof}

\subsection{\label{subsec:Calculation-of-the-p-support}Calculation of the $p$-support}

Now we prove the first main assertion of \prettyref{thm:1}; namely,
that $\mathcal{M}$ has constant arithmetic support. Let $\mathfrak{X}_{W(k)}$
denote the formal completion of $X_{W(k)}$, considered as an affine
formal scheme. As in \prettyref{def:D-complete}, we let $\mathcal{\widehat{D}}_{\mathfrak{X}_{W(k)}}^{(0)}$
denote the $p$-adic completion of $\mathcal{D}_{X_{W(k)}}$; it is
a sheaf of noetherian, coherent algebras on $\mathfrak{X}_{W(k)}$.
We let $\mathfrak{U}_{W(k)}$ denote the formal scheme which is the
$p$-adic completion of $U_{W(k)}$; let $j:\mathfrak{U}_{W(k)}\to\mathfrak{X}_{W(k)}$
denote the inclusion. Finally, let $\widehat{\mathcal{M}}_{W(k)}$
denote the $p$-adic completion of $\mathcal{M}_{W(k)}$, and $\widehat{\mathcal{M}}_{\mathfrak{U},W(k)}$
the $p$-adic completion of $\mathcal{M}_{U,W(k)}$. Then, in \prettyref{subsec:V-filtrations}
below, we prove
\begin{thm}
\label{thm:Injectivity-for-M}The natural map $\widehat{\mathcal{M}}_{W(k)}\to j_{*}(\widehat{\mathcal{M}}_{\mathfrak{U},W(k)})$
is injective. 
\end{thm}

Now, $\widehat{\mathcal{M}}_{\mathfrak{U},W(k)}/p$ is a splitting
bundle for $\mathcal{D}_{U_{k}}|_{L_{U_{k}}^{(1)}}$. As we know from
\prettyref{lem:Extend-mod-p}, this splitting bundle extends to a
splitting bundle on all of $L_{k}^{(1)}$. Our next theorem shows
that (at least one such) extension admits a lift to $W(k)$. To state
this theorem, we recall from \cite{key-80} that the center of $\mathcal{D}_{X_{W_{n}(k)}}$
is isomorphic to the ring of Witt vectors on the $n$th Frobenius
twist of the cotangent bundle of $X_{k}$; $W_{n}(T^{*}X_{k}^{(n)})$.
In particular, the underlying topological space of $\text{Spec}(\mathcal{Z}(\mathcal{D}_{X_{W_{n}(k)}}))$
can be identified with that of $T^{*}X_{k}^{(n)}$.
\begin{thm}
\label{thm:p-adic-IC-v1}1) There is an open subset $V$ of $T^{*}X_{k}$,
whose intersection with $L_{k}$ has complement of codimension $2$
inside $L_{k}$, and such that $\mathcal{M}_{k}|_{V^{(1)}}$ is a
splitting bundle for $L_{k}^{(1)}\cap V^{(1)}$. For each $n\geq1$,
$\mathcal{M}_{W_{n}(k)}|_{W_{n}(V^{(n)})}$ is scheme theoretically
supported along $W_{n}(L_{k}^{(n)}\cap V^{(n)})$, and is a $W_{n}(k)$-flat
deformation of $\mathcal{M}_{k}|_{V^{(1)}}$.

2) Let $j:V\to T^{*}X_{k}$ denote the inclusion. Then, for each $n\geq1$,
$j_{*}(\mathcal{M}_{W_{n}(k)}|_{W_{n}(V^{(n)})})$ is a flat deformation
of $j_{*}(\mathcal{M}_{k}|_{V^{(1)}})$, which is a splitting bundle
on $L_{k}^{(1)}$. The module 
\[
\lim_{n\to\infty}j_{*}(\mathcal{M}_{W_{n}(k)}|_{W_{n}(V^{(n)})}):=\tilde{\mathcal{M}}_{W(k)}
\]
 is therefore a $p$-adically complete, $p$-torsion-free, finite
$\mathcal{\widehat{D}}_{\mathfrak{X}_{W(k)}}^{(0)}$-module $\tilde{\mathcal{M}}_{W(k)}$
such that $\tilde{\mathcal{M}}_{W(k)}/p$ is a splitting bundle on
$L_{k}^{(1)}$. Further, there is an isomorphism 
\[
\mathcal{\widehat{D}}_{\mathfrak{U}_{W(k)}}^{(0)}\widehat{\otimes}_{\mathcal{\widehat{D}}_{\mathfrak{X}_{W(k)}}^{(0)}}\tilde{\mathcal{M}}_{W(k)}\tilde{\to}j_{*}(\widehat{\mathcal{M}}_{\mathfrak{U},W(k)})
\]
where the term on the left denotes the $p$-adic completion of $\mathcal{\widehat{D}}_{\mathfrak{U}_{W(k)}}^{(0)}\otimes_{\mathcal{\widehat{D}}_{\mathfrak{X}_{W(k)}}^{(0)}}\tilde{\mathcal{M}}_{W(k)}$. 

In particular, there is a map $\widehat{\mathcal{M}}_{W(k)}\to\tilde{\mathcal{M}}_{W(k)}$
given by taking the inverse limit of the natural maps 
\[
\mathcal{M}_{W_{n}(k)}\to j_{*}(\mathcal{M}_{W_{n}(k)}|_{W_{n}(V^{(n)})})
\]
This map is surjective.
\end{thm}

Combining these two theorems yields immediately
\begin{thm}
\label{thm:generic-fibre-comparison}There is an isomorphism $\tilde{\mathcal{M}}_{W(k)}\tilde{=}\widehat{\mathcal{M}}_{W(k)}$.
In particular, the $p$-support of $\mathcal{M}_{k}$ is $L_{k}^{(1)}$,
with multiplicity $1$; in fact $\mathcal{M}_{k}$ is a splitting
bundle for $\mathcal{D}_{X_{k}}$ along $L_{k}^{(1)}$
\end{thm}

Now we proceed to the 
\begin{proof}
(of \prettyref{thm:p-adic-IC-v1}) 1) As in the previous section we
fix a component $E'$ of the divisor $E=L\backslash L_{U}$, and a
$\sigma$ so that the projection $T^{*}\mathbb{A}^{m}\to\mathbb{A}^{m}$
is an isomorphism in a formal neighborhood of $\sigma(E')$ in $\sigma(L)$.
Then, there is a component $D'$ of $\mathbb{A}^{m}\backslash U'$
so that $E'\to D'$ is generically an isomorphism.

Then, by \prettyref{thm:M-sigma-is-theta-reg}, we have that $\mathcal{M}^{\sigma}$
is the restriction to $U'$ of the reduction mod $(\lambda-1)$ of
$\overline{\mathcal{M}_{\lambda}^{\sigma}}$, which is $\theta$-regular
at infinity. Let $\tilde{\mathcal{M}}_{1}^{\sigma}$ denote the $\mathcal{D}_{\mathbb{A}^{m}}$-submodule
of $j_{*}(\mathcal{M}_{1}^{\sigma})$ generated by $\overline{\mathcal{M}_{1}^{\sigma}}|_{\mathbb{A}^{m}}$.
Then, localizing $R$ as needed, we have $\sigma^{*}\mathcal{M}\subset\tilde{\mathcal{M}}_{1}^{\sigma}$,
with cokernel flat over $R$. Thus we can suppose $\sigma^{*}\mathcal{M}_{W_{n}(k)}\subset\tilde{\mathcal{M}}_{1,W_{n}(k)}^{\sigma}$
for all $k$ with $R\to k$, and all $n\geq1$. 

By the $\theta$-regularity, we have for each $n\geq1$ a decomposition
\[
(\overline{\mathcal{M}_{\lambda}^{\sigma}})_{W_{n}(k)}=\bigoplus_{i}(\overline{\mathcal{M}_{\lambda}^{\sigma}})_{W_{n}(k),i}
\]
in an etale neighborhood $V_{x,W_{n}(k)}$ of the generic point of
$D'_{W_{n}(k)}$, and the sum is over components of the normalization
$\overline{\sigma(L)}$ of $\tilde{\mathbb{P}}^{m}$ in $K(\sigma(L))$
which live over $D'$. Since $\sigma(L)$ is smooth, we see that this
collection of components includes each component of $\sigma(L)$ which
lives over $D'$, including $E'$. This implies that the component
containing $E'$ is embedded in $T^{*}(\tilde{\mathbb{P}}^{m})$,
with trivial residue. Therefore, by the $\theta$-regularity condition,
the summand $(\widehat{\overline{\mathcal{M}_{\lambda}^{\sigma}}})_{W_{n}(k),i}$
which corresponds to $E'$ is itself a line bundle with $\lambda$-connection
over $\widehat{\mathcal{O}}_{D_{W_{n}(k)}'}[\lambda]$. Similarly,
by the choice of residue in \prettyref{thm:M-sigma-is-theta-reg},
the summands corresponding to other components of $\sigma(L)$ living
above $D'$ are vector bundles with $\lambda$-connection, while the
remaining summands correspond to divisors in $\overline{\sigma(L)}\backslash\sigma(L)$.
Taking the inverse limit, the same holds over $W(k)$.

Now we show that, after pulling back to $V_{x}$, we have the decomposition
\begin{equation}
\tilde{\mathcal{M}}_{1,k}=\bigoplus_{i,\text{finite}}(\overline{\mathcal{M}_{1}^{\sigma}})_{k,i}\oplus\bigoplus_{i,\text{infinite}}(\overline{\mathcal{M}_{1}^{\sigma}})_{k,i}[z^{-1}]\label{eq:Decomp-for-M-1-k}
\end{equation}
where the first sum runs over divisors in $\sigma(L)$, while the
second runs over divisors in $\overline{\sigma(L)}\backslash\sigma(L)$,
and $\{z=0\}$ is a local equation for $D'$. 

To see it, use the fact that, after taking a suitable root cover,
each $(\overline{\mathcal{M}_{1}^{\sigma}})_{W_{n}(k),i}$ (for $i$
infinite) becomes a direct sum of line bundles with connection, where
the connection is of the form ${\displaystyle \frac{d}{dz}e=f_{i}\cdot e}$
where $f_{i}$ has a pole in $z$. The $\mathcal{\widehat{D}}_{\mathfrak{V}_{x,W(k)}}^{(0)}$-module\footnote{here the $\mathfrak{V}_{x}$ is the $p$-adic completion of $V_{x}$}
generated by $(\overline{\mathcal{M}_{1}^{\sigma}})_{W(k),i}:=\lim_{n}(\overline{\mathcal{M}_{\lambda}^{\sigma}})_{W_{n}(k),i}$
is therefore equal to $\widehat{(\overline{\mathcal{M}_{1}^{\sigma}})_{W(k),i}[z^{-1}]}$.
For $i$ finite each $(\overline{\mathcal{M}_{1}^{\sigma}})_{W(k),i}$
is already a vector bundle with connection. So we see that the $\mathcal{D}_{\mathfrak{V}_{x,W(k)}}$-module
generated by $\{(\overline{\mathcal{M}_{1}^{\sigma}})_{W(k),i}\}$,
which is the $p$-adic completion of $\tilde{\mathcal{M}}_{1,Wk)}|_{V_{x,W(k)}}$,
is given by 
\[
\bigoplus_{i,\text{finite}}(\overline{\mathcal{M}_{1}^{\sigma}})_{W(k),i}\oplus\bigoplus_{i,\text{infinite}}\widehat{(\overline{\mathcal{M}_{1}^{\sigma}})_{Wk),i}[z^{-1}]}
\]
Thus $\tilde{\mathcal{M}}_{1,k}|_{V_{x}}$, which is the reduction
mod $p$ of the $p$-adic completion of $\tilde{\mathcal{M}}_{1,W(k)}|_{V_{x,W(k)}}$,
is exactly given by \prettyref{eq:Decomp-for-M-1-k} above. 

This implies that the $\mathcal{D}$-module $\tilde{\mathcal{M}}_{1,k}^{\sigma}$,
after restriction to an etale neighborhood of the generic point of
$D'$, has $p$-support equal to $\sigma(L_{k}^{(1)})$; and in fact
is a line bundle when restricted to $\widehat{\sigma(E')}_{k}$. Thus
$\sigma^{*}\mathcal{M}_{k}$ is torsion free of rank $1$ over $\sigma(L_{k}^{(1)})$
near $\sigma(E')_{k}$, and is therefore a line bundle in a neighborhood
of the generic point of $\sigma(E')_{k}$. Since $E'$ and $\sigma$
were arbitrary, we see that $\mathcal{M}_{k}$ is a vector bundle
on $L_{k}^{(1)}$ in codimension $2$, which is the first statement
of $1)$; the second follows as $\mathcal{M}_{W_{n}(k)}$ is flat
over $W_{n}(k)$. 

2) Since $\mathcal{M}_{k}|_{V^{(1)}}$ is a splitting bundle on $V^{(1)}$,
which has codimension $2$ in $L_{k}^{(1)}$, it follows (since $\mathcal{D}$
is split on $L_{k}^{(1)}$ by \prettyref{lem:Extend-mod-p}) that
$j_{*}(\mathcal{M}_{k}|_{v^{(1)}})$ is a splitting bundle on all
of $L_{k}^{(1)}$. To obtain the mixed-characteristic result, it suffices
to show that $j_{*}(\mathcal{M}_{W_{n}(k)}|_{W_{n}(V^{(n)})})$ is
a flat deformation of $j_{*}(\mathcal{M}_{k}|_{V^{(1)}})$. To do
so, choose a closed point $x\in L_{k}$. Let $e$ be a local section
of $j_{*}(\mathcal{M}_{k}|_{V^{(1)}})$ near $x$ which generates
it as a $\mathcal{D}$-module. We have the canonical morphism $\mathcal{M}_{k}\to j_{*}(\mathcal{M}_{k}|_{V^{(1)}})$.
We claim that this morphism is a surjection.

To prove this, we apply the $\mathcal{D}$-module duality functor
\[
\mathbb{D}\mathcal{E}=\mathcal{R}Hom(\mathcal{E},\mathcal{D})\otimes\omega_{X_{k}}[n]
\]
Since $\mathcal{M}_{k}$ is the reduction of a holonomic $\mathcal{D}_{X_{\mathbb{C}}}$-module,
we have that $\mathbb{D}\mathcal{M}_{k}$ is concentrated in degree
$0$. The same is also true for $j_{*}(\mathcal{M}_{k}|_{V^{(1)}})$;
this can be seen directly from the fact that $j_{*}(\mathcal{M}_{k}|_{V^{(1)}})$
is a splitting bundle for $L_{k}^{(1)}$. Furthermore, we can apply
all of the above constructions to the irreducible $\mathcal{D}_{X_{\mathbb{C}}}$
module $\mathbb{D}\mathcal{M}_{\mathbb{C}}$; which is the IC extension
of the dual vector bundle $\mathcal{M}^{*}|_{U_{\mathbb{C}}}$. This
yields a splitting bundle $j_{*}(\mathcal{M}_{k}^{*}|_{V^{(1)}})$,
which is the dual to $j_{*}(\mathcal{M}_{k}|_{V^{(1)}})$, and a map
$\mathbb{D}\mathcal{M}_{k}\to j_{*}(\mathcal{M}_{k}^{*}|_{V^{(1)}})$.
Taking the dual of this map, we obtain a map $j_{*}(\mathcal{M}_{k}|_{V^{(1)}})\to\mathbb{D}\mathbb{D}\mathcal{M}_{k}=\mathcal{M}_{k}$.
Composing with $\mathcal{M}_{k}\to j_{*}(\mathcal{M}_{k}|_{V^{(1)}})$
we get an isomorphism on $j_{*}(\mathcal{M}_{k}|_{V^{(1)}})$ (this
follows by restricting to $V^{(1)}$, where this map is clearly an
isomorphism). Therefore $\mathcal{M}_{k}\to j_{*}(\mathcal{M}_{k}|_{V^{(1)}})$
is a surjection as claimed; indeed, it is a split surjection. 

Now we proceed to prove point $2)$ by induction on $n$. Assume that
$j_{*}(\mathcal{M}_{W_{n-1}(k)}|_{V^{(n-1)}})$ is a flat deformation
of $j_{*}(\mathcal{M}_{k}|_{V^{(1)}})$ and that $\mathcal{M}_{W_{n-1}(k)}\to j_{*}(\mathcal{M}_{W_{n-1}(k)}|_{W^{(n-1)}})$
is onto. Choose an element $\bar{e}\in\mathcal{M}_{W_{n-1}(k)}$ whose
image in $j_{*}(\mathcal{M}_{W_{n-1}(k)}|_{V^{(n-1)}})$ is a generator
(over $\mathcal{D}_{X_{W_{n-1}(k)}}$). Let $e\in\mathcal{M}_{W_{n}(k)}$
be a lift of $\bar{e}$; we denote also by $e$ its image in $j_{*}(\mathcal{M}_{W_{n}(k)}|_{V^{(n)}})$
We have the exact sequence 
\[
0\to j_{*}(p\cdot\mathcal{M}_{W_{n}(k)}|_{V^{(n)}})\to j_{*}(\mathcal{M}_{W_{n}(k)}|_{V^{(n)}})\to j_{*}(\mathcal{M}_{k}|_{V^{(1)}})
\]
The right hand map is surjective, as the image of $e$ under $\mathcal{M}_{k}\to j_{*}(\mathcal{M}_{k}|_{V^{(1)}})$
generates it. By the induction hypothesis, $j_{*}(p\cdot\mathcal{M}_{W_{n}(k)}|_{V^{(n)}})$
is generated by the image of $pe$, which implies $j_{*}(p\cdot\mathcal{M}_{W_{n}(k)}|_{V^{(n)}})=p\cdot j_{*}(\mathcal{M}_{W_{n}(k)}|_{V^{(n)}})$.
Therefore
\[
j_{*}(\mathcal{M}_{W_{n}(k)}|_{V^{(n)}})/p\cdot j_{*}(\mathcal{M}_{W_{n}(k)}|_{V^{(n)}})\tilde{\to}j_{*}(\mathcal{M}_{k}|_{V^{(1)}})
\]
So $\mathcal{M}_{W_{n}(k)}\to j_{*}(\mathcal{M}_{W_{n}(k)}|_{V^{(n)}})$
is onto by Nakayama's lemma, and the image of $e$ generates $j_{*}(\mathcal{\mathcal{M}}_{W_{n}(k)}|_{V^{(n)}})$.
Now, let $m\in j_{*}(\mathcal{\mathcal{M}}_{W_{n}(k)}|_{V^{(n)}})$.
If $m\in\text{ker}(p^{n-1}\cdot)$, then $m\in\text{ker}(j_{*}(\mathcal{\mathcal{M}}_{W_{n}(k)}|_{V^{(n)}})\to j_{*}(\mathcal{M}_{k}|_{V^{(1)}}))$
since $\mathcal{M}_{W_{n}(k)}|_{V^{(n)}}$ is $W_{n}(k)$-flat. Therefore
$m\in\text{image}(p\cdot)$; so $j_{*}(\mathcal{\mathcal{M}}_{W_{n}(k)}|_{W^{(n)}})$
is flat over $W_{n}(k)$ by induction, and the result follows. The
surjectivity of the map $\widehat{\mathcal{M}}_{W(k)}\to\text{\ensuremath{\tilde{\mathcal{M}}}}_{W(k)}:={\displaystyle \lim_{n}j_{*}(\mathcal{\mathcal{M}}_{W_{n}(k)}|_{W^{(n)}})}$
now follows from the complete Nakayama lemma. 
\end{proof}

\subsection{\label{subsec:Torsor-Structure}Torsor Structure}

The results of the previous section ensure that we have constructed
a finite collection of holonomic $\mathcal{D}_{X_{\mathbb{C}}}$-modules,
$\{\mathcal{K}\star\mathcal{M}_{\mathbb{C}}\}$, where $\mathcal{K}$
is indexed by the character group $\pi^{*}$ of $\pi_{1}(L_{\mathbb{C}})$.
We now turn to checking that, first, each distinct $\mathcal{K}$
defines a distinct holonomic $\mathcal{D}$-module, and, second, that
any holonomic $\mathcal{D}$-module of constant arithmetic support
equal to $L_{\mathbb{C}}$ is isomorphic to such a $\mathcal{K}\star\mathcal{M}_{\mathbb{C}}$.
In fact, both statements will drop out of the same argument, and so
we turn to checking the second statement first. 

Let $\mathcal{N}_{\mathbb{C}}$ be a $\mathcal{D}$-module with constant
arithmetic support equal to $L_{\mathbb{C}}$ (with multiplicity $1$).
Our first task is to use this information about the arithmetic support
to constrain the ``behavior at infinity'' of $\mathcal{N}_{\mathbb{C}}$.
We let $\mathcal{N}$ denote an $R$-model of $\mathcal{N}_{\mathbb{C}}$,
and we let $\overline{\mathcal{N}}_{\mathbb{C}}$ denote an extension
of $\mathcal{N}_{\mathbb{C}}$ to a meromorphic connection on $\overline{\tilde{X}}_{\mathbb{C}}$. 
\begin{lem}
\label{lem:Local-Decomp-for-N}Let $\{x\}$ be the generic point of
some component of $\tilde{D}$. Let $\varphi:V_{x}\to\overline{\tilde{X}}$
be an etale neighborhood, and $V_{x}^{(l)}\to V_{x}$ a root cover.
Let $\varphi^{(l)}:V_{x}^{(l)}\to\overline{\tilde{X}}$ denote the
composed morphism. There is a decomposition
\[
(\varphi^{(l)})^{*}\widehat{\overline{\mathcal{N}}}[z_{1}^{-1}]=\bigoplus_{i=1}^{r'}\mathcal{O}_{\widehat{V_{x}^{(l)}}}[z_{1}^{-1}]\cdot e_{i}\oplus\widehat{\overline{\mathcal{N}}}[z_{1}^{-1}]_{reg}
\]
(as in \prettyref{cor:B-V-over-R}; here $\widehat{V_{x}^{(l)}}$
denotes the completion of $V_{x}^{(l)}$ along the divisor $\tilde{D}$).
We write $\partial_{j}e_{i}=\alpha_{ij}e_{i}$. 

1) After possibly renumbering, the (nonzero) image of $\alpha_{i}$
in $\mathcal{O}_{\widehat{V_{x}^{(l)}}}[z_{1}^{-1}]/\mathcal{O}_{\widehat{V_{x}^{(l)}}}\cdot z_{1}^{-1}$
agrees with the image of $\theta_{ij}$ in $\mathcal{O}_{\widehat{V_{x}^{(l)}}}[z_{1}^{-1}]/\mathcal{O}_{\widehat{V_{x}^{(l)}}}\cdot z_{1}^{-1}$. 

2) Let $R\to W(k)$. For any $m\geq1$, there is a decomposition 
\begin{equation}
\widehat{\overline{\mathcal{N}}}_{W_{m}(k)}[z_{1}^{-1}]=\bigoplus_{i=1}^{r}\mathcal{O}_{\widehat{V_{x,W_{m}(k)}^{(l)}}}[z_{1}^{-1}]\cdot e_{i}\label{eq:decomp}
\end{equation}
of meromorphic connections (i.e., we have $\nabla(e_{i})=\psi_{i}e_{i}$
for one-forms $\psi_{i}\in\Omega_{\widehat{V_{x,W_{m}(k)}^{(l)}}}^{1}[z_{1}^{-1}]$)
such that, for $1\leq i\leq r'$, we have that $e_{i}$ agrees with
the $e_{i}$ of the decomposition written above, and for $i>r'$ the
one-form $\psi_{i}$ has log poles.
\end{lem}

\begin{proof}
After reduction to $k$, the assumption on the $p$-curvature implies
that we have a decomposition 
\[
\widehat{\overline{\mathcal{N}}}_{k}[z_{1}^{-1}]=\bigoplus_{i=1}^{r}\mathcal{O}_{\widehat{V_{x,k}^{(l)}}}[z_{1}^{-1}]\cdot e_{i}
\]
of meromorphic connections; if we write $\nabla(e_{i})=\psi_{i}e_{i}$,
then we have $\nabla^{(p)}(e_{i})=(\psi_{i}^{p}-C(\psi_{i}))e_{i}$;
hence this is a complete set of eigenvectors for the operators $\{\partial_{i}^{p}\}_{i=1}^{n}$;
further, we have $(\psi_{i}^{p}-C(\psi_{i}))=\theta_{i}^{p}$. On
the other hand, the elements $\alpha_{i}^{p}-(\partial_{z_{1}})^{p-1}\alpha_{i}$
are amongst the eigenvalues for the operator $\partial_{z}^{p}$.
Writing $\alpha_{i}=a_{i,m}z^{-m}+\cdots+a_{i,2}z^{-2}+a_{i,1}z^{-1}+a_{0}$
(where $a_{0}$ has no poles in $z$) shows that 
\[
\alpha_{i}^{p}-(\partial_{j})^{p-1}\alpha_{i}=a_{i,m}^{p}z^{-mp}+\cdots a_{i,2}^{p}z^{-2p}+(a_{i,1}^{p}-a_{i,1})z^{-p}+a_{0}'
\]
(where $a_{0}'$ has no poles in $z$). Comparing these two sets of
eigenvectors thus gives $1)$. For $2)$, the decomposition \prettyref{eq:decomp}
follows from the fact that there are no nontrivial extensions between
the distinct connections $\{\mathcal{O}_{\widehat{V_{x,k}^{(l)}}}[z_{1}^{-1}]\cdot e_{i}\}_{i=1}^{r}$.
Comparing with the reduction to $W_{m}(k)$ of the decomposition 
\[
\widehat{\overline{\mathcal{N}}}[z_{1}^{-1}]=\bigoplus_{i=1}^{r'}\mathcal{O}_{\widehat{V_{x}^{(l)}}}[z_{1}^{-1}]\cdot e_{i}\oplus\widehat{\overline{\mathcal{N}}}[z_{1}^{-1}]_{reg}
\]
we see that the eigenvectors must agree, for each operator $\{\partial_{i}\}_{i=1}^{n}$
and the result follows.
\end{proof}
Now we are going to attach an element of $\pi^{*}$ to $\mathcal{N}_{\mathbb{C}}$.
Recall from (the proof of) \cite{key-7}, theorem 3.1.1, that $\mathcal{N}_{k}$
is a pure sheaf for all $k$ of characteristic $p>>0$. In particular,
since the $p$-support of $\mathcal{N}_{k}$ is the smooth scheme
$L_{k}^{(1)}$, we see that $\mathcal{N}_{k}$ is torsion-free as
a sheaf on $L_{k}^{(1)}$, and hence a vector bundle in codimension
$2$. As $\mathcal{D}_{X_{k}}$ is split on $L_{k}^{(1)}$, and $\mathcal{N}_{k}$
has multiplicity $1$, we see that in fact $\mathcal{N}_{k}$ is a
splitting bundle for $L_{k}^{(1)}$ in codimension $2$. As splitting
bundles form a torsor over $L_{k}^{(1)}$, there is a unique line
bundle $\mathcal{K}'_{k}$ on $L_{k}^{(1)}$ such that $\mathcal{K}'_{k}\star\mathcal{N}_{k}=\mathcal{M}_{k}$
(in codimension $2$). Let $(\mathcal{K}_{k},\nabla):=F^{*}\mathcal{K}_{k}'$
be the locally trivial connection on $L_{k}$ attached to $\mathcal{K}_{k}'$. 

Let us consider how to lift this to mixed characteristic. We note
that, since $\mathcal{N}_{W_{n}(k)}$ is a deformation of $\mathcal{N}_{k}$
for each $n\geq1$, it follows from \prettyref{cor:Support-of-N-W_m(k)}
that $\mathcal{N}_{W_{n}(k)}$ is scheme theoretically supported on
$W_{n}(L_{k}^{(n)})$. 
\begin{prop}
\label{prop:K_n-is-locally-trivial}For each $n\geq1$, the sheaves
$\mathcal{N}_{W_{n}(k)}$ and $\mathcal{M}_{W_{n}(k)}$ are locally
isomorphic in codimension $2$ on $W_{n}(L_{k}^{(n)})$; thus there
is a line bundle $(\mathcal{K}_{W_{n}(k)},\nabla)$ with locally trivial
connection (in codimension $2$ on $L_{W_{n}(k)}$) such that $\mathcal{K}_{W_{n}(k)}\star\mathcal{M}_{W_{n}(k)}\tilde{=}\mathcal{N}_{W_{n}(k)}$;
further, $(\mathcal{K}_{W_{n}(k)},\nabla)$ is a lift of $(\mathcal{K}_{k},\nabla)$. 
\end{prop}

\begin{proof}
For $n=1$ this is discussed above; we proceed by induction and suppose
we have constructed $\mathcal{K}_{W_{n-1}(k)}$. Applying the construction
of \prettyref{prop:p^m-curv-over-L}, we obtain a one-form $\Psi\in\Omega_{V_{k}^{(n)}}^{1}$,
where $V_{k}$ is an open subset of codimension $2$. 

We claim that $\psi$ has log poles along each divisor in $\overline{L}_{W_{n}(k)}$.
Consider a component $E_{i}$ of $\overline{L}_{W_{n}(k)}\backslash L_{W_{n}(k)}$.
Let the image of $E_{i}$ in $\overline{\tilde{X}}_{W_{n}(k)}$ be
a divisor $D_{i}$. Then, we can apply \prettyref{eq:decomp} over
the generic point of $D_{i}$; after pulling back to $\widehat{V}_{x}$
we have 
\[
\widehat{\overline{\mathcal{N}}}_{W_{n}(k)}[z_{1}^{-1}]=\bigoplus_{i=1}^{r}\mathcal{O}_{\widehat{V_{x,W_{n}(k)}^{(l)}}}[z_{1}^{-1}]\cdot e_{i}
\]
with $\nabla(e_{i})=\gamma_{i}$ for some $\gamma_{i}\in\Omega_{\widehat{V_{x}^{(l)}}}^{1}[z_{1}^{-1}]$
with the constraints on the poles of $\gamma_{i}$ listed in \prettyref{lem:Local-Decomp-for-N}.
By the same token, we have 
\[
\widehat{\overline{\mathcal{M}}}_{W_{n}(k)}[z_{1}^{-1}]=\bigoplus_{i=1}^{r}\mathcal{O}_{\widehat{V_{x,W_{n}(k)}^{(l)}}}[z_{1}^{-1}]\cdot e_{i}
\]
with $\nabla(e_{i})=\delta_{i}\in\Omega_{\widehat{V_{x}^{(l)}}}^{1}[z_{1}^{-1}]$
with the same constraints. In particular, by \prettyref{lem:Local-Decomp-for-N},
this difference has log poles. Computing the $p^{m}$-curvature, we
see that $\Psi$ has log poles as well. Since $\Gamma(\Omega_{\overline{L}_{k}^{(n)}}^{1}(\tilde{E}^{(n)}))=0$
we see that $\Psi=0$. Applying \prettyref{prop:Pic-for-Local-Lifts}
we deduce that $(\mathcal{K}_{W_{n-1}(k)},\nabla)$ lifts to a locally
trivial $(\mathcal{K}_{W_{n}(k)},\nabla)$, as desired. 
\end{proof}
\begin{rem}
In characteristic $0$, the assumption that $\mathbb{H}_{dR}^{1}(L_{\mathbb{C}})=0$
implies that any line bundle with connection on $L_{\mathbb{C}}$,
which has regular singularities, has finite order as a connection.
The above proof gives the analogous result in mixed characteristic:
any line bundle with connection on $L_{W_{n}(k)}$, which has log
poles along each divisor in $\overline{L}_{W_{n}(k)}$, is locally
trivial as a connection. 
\end{rem}

From this result we can deduce the 
\begin{cor}
For each $n\geq1$, the line bundle $(\mathcal{K}_{W_{n}(k)},\nabla)$
admits the structure of an $F^{s}$ module for some $s>0$ (which
does not depend on $n$). Further, $\mathcal{K}_{W_{n}(k)}$ extends
uniquely to an $F^{s}$-module on all of $L_{W_{n}(k)}$. Under the
Riemann-Hilbert correspondence for unit $F$-crystals, this sheaf
corresponds to a representation of $\pi_{1}^{ab,p'}(L_{k})$ into
$GL_{1}(W_{n}(\mathbb{F}_{p^{s}}))$. 
\end{cor}

\begin{proof}
Let $V_{k}\subset L_{k}$ be the open subset of codimension $2$ occurring
in the previous lemma. By the previous lemma and \prettyref{thm:Higher-diff-structure-on-deformations},
we see that each $\mathcal{K}_{W_{n}(k)}$ admits the structure of
a module over $\mathcal{D}_{V_{W_{n}(k)}}^{(\infty)}$. By induction
on $n$, we deduce that $j_{*}(\mathcal{K}_{W_{n}(k)})$ is a line
bundle on $L_{W_{n}(k)}$: this is clear for $n=1$, and for $n>1$
we note that the cokernel of the natural map $j_{*}(\mathcal{K}_{W_{n}(k)})\to j_{*}(\mathcal{K}_{k})$
must be $0$ since $j_{*}(\mathcal{K}_{k})$ is a simple $\mathcal{D}_{L_{k}}^{(\infty)}$-module
(being a line bundle); so the result follows by induction. 

We shall analyze this $\mathcal{D}^{(\infty)}$-module structure.
According to \cite{key-77}, section 4, giving a $\mathcal{D}_{L_{k}}^{(\infty)}$-module
structure on $\mathcal{K}_{k}$ is equivalent to giving an infinite
sequence of line bundles $\{\mathcal{K}_{k}^{(i)}\}_{i\geq0}$ with
$\mathcal{K}_{k}^{(0)}=\mathcal{K}_{k}$ and isomorphisms $F^{*}\mathcal{K}_{k}^{(i)}\tilde{\to}\mathcal{K}_{k}^{(i-1)}$.
In particular, each $\mathcal{K}_{k}^{(i)}$ possesses infinitely
many $p$th roots. On the other hand, since $H^{1}(\mathcal{O}_{\overline{L}_{k}})=0$,
we have that $\text{Pic}(\overline{L}_{k})$ is a finitely generated
abelian group; therefore the same is true of $\text{Pic}(L_{k})$.
So each $\mathcal{K}_{k}^{(i)}$ lives in the finite set of finite
order elements of $\text{Pic}(L_{k})$. Thus we must have $\mathcal{K}_{k}^{(i)}\tilde{=}\mathcal{K}_{k}^{(i+s)}$
for some $i$ and some $s>0$; but then we have $\mathcal{K}_{k}^{(j)}\tilde{=}\mathcal{K}_{k}^{(j+s)}$
for all $j$ by applying Frobenius descent. Thus $\mathcal{K}_{k}$
has the structure of an $F^{s}$-module. 

Now let us lift this to $\mathcal{K}_{W_{m}(k)}$. Since $L_{k}$
is smooth affine we may choose a lift of Frobenius to $L_{W_{m}(k)}$;
and, by \cite{key-90}, corollary 2.3, the $\mathcal{D}_{L_{W_{m}(k)}}^{(\infty)}$-module
$\mathcal{K}_{W_{m}(k)}$ determines an infinite sequence $\{\mathcal{K}_{W_{m}(k)}^{(i)}\}_{i\geq0}$
with $\mathcal{K}_{W_{m}(k)}^{(0)}=\mathcal{K}_{W_{m}(k)}$ and isomorphisms
$F^{*}\mathcal{K}_{W_{m}(k)}^{(i)}\tilde{\to}\mathcal{K}_{W_{m}(k)}^{(i-1)}$.
Each $\mathcal{K}_{W_{m}(k)}^{(i)}$ is necessarily a flat deformation
of $\mathcal{K}_{k}^{(i)}$, which makes it unique as a line bundle
(since $L_{k}$ is affine). The we again have $\mathcal{K}_{W_{m}(k)}^{(j)}\tilde{=}\mathcal{K}_{W_{m}(k)}^{(j+s)}$
for all $j$. Thus by the Riemann-Hilbert correspondence for unit
$F$-crystals (c.f. \cite{key-69}, corollary 16.2.8) there is associated
a representation of $\pi_{1}^{ab}(L_{W_{m}(k)})\tilde{\to}\pi_{1}^{ab}(L_{k})$
into $GL_{1}(W_{m}(\mathbb{F}_{p^{s}}))$. Since $GL_{1}(W_{m}(\mathbb{F}_{p^{s}}))$
has order prime to $p$, we see that this representation factors through
$\pi_{1}^{ab,p'}(L_{k})$ as claimed. 
\end{proof}
\begin{cor}
Suppose $(\mathcal{K}_{\mathbb{C}},\nabla)\star\mathcal{M}_{\mathbb{C}}\tilde{=}\mathcal{M}_{\mathbb{C}}$
for some $(\mathcal{K}_{\mathbb{C}},\nabla)$. Then $(\mathcal{K}_{\mathbb{C}},\nabla)$
is the trivial connection. 
\end{cor}

\begin{proof}
Since $(\mathcal{K}_{\mathbb{C}},\nabla)\star\mathcal{M}_{\mathbb{C}}$
has constant $p$-curvature equal to $L_{\mathbb{C}}$, with multiplicity
$1$, the previous corollary gives, for any $k$ of characteristic
$p>>0$, a line bundle $\mathcal{K}_{k}$ with an $F^{s}$-structure.
Comparing the constructions, is is clear that this line bundle is
simply the reduction to $k$ of $(\mathcal{K}_{\mathbb{C}},\nabla)|_{L_{\mathbb{C}}}$
in this case. But since $(\mathcal{K}_{\mathbb{C}},\nabla)\star\mathcal{M}_{\mathbb{C}}\tilde{=}\mathcal{M}_{\mathbb{C}}$,
we see that this is the trivial bundle for $p>>0$. This shows that
the associated representation of $\pi_{1}^{ab,p'}(L_{k})$ is trivial
for $p>>0$, which implies that the monodromy representation associated
to $(\mathcal{K}_{\mathbb{C}},\nabla)$ is also trivial, as desired.
\end{proof}
Now we prove 
\begin{thm}
\label{thm:Torsor!} There is a unique $(\mathcal{K}_{\mathbb{C}},\nabla)$
so that $\mathcal{N}_{\mathbb{C}}\tilde{=}(\mathcal{K}_{\mathbb{C}},\nabla)\star\mathcal{M}_{\mathbb{C}}$.
In particular, the set of such $\mathcal{N}_{\mathbb{C}}$ is a torsor
over $\pi^{*}$. 
\end{thm}

\begin{proof}
The uniqueness follows from the previous corollary. For the existence,
note that, due the set $\pi^{*}$ being finite, we must have some
$(\mathcal{K}_{\mathbb{C}},\nabla)$ and an infinite set of primes
for which $\mathcal{K}_{W_{n}(k)}\star\mathcal{M}_{W_{n}(k)}\tilde{=}\mathcal{N}_{W_{n}(k)}$
(for all $n$). Therefore, for such $k$, the $p$-adic completions
of these two connections are isomorphic, and we can apply (the proof
of) \prettyref{thm:M-sigma-is-theta-reg} to conclude.
\end{proof}

\subsection{Ext Vanishing}

Finally, we turn to the last part of \prettyref{thm:1}: 
\begin{thm}
\label{thm:Ext-vanishing} We have $\text{Ext}^{1}(\mathcal{M}_{\mathbb{C}},\mathcal{M}_{\mathbb{C}})=0$.
The same is true upon replacing $\mathcal{M}_{\mathbb{C}}$ by any
$(\mathcal{K}_{\mathbb{C}},\nabla)\star\mathcal{M}_{\mathbb{C}}$. 
\end{thm}

We'll simply prove the result for $\mathcal{M}_{\mathbb{C}}$, the
proof for $(\mathcal{K}_{\mathbb{C}},\nabla)\star\mathcal{M}_{\mathbb{C}}$
being identical. 

Consider an extension $\mathcal{P}_{\mathbb{C}}$ of $\mathcal{M}_{\mathbb{C}}$
by itself. Let $\overline{\mathcal{P}}_{\mathbb{C}}$ be an extension
of $\mathcal{P}_{\mathbb{C}}$ to a reflexive meromorphic connection
on $\overline{\tilde{X}}_{\mathbb{C}}$, which is a self extension
of $\overline{\mathcal{M}}_{\mathbb{C}}$ in codimension $2$. Let
$\mathcal{P}$ be an $R$-model for $\mathcal{P}_{\mathbb{C}}$ and
$\overline{\mathcal{P}}$ be an $R$-model for $\overline{\mathcal{P}}_{\mathbb{C}}$.
We have 
\begin{lem}
Let $\{x\}$ be the generic point of some component of $\tilde{D}$.
Let $\varphi:V_{x}\to\overline{\tilde{X}}_{\mathbb{C}}$ be an etale
neighborhood, and $V_{x}^{(l)}\to V_{x}$ a root cover, for which
the decomposition
\[
(\varphi^{(l)})^{*}\widehat{\overline{\mathcal{P}}}[z_{1}^{-1}]=\bigoplus_{i=1}^{2r'}\mathcal{O}_{\widehat{V_{x}^{(l)}}}[z_{1}^{-1}]\cdot e_{i}\oplus\widehat{\overline{\mathcal{P}}}[z_{1}^{-1}]_{reg}
\]
(as in \prettyref{cor:B-V-over-R}) holds. Write $\partial_{j}e_{i}=\alpha_{ij}e_{i}$. 

1) After possibly renumbering, for $i\in\{1,\dots,r'\}$ the image
of $\alpha_{ij}$ and $\alpha_{i+r',j}$ in $\mathcal{O}_{\widehat{V_{x}^{(l)}}}[z_{1}^{-1}]/\mathcal{O}_{\widehat{V_{x}^{(l)}}}\cdot z_{1}^{-1}$
agrees with the image of $\theta_{ij}$ in $\mathcal{O}_{\widehat{V_{x}^{(l)}}}[z_{1}^{-1}]/\mathcal{O}_{\widehat{V_{x}^{(l)}}}\cdot z_{1}^{-1}$.

2) Let $R\to k$. There is a decomposition 
\begin{equation}
\widehat{\overline{\mathcal{P}}}_{k}[z_{1}^{-1}]=\bigoplus_{i=1}^{2r'}\mathcal{O}_{\widehat{V_{x,k}^{(l)}}}[z_{1}^{-1}]\cdot e_{i}\oplus\bigoplus_{i=1}^{r-r'}\widehat{\overline{\mathcal{P}}}_{i}[z_{1}^{-1}]_{reg}\label{eq:decomp-1}
\end{equation}
of meromorphic connections (i.e., we have $\nabla(e_{i})=\psi_{i}e_{i}$
for one-forms $\psi_{i}\in\Omega_{\widehat{V_{x,k}^{(l)}}}^{1}[z_{1}^{-1}]$)
such that, for $1\leq i\leq r'$, we have that the image of $\psi_{i}$
in $\Omega_{\widehat{V_{x}^{(l)}}}^{1}[z_{1}^{-1}]/\Omega_{\widehat{V_{x}^{(l)}}}^{1}\cdot z_{1}^{-1}$
agrees with the image of $\theta_{i}$ in $\Omega_{\widehat{V_{x}^{(l)}}}^{1}[z_{1}^{-1}]/\Omega_{\widehat{V_{x}^{(l)}}}^{1}\cdot z_{1}^{-1}$;
and each $\widehat{\overline{\mathcal{P}}}_{i}[z_{1}^{-1}]_{reg}$
is a rank $2$ log-connection which is a self-extension of a line
bundle with log connection. 
\end{lem}

This follows directly from the proceeding characterization of $\overline{\mathcal{M}}_{R}$,
as well as the eigenvalue decomposition of $\widehat{\overline{\mathcal{M}}}_{k}[z_{1}^{-1}]$
(just as in the proof of \prettyref{lem:Local-Decomp-for-N}) Now
let us give the 
\begin{proof}
(of \prettyref{thm:Ext-vanishing}) First we shall show that the extension
$\mathcal{P}_{k}$ is trivial for each $k$ of characteristic $p>>0$.
To do so, note that $\mathcal{P}_{k}$ defines a class $\phi$ in
$\text{Ext}^{1}(\mathcal{M}_{k},\mathcal{M}_{k})\tilde{=}\Omega_{L_{k}}^{1}$.
We claim that $\phi$ has log poles along each component of the compactification
$\overline{L}$. If not, then then $\phi$ must have a pole of order
$\geq2$ on some component $E_{i}$ of $\overline{L}_{W_{n}(k)}\backslash L_{W_{n}(k)}$.
Let the image of $E_{i}$ in $\overline{\tilde{X}}_{k}$ be a divisor
$D_{i}$. Then, we can apply \prettyref{eq:decomp-1} over the generic
point of $D_{i}$; after pulling back to $\widehat{V}_{x,k}^{(l)}$
we have 
\[
\widehat{\overline{\mathcal{P}}}_{k}[z_{1}^{-1}]=\bigoplus_{i=1}^{2r'}\mathcal{O}_{\widehat{V_{x,k}^{(l)}}}[z_{1}^{-1}]\cdot e_{i}\oplus\bigoplus_{i=1}^{r-r'}\widehat{\overline{\mathcal{P}}}_{i}[z_{1}^{-1}]_{reg}
\]
On the other hand, we have the decomposition 
\[
\widehat{\overline{\mathcal{M}}}_{k}[z_{1}^{-1}]=\bigoplus_{i=1}^{r'}\mathcal{O}_{\widehat{V_{x,k}^{(l)}}}[z_{1}^{-1}]\cdot e_{i}\oplus\bigoplus_{i=1}^{r-r'}\widehat{\overline{\mathcal{M}}}_{i}[z_{1}^{-1}]_{reg}
\]
where $\partial_{z_{1}}e_{i}=\alpha_{i}$ and each $\widehat{\overline{\mathcal{M}}}_{i}[z_{1}^{-1}]_{reg}$
is a line bundle with log-connection; note that each summand corresponds
to a component of the formal completion of $\overline{L}^{(l)}$ over
$V_{x}^{(l)}$. So the extension $\widehat{\overline{\mathcal{P}}}_{k}[z_{1}^{-1}]$
is given by taking $\widehat{\overline{\mathcal{M}}}_{k}[z_{1}^{-1}]\oplus\widehat{\overline{\mathcal{M}}}_{k}[z_{1}^{-1}]$
and adding the restriction of $\phi$ (to the corresponding component)
to the connection form. Since the poles of order $\geq2$ in $\widehat{\overline{\mathcal{P}}}_{k}[z_{1}^{-1}]$
match those in $\widehat{\overline{\mathcal{M}}}_{k}[z_{1}^{-1}]\oplus\widehat{\overline{\mathcal{M}}}_{k}[z_{1}^{-1}]$,
we see that $\phi$ has log poles. 

On the other hand, we have $H^{0}(\Omega_{L}^{1}(\tilde{E}))=0$.
So $\phi=0$, and the extension $\mathcal{P}_{k}$ is trivial as claimed. 

Now consider the class $[\mathcal{P}_{U}]\in\text{Ext}^{1}(\mathcal{M}_{U},\mathcal{M}_{U})$.
As $\mathcal{M}_{U}$ is a bundle over $U$, the $R$-module $\text{Ext}^{1}(\mathcal{M}_{U},\mathcal{M}_{U})$
is the first cohomology group of the de Rham complex for $\mathcal{M}_{U}\otimes_{\mathcal{O}_{U}}\mathcal{M}_{U}^{*}$,
which is a complex of free $R$-modules. Thus for any ring $A$ such
that $R\to A$ we have that $\text{Ext}^{1}(\mathcal{M}_{U_{A}},\mathcal{M}_{U_{A}})$
is the the first cohomology of the de Rham complex for $\mathcal{M}_{U_{A}}\otimes_{\mathcal{O}_{U_{A}}}\mathcal{M}_{U_{A}}^{*}$.
Applying this with $A=R/p$, from the spectral sequence for the base
change from $R$ to $R/p$ we obtain the injection 
\[
\text{Ext}^{1}(\mathcal{M}_{U},\mathcal{M}_{U})/p\to\text{Ext}^{1}(\mathcal{M}_{U_{R/p}},\mathcal{M}_{U_{R/p}})
\]
and from the discussion in the previous paragraph we deduce that the
image of $[\mathcal{P}_{U}]$ in $\text{Ext}^{1}(\mathcal{M}_{U_{R/p}},\mathcal{M}_{U_{R/p}})$
is $0$. Thus the image of $[\mathcal{P}_{U}]$ in $\text{Ext}^{1}(\mathcal{M}_{U},\mathcal{M}_{U})/p$
is $0$ as well. As this is true for all $p>>0$, then, using the
fact that $\text{Ext}^{1}(\mathcal{M}_{U},\mathcal{M}_{U})$ is a
direct sum of finite type $R$-modules (by \prettyref{lem:p-adically-seperated}),
we see that $[\mathcal{P}_{U}]$ defines a torsion class. Thus the
image of $[\mathcal{P}_{U}]$ in $\text{Ext}^{1}(\mathcal{M}_{U_{\mathbb{C}}},\mathcal{M}_{U_{\mathbb{C}}})$
must be $0$ as well. Thus $\mathcal{P}_{U_{\mathbb{C}}}$ is a split
extension. Furthermore, $\mathcal{P}_{\mathbb{C}}=j_{!*}(\mathcal{P}_{U_{\mathbb{C}}})$
(this follows from $\mathcal{M}_{\mathbb{C}}=j_{!*}(\mathcal{M}_{U_{\mathbb{C}}})$).
So $\mathcal{P}_{\mathbb{C}}$ is a split extension as well and we
are done.
\end{proof}

\section{\label{sec:Applications}Applications}

In this section we present some basic applications of \prettyref{thm:1}.
We'll start by explaining how the famous Abhyankar-Moh theorem can
now be derived from $\mathcal{D}$-module theory, in particular Arinkin's
construction of rigid local systems. Then we'll give the promised
application to the Weyl algebra. 

\subsection{Abhyankar-Moh Theorem}

In this subsection we make some brief remarks about the relation of
the $m=1$ case of \prettyref{thm:1} and the Abhyankar-Moh theorem
about embeddings of $\mathbb{A}_{\mathbb{C}}^{1}$ into $\mathbb{A}_{\mathbb{C}}^{2}$.
As it turns out, the two theorems are essentially equivalent. To see
why, we recall (one version of) the statement of the theorem: let
$\{x,y\}$ be standard coordinates on $\mathbb{A}_{\mathbb{C}}^{2}$.
Then:
\begin{thm}
(Abhyankar-Moh) Consider any embedding $i:\mathbb{A}_{\mathbb{C}}^{1}\to\mathbb{A}_{\mathbb{C}}^{2}$.
Then there exists an automorphism $a$ of $\mathbb{A}_{\mathbb{C}}^{2}$
such that $a\circ i$ is the standard embedding of the $x$-axis.
We can choose $a$ so that the Jacobian $J(a)=1$. 
\end{thm}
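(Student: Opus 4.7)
The plan is to derive Abhyankar-Moh from \corref{2} together with \thmref{3}. Identify $\mathbb{A}_{\mathbb{C}}^{2}$ with $T^{*}\mathbb{A}_{\mathbb{C}}^{1}$ in coordinates $(x,y)$ so that the symplectic form is $dx\wedge dy$ and the $x$-axis is the zero section. Given an embedding $i:\mathbb{A}_{\mathbb{C}}^{1}\to\mathbb{A}_{\mathbb{C}}^{2}$, set $\mathcal{L}:=i(\mathbb{A}_{\mathbb{C}}^{1})$. As a one-dimensional smooth closed subvariety of a surface, $\mathcal{L}$ is automatically Lagrangian, and since $\mathcal{L}\cong\mathbb{A}_{\mathbb{C}}^{1}$ the cohomological hypotheses of \corref{2} are trivially met: $H_{1}^{sing}(\mathcal{L},\mathbb{Z})=0$, and its smooth compactification $\mathbb{P}_{\mathbb{C}}^{1}$ has $H^{0,2}=0$. \corref{2} then produces a unique irreducible holonomic $\mathcal{D}_{\mathbb{A}_{\mathbb{C}}^{1}}$-module $M_{\mathcal{L}}$ with constant arithmetic support $\mathcal{L}$ of multiplicity $1$; for the zero section the corresponding module is $O_{\mathbb{A}_{\mathbb{C}}^{1}}$. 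By \propref{M_k} and \lemref{M_k-is-a-connection}, $M_{\mathcal{L}}$ is moreover a vector bundle with connection on all of $\mathbb{A}_{\mathbb{C}}^{1}$.

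Next, invoke \thmref{3}: $\mathrm{Pic}(\mathcal{D}_{1})\tilde{\to}\mathrm{Symp}(T^{*}\mathbb{A}_{\mathbb{C}}^{1})$. For $m=1$, $\mathrm{Pic}(\mathcal{D}_{1})=\mathrm{Aut}(\mathcal{D}_{1})$ by the classical result of Dixmier, while $\mathrm{Symp}(T^{*}\mathbb{A}_{\mathbb{C}}^{1})$ is precisely the group of polynomial automorphisms of $\mathbb{A}_{\mathbb{C}}^{2}$ with Jacobian $1$. Under this correspondence, if $\varphi\in\mathrm{Aut}(\mathcal{D}_{1})$ is sent to $a\in\mathrm{Symp}(T^{*}\mathbb{A}_{\mathbb{C}}^{1})$, then \thmref{(Bezrukavnikov-Braverman)} and the $p$-cycle formalism imply that $\varphi^{*}M_{\mathcal{L}}$ has constant arithmetic support $a^{-1}(\mathcal{L})$. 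It therefore suffices to exhibit $\varphi$ with $\varphi^{*}M_{\mathcal{L}}\cong O_{\mathbb{A}_{\mathbb{C}}^{1}}$: given such $\varphi$, the image $a$ carries $\mathcal{L}$ onto the zero section by the uniqueness clause of \corref{2}, so $a\circ i$ is the standard embedding of the $x$-axis (up to a reparametrisation of the source, which can be absorbed into $a$ by composing with a diagonal rescale $(x,y)\mapsto(\lambda x,\lambda^{-1}y)$), and $J(a)=1$ automatically.

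The remaining step -- transitivity of the $\mathrm{Aut}(\mathcal{D}_{1})$-action on the isomorphism classes of simple $\mathcal{D}_{1}$-modules that are vector bundles with connection on all of $\mathbb{A}_{\mathbb{C}}^{1}$ -- is the classical theorem of Dixmier, later sharpened by Stafford, and supplies exactly the input this reduction needs. This is also the main obstacle: translated through \thmref{3}, this transitivity statement is itself logically equivalent to Abhyankar-Moh, which is why the two theorems are characterised as \emph{essentially} equivalent rather than one implying the other in a mechanical way. In the reverse direction, once Abhyankar-Moh rectifies any such $\mathcal{L}$ to the zero section by a Jacobian-$1$ automorphism $a$, the $\mathcal{D}$-module $M_{\mathcal{L}}$ is identified explicitly as the pullback of $O_{\mathbb{A}_{\mathbb{C}}^{1}}$ under the automorphism of $\mathcal{D}_{1}$ corresponding to $a$, so that \corref{2} in dimension one is recovered from the classical theorem and the dictionary furnished by \thmref{3}.
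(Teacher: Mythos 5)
Your reduction has a genuine gap at its central step, and you have in fact identified it yourself. Everything up to the point where you need an automorphism $\varphi\in\mbox{Aut}(\mathcal{D}_{1})$ with $\varphi^{*}M_{\mathcal{L}}\cong O_{\mathbb{A}_{\mathbb{C}}^{1}}$ is fine, but the statement you invoke to produce it --- transitivity of the $\mbox{Aut}(\mathcal{D}_{1})$-action on simple $\mathcal{D}_{1}$-modules that are connections on all of $\mathbb{A}_{\mathbb{C}}^{1}$ --- is not a classical theorem of Dixmier or Stafford. Dixmier determined generators of $\mbox{Aut}(\mathcal{D}_{1})$; Stafford's work on simple $\mathcal{D}_{1}$-modules points in the opposite direction, exhibiting large families of simple modules (connections included) that are \emph{not} related by any automorphism, so the action is very far from transitive even on the class you name. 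Restricting to modules with constant arithmetic support a smooth $\mathcal{L}\cong\mathbb{A}_{\mathbb{C}}^{1}$ does make the transitivity true, but, as you concede, that restricted statement is logically equivalent to Abhyankar--Moh via \thmref{3}, so the argument is circular as written. The missing ingredient that actually closes this loop in the paper's sketch is Arinkin's constructive algorithm for rigid irreducible connections (corollary 2.5 of [A], extending Katz's algorithm to the irregular case): it expresses $M_{\mathcal{L}}$ as built from $O_{\mathbb{A}_{\mathbb{C}}^{1}}$ by a finite sequence of tensoring by $e^{f}$, scaling, and Fourier transform, and each of these operations moves the arithmetic support by an explicit generator of $\mbox{Aut}_{Symp}(\mathbb{A}_{\mathbb{C}}^{2})$. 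Without that (or some equivalent constructive input), your reduction does not produce the required $\varphi$.

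You should also be aware that the paper's own proof of the theorem \emph{as stated} is not this reduction at all: it simply cites [AM], theorem 1.6, for the first sentence, and then adds the elementary observation that the Jacobian of any automorphism of $\mathbb{A}_{\mathbb{C}}^{2}$ is a nonzero constant $c$, which can be normalized to $1$ by composing with $x\to c^{-1}x$, $y\to y$, without changing the action on the $x$-axis. The derivation of Abhyankar--Moh from \thmref{1} appears in the paper only as a separate remark, and there it is Arinkin's theorem, not a transitivity statement about $\mbox{Aut}(\mathcal{D}_{1})$-orbits, that carries the weight.
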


This is essentially ({[}AM{]} theorem 1.6); we remark that the claim
about choosing $J(a)=1$ is not mentioned there; however, if one has
found an $a$ as in the statement of the first sentence of the theorem,
then $J(a)=c$ is necessarily a constant function on $\mathbb{A}_{\mathbb{C}}^{2}$.
Multiplying $a$ on the right by the transformation given by $y\to y$
and $x\to c^{-1}x$ yields an automorphism with $J=1$ whose action
on the $x$-axis is the same as that of $a$. 

Further, we assume throughout the rest of this section that the image
of $i$, called $L_{\mathbb{C}}$, has a dominant projection to the
$x$-axis $\mathbb{A}_{\mathbb{C}}^{1}$. If not, the image of $i$
is simply a line in $\mathbb{A}_{\mathbb{C}}^{2}$ for which the theorem
is easy. 

To see why this applies to our situation, we recall that there is
a natural map 
\[
\mbox{Aut}(D_{\mathbb{A}_{\mathbb{C}}^{m}})\to\mbox{Aut}_{Symp}(\mathbb{A}_{\mathbb{C}}^{2m})
\]
 constructed in {[}ML{]} (for $m=1$) and then \cite{key-25} and
\cite{key-3} in general (see the subsection directly below for more
on this). Here, the group on the right is the group of algebraic symplectomorphisms
of $\mathbb{A}_{\mathbb{C}}^{2m}$ when equipped with the standard
symplectic form. 

In the case $m=1$, this map is known to be an automorphism from \cite{key-31};
in addition, in this case, the group on the right is simply the group
of automorphisms of $\mathbb{A}_{\mathbb{C}}^{2}$ whose Jacobian
is equal to $1$. Furthermore, in this case the construction of the
inverse map 
\[
\mbox{Aut}_{Symp}(\mathbb{A}_{\mathbb{C}}^{2})\to\mbox{Aut}(D_{\mathbb{A}_{\mathbb{C}}^{1}})
\]
can be done quite explicitly; namely, it is known\footnote{Due, I believe, to Dixmier and reproved, e.g., in \cite{key-31}}
that the group $\mbox{Aut}_{Symp}(\mathbb{A}_{\mathbb{C}}^{2})$ is
generated by $\mbox{SL}_{2}(\mathbb{C})$ and transformations of the
form 
\[
{x\to x \brace y\to y+f(x)}
\]
where $f(x)$ is an arbitrary polynomial. One sees quite directly
that both $\mbox{SL}_{2}(\mathbb{C})$ and all of these elements act
on $D_{\mathbb{A}_{\mathbb{C}}^{1}}$ by replacing $y$ with ${\displaystyle \frac{d}{dx}}$
everywhere in the formulas; after checking some relations this yields
the required map. Furthermore, one may see quite directly that, given
an automorphism $a$ of $\mathbb{A}_{\mathbb{C}}^{2}$ which goes
to $a'\in\mbox{Aut}(D_{\mathbb{A}_{\mathbb{C}}^{1}})$, after reduction
mod $p$, $a'$ acts on ${\displaystyle Z(D_{\mathbb{A}_{\mathbb{C}}^{1}})\tilde{=}k[x^{p},(\frac{d}{dx})^{p}]}$
by $a^{(1)}$. Indeed, this is the key to the proof of the automorphism
given in \cite{key-31}. 

So, given a curve $L_{\mathbb{C}}$ isomorphic to $\mathbb{A}_{\mathbb{C}}^{1}$
inside $\mathbb{A}_{\mathbb{C}}^{2}$, the element $a$ from the previous
theorem may be used to construct a $\mathcal{D}$-module: namely,
we take $a^{*}O_{X}$, which, by construction, will have constant
arithmetic support equal to $L_{\mathbb{C}}$. 

On the other hand, the one dimensional case of \prettyref{thm:1},
combined with the main theorem of Arinkin's paper \cite{key-28},
may also be used to reprove the Abhyankar-Moh theorem. Namely, Arinkin
extends Katz's constructive algorithm for rigid irreducible connections
to the irregular case, thus yielding a finite procedure which constructs
$M_{\mathbb{C}}$ from $O_{X}$. Let us recall corollary 2.5 of that
paper: 
\begin{thm}
(Arinkin) Let $M_{\mathbb{C}}$ be a rigid irreducible connection
on some open subset $U_{\mathbb{C}}\subset\mathbb{P}_{\mathbb{C}}^{1}$.
Then $M_{\mathbb{C}}$ can be constructed out of $O_{\mathbb{A}_{\mathbb{C}}^{1}}$
by a sequence of operations of the following types: 

a) Tensor by a rank one connection. 

b) Pullback by an automorphism of $\mathbb{P}_{\mathbb{C}}^{1}$. 

c) Fourier transform. 
\end{thm}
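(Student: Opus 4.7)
The plan is to prove the theorem by induction on the rank $r$ of $M_{\mathbb{C}}$, following the strategy of Katz's middle convolution algorithm extended by Arinkin to the irregular setting. In the base case $r=1$, any rank-one connection on an open subset of $\mathbb{P}_{\mathbb{C}}^{1}$ is by definition obtained from $O_{\mathbb{A}_{\mathbb{C}}^{1}}$ by operations (a) and (b) alone: after an automorphism of $\mathbb{P}^{1}$ moving the singularities to convenient locations, such a connection is built by tensoring the trivial connection with a rank-one bundle with connection. For the inductive step, given a rigid irreducible $M_{\mathbb{C}}$ of rank $r>1$, the aim is to exhibit rank-one connections $L_{1},L_{2}$ and an automorphism $\varphi$ of $\mathbb{P}^{1}$ such that
\[
N := L_{2}\otimes\mathrm{FT}\bigl(\varphi^{*}(L_{1}\otimes M_{\mathbb{C}})\bigr)
\]
is rigid and irreducible of rank strictly less than $r$. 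Each of (a), (b), (c) is invertible (Fourier transform up to sign), so inverting the construction recovers $M_{\mathbb{C}}$ from $N$, and the inductive hypothesis finishes the argument.

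First one checks that the class of rigid irreducible connections is stable under each of the three operations. Irreducibility is immediate for (a), (b), and for (c) follows from exactness of Fourier transform on holonomic $\mathcal{D}_{\mathbb{A}^{1}}$-modules. For rigidity in the form $\chi(j_{!*}\mathcal{E}nd(M))=2$, one uses that Euler characteristic on $\mathbb{A}^{1}$ is invariant under Fourier transform, while tensoring with a rank-one connection or pulling back by an automorphism leaves $\mathcal{E}nd(M)$ (and hence $\chi$) unchanged.

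Next one analyzes the local formal data at each singular point. At $x\in\mathbb{P}^{1}$, the formal type of a meromorphic connection is (after a ramified cover) a direct sum of elementary pieces $e^{f_{i}}\otimes R_{i}$ with $R_{i}$ regular-singular. Tensoring by a rank-one $L$ shifts each $f_{i}$ by the local function defining $L$; automorphisms permute the singular points; and Fourier transform acts on local invariants via the Laumon--Malgrange stationary phase formula, which computes the formal type of $\mathrm{FT}(M)$ at each point of $\mathbb{P}^{1}$ in terms of the local data of $M$. With these formulas in hand, one chooses $L_{1}$ (and $\varphi$) so that in the formal decomposition of $L_{1}\otimes M_{\mathbb{C}}$ at the critical singular point the dominant piece is cancelled or replaced by a lower-slope piece; after Fourier transform (and tensoring by $L_{2}$ to tidy the resulting local types) the rank strictly drops.

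The main obstacle is proving that such a rank-reducing choice always exists for a rigid irreducible $M$ of rank $>1$. In Katz's regular case this reduces to a combinatorial argument on the numerical invariants (ranks and local Jordan data), together with the fact that rigidity forces the count to be ``tight'' enough that middle convolution by a generic rank-one regular connection lowers the rank. In Arinkin's irregular extension, one instead needs the full formal classification of irregular connections, careful bookkeeping of slopes and Stokes structures under Fourier transform through the stationary phase formula, and a ``no-gap'' statement to the effect that the local-to-global constraints imposed by rigidity are flexible enough at the formal level to permit a rank-reducing twist. Establishing this flexibility, and thereby guaranteeing termination of the algorithm, is the technical heart of the proof.
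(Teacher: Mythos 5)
First, a point of context: the paper does not prove this statement at all. It is quoted verbatim as Corollary~2.5 of Arinkin's paper \emph{Rigid Irregular Connections on $\mathbb{P}^{1}$} (reference [A] in the bibliography), and the author uses it as a black box to relate the one-dimensional case of the main theorem to the Abhyankar--Moh theorem. So there is no in-paper proof to compare yours against; the relevant comparison is with Arinkin's argument (which itself builds on Katz's \emph{Rigid Local Systems} and the Bloch--Esnault local Fourier transform).

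Measured against that, your proposal correctly identifies the shape of the actual proof: induction on rank, stability of rigidity and irreducibility under the three operations, and the use of stationary phase to track formal types through the Fourier transform. But as written it has a genuine gap, and you have in effect flagged it yourself: the entire content of the theorem is the claim that for every rigid irreducible $M_{\mathbb{C}}$ of rank $r>1$ there \emph{exists} a twist $L_{1}$ and automorphism $\varphi$ for which $\mathrm{rk}\,\mathrm{FT}(\varphi^{*}(L_{1}\otimes M_{\mathbb{C}}))<r$, and you assert this without proving it. In Katz's regular case this is the inequality coming from $\chi(j_{!*}\mathcal{E}nd(M))=2$ combined with the Euler--Poincar\'e formula, which forces some local Jordan block structure to be ``large'' enough that middle convolution drops the rank; in Arinkin's irregular extension the analogous step is a computation of $\mathrm{rk}\,\mathrm{FT}(N)$ in terms of irregularities and multiplicities of the exponential factors of $N$ at its singular points, together with the observation that rigidity forces the existence of a dominant exponential factor whose cancellation by a rank-one twist strictly lowers that count. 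Without carrying out this bookkeeping (and verifying termination), the induction does not close, so the proposal is an accurate outline of the known proof rather than a proof.
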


Because $M_{\mathbb{C}}$ has no singularities except at $\{\infty\}$,
the only allowable rank one local systems have no singularities except
at $\{\infty\}$, and hence are of the form $e^{f}$ for some $f\in O(\mathbb{A}_{\mathbb{C}}^{1})$.
Similarly, the only allowable automorphisms of $\mathbb{P}_{\mathbb{C}}^{1}$
are those which fix $\{\infty\}$; i.e., the multiplication by a constant. 

Now let us consider the effect on the arithmetic support of each of
the three operations in the corollary. So, suppose we have a $\mathcal{D}$-module
$N_{\mathbb{C}}$ with constant arithmetic support $\mathcal{L}_{\mathbb{C}}$.
Tensoring by $e^{f}$ corresponds to the operation 
\[
\mathcal{L}_{\mathbb{C}}\to\mathcal{L}_{\mathbb{C}}+df
\]
Now consider the automorphism $a_{c}$ defined by multiplication by
$c$on $\mathbb{A}_{\mathbb{C}}^{1}$. Let $\tilde{a}_{c}$ be the
automorphism of $\mathbb{A}_{\mathbb{C}}^{2}$ defined by $x\to cx$
and $y\to c^{-1}y$. Then the pullback by $a_{c}$ corresponds to
the operation 
\[
\mathcal{L}_{\mathbb{C}}\to\tilde{a}_{c}^{*}(\mathcal{L}_{\mathbb{C}})
\]
Finally, the Fourier transform corresponds to 
\[
\mathcal{L}_{\mathbb{C}}\to r^{*}(\mathcal{L}_{\mathbb{C}})
\]
where $r$ is the automorphism $x\to y$, $y\to-x$ of $\mathbb{A}_{\mathbb{C}}^{2}$.
Each of these three operations, therefore, moves the arithmetic support
by the action of an automorphism of $\mathbb{A}_{\mathbb{C}}^{2}$
(and it follows from the previous discussion that these elements generate
the automorphism group). Therefore, our proof of the one-dimensional
case of \prettyref{thm:1} combined with Arinkin's theorem, reproves
the Abhyankar-Moh theorem. 

\subsection{Autoequivalences of the Weyl Algebra}

In this section we will prove \prettyref{thm:3}. First, following
\cite{key-3} and \cite{key-25}, we recall the following 
\begin{thm}
There is a natural map from the group $\mbox{MAut}(D_{m,\mathbb{C}})$
of Morita autoequivalences of the Weyl algebra $D_{m,\mathbb{C}}$
to the group $\mbox{Aut}_{\mathrm{Symp}}(T^{*}\mathbb{A}_{\mathbb{C}}^{m})$
of symplectomorphisms of $T^{*}\mathbb{A}_{\mathbb{C}}^{m}$. 
\end{thm}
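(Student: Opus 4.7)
The plan is to construct mutually inverse group homomorphisms $\Phi\colon\mathrm{Pic}(\mathcal{D}_m)\to\mathrm{Symp}(T^*\mathbb{A}^m_{\mathbb{C}})$ and $\Psi\colon\mathrm{Symp}(T^*\mathbb{A}^m_{\mathbb{C}})\to\mathrm{Pic}(\mathcal{D}_m)$. The natural map of the theorem statement is $\Phi$, which comes from [Ts] and [BKKo]; showing it is an isomorphism amounts to constructing the inverse $\Psi$ using Corollary \ref{cor:2} and showing compatibility of the two constructions.

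First, I would recall the construction of $\Phi$ from [BKKo] and [Ts]. Given an invertible $\mathcal{D}_{m,\mathbb{C}}$-bimodule $B$, view it as a $\mathcal{D}_{m,\mathbb{C}}\otimes\mathcal{D}_{m,\mathbb{C}}^{op}\cong\mathcal{D}_{2m,\mathbb{C}}$-module. After spreading out and reducing mod $p$ for $p\gg 0$, invertibility of $B$ together with the Azumaya property of $\mathcal{D}_{m,k}$ forces the $p$-support of $B_k$ to be the graph of a symplectomorphism $\phi_k^{(1)}$ of $(T^*\mathbb{A}^m_k)^{(1)}$. The main input of [Ts] and [BKKo] assembles these data (for a Zariski-dense family of primes $p$) into a single algebraic symplectomorphism $\phi=\Phi(B)$ of $T^*\mathbb{A}^m_{\mathbb{C}}$, and shows that $B\mapsto\phi$ is a group homomorphism.

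Next, I would construct $\Psi$ directly from Corollary \ref{cor:2}. For $\phi\in\mathrm{Symp}(T^*\mathbb{A}^m_{\mathbb{C}})$, form the graph $\mathcal{L}^\phi\subset T^*\mathbb{A}^m_{\mathbb{C}}\times T^*\mathbb{A}^m_{\mathbb{C}}\cong T^*\mathbb{A}^{2m}_{\mathbb{C}}$, which is Lagrangian and isomorphic to $\mathbb{A}^{2m}_{\mathbb{C}}$. The cohomological hypotheses of Corollary \ref{cor:2} hold trivially, yielding a unique irreducible holonomic $\mathcal{D}_{2m,\mathbb{C}}$-module $M^\phi$ of constant arithmetic support $\mathcal{L}^\phi$ with multiplicity one; set $\Psi(\phi)=[M^\phi]$. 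To see that $M^\phi$ is invertible, I would analyze the convolution (bimodule tensor product) $M^\phi\otimes^L_{\mathcal{D}_m}M^{\phi^{-1}}$. By the remark following Theorem \ref{thm:(Bezrukavnikov-Braverman)}, the $p$-support of each cohomology sheaf of this derived convolution is contained in the composition $\mathcal{L}^\phi\circ\mathcal{L}^{\phi^{-1}}$ of the two correspondences, which is the graph of the identity, i.e. the diagonal $\Delta$, with multiplicity one. Since $\Delta\cong\mathbb{A}^{2m}_{\mathbb{C}}$ satisfies the hypotheses of Corollary \ref{cor:2} and $\mathcal{D}_m$ (as a bimodule over itself) has arithmetic support $\Delta$ with multiplicity one, the uniqueness clause of Corollary \ref{cor:2} forces $H^0(M^\phi\otimes^L M^{\phi^{-1}})\cong\mathcal{D}_m$ together with vanishing of higher cohomology. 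The analogous computation gives $M^\phi\otimes M^\psi\cong M^{\psi\circ\phi}$, making $\Psi$ a group homomorphism.

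Finally, I would verify that $\Phi$ and $\Psi$ are mutually inverse. For $\Phi\circ\Psi=\mathrm{id}$: $M^\phi$ has arithmetic support $\Gamma(\phi)$ by construction, so the symplectomorphism extracted by $\Phi$ is $\phi$. For $\Psi\circ\Phi=\mathrm{id}$: if $B$ is an invertible bimodule with $\Phi(B)=\phi$, then $B$ has arithmetic support $\Gamma(\phi)$ with multiplicity one by the definition of $\Phi$; irreducibility (Lemma \ref{lem:Irreducibility-Argument}) combined with Corollary \ref{cor:2} gives $B\cong M^\phi=\Psi(\phi)$. The main obstacle will be the tensor product identity $M^\phi\otimes^L M^{\phi^{-1}}\cong\mathcal{D}_m$ being concentrated in degree zero: controlling the higher Tor terms requires showing that the relevant intersection of the correspondences $\mathcal{L}^\phi$ and $\mathcal{L}^{\phi^{-1}}$ in $T^*\mathbb{A}^{3m}_{\mathbb{C}}$ is clean (in fact, transverse and isomorphic to the diagonal) at the level of the $p$-support for almost all $p$, after which uniqueness in Corollary \ref{cor:2} rules out any extraneous components or higher-degree terms.
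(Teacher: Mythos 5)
Your proposal follows essentially the same route as the paper: the forward map is the Tsuchimoto/Belov-Kanel--Kontsevich reduction-mod-$p$ construction, and bijectivity is deduced from the existence and uniqueness of the bimodule attached to the graph of a symplectomorphism via Corollary \ref{cor:2}. The only difference is that you spell out the verification that $M^{\phi}$ is genuinely invertible (via the convolution with $M^{\phi^{-1}}$ and the uniqueness clause applied to the diagonal), a point the paper dispatches with ``one verifies''; your account of that step is sound.
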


In fact, the proof is a very quick consequence of the existence and
uniqueness theorems and the results of the cited papers. Let us briefly
recall the relevant set-up, following the notation of \cite{key-25}
(the way of \cite{key-3} is extremely similar): first, he introduces
the field $\mathbb{Q}_{U}^{(\infty)}$ which is a subfield of the
ring 
\[
\prod\bar{\mathbb{F}}_{p}
\]
where the product ranges over all prime numbers, and the $U$ denotes
a principle ultrafilter on the set of primes. The field $\mathbb{Q}_{U}^{(\infty)}$
is isomorphic, non canonically, to $\mathbb{C}$. Then the main construction
of \cite{key-25} (proposition 7.1) goes as follows: given any endomorphism
$\phi$ of the Weyl algebra $D_{m,\mathbb{C}}$, one may regard it
as an endomorphism of $D_{m,\mathbb{Q}_{U}^{(\infty)}}$. From this
it follows that there exists a family of endomorphisms $\phi_{p}$
of $D_{m,\bar{\mathbb{F}}_{p}}$ (for some infinite set of primes)
whose limit is equal to $\phi$. But then each $\phi_{p}$ gives an
endomorphism of $Z(D_{m,\bar{\mathbb{F}}_{p}})=T^{*}(\mathbb{A}^{n}(\bar{\mathbb{F}}_{p}))^{(1)}$
which respects the symplectic form. In this way we have a natural
map 
\[
\mbox{End}(D_{m,\mathbb{Q}_{U}^{(\infty)}})\to\mbox{End}(T^{*}\mathbb{A}_{\mathbb{Q}_{U}^{(\infty)}}^{m})
\]
 which is therefore equivalent to a map 
\[
\mbox{End}(D_{m,\mathbb{C}})\to\mbox{End}(T^{*}\mathbb{A}^{m}(\mathbb{C}))
\]

Now, instead of starting with an endomorphism of $D_{m,\mathbb{C}}$,
we could have started with an invertible bimodule over $D_{m,\mathbb{C}}$;
i.e., a $D_{m,\mathbb{C}}$ bimodule $A_{\mathbb{C}}$ such that there
exists a $D_{m,\mathbb{C}}$ bimodule $B$ with $A\otimes_{D_{m,\mathbb{C}}}B\tilde{=}D_{m,\mathbb{C}}$.
Applying the same technique, one obtains a map 
\[
\mbox{MAut}(D_{m,\mathbb{Q}_{U}^{(\infty)}})\to\mbox{Cor}{}_{symp}(T^{*}\mathbb{A}_{\mathbb{Q}_{U}^{(\infty)}}^{m})
\]
where on the right hand side we have the monoid of symplectic correspondences
of $T^{*}\mathbb{A}^{m}$, i.e., the monoid (under composition of
correspondences) of coherent sheaves on $T^{*}\mathbb{A}_{\mathbb{Q}_{U}^{(\infty)}}^{m}\times T^{*}\mathbb{A}_{\mathbb{Q}_{U}^{(\infty)}}^{m}$
whose support is a Lagrangian subvariety. The image under this map
will consist of invertible correspondences, and in fact it is not
hard to show the 
\begin{lem}
The group of invertible symplectic correspondences of $T^{*}\mathbb{A}_{k}^{m}$
(where $k$ is any algebraically closed field) is isomorphic to the
group of symplectomorphisms of $T^{*}\mathbb{A}_{k}^{m}$.
\end{lem}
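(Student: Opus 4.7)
The plan is to construct mutually inverse maps between the two groups. In one direction, send a symplectomorphism $\phi \in \mathrm{Symp}(T^{*}\mathbb{A}_{k}^{m})$ to the structure sheaf $O_{\Gamma_{\phi}}$ of its graph $\Gamma_{\phi} \subset T^{*}\mathbb{A}_{k}^{m} \times T^{*}\mathbb{A}_{k}^{m}$. Since $\phi$ preserves the canonical symplectic form, $\Gamma_{\phi}$ is a Lagrangian subvariety of the product. A direct base change computation shows that the convolution of correspondences composes graphs, so $O_{\Gamma_{\phi^{-1}}}$ is an inverse to $O_{\Gamma_{\phi}}$, and the map is a group homomorphism into the invertible symplectic correspondences. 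Injectivity is immediate, since $\Gamma_{\phi}$ is recovered as the support of $O_{\Gamma_{\phi}}$ and $\phi$ is recovered from its graph.

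For surjectivity, let $A$ be an invertible symplectic correspondence with Lagrangian support $\mathcal{L} \subset T^{*}\mathbb{A}_{k}^{m} \times T^{*}\mathbb{A}_{k}^{m}$, and let $B$, with support $\mathcal{L}'$, satisfy $A \ast B \cong O_{\Delta}$, where $\Delta$ denotes the diagonal. The support of the convolution $A \ast B = p_{13,*}(p_{12}^{*} A \otimes p_{23}^{*} B)$ is contained in the geometric composition $\mathcal{L} \circ \mathcal{L}' = p_{13}(p_{12}^{-1}\mathcal{L} \cap p_{23}^{-1}\mathcal{L}')$, and the equality $A \ast B \cong O_{\Delta}$ forces $\Delta \subseteq \mathcal{L} \circ \mathcal{L}'$. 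A dimension count, together with the generic-rank computation for the convolution, forces the generic ranks of $A$ and $B$ on their respective supports to equal $1$ and forces both projections $p_{1}, p_{2}: \mathcal{L} \to T^{*}\mathbb{A}_{k}^{m}$ to be generically birational. Thus $\mathcal{L}$ is the closure of the graph of a birational self-map $\phi$ of $T^{*}\mathbb{A}_{k}^{m}$, and the Lagrangian condition on $\mathcal{L}$ translates to $\phi$ being symplectic on its domain of definition.

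The main obstacle is upgrading $\phi$ from a birational to a regular symplectomorphism, and then identifying $A$ with $O_{\Gamma_{\phi}}$. For regularity, if $\phi$ failed to extend over some divisor, then $\mathcal{L}$ would pick up extra irreducible components mapping non-dominantly to one of the factors (the closures of the indeterminacy loci viewed inside the product); these components would propagate into the composition $A \ast B$ as extra summands not contained in $\Delta$, contradicting $A \ast B \cong O_{\Delta}$. Hence $\phi$ is regular, and by the symmetric argument applied via the invertibility of $B$, $\phi^{-1}$ is regular as well, so $\phi \in \mathrm{Symp}(T^{*}\mathbb{A}_{k}^{m})$ and $\mathcal{L} = \Gamma_{\phi}$. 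Finally, since $\Gamma_{\phi} \cong T^{*}\mathbb{A}_{k}^{m} \cong \mathbb{A}_{k}^{2m}$ has trivial Picard group, and since $A$ is a coherent sheaf of generic rank one whose scheme-theoretic support must be $\Gamma_{\phi}$ (any nilpotent enhancement of $\mathcal{L}$ would propagate into the convolution, violating $A \ast B = O_{\Delta}$), we conclude $A \cong O_{\Gamma_{\phi}}$, establishing surjectivity and completing the proof.
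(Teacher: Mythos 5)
The paper does not actually prove this lemma --- it is asserted with ``it is not hard to show'' --- so there is no argument of the paper's to compare against and your proposal must stand on its own. The overall skeleton (graphs of symplectomorphisms give invertible correspondences; invertibility forces the support to be a graph) is the right one, and the easy direction together with injectivity is fine.

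The genuine gap is in the regularity step. You claim that if the birational map $\phi$ determined by $\mathcal{L}$ failed to be regular, then $\mathcal{L}$ would acquire extra irreducible components over the indeterminacy locus, and that these would pollute the composition. That is not what happens in general: take $m=1$ and the birational symplectomorphism $(x,y)\mapsto(x,\,y+1/x)$ of $T^{*}\mathbb{A}^{1}=\mathbb{A}^{2}$. The closure of its graph in $\mathbb{A}^{2}\times\mathbb{A}^{2}$ is the irreducible surface $\{u=x,\ x(v-y)=1\}$; it has no extra components at all, it simply fails to meet $\{x=0\}\times\mathbb{A}^{2}$. The reason $O_{\mathcal{L}}$ is not invertible here is different from the one you give: since $\mathrm{supp}(A*B)\subseteq p_{13}\bigl(p_{12}^{-1}(\mathcal{L})\cap p_{23}^{-1}(\mathcal{L}')\bigr)$ lies over $p_{1}(\mathcal{L})$, which omits $\{x=0\}$, no composition with $A$ can have support equal to all of $\Delta$. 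So the correct argument is that invertibility forces both projections $p_{1},p_{2}:\mathcal{L}\to T^{*}\mathbb{A}_{k}^{m}$ to be surjective (and, if the convolution is to be coherent at all, proper, hence finite); a finite birational surjection onto the normal variety $\mathbb{A}_{k}^{2m}$ is an isomorphism by Zariski's main theorem, and that is how regularity of $\phi$ and $\phi^{-1}$ should be extracted. Relatedly, your ``dimension count together with the generic-rank computation'' is asserted rather than carried out; it must in particular exclude components of $\mathcal{L}$ of product type $\Lambda_{1}\times\Lambda_{2}$ with each $\Lambda_{i}$ Lagrangian in one factor, which are perfectly good Lagrangians in $T^{*}\mathbb{A}_{k}^{2m}$ but project non-dominantly. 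With these repairs the strategy goes through.
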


Thus we obtain the promised map $\mbox{MAut}(D_{m,\mathbb{C}})\to\mbox{Aut}{}_{symp}(T^{*}\mathbb{A}_{\mathbb{C}}^{m})$.
Since $T^{*}\mathbb{A}_{\mathbb{C}}^{m}$ is an affine space, the
underlying variety of every invertible symplectic correspondence will
satisfy the cohomology vanishing assumptions of \prettyref{thm:1};
thus to each one we may associate a unique $D_{m,\mathbb{C}}$ bimodule
by the existence and uniqueness theorems; and so we deduce that the
map $\mbox{MAut}(D_{m,\mathbb{C}})\to\mbox{Aut}{}_{symp}(T^{*}\mathbb{A}_{\mathbb{C}}^{m})$
is an isomorphism; this proves \prettyref{thm:3}.

\section{\label{sec:Appendix}Appendix}

Here we gather together a few general results which we need in body
of the text, but which do not seem to exist in the literature in the
form we need.

\subsection{Spectral Splitting and decomposition of meromorphic connections over
$R$}

In this section we give the versions of two well-known facts about
meromorphic connections -the spectral splitting lemma and the BNR
correspondence that we use in this paper. Since these have not quite
appeared in the form that we need, we include proofs. Let us begin
with the spectral splitting lemma; whose statement (and proof) we
essentially take from \cite{key-22}, in a generalized form. 

The set up here is as follows: Let $Y=\text{Spec}(A)$ be an affine
scheme, smooth over $R$, which possesses coordinates $\{z_{1},\dots z_{n}\}$,
and coordinate derivations $\{\partial_{1},\dots,\partial_{n}\}$.
Let $\widehat{A}$ be the completion of $A$ along $(z_{1})$. Let
$\mathcal{V}$ be a free $\widehat{A}$-module such that $\mathcal{V}[z_{1}^{-1}]$
is equipped with a flat connection $\nabla$. 

For some $r\in\mathbb{Z}$ we have the morphism $\partial_{1}:\mathcal{V}\to z^{r}\mathcal{V}$.
If we suppose $r\leq-2$, then we have, for all $v\in\mathcal{V}$,
$\partial_{1}(z_{1}v)=v+z_{1}\partial_{1}(v)\in z^{r+1}\mathcal{V}$.
Therefore we obtain a morphism 
\[
\bar{\partial}_{1}:\mathcal{V}/z_{1}\to z_{1}^{r}\mathcal{V}/(z_{1}^{r+1}\mathcal{V})\tilde{=}\mathcal{V}/z_{1}
\]
which is an endomorphism in the category of vector bundles. Therefore,
after possibly shrinking $Y$ and passing to a finite etale cover,
we can suppose that there is a basis of $\mathcal{V}/z_{1}$ for which
$\bar{\partial}_{1}$ is upper-triangular. In particular, the action
of $\overline{\partial}_{1}$ satisfies a generalized eigenspace decomposition
\[
\mathcal{V}/z_{1}=\bigoplus_{i}(\mathcal{V}/z_{1})_{i}
\]
for which, if $\{\alpha_{i}\}$ are the eigenvalues, we have $\alpha_{i}-\alpha_{j}$
are units in $A$ for $i\neq j$. Then we have
\begin{lem}
\label{lem:(Spectral-Splitting)} 1) (Spectral Splitting) With notation
as above, we have a direct sum decomposition 
\[
\mathcal{V}=\bigoplus\mathcal{V}_{i}
\]
such that $\mathcal{V}_{i}/z_{1}=(\mathcal{V}/z_{1})_{i}$ and the
action of $\partial_{1}$ preserves $\mathcal{V}_{i}[z_{1}^{-1}]$
to $\mathcal{V}_{i}[z_{1}^{-1}]$. 

2) The subspaces $\mathcal{V}_{i}$ are also preserved under the action
of the $\{\partial_{2},\dots,\partial_{n}\}$. 
\end{lem}

\begin{proof}
1) We shall construct the required decomposition inductively inside
$\mathcal{V}/z_{1}^{m}\mathcal{V}$ and then take the limit; for $m=1$
there is nothing to prove.

Consider the induction step for $m\geq2$. Each $\mathcal{V}/z_{1}^{m}\mathcal{V}$
is a module over $A/z_{1}^{m}$, and we obtain a map
\[
\overline{\partial}_{1}:\mathcal{V}/z_{1}^{m}\mathcal{V}\to z_{1}^{r}\mathcal{V}/z_{1}^{r+m}\mathcal{V}\tilde{\to}\mathcal{V}/z_{1}^{m}\mathcal{V}
\]
We obtain that $\mathcal{V}/z_{1}^{m}\mathcal{V}$ is a module over
the ring $(A/z_{1}^{m})<\overline{\Theta}>$ which has the relations
\[
[\overline{\Theta},a]=z_{1}^{r}\partial_{1}(a)
\]
Now, by induction the $(A/z_{1}^{m-1})<\overline{\Theta}>$-module
$\mathcal{V}/z^{m-1}\mathcal{V}$ spits as a direct sum which lifts
the generalized eigenspace decomposition $\mathcal{V}/z_{1}$. Write
\[
\mathcal{V}/z_{1}^{m-1}\mathcal{V}=\bigoplus(\mathcal{V}/z_{1}^{m-1}\mathcal{V})_{i}
\]

Now, since $V/z^{m}V$ is free as an $A/z^{m}$-module, we may choose
a direct sum decomposition of $V/z^{m}V$ as an $A/z^{m}$-module
which lifts the given decomposition of $V/z^{m-1}V$; let $\{\pi_{i}\}$
denote the associated projection operators; they are $A/z^{m}$-linear
but may not commute with $\overline{\Theta}$. So, we write 
\[
\overline{\Theta}(v)=v+z^{m}T(\overline{v})
\]
where $T:V/zV\to V/zV$ is $A/z$-linear; in the above, $\overline{v}$
denotes the reduction of $v$ to $\mathcal{V}/z_{1}$. Using this,
one computes that the $A/z$-linear map ${\displaystyle \frac{1}{z_{1}^{m}}(\pi_{i}\overline{\Theta}-\overline{\Theta}\pi_{i})}:\mathcal{V}/z_{1}\to\mathcal{V}/z_{1}$
takes $(\mathcal{V}/z\mathcal{V})_{l}$ to $(\mathcal{V}/z\mathcal{V})_{i}$
for $l\neq i$ and $(\mathcal{V}/z\mathcal{V})_{i}$ to ${\displaystyle \bigoplus_{s\neq i}(\mathcal{V}/z\mathcal{V})_{s}}$.
 In other words, if we consider ${\displaystyle \frac{1}{z_{1}^{m}}(\pi_{i}\overline{\Theta}-\overline{\Theta}\pi_{i})}$
as an element of ${\displaystyle \text{End}(\mathcal{V}/z\mathcal{V})\tilde{=}\bigoplus_{s,t}(\mathcal{V}/z\mathcal{V})_{s}\otimes(\mathcal{V}/z\mathcal{V})_{t}^{*}}$,
then this element is contained in ${\displaystyle \bigoplus_{s\neq t}(\mathcal{V}/z\mathcal{V})_{s}\otimes(\mathcal{V}/z\mathcal{V})_{t}^{*}}$. 

On the other hand, if we consider $[\overline{\partial}_{1},\cdot]$
as an endomorphism of $\text{End}(\mathcal{V}/z\mathcal{V})$, then
the subspaces $(\mathcal{V}/z_{1}\mathcal{V})_{s}\otimes(\mathcal{V}/z_{1}\mathcal{V})_{t}^{*}$
are the just the generalized eigenspaces, with eigenvalue $\alpha_{s}-\alpha_{t}$.
Since each such eigenvalue is assumed to be a unit (for $s\neq t$),
we see that $\overline{\partial}_{1}$ is surjective on $(\mathcal{V}/z_{1}\mathcal{V})_{s}\otimes(\mathcal{V}/z_{1}\mathcal{V})_{t}^{*}$
when $s\neq t$. In particular, we see that ${\displaystyle \frac{1}{z_{1}^{m}}(\pi_{i}\overline{\Theta}-\overline{\Theta}\pi_{i})}$
is in the range of $[\overline{\partial}_{1},\cdot]$. This implies
that if we modify the maps $\pi_{i}$ to $\tilde{\pi}_{i}=\pi_{i}+z^{m}S_{i}$,
where $S_{i}:\mathcal{V}/z_{1}\to\mathcal{V}/z_{1}$ are chosen so
that 
\[
[S_{i},\overline{\partial}_{1}]=\frac{1}{z_{1}^{m}}(\pi_{i}\overline{\Theta}-\overline{\Theta}\pi_{i})
\]
then $[\tilde{\pi}_{i},\overline{\partial}_{1}]=0$. 

In sum, we have shown that the projectors $\overline{\pi}_{i}\in\text{End}_{(A/z_{1}^{m-1})<\overline{\Theta}>}(\mathcal{V}/z^{m-1}\mathcal{V})$
lift to elements $\tilde{\pi}_{i}\in\text{End}_{(A/z_{1}^{m})<\overline{\Theta}>}(\mathcal{V}/z^{m}\mathcal{V})$.
But the natural map 
\[
\text{End}_{(A/z_{1}^{m})<\overline{\Theta}>}(\mathcal{V}/z^{m}\mathcal{V})\to\text{End}_{(A/z_{1}^{m-1})<\overline{\Theta}>}(\mathcal{V}/z^{m-1}\mathcal{V})
\]
has a nilpotent kernel, so the classical lifting of idempotents theorem
(\cite{key-42}, Theorem 21.28) implies that in fact the $\overline{\pi}_{i}$
lift to a complete set of orthogonal idempotents in $\text{End}_{(A/z_{1}^{m})<\overline{\Theta}>}(\mathcal{V}/z^{m}\mathcal{V})$;
which gives the required decomposition. 

2) To show this, we recall from \cite{key-68}, lemme 4.2, that the
analogous result holds, for the canonical decomposition of $\mathcal{V}[z_{1}^{-1}]$,
with respect to $\partial_{1}$, after passing to small neighborhood
(in the analytic topology) of a generic closed point in $D_{\mathbb{C}}$.
But the canonical decomposition clearly refines the decomposition
that we have constructed. So the subspaces are preserved by $\{\partial_{2},\dots\partial_{n}\}$
after passage to some faithfully flat ring extension; therefore they
are preserved already over $A$. 
\end{proof}
Now we give the main application of this result: let $X$ be a smooth
scheme over $R$, and let $(\mathcal{V},\nabla)$ be a vector bundle
with connection of rank $r$ on $X$. Suppose that $\overline{X}$
is a smooth compactification of $X$, and let $D$ be an irreducible
divisor in $\overline{X}$. Take an extension $\overline{\mathcal{V}}$
to a meromorphic connection on $\overline{X}$, which we assume to
be a vector bundle over the generic point of $D$; let $\{z_{1}=0\}$
be a local equation for $D$. 

Writing $U=\text{Spec}(A)$, we have the finite flat cover $U_{m}:=A[y]/(y-z^{m})$.
We use the same notation when replacing $U$ by any etale neighborhood
$U'$. Finally, let $\widehat{U}_{m}$ denote the $z$-adic completion\footnote{equivalently, the $y$-adic completion}
of $U_{m}$, considered as a formal scheme. Then we have 
\begin{cor}
\label{cor:B-V-over-R}There is a finite etale neighborhood $U'\to U$,
and an integer $m\geq1$, so that, after pulling back to $\widehat{U}'_{m}$,
we have 
\[
\widehat{\overline{\mathcal{V}}}_{m}=\bigoplus_{i=1}^{n'}\widehat{\overline{\mathcal{V}}}_{m,i}\oplus\widehat{\overline{\mathcal{V}}}_{m,reg}
\]
where the $\widehat{\overline{\mathcal{V}}}_{m,i}$ are vector bundles
with meromorphic connection, where the connection takes the form $\nabla=\alpha_{i}I+\nabla'$
where $\alpha_{i}\in\Omega^{1}[z_{1}^{-1}]$ is a one-form over $\widehat{U}'_{m}$,
$\nabla'$ is a connection with log singularities, and $\widehat{\overline{\mathcal{V}}}_{m,reg}$
is a bundle over $\mathcal{O}_{\widehat{U}_{m}}$ with log singularities. 
\end{cor}

Before proving this, we recall the main technical result from \cite{key-22},
in a form suitable for this situation. Namely, let $F$ be an algebraically
closed field of characteristic $0$, and consider a free module $\mathcal{W}$
of rank $r$ over $F[[z]]$, so that that $\mathcal{W}[z^{-1}]$ is
equipped with the action of an operator $\partial/\partial z$, which
is continuous (for the $z$-adic topology) and satisfies the Leibniz
rule. For each $m>0$ we choose an $m$'th root of $z$, denoted $y$,
and we have the finite flat map $F[[z]]\to F[[y]]$. Set $\mathcal{W}_{m}:=F[[y]]\otimes_{F[[z]]}\mathcal{W}$.
Then $\mathcal{W}_{m}[z^{-1}]$ admits an action of $\partial/\partial y:=m\cdot y^{m-1}(\partial/\partial z)$,
which is again continuous and satisfies the Leibniz rule. 

Pick a basis for $\mathcal{W}$ and write 
\[
[\partial/\partial z]=\sum_{i=r}^{\infty}z^{i}A_{i}
\]
where $[\partial/\partial z]$ is the matrix of the action of $\partial/\partial z$,
and the $A_{i}$ are $n\times n$ matrices over $F$. We refer to
$A_{r}$ as the leading term of $[\partial/\partial z]$. Suppose
$r\leq-2$, and $n>1$. Then 
\begin{prop}
\label{prop:(Babbitt-Varadarajan)-Main-Lemma}(Babbitt-Varadarajan)
If $A_{r}\neq0$ is nilpotent, there is an $m>0$ so that $\mathcal{W}_{m}$
admits a basis in which the matrix of $\partial/\partial y$ has leading
term $A_{r'}$, for some $r'\leq r$, and $A_{r'}$ has at least two
distinct eigenvalues. 
\end{prop}

This is proposition 4.6 in \cite{key-22} (c.f. also the proof of
Theorem 6.3 in loc. cit). It is the key point in their approach to
the formal reduction theory; once the leading term has more than one
eigenvalue, one may apply the spectral splitting lemma to show that
the connection is a direct sum of connections of smaller rank. Using
this result, we modify their technique to give the 
\begin{proof}
(of \prettyref{cor:B-V-over-R}) Let $\mathcal{O}_{D}$ be the local
ring of the subscheme $D$. Let $\mathcal{O}_{D_{\mathbb{C}}}$ be
the local ring of $D_{\mathbb{C}}$ in $X_{\mathbb{C}}$. After completing
along $(z)$, the ring $\widehat{\mathcal{O}}_{D_{\mathbb{C}}}$ is
a complete DVR of equicharacteristic $0$, and therefore isomorphic
to $\mathbb{C}(D)[[z]]$, where $\mathbb{C}(D)$ denotes the function
field of $D_{\mathbb{C}}$, which is the residue field of $\mathcal{O}_{D_{\mathbb{C}}}$.
The previous proposition therefore applies in this setup, at least
after passing to the algebraic closure of $\mathbb{C}(D)$. 

Let $\overline{\mathcal{V}}$ be as in the statement of the corollary,
let $\overline{\mathcal{V}}_{D}$ be its localization at $D$; this
is a finite free module over $\mathcal{O}_{D}$. Choosing some basis
for $\overline{\mathcal{V}}_{D}$, we obtain a basis for $\overline{\mathcal{V}}_{D}\otimes_{\mathcal{O}_{D}}\mathcal{O}_{D_{\mathbb{C}}}$.
If the connection has log singularities, we are done; so we assume
this is not the case. Let $A_{r}$ denote the leading term for the
action of $\partial_{1}$ in this basis; if $r\geq-1$ then $\overline{\mathcal{V}}$
is regular along $D$, so we may suppose $r\leq-2$. 

Let $\mathbb{C}(D)'$ be a finite field extension of $\mathbb{C}(D)$
in which $A_{r}$ has a generalized eigenspace decomposition. So,
after replacing $R$ with a finite ring extension, there is a finite
etale neighborhood $U'$ of the generic point of $D$ over which $A_{r}$
is defined and has a generalized eigenspace decomposition. Localizing
as needed, we may assume that the differences between distinct eigenvalues
are units in $U'$. Therefore, applying \prettyref{lem:(Spectral-Splitting)};
we see that if there are two distinct eigenvectors then the completion
$\widehat{\overline{\mathcal{V}}}_{D}[z^{-1}]$ splits along the generalized
eigenspace decomposition. Therefore we may suppose there is a single
generalized eigenspace of dimension $>1$, with eigenvalue $\alpha$.
Replacing the action of $\partial/\partial z$ by $\partial/\partial z-z^{r}\alpha$,
we see that we may suppose $A_{r}$ is nilpotent.

If $A_{r}\neq0$, then applying \prettyref{prop:(Babbitt-Varadarajan)-Main-Lemma},
we see that there exists $m>0$ and a finite etale extension $\widehat{\mathcal{O}}_{D'_{\mathbb{C}}}$
of $\widehat{\mathcal{O}}_{D_{\mathbb{C}}}$ so that, after pulling
back to $\widehat{\mathcal{O}}_{D'_{\mathbb{C}}}[y]/(y^{m}-z)$, the
module $\overline{\mathcal{V}}_{D_{\mathbb{C}}}\otimes_{\mathcal{O}_{D_{\mathbb{C}}}}\widehat{\mathcal{O}}_{D'_{\mathbb{C}}}[y]/(y^{m}-z)$
admits a basis, for which the leading term of $[\partial/\partial y]$
has a generalized eigenspace decomposition with more than one eigenvalue.
Let $\{\overline{e}_{i}\}$ denote the image of this basis in 
\[
(\overline{\mathcal{V}}_{D_{\mathbb{C}}}\otimes_{\mathcal{O}_{D_{\mathbb{C}}}}\widehat{\mathcal{O}}_{D'_{\mathbb{C}}}[y]/(y^{m}-z))/y\tilde{=}(\overline{\mathcal{V}}_{D_{\mathbb{C}}}/z)\otimes_{\mathbb{C}(D)}\mathbb{C}(D)'
\]
Then, again extending $R$ if needed, we may find an etale neighborhood
$U'$ of $D$ so that, $\overline{\mathcal{V}}(U'_{m})$ has a basis
whose reduction mod $y$ is equal to $\{\overline{e}_{i}\}$. Since
the leading term of $[\partial/\partial y]$ only depends on the reduction
mod $y$, we see that the leading term of $[\partial/\partial y]$
has a generalized eigenspace decomposition with at least two eigenvalues.
Thus as above we may split the connection again.

If, on the other hand, $A_{r}=0$, then the connection $\nabla-z^{r}dz$
has a leading term of strictly smaller degree. If this leading term
is still $\leq-2$ we may repeat the above; if it is $\geq-1$ then
this connection is of the form $\widehat{\overline{\mathcal{V}}}_{m,i}$
(for the action of $\partial_{1}$ only). Thus we arrive at a place
where, as a module for $\partial_{1}$, there is a decomposition 
\[
\widehat{\overline{\mathcal{V}}}_{m}=\bigoplus_{i=1}^{n'}\widehat{\overline{\mathcal{V}}}_{m,i}\oplus\widehat{\overline{\mathcal{V}}}_{m,reg}
\]
where $\partial_{1}$ acts on $\widehat{\overline{\mathcal{V}}}_{m,i}$
as $\alpha I+M$, where $\alpha$ is a function with poles in $z_{1}$
of order $\geq2$, at $M$ has only log poles; and $\partial_{1}$
acts on $\widehat{\overline{\mathcal{V}}}_{m,reg}$ with log poles
in $z_{1}$. Further, each of these summands is preserved under $\{\partial_{2},\dots,\partial_{n}\}$.
This tells us that $\widehat{\overline{\mathcal{V}}}_{m,reg}$ is
already a log connection. Now consider $\widehat{\overline{\mathcal{V}}}_{m,i}$.
After inverting finitely many integers in $R$, we can find a function
$g$ such that ${\displaystyle \frac{\partial g}{\partial z_{1}}}=[\alpha]$,
where $[\alpha]$ is the sum of the terms in $\alpha$ whose order
in $z_{1}$ is $\leq-2$. Then the meromorphic connection $e^{-g}\otimes\widehat{\overline{\mathcal{V}}}_{m,i}$
has log singularities, which is exactly the result. 
\end{proof}

\subsection{Marked Descent of Line bundles in mixed characteristic}

In this section we provide a few useful general facts about locally
trivial vector bundles with connection in mixed characteristic. In
positive characteristic, such bundles are exactly the ones with $p$-curvature
zero, and, via the famous Cartier descent theorem (\cite{key-24},
theorem 7.2) these are the bundles with connection arising from Frobenius
pullback. We provide analogues of these statements in mixed characteristic.
Finally, to close out this appendix we give some basic results about
Frobenius descent in mixed characteristic, and we explain the ``marked''
Frobenius descent of line bundles. 

To set things up, recall that the kernel of the differential $d:\mathcal{O}_{X_{W_{m}(k)}}\to\Omega_{X_{W_{m}(k)}}^{1}$
is isomorphic to $\mathcal{O}_{W_{m}(X_{k}^{(m)})}$, the structure
sheaf of the $m$th Witt vectors of $X_{k}^{(m)}$ (a local section
$(g_{1},\dots,g_{n})$ of $\mathcal{O}_{W_{n}(X_{k}^{(n)})}$ is sent
to $g_{1}^{p^{m}}+pg_{2}^{p^{m-1}}+\dots+p^{m}g_{m}$). We let $\Phi:\mathcal{O}_{W_{m}(X_{k}^{(m)})}\to\mathcal{O}_{X_{W_{m}(k)}}$
denote the inclusion. Then the result reads
\begin{prop}
\label{prop:Cartier-Over-W_m}The functor $\Phi^{*}$ induces an equivalence
between vector bundles on $W_{m}(X_{k}^{(m)})$, and vector bundles
with locally trivial connection on $X_{W_{m}(k)}$. The inverse is
given by $\mathcal{E}\to\text{ker}(\nabla:\mathcal{E}\to\mathcal{E})$. 
\end{prop}

Now suppose $X_{W_{m}(k)}$ is equipped with a normal crossings divisor,
$D_{W_{m}(k)}$. Let $(\mathcal{L},\nabla)$ be a line bundle with
log connection (with respect to $D_{W_{m}(k)}$). Suppose that the
restriction of $(\mathcal{L},\nabla)$ to $U_{W_{m}(k)}:=X_{W_{m}(k)}\backslash D_{W_{m}(k)}$
is locally trivial. Then $(\mathcal{L},\nabla)$ must have residues
in $\mathbb{Z}/p^{m}$, and in fact, near a point which is contained
in $s$ components of the divisor $D_{W_{m}(k)}$, the one-form of
the connection can locally be written as 
\[
\sum_{i=1}^{s}\alpha_{i}\frac{dz_{i}}{z_{i}}
\]
for $\alpha_{i}\in\mathbb{Z}/p^{m}$ (this can be shown by induction
on $m$, using the method of proof of \prettyref{thm:Higher-diff-structure-on-deformations}).
So we have
\begin{cor}
\label{cor:Marked-Descent}There is a bijection between line bundles
with log connection on $(X_{W_{m}(k)},D_{W_{m}(k)})$ which are locally
trivial on $U_{W_{m}(k)}$, and line bundles on $W_{m}(X_{k}^{(m)})$,
along with an element of $\{0,1,2,\dots,p^{m}-1\}$ which is attached
to each component of $D_{W_{m}(k)}$. 
\end{cor}

\begin{proof}
To a line bundle with log connection $(\mathcal{L},\nabla)$ we attach
$\text{ker}(\nabla)$. By the local form of the connection above this
is a line bundle on $W_{m}(X_{k}^{(m)})$, and along each component
of $D_{W_{m}(k)}$ we attach the unique element $\alpha\in\{0,1,2,\dots,p^{m}-1\}$
so that $z^{\alpha}\cdot\mathcal{L}$ is preserved under the connection
(here $z$ is a local equation for $D_{W_{m}(k)}$). 
\end{proof}

\subsection{\label{subsec:V-filtrations}$V$-filtrations over $R$, applications}

In this subsection, we'll develop the general theory of $V$-filtrations
for $R$-models of holonomic $\mathcal{D}$-modules. As an application,
we'll prove the missing injectivity statement (\prettyref{thm:Injectivity-for-M})
in the proof of \prettyref{thm:generic-fibre-comparison}; as well
as the important technical lemma \prettyref{lem:p-adically-seperated}. 

Suppose $\mathcal{E}_{\mathbb{C}}$ is a holonomic $\mathcal{D}$-module
on a complex algebraic variety $Y_{\mathbb{C}}$, and let $t\in\mathbb{C}[Y]$
be a regular function on $Y_{\mathbb{C}}$, such that there is a derivation
$\partial$ on $Y$ satisfying $\partial(t)=1$. In particular, the
map $t:Y_{\mathbb{C}}\to\mathbb{A}_{\mathbb{C}}^{1}$ is smooth; and
we have the smooth variety $X_{\mathbb{C}}=t^{-1}(0)$. Then for each
$U\subset Y_{\mathbb{C}}$, we have the sequence of ideals $\{(t)^{k}\}_{k\in\mathbb{Z}}$
where by convention we have $(t)^{k}=\mathcal{O}(U)$ for $k<0$.
We then set 
\begin{equation}
V_{\leq k}(\mathcal{D}_{Y_{\mathbb{C}}}(U))=\{Q\in\mathcal{D}_{Y}(U)|Q\cdot(t)^{j}\subset(t)^{j-k}\phantom{i}\text{for all}\phantom{i}j\in\mathbb{Z}\}\label{eq:V-filt}
\end{equation}
 This is a separated, exhaustive, multiplicative filtration on the
sheaf $\mathcal{D}_{Y_{\mathbb{C}}}$, for which $t\in V_{\leq-1}(\mathcal{D}_{Y})$,
$\partial\in V_{\leq1}(\mathcal{D}_{Y})$, and 
\[
V_{\leq0}(\mathcal{D}_{Y_{\mathbb{C}}})/V_{\leq-1}(\mathcal{D}_{Y_{\mathbb{C}}})\tilde{=}i_{*}(\mathcal{D}_{X_{\mathbb{C}}}[t\partial])
\]
(where $i:X_{\mathbb{C}}\to Y_{\mathbb{C}}$ is the inclusion). As
usual, there is the notion of a filtered module over this filtered
ring; and such a filtration is called good if its associated graded
is a locally finitely generated module over $\text{gr}(\mathcal{D}_{Y_{\mathbb{C}}})$.
We shall denote the associated graded module by $\text{gr}(\mathcal{E}_{\mathbb{C}})$;
each graded summand $\text{gr}_{i}(\mathcal{E}_{\mathbb{C}})$ is
naturally a module over $V_{\leq0}(\mathcal{D}_{Y_{\mathbb{C}}})/V_{\leq-1}(\mathcal{D}_{Y_{\mathbb{C}}})$,
and hence, by the above, a $\mathcal{D}_{X_{\mathbb{C}}}$-module
with an action of the operator $t\partial$. In the holonomic case,
this operator acts via a generalized eigenspace decomposition; in
fact, summarizing the main theorems of \cite{key-67}, sections 4.3
and 4.4, we have 
\begin{thm}
\label{thm:V-filt-exists}The module $\mathcal{E}_{\mathbb{C}}$ possesses
a unique good filtration $V_{\leq i}(\mathcal{E}_{\mathbb{C}})$ (called
the Kashiwara-Malgrange filtration) such that each $\text{gr}_{i}(\mathcal{E}_{\mathbb{C}})=V_{\leq i}(\mathcal{E}_{\mathbb{C}})/V_{\leq i-1}(\mathcal{E}_{\mathbb{C}})$
is annihilated by an operator of the form 
\[
\prod_{l=1}^{m}(t\partial+1+\alpha_{i,l})^{s_{l}}
\]
for which $i-1<\text{Re}(\alpha_{l})\leq i$. Each $\text{gr}_{i}(\mathcal{E})$
is a coherent (in fact, holonomic, by \cite{key-67} 4.6.3) $\mathcal{D}_{Y}$-module.
Further, the collection of all $\{\alpha_{i,l}\}$ which appear in
the above operators consists of integer shifts of some finite subset
of $\{z\in\mathbb{C}|-1<\text{Re}(z)\leq0\}$. 
\end{thm}

From these conditions, one can deduce that $t:V_{\leq i}(\mathcal{E}_{\mathbb{C}})\to V_{\leq i-1}(\mathcal{E}_{\mathbb{C}})$
is an isomorphism for all $i\leq0$, and that $\partial:\text{gr}_{i}(\mathcal{E}_{\mathbb{C}})\to\text{gr}_{i+1}(\mathcal{E}_{\mathbb{C}})$
is bijective for $i\geq1$ (c.f \cite{key-67}, lemmas 4.5.1 and 4.5.4).
In particular, $V_{\leq1}(\mathcal{E}_{\mathbb{C}})$ generates $\mathcal{E}_{\mathbb{C}}$
as a $\mathcal{D}$-module.

Now we explain how to build analogous filtrations for modules over
$R$. Suppose that we are given $R$-models for everything in sight.
Then certainly the definition \prettyref{eq:V-filt} makes sense for
$\mathcal{D}_{Y}$. We have 
\begin{lem}
\label{lem:V-filt-over-R}After possibly localizing and extending
$R$, the filtration $V_{\leq i}(\mathcal{E}_{\mathbb{C}})\cap\mathcal{E}$
is a good filtration (with respect to $V_{\leq i}(\mathcal{D}_{Y})$).
We have that $t:V_{\leq0}(\mathcal{E})\to V_{\leq-1}(\mathcal{E})$
is an isomorphism, and $\partial:\text{gr}_{i}(\mathcal{E})\to\text{gr}_{i+1}(\mathcal{E})$
is a bijection for $i\geq1$. In particular, $V_{\leq1}(\mathcal{E})$
generates $\mathcal{E}$ as a $\mathcal{D}_{Y}$-module. 
\end{lem}

\begin{proof}
By definition each $V_{\leq i}(\mathcal{E}_{\mathbb{C}})$ is coherent
over $V_{\leq0}(\mathcal{D}_{Y_{\mathbb{C}}})$. So, after possibly
localizing and extending $R$, choose coherent $V_{\leq0}(\mathcal{D}_{Y})$-models
$V_{\leq i}(\mathcal{E})$ inside $\mathcal{E}$ for $V_{\leq i}(\mathcal{E}_{\mathbb{C}})$
when $i=\{-1,0,1\}$; we can suppose, after localizing $R$, that
$t\cdot V_{\leq0}(\mathcal{E})\subset V_{\leq-1}(\mathcal{E})$. We
have that $V_{\leq-1}/(t\cdot V_{\leq0}(\mathcal{E}))$ is a torsion
$R$-module; since it is coherent over $V_{\leq0}(\mathcal{D}_{Y})$
we can localize $R$ again and suppose it is $0$. By generic freeness
we can also suppose that $\text{gr}_{i}(\mathcal{E})$ is free over
$R$, for $i=\{0,1\}$. 

Now, define $V_{\leq i}(\mathcal{E}):=\partial^{i-1}\cdot V_{\leq1}(\mathcal{E})$
for $i\geq1$ and $V_{\leq-i-1}(\mathcal{E}):=t^{i}\cdot V_{\leq-1}(\mathcal{E})$
for $i\geq0$. We have $\mathcal{D}_{Y}\cdot V_{\leq1}(\mathcal{E})\subset\mathcal{E}$
is an inclusion of coherent $\mathcal{D}_{Y}$-modules which becomes
an equality after passing to the fraction field of $R$. Thus it is
an equality after localizing $R$ at some element, and so we can suppose
$\mathcal{D}_{Y}\cdot V_{\leq1}(\mathcal{E})=\mathcal{E}$. It follows
that we have defined an exhaustive, good filtration on $\mathcal{E}$
over $V_{\cdot}(\mathcal{D}_{Y})$. 

Next, we need to show $V_{\leq i}(\mathcal{E})=V_{\leq i}(\mathcal{E}_{\mathbb{C}})\cap\mathcal{E}$.
Clearly $V_{\leq i}(\mathcal{E})\subseteq V_{\leq i}(\mathcal{E}_{\mathbb{C}})\cap\mathcal{E}$
for all $i$. Let $m\in\mathcal{E}$ be any section, and suppose $m\in V_{\leq i}(\mathcal{E}_{\mathbb{C}})$
for some $i\geq1$. Since $\mathcal{E}=\mathcal{D}_{Y}\cdot V_{\leq1}(\mathcal{E})$
we can write 
\begin{equation}
m=\partial^{r}\cdot m_{1}+\Phi\cdot m_{2}\label{eq:V-filt-rep}
\end{equation}
where $r\geq0$, $m_{1},m_{2}\in V_{\leq1}(\mathcal{E})$, and $\Phi\in V_{\leq r-1}(\mathcal{D}_{Y})$.
If $r+1>i$, then the image of $m$ in $\text{gr}_{r+1}(\mathcal{E}_{\mathbb{C}})$
is $0$. Therefore \prettyref{eq:V-filt-rep} implies that the image
of $m_{1}$ in $\text{gr}_{1}(\mathcal{E}_{\mathbb{C}})$ is $0$,
since $\partial^{r}:\text{gr}_{1}(\mathcal{E}_{\mathbb{C}})\to\text{gr}_{r+1}(\mathcal{E}_{\mathbb{C}})$
is a bijection. But we have that that $\text{gr}_{1}(\mathcal{E}_{\mathbb{C}})=\text{gr}_{1}(\mathcal{E})\otimes_{R}\mathbb{C}$,
and $\text{gr}_{1}(\mathcal{E})$ is torsion-free over $R$, so that
the image of $m_{1}$ in $\text{gr}_{1}(\mathcal{E})$ is also $0$.
So in fact $m_{1}\in V_{\leq0}(\mathcal{E})$ and we can rewrite \prettyref{eq:V-filt-rep}
with $r$ replaced by $r-1$. Continuing in this way, we eventually
an expression for $m$ as in \prettyref{eq:V-filt-rep} in which $r=i-1$,
which forces $m\in V_{\leq i}(\mathcal{E})$. So we see that $V_{\leq i}(\mathcal{E})=V_{\leq i}(\mathcal{E}_{\mathbb{C}})\cap\mathcal{E}$
for all $i\geq1$. Further, if $i=0$ we see that $m\in V_{\leq1}(\mathcal{E})\cap V_{\leq0}(\mathcal{E}_{\mathbb{C}})$.
This implies $m\in V_{\leq0}(\mathcal{E})$ since $\text{gr}_{1}(\mathcal{E}_{\mathbb{C}})=\text{gr}_{1}(\mathcal{E})\otimes_{R}\mathbb{C}$
and $\text{gr}_{1}(\mathcal{E})$ is free over $R$. We can then repeat
the argument for $i=-1$ to deduce $V_{\leq-1}(\mathcal{E})=V_{\leq-1}(\mathcal{E}_{\mathbb{C}})\cap\mathcal{E}$. 

We now have to show $V_{\leq i}(\mathcal{E})=V_{\leq i}(\mathcal{E}_{\mathbb{C}})\cap\mathcal{E}$
for $i<-1$. By definition $V_{\leq i}(\mathcal{E})=t^{-i-1}\cdot V_{\leq-1}(\mathcal{E})$
for any such $i$, and since multiplication by $t$ is injective on
all of $V_{\leq-1}(\mathcal{E})$, we see that $t^{-i-1}\cdot$ is
an isomorphism $V_{\leq-1}(\mathcal{E})\tilde{\to}V_{\leq i}(\mathcal{E})$.
The same holds for $V_{\leq i}(\mathcal{E}_{\mathbb{C}})$. So we
see that each $V_{\leq i}(\mathcal{E})$ is free over $R$ and we
have $V_{\leq i}(\mathcal{E})\otimes_{R}\mathbb{C}\tilde{\to}V_{\leq i}(\mathcal{E}_{\mathbb{C}})$
and $\text{gr}_{i}(\mathcal{E})\otimes_{R}\mathbb{C}\tilde{\to}\text{gr}_{i}(\mathcal{E}_{\mathbb{C}})$.
Now we proceed as above, arguing by induction on $i$. 

To finish the proof of the lemma, we need to show that $\partial:\text{gr}_{i}(\mathcal{E})\to\text{gr}_{i+1}(\mathcal{E})$
is a bijection for $i\geq1$. It is onto by definition. Further, since
$V_{\leq i}(\mathcal{E})=V_{\leq i}(\mathcal{E}_{\mathbb{C}})\cap\mathcal{E}$,
we have that $\text{gr}_{i}(\mathcal{E})\to\text{gr}_{i}(\mathcal{E}_{\mathbb{C}})$
is injective, so the result follows from the analogous one for $\mathcal{E}_{\mathbb{C}}$. 
\end{proof}
Now we want to apply this to the inclusion $\mathcal{E}\to\mathcal{E}[t^{-1}]:=\mathcal{F}$.
Although the complexification $\mathcal{F}_{\mathbb{C}}$ is coherent,
the $\mathcal{D}_{Y}$-module $\mathcal{F}$ generally will not be.
However, it is coherent over $\mathcal{D}_{Y}[t^{-1}]$. We have 
\begin{lem}
\label{lem:Inclusion-Q-to-P} Let $V_{\leq i}(\mathcal{F})=V_{\leq i}(\mathcal{E})$
for all $i\leq1$, and $V_{\leq i}(\mathcal{F})=t^{-i+1}\cdot V_{\leq1}(\mathcal{F})=t^{-i+1}\cdot V_{\leq1}(\mathcal{E})$
for all $i>1$. Then, after possibly localizing and extending $R$,
this is an exhaustive filtration of $\mathcal{P}$ over $V_{\cdot}(\mathcal{D}_{Y})$.
Each $V_{\leq i}(\mathcal{F})$ is coherent over $V_{\leq0}(\mathcal{D}_{Y})$,
and $V_{\leq i}(\mathcal{F})=V_{\leq i}(\mathcal{F}_{\mathbb{C}})\cap\mathcal{F}=V_{\leq i}(\mathcal{E}_{\mathbb{C}})\cap\mathcal{F}$
for all $i$. In particular, the filtered inclusion $\mathcal{E}\to\mathcal{F}$
is strict. 
\end{lem}

\begin{proof}
Choose coherent $V_{\leq0}(\mathcal{D}_{Y})$ modules $V_{\leq i}(\mathcal{F})\subset\mathcal{F}$
(for $i=\{0,1,-1\}$) satisfying $V_{\leq i}(\mathcal{F})\otimes_{R}\mathbb{C}=V_{\leq i}(\mathcal{F}_{\mathbb{C}})$.
As in the previous proof, localizing $R$ as needed, we can suppose
$\text{gr}_{i}(\mathcal{F})$ is free over $R$, for $i\in\{0,1\}$,
that $\text{gr}_{i}(\mathcal{F})\otimes_{R}\mathbb{C}=\text{gr}_{i}(\mathcal{F}_{\mathbb{C}})$
(for $i\in\{0,1\}$) and that $t\cdot V_{\leq0}(\mathcal{F})=V_{\leq-1}(\mathcal{F})$.
Define $V_{\leq i}(\mathcal{F})=t^{-i+1}\cdot V_{\leq1}(\mathcal{F})$
for all $i>1$, and $V_{\leq i}(\mathcal{F})=t^{-i}\cdot V_{\leq0}(\mathcal{F})$
for all $i\leq0$. Clearly $V_{\leq i}(\mathcal{F})\subset V_{\leq i}(\mathcal{F}_{\mathbb{C}})\cap\mathcal{F}$
for all $i$. 

Now, suppose $x\in V_{\leq i}(\mathcal{F})$, and its image in $\mathcal{F}_{\mathbb{C}}$
is contained in $V_{\leq i-1}(\mathcal{F}_{\mathbb{C}})$. Multiplying
by an appropriate power of $t$, we may assume $i\in\{0,1\}$. But
then since $\text{gr}_{i}(\mathcal{F})$ is free over $R$, and $\text{gr}_{i}(\mathcal{F})\otimes_{R}\mathbb{C}=\text{gr}_{i}(\mathcal{F}_{\mathbb{C}})$
we see that $x\in V_{\leq i-1}(\mathcal{F})$. So $V_{\leq i}(\mathcal{F})=V_{\leq i}(\mathcal{F}_{\mathbb{C}})\cap\mathcal{F}$
for all $i$. Since $\mathcal{F}_{\mathbb{C}}=\mathcal{E}_{\mathbb{C}}$
and $V_{\leq i}(\mathcal{E})=V_{\leq i}(\mathcal{E}_{\mathbb{C}})\cap\mathcal{E}$,
we see that the filtered inclusion $\mathcal{E}\to\mathcal{F}$ is
strict. Further, after localizing $R$ again, we can suppose $V_{\leq1}(\mathcal{F})=V_{\leq1}(\mathcal{E})$
and $V_{\leq0}(\mathcal{F})=V_{\leq0}(\mathcal{E})$. This implies
the description of the filtration given in the lemma; further, since
$V_{\leq1}(\mathcal{F})$ is coherent over $V_{\leq0}(\mathcal{D}_{Y})$
we see that each $V_{\leq i}(\mathcal{F})$ is coherent over $V_{\leq0}(\mathcal{D}_{Y})$. 

Finally, note that 
\[
\bigcup_{i}V_{\leq i}(\mathcal{F})\subset\mathcal{F}
\]
is an inclusion of coherent $\mathcal{D}_{Y}[t^{-1}]$-modules, which
becomes an equality on passing to $\mathbb{C}$. So it is an equality
after localizing $R$, and the filtration is indeed exhaustive. 
\end{proof}
Now we turn to the proof of \ref{thm:Injectivity-for-M}. To set things
up, we recall that we are considering a coherent $\mathcal{D}_{X}$-module
$\mathcal{M}\subset j_{*}(\mathcal{M}_{U})$ which is a model for
$j_{!*}(\mathcal{M}_{U_{\mathbb{C}}})$. Now, the $D_{X_{R}}$-module
$j_{*}(\mathcal{M}_{U})$ is, in general, not coherent; however, it
becomes coherent after passing to $\text{Frac}(R)$. Therefore we
may consider a coherent $\mathcal{D}_{X}$-module 
\[
\int_{j}\mathcal{M}_{U}\subset j_{*}(\mathcal{M}_{U})
\]
which is a lattice for $j_{*}(\mathcal{M}_{U_{R}})\otimes_{R}F$ (where
$F=\text{Frac}(R)$). Then the main theorem of this section is 
\begin{thm}
\label{thm:Injectivity-for-j-push-M} For all $p>>0$, the $p$-adic
completion of the map ${\displaystyle \int_{j}\mathcal{M}_{U_{W(k)}}\to j_{*}(\mathcal{M}_{U_{W(k)}})}$
is injective. 
\end{thm}

We point out that this directly implies the 
\begin{proof}
(of \prettyref{thm:Injectivity-for-M}) Localizing $R$ if necessary,
we may suppose that there is an injection ${\displaystyle \mathcal{M}\to\int_{j}\mathcal{M}_{U}}$.
Since these are both coherent $\mathcal{D}_{X}$-modules, the Artin-Rees
lemma implies that the $p$-adic completion of this map is injective;
similarly, the $p$-adic completion of ${\displaystyle \mathcal{M}_{W(k)}\to\int_{j}\mathcal{M}_{U_{W(k)}}}$
is injective. By the above, the $p$-adic completion of ${\displaystyle \int_{j}\mathcal{M}_{U_{W(k)}}\to j_{*}(\mathcal{M}_{U_{W(k)}})}$
is injective also; since $\mathcal{M}_{U_{R}}$ is coherent over $\mathcal{D}_{U_{R}}$,
the $p$-adic completion of $\mathcal{M}_{U_{R}}\otimes_{R}W(k)$
is precisely $j_{*}(\mathcal{D}_{\mathfrak{U}_{W(k)}}\widehat{\otimes}_{\mathcal{D}_{U_{W(k)}}}\mathcal{M}_{U_{W(k)}})$,
and the result follows. 
\end{proof}
Recall that we denote by $g:X_{R}\to\mathbb{A}_{R}^{1}$ the function
whose zero set is the complement of $U_{R}$. Let $\Gamma\subset X_{R}\times_{R}\mathbb{A}_{R}^{1}$
denote the graph. We have $\Gamma\tilde{=}X_{R}$ and we let $i$
denote the inclusion of $\Gamma$ into $X_{R}\times_{R}\mathbb{A}_{R}^{1}$.
To analyze the inclusion ${\displaystyle \int_{j}\mathcal{M}_{U_{R}}}\subset j_{*}(\mathcal{M}_{U_{R}})$,
it will be necessary to consider the $\mathcal{D}$-module push-forward
under $i$. We have the 
\begin{lem}
\label{lem:First-Reduction}Suppose that the $p$-adic completion
of the map ${\displaystyle i_{*}\int_{j}\mathcal{M}_{U_{W(k)}}\to i_{*}j_{*}(\mathcal{M}_{U_{W(k)}})}$
is an injection. Then \prettyref{thm:Injectivity-for-j-push-M} is
true. 
\end{lem}

\begin{proof}
For each $r\geq0$, we have the submodule $\mathcal{N}_{r}:={\displaystyle p^{r}\cdot i_{*}j_{*}(\mathcal{M}_{U_{W(k)}})\cap i_{*}\int_{j}\mathcal{M}_{U_{W(k)}}}$,
and we denote by $\mathcal{N}_{r,m}$ (for $m\geq0$) the image of
$\mathcal{N}_{r}$ inside ${\displaystyle (i_{*}\int_{j}\mathcal{M}_{U_{W(k)}})/p^{m}}$.
Then the injectivity statement of the lemma is that, for each $m\geq0$,
we have 
\[
\bigcap_{r\geq0}\mathcal{N}_{r,m}=0
\]
Now, Let $y$ be a function on $X_{R}\times_{R}\mathbb{A}_{R}^{1}$
such that $\Gamma$ is the (scheme-theoretic) zero set of $y$; let
$\partial_{y}$ be a derivation such that $\partial_{y}(y)=1$. Then
we have isomorphisms (of $\mathcal{D}_{X_{R}}$-modules)
\[
i_{*}\int_{j}\mathcal{M}_{U_{W(k)}}\tilde{=}\bigoplus_{i=0}^{\infty}\int_{j}\mathcal{M}_{U_{W(k)}}\cdot\partial_{y}^{i}
\]
and 
\[
i_{*}j_{*}(\mathcal{M}_{U_{W(k)}})\tilde{=}\bigoplus_{i=0}^{\infty}j_{*}(\mathcal{M}_{U_{W(k)}})\cdot\partial_{y}^{i}
\]
Therefore, if the condition ${\displaystyle \bigcap_{r\geq0}\mathcal{N}_{r,m}=0}$
holds then the analogous statement must hold for the inclusion ${\displaystyle \int_{j}\mathcal{M}_{U_{W(k)}}\to j_{*}(\mathcal{M}_{U_{W(k)}})}$
as it is a summand (in the category of $W(k)$-modules) of the inclusion
${\displaystyle i_{*}\int_{j}\mathcal{M}_{U_{W(k)}}\to i_{*}j_{*}(\mathcal{M}_{U_{W(k)}})}$. 
\end{proof}
This lemma allows us to replace the function $g$ by the coordinate
function $t:X\times\mathbb{A}^{1}\to\mathbb{A}^{1}$. Let $Y=X\times\mathbb{A}^{1}$;
to ease notation we denote $\mathcal{P}:=i_{*}j_{*}(\mathcal{M}_{U})$
and ${\displaystyle \mathcal{Q}:=i_{*}\int_{j}(\mathcal{M}_{U})}$
; the natural injection $\mathcal{Q}\to\mathcal{P}$ will be called
$\iota$. We have that $\mathcal{Q}$ is finitely generated over $\mathcal{D}_{Y}$
and $\mathcal{P}=\mathcal{Q}[t^{-1}]$ is finitely generated over
$\mathcal{D}_{Y}[t^{-1}]$; in addition $\mathcal{P}\otimes_{R}F=\mathcal{Q}\otimes_{R}F$.
We consider the $V$-filtrations on these modules as developed above. 

To use them, we first prove the following general
\begin{lem}
Let $(\mathcal{E}_{W(k)},V)$ be a $p$-torsion-free $\mathcal{D}_{Y_{W(k)}}$-module,
filtered with respect to $V_{\cdot}(\mathcal{D}_{Y_{W(k)}})$. Let
${\displaystyle \text{Rees}(\mathcal{E}_{W(k)})=\bigoplus_{i=\infty}^{\infty}V_{\leq i}(\mathcal{E}_{W(k)})}$
be the associated Rees module, graded by putting $V_{\leq i}(\mathcal{E}_{W(k)})$
is degree $i$, and equipped with the action of the operator $\tau$
of degree $1$ which acts by including $V_{\leq i}(\mathcal{E}_{W(k)})$
in degree $i$ into $V_{\leq i+1}(\mathcal{E}_{W(k)})$ in degree
$i+1$. Then there is an embedding 
\[
\widehat{\text{Rees}(\mathcal{E}_{W(k)})}\to\prod_{i=-\infty}^{\infty}\widehat{(V_{\leq i}(\mathcal{E}_{W(k)}))}
\]
where $\widehat{?}$ denotes $p$-adic completion
\end{lem}

\begin{proof}
There is a canonical embedding 
\[
\iota:\bigoplus_{i=\infty}^{\infty}V_{\leq i}(\mathcal{E}_{W(k)})\to\prod_{i=-\infty}^{\infty}V_{\leq i}(\mathcal{E}_{W(k)})
\]
and we denote the cokernel by $\mathcal{C}$. An element $x=(x_{i})_{i\in\mathbb{Z}}$
is not in the image of $\iota$ iff $x_{i}\neq0$ for infinitely many
$i$. Since each component is torsion-free over $W(k)$, this implies
that $rx$ is not in the image of $\iota$ for all $r\in W(k)\backslash\{0\}$.
So $\mathcal{C}$ is torsion-free over $W(k)$ as well. Thus we obtain
short exact sequences 
\[
(\bigoplus_{i=\infty}^{\infty}V_{\leq i}(\mathcal{E}_{W(k)}))/p^{m}\to(\prod_{i=-\infty}^{\infty}V_{\leq i}(\mathcal{E}_{W(k)}))/p^{m}\to\mathcal{C}/p^{m}
\]
for all $m$. Taking the inverse limit, we see that 
\[
\widehat{(\bigoplus_{i=\infty}^{\infty}V_{\leq i}(\mathcal{E}_{W(k)}))}\to\widehat{(\prod_{i=-\infty}^{\infty}V_{\leq i}(\mathcal{E}_{W(k)}))}
\]
is an injection. In addition, one sees directly that the natural map
\[
(\prod_{i=-\infty}^{\infty}V_{\leq i}(\mathcal{E}_{W(k)}))/p^{m}\to(\prod_{i=-\infty}^{\infty}V_{\leq i}(\mathcal{E}_{W(k)})/p^{m})
\]
is an isomorphism. A moment's thought also shows that the map
\[
\prod_{i=-\infty}^{\infty}\widehat{(V_{\leq i}(\mathcal{E}_{W(k)}))}\to\lim_{m}(\prod_{i=-\infty}^{\infty}V_{\leq i}(\mathcal{E}_{W(k)})/p^{m})
\]
is an isomorphism. Thus we obtain 
\[
\widehat{(\prod_{i=-\infty}^{\infty}V_{\leq i}(\mathcal{E}_{W(k)}))}\tilde{=}\prod_{i=-\infty}^{\infty}\widehat{(V_{\leq i}(\mathcal{E}_{W(k)}))}
\]
whence the result. 
\end{proof}
Now we can give the 
\begin{proof}
(of \prettyref{thm:Injectivity-for-j-push-M}) Let $\mathcal{C}$
be the cokernel of $\mathcal{Q}_{W(k)}\to\mathcal{P}_{W(k)}$. We
have ${\displaystyle \text{ker}(\widehat{\mathcal{Q}_{W(k)}}\to\widehat{\mathcal{P}_{W(k)}})=\lim_{\leftarrow}\mathcal{C}[p^{n}]}$
where $\mathcal{C}[p^{n}]$ is the $p^{n}$-torsion submodule. Let
$V_{\leq i}(\mathcal{Q}_{W(k)}):=V_{\leq i}(\mathcal{Q})\otimes_{R}W(k)$
and similarly for $\mathcal{P}$. Then these are exhaustive, good
filtrations of $\mathcal{Q}_{W(k)}$ (and $\mathcal{P}_{W(k)}$) over
$V_{\cdot}(\mathcal{D}_{Y_{W(k)}})$. Since $R\to W(k)$ is flat,
we see that $\mathcal{Q}_{W(k)}\to\mathcal{P}_{W(k)}$ is a strict
inclusion with respect to these filtrations.

For each $i$, we have that $V_{\leq i}(\mathcal{Q}_{W(k)})\subset V_{\leq i}(\mathcal{P}_{W(k)})$
is an inclusion of finite $V_{\leq0}(\mathcal{D}_{Y_{R}})$-modules.
Thus the Artin-Rees lemma implies 
\[
\widehat{V_{\leq i}(\mathcal{Q}_{W(k)})}\to\widehat{V_{\leq i}(\mathcal{P}_{W(k)})}
\]
is also injective; hence the same is true of the natural map 
\[
\prod_{i=-\infty}^{\infty}\widehat{V_{\leq i}(\mathcal{Q}_{W(k)})}\to\prod_{i=-\infty}^{\infty}\widehat{V_{\leq i}(\mathcal{P}_{W(k)})}
\]
Therefore the previous lemma now implies ${\displaystyle \widehat{\text{Rees}(\mathcal{Q}_{W(k)})}\to\widehat{\text{Rees}(\mathcal{P}_{W(k)})}}$
is injective. 

Since $\mathcal{Q}_{W(k)}\to\mathcal{P}_{W(k)}$ is strict, $\text{Rees}(\mathcal{C})$
is the cokernel of $\text{Rees}(\mathcal{Q}_{W(k)})\to\text{Rees}(\mathcal{P}_{W(k)})$.
So the above implies ${\displaystyle \lim_{\leftarrow}\text{Rees}(\mathcal{C})[p^{n}]}=0$.
We shall show that this implies ${\displaystyle \lim_{\leftarrow}\mathcal{C}[p^{n}]=0}$.
For each $n$ we have a short exact sequence
\[
\text{Rees}(\mathcal{C})[p^{n}]\xrightarrow{\tau-1}\text{Rees}(\mathcal{C})[p^{n}]\to\mathcal{C}[p^{n}]
\]
Since ${\displaystyle \lim_{\leftarrow}\text{Rees}(\mathcal{C})[p^{n}]}=0$,
to prove ${\displaystyle \lim_{\leftarrow}\mathcal{C}[p^{n}]=0}$
we must show that $\tau-1$ acts injectively on ${\displaystyle \text{R}^{1}\lim_{\leftarrow}\text{Rees}(\mathcal{C})[p^{n}]}$.
Recall that this module is the cokernel of 
\[
\eta:\prod_{n=1}^{\infty}\text{Rees}(C)[p^{n}]\to\prod_{n=1}^{\infty}\text{Rees}(C)[p^{n}]
\]
where $\eta(c_{1},c_{2},c_{3},\dots)=(c_{1}-pc_{2},c_{2}-pc_{3},\dots)$.
Now, since each $\text{Rees}(C)[p^{n}]$ is graded, we may define
a homogenous element of degree $i$ in ${\displaystyle \prod_{n=1}^{\infty}\text{Rees}(C)[p^{n}]}$
to be an element $(c_{1},c_{2},\dots)$ such that each $c_{j}$ has
degree $i$. Any element of $d\in{\displaystyle \prod_{n=1}^{\infty}\text{Rees}(C)[p^{n}]}$
has a unique representation of the form ${\displaystyle \sum_{i=0}^{\infty}d_{i}}$
where $d_{i}$ is homogenous of degree $i$ (this follows by looking
at the decomposition by grading of each component). Since the map
$\eta$ preserves the set of homogenous elements of degree $i$, we
have ${\displaystyle \eta(\sum_{i=0}^{\infty}d_{i})=\sum_{i=0}^{\infty}\eta(d_{i})}$. 

Suppose that $(\tau-1)d=\eta(d')$. Write ${\displaystyle d=\sum_{i=j}^{\infty}d_{i}}$
where $d_{j}\neq0$. Then 
\[
(\tau-1){\displaystyle \sum_{i=j}^{\infty}d_{j}}=-d_{j}+\sum_{i=j+1}^{\infty}(\tau d_{i-1}-d_{i})=\sum_{i=0}^{\infty}\eta(d_{i}')
\]
So we obtain $d_{j}=-\eta(d_{j}')$, and $d_{i}=\tau d_{i-1}+\eta(d_{i}')$
for all $i>j$, which immediately gives $d_{i}\in\text{image}(\eta)$
for all $i$; so $d\in\text{image}(\eta)$ and $\tau-1$ acts injectively
on $\text{coker}(\eta)$ as required. 
\end{proof}
Now, we'll give the other key application of the $V$-filtration theory: 
\begin{lem}
\label{lem:p-adically-seperated}Let $\mathcal{E}$ be a $\mathcal{D}_{\mathbb{A}^{n}}$
module such that $\mathcal{E}_{\mathbb{C}}$ is holonomic. Then, after
possibly localizing $R$, we have that, $\mathbb{H}_{dR}^{i}(\mathcal{E})$
is a direct sum of finite type $R$-modukes. For all $R\to W(k)$,
the same is true of $\mathbb{H}_{dR}^{i}(\mathcal{E}_{W(k)})$. Therefore
this $W(k)$-module is is $p$-adically separated for all. 

The statement of the lemma holds also over any affine algebraic variety.
\end{lem}

\begin{proof}
The second sentence follows from the first by choosing an embedding
into affine space. So we assume we are working over $\mathbb{A}^{n}$
from now on.

After taking the Fourier transform of $\mathcal{E}$, it suffices
to show that $L^{j}\iota^{*}(\mathcal{E}_{W(k)})$ is $p$-adically
seperated for each $j$, where $\iota:\{0\}\to\mathbb{A}_{W(k)}^{n}$
is the inclusion. To see this, we'll use the V-filtration on $\mathcal{E}$
as presented above in . Let $\{t_{1},\dots,t_{n}\}$ be the coordinates
on $\mathbb{A}^{n}$; we thus have $n$ distinct $V$-filtrations
on $\mathcal{D}_{\mathbb{A}^{n}}$ (each of which is actually a grading),
and the $i$th one has $\text{deg}(t_{i})=-1$ and $\text{deg}(\partial_{i})=1$. 

Consider $\mathcal{E}$ with the $V$ filtration with respect to $t_{1}$.
We have that $t_{1}:V_{\leq0}(\mathcal{E})\to V_{\leq-1}(\mathcal{E})$
is an isomorphism. Therefore the complex $\mathcal{E}\xrightarrow{t_{1}}\mathcal{E}$
is quasi-isomorphic to $\mathcal{E}/V_{\leq0}(\mathcal{E})\xrightarrow{t_{1}}\mathcal{E}/V_{\leq-1}(\mathcal{E})$.
Now, we claim that the $V$-filtration is split on both $\mathcal{E}/V_{\leq0}(\mathcal{E})$
and $\mathcal{E}/V_{\leq-1}(\mathcal{E})$. To see this, we localize
$R$ so that all of the differences between the roots of the characteristic
polynomial of $t_{1}d_{1}$ on $\text{gr}_{0}(\mathcal{E})$ and $\text{gr}_{1}(\mathcal{E})$
are units in $R$. Then the module $V_{\leq1}(\mathcal{E})/V_{\leq-1}(\mathcal{E})$
splits as $\text{gr}_{0}(\mathcal{E})\oplus\text{gr}_{1}(\mathcal{E})$
by using the eigenspace decomposition. Now we can split the filtration
on any $V_{\leq i}(\mathcal{E})/V_{\leq-1}(\mathcal{E})$ by using
the subspaces $d_{1}^{j}\cdot\text{gr}_{1}(\mathcal{E})$ for $0\leq j\leq i$. 

Therefore, if we denote by $\text{gr}^{(1)}(\mathcal{E})$ the associated
graded of $\mathcal{E}$ with respect to this $V$ filtration, we
also obtain that $\mathcal{E}/V_{\leq0}(\mathcal{E})\xrightarrow{t_{1}}\mathcal{E}/V_{\leq-1}(\mathcal{E})$
is quasi-isomorphic to $\text{gr}^{(1)}(\mathcal{E})\xrightarrow{t_{1}}\text{gr}^{(1)}(\mathcal{E})$. 

If $\iota_{1}$ denotes the inclusion $\{t_{1}=0\}\to\mathbb{A}^{n}$,
we have shown $L\iota_{1}^{*}(\mathcal{E})\tilde{=}L\iota_{1}^{*}(\text{gr}^{(1)}(\mathcal{E}))$.
It follows that $Li^{*}(\mathcal{E})\tilde{=}Li^{*}(\text{gr}^{(1)}(\mathcal{E}))$. 

Now, each graded piece $\text{gr}_{i}^{(1)}(\mathcal{E})$ is a coherent
$R<t_{2},\dots,t_{n},\partial_{2},\dots,\partial_{n}>$ module (which
is holonomic upon passing to $\mathbb{C}$) and we have isomorphisms
$\text{gr}_{i}^{(1)}(\mathcal{E})\tilde{=}\text{gr}_{0}^{(1)}(\mathcal{E})$
for all $i<0$ and $\text{gr}_{i}^{(1)}(\mathcal{E})\tilde{=}\text{gr}_{1}^{(1)}(\mathcal{E})$
for all $i>1$. Thus we may choose simultaneous $R$-models for each
$\text{gr}_{i}^{(1)}(\mathcal{E})$ which possess $V$-filtrations
with respect to $t_{2}$. Setting 
\[
V_{\leq t}(\text{gr}^{(1)}(\mathcal{E}))=\bigoplus_{i}V_{\leq t}(\text{gr}_{i}^{(1)}(\mathcal{E}))
\]
then puts a $V$-filtration (with respect to $t_{2}$) on $\text{gr}^{(1)}(\mathcal{E})$.
Arguing as above we obtain $Li^{*}(\text{gr}^{(1)}(\mathcal{E}))\tilde{=}Li^{*}(\text{gr}^{(2)}(\text{gr}^{(1)}(\mathcal{E})))$;
and $(\text{gr}^{(2)}(\text{gr}^{(1)}(\mathcal{E}))$ is a $\mathbb{Z}^{2}$-graded
module, with each $(\text{gr}_{i}^{(2)}(\text{gr}_{j}^{(1)}(\mathcal{E}))$
coherent over $R<t_{3},\dots,t_{n},\partial_{3},\dots,\partial_{n}>$
(and holonomic upon passing to $\mathbb{C}$). 

Continuing in this way, we deduce 
\[
Li^{*}(\mathcal{E})\tilde{=}Li^{*}(\text{gr}^{(1)}(\mathcal{E}))\tilde{=}Li^{*}(\text{gr}^{(2)}(\text{gr}^{(1)}(\mathcal{E})))\tilde{=}\dots\tilde{=}Li^{*}(\text{gr}^{(n)}(\cdots\text{gr}^{(2)}(\text{gr}^{(1)}(\mathcal{E})))
\]
The last module here is a $\mathbb{Z}^{n}$-graded module, which has
the property that each multi-degree is finite and flat over $R$.
Base changing to $W(k)$, we obtain 
\[
Li^{*}(\mathcal{E}_{W(k)})\tilde{=}Li^{*}(\text{gr}^{(n)}(\cdots\text{gr}^{(2)}(\text{gr}^{(1)}(\mathcal{E}_{W(k)})))
\]
 The last complex can be computed via the Koszul complex with respect
to $\{t_{1},\dots,t_{n}\}$, which has homogenous differentials (after
shifting the grading appropriately). Thus we see that each $L^{j}\iota^{*}(\mathcal{E}_{W(k)})$
is a direct sum of modules whose $p$-torsion is bounded; and the
result follows immediately. 
\end{proof}

University of Illinois. csdodd2@illinois.edu
\end{document}